\documentclass[12pt, letterpaper]{amsart}

\usepackage{amsthm, amssymb, amsfonts, amscd}
\usepackage{array}
\usepackage[frame,cmtip,arrow,matrix,line,graph,curve]{xy}
\usepackage{graphpap, color, enumitem}
\usepackage[pdftex]{graphicx}
\usepackage[pdftex,colorlinks,citecolor=blue]{hyperref}
\usepackage{tikz}
\usepackage{tikz-cd}
\usepackage{verbatim}
\usepackage{mathrsfs, mathtools}
\usepackage{xcolor}
\usepackage{cleveref}
\usepackage{setspace}
\usepackage{diagbox}
\usepackage[style=alphabetic, url=false, doi=false, isbn=false]{biblatex}
\AtBeginBibliography{\small}
\AtBeginDocument{%
  \raggedbottom
  \setlength{\abovedisplayskip}{5pt}%
  \setlength{\belowdisplayskip}{5pt}%
  \setlength{\abovedisplayshortskip}{3pt}%
  \setlength{\belowdisplayshortskip}{3pt}%
}

\allowdisplaybreaks[4]
\makeatletter
\@namedef{subjclassname@2020}{%
\textup{2020} Mathematics Subject Classification}
\makeatother

\newtheorem{theorem}{Theorem}[section]
\newtheorem{theoremletter}{Theorem}

\newtheorem{proposition}[theorem]{Proposition}
\newtheorem{corollary}[theorem]{Corollary}
\newtheorem{lemma}[theorem]{Lemma}

\newtheorem{conjecture}[theorem]{Conjecture}

\theoremstyle{definition}
\newtheorem{definition}[theorem]{Definition}
\newtheorem{example}[theorem]{Example}
\newtheorem{examples}[theorem]{Examples}

\newtheorem{remark}[theorem]{Remark}

\newtheorem{algorithm}[theorem]{Algorithm}
\newtheorem*{acknowledgement}{Acknowledgement}

\renewcommand{\AA}{\mathbb{A} }

\newcommand{\GG}{\mathbb{G} }

\newcommand{\PP}{\mathbb{P} }

\newcommand{\ZZ}{\mathbb{Z} }

\newcommand{\cC}{\mathcal{C} }
\newcommand{\cE}{\mathcal{E} }
\newcommand{\cF}{\mathcal{F} }
\newcommand{\cG}{\mathcal{G} }
\newcommand{\cH}{\mathcal{H} }

\newcommand{\cK}{\mathcal{K} }
\newcommand{\cL}{\mathcal{L} }
\newcommand{\cM}{\mathcal{M} }
\newcommand{\cN}{\mathcal{N} }
\newcommand{\cO}{\mathcal{O} }

\newcommand{\cQ}{\mathcal{Q} }
\newcommand{\cR}{\mathcal{R} }
\newcommand{\cS}{\mathcal{S} }
\newcommand{\cT}{\mathcal{T} }
\newcommand{\cU}{\mathcal{U} }

\newcommand{\ba}{\mathbf{a} }
\newcommand{\bA}{\mathbf{A} }

\newcommand{\bB}{\mathbf{B} }
\newcommand{\bc}{\mathbf{c} }

\newcommand{\bI}{\mathbf{I} }
\newcommand{\bp}{\mathbf{p} }
\newcommand{\bq}{\bsq }
\newcommand{\bm}{\mathbf{m} }
\newcommand{\bn}{\mathbf{n} }

\newcommand{\bsq}{\mathbf{q} }

\newcommand{\bT}{\mathbf{T} }

\newcommand{\bU}{\mathbf{U} }
\newcommand{\bV}{\mathbf{V} }
\newcommand{\bW}{\mathbf{W} }

\newcommand{\bX}{\mathbf{X} }

\newcommand{\rD}{\mathrm{D} }
\newcommand{\rH}{\mathrm{H} }
\newcommand{\rM}{\mathrm{M} }

\newcommand{\rU}{\mathrm{U} }

\newcommand{\hX}{{X_H} }

\newcommand{\tcS}{\widetilde{\cS} }

\newcommand{\Om}{\Omega}
\newcommand{\ga}{\gamma}
\newcommand{\lbd}{\lambda}

\newcommand{\pHom}{\mathrm{ParHom}}

\newcommand{\cEnd}{\mathcal{E}nd}
\newcommand{\cpHom}{\mathcal{P}ar\cHom}

\newcommand{\rpar}{\mathrm{par}}
\newcommand{\parmu}{\mu_{par}}
\newcommand{\pardeg}{\deg_{par}}
\def\codim{\mathrm{codim}}
\def\sym{\mathrm{Sym}}

\def\rHom{\mathrm{Hom} }
\def\cHom{\mathcal{H}om }
\def\rEnd{\mathrm{End} }

\def\OGr{\mathrm{OGr}}
\def\Ext{\mathrm{Ext} }
\def\pExt{\mathrm{ParExt} }
\def\Sym{\mathrm{Sym} }
\def\git{/\!/ }
\def\Pic{\mathrm{Pic} }

\def\im{\mathrm{im}\, }
\def\spec{\mathrm{Spec}\;}

\def\rank{\mathrm{rank}\;}
\def\depth{\mathrm{depth}\;}
\def\type{\mathrm{type}\;}
\def\nullity{\mathrm{nullity}\;}
\def\Cl{\mathrm{Cl}}
\def\ch{\mathrm{ch}}
\def\td{\mathrm{td}}
\newcommand{\Coh}{\mathsf{Coh}}

\makeatletter
\renewcommand{\part}{%
  \clearpage
  \thispagestyle{empty}
  \@startsection{part}{0}
  {\z@}{0pt}{0pt}
  {\centering\normalfont\Large\bfseries}}
\makeatother

\begin{document}
\title{Ulrich bundles on intersections of two quadrics} \date{\today}

\subjclass[2020]{14J60, 14D20, 14F08}

\author{Jiwan Jung} \address{Jiwan Jung, Department of Mathematics, POSTECH, 77, Cheongam-ro, Nam-gu, Pohang-si, Gyeongsangbuk-do, 37673, Korea} \email{jiwanjung11@gmail.com}

\author{Kyoung-Seog Lee} \address{Kyoung-Seog Lee, Department of Mathematics, POSTECH, 77, Cheongam-ro, Nam-gu, Pohang-si, Gyeongsangbuk-do, 37673, Korea} \email{kyoungseog@postech.ac.kr}

\author{Han-Bom Moon} \address{Han-Bom Moon, Department of Mathematics, Fordham University, New York, NY 10023} \email{hmoon8@fordham.edu}

\begin{abstract}
We construct Ulrich bundles on smooth intersections of two quadrics. We determine all possible ranks and prove the existence of indecomposable Ulrich bundles of every allowable rank. To our knowledge, this gives the first such construction for a family of nonhomogeneous varieties of arbitrarily large dimension. We also study the moduli spaces of these bundles and relate them to moduli spaces of vector bundles on curves. For intersections of two even-dimensional quadrics, our results prove a conjecture of Eisenbud and Schreyer.
\end{abstract}

\maketitle

\section{Introduction}\label{sec:intro}

The study of Ulrich bundles has attracted considerable attention over the past two decades. Eisenbud and Schreyer introduced them in connection with the computation of Chow forms \cite{ES03}, and subsequent work has revealed connections with several other areas of algebraic geometry.

For example, let \(\mathrm{BS}(X)\) denote the Boij--S\"oderberg cone of cohomology tables of vector bundles on an \(n\)-dimensional variety \(X\). Although this countable-dimensional cone is generally difficult to determine, the existence of an Ulrich bundle implies \(\mathrm{BS}(X) = \mathrm{BS}(\PP^n)\) \cite{ES11}, and the latter cone was characterized in \cite{ES09}.

Let \(X\) be a smooth projective variety equipped with a very ample line bundle \(\cO_X(1)\). A vector bundle \(\cE\) on \(X\) is \textbf{Ulrich} if
\[\rH^\bullet(X, \cE(-j)) = 0\]
for all \(1 \le j \le \dim X\). Eisenbud and Schreyer conjectured that every smooth polarized projective variety admits an Ulrich bundle \cite{ES03}. The strength of the required vanishings makes explicit constructions difficult, particularly in dimensions greater than three and outside the homogeneous setting. Beyond existence, two natural questions concern the possible ranks--especially the minimum rank, or \textbf{Ulrich complexity} \(uc(X)\)--and the geometry of the corresponding moduli spaces. See \cite{CFK23, CFK24, CMRPL21} and references therein. Very recently, Anghel announced a counterexample to the above Eisenbud-Schreyer conjecture for a certain surface of general type \cite{Ang26}, hence there is an example with \(uc(X) = \infty\).

\subsection{Intersection of two quadrics}

The main object of this paper is an intersection of two quadrics. This is the first family of nonhomogeneous varieties of unbounded dimension on which Ulrich bundles are studied systematically. In \cite{ES25}, Eisenbud and Schreyer considered the intersection \(X^{2g-1}\) of two even-dimensional quadrics \(Q_0^{2g} \cap Q_{\infty}^{2g}\). Using matrix factorizations, they showed that \(uc(X^{2g-1}) = 2^{g-1}\) and that every Ulrich bundle has rank \(r2^{g-2}\) for some \(r \in \ZZ_{> 0}\). Furthermore, they made the following explicit conjecture.

\begin{conjecture}[\protect{\cite[Conjecture~1.2]{ES25}}]\label{conj:ESquadrics}
Let \(X^{2g-1} = Q_0^{2g} \cap Q_{\infty}^{2g} \subset \PP^{2g+1}\) be the smooth intersection of two quadrics for some \(g \ge 1\). Then there exists an indecomposable Ulrich bundle of rank \(r2^{g-2}\) if and only if \(r \ge 2\) and \(rg \equiv 0 \pmod{2}\).
\end{conjecture}

The aim of this paper is to extend their work to both even- and odd-dimensional quadrics and to higher-rank Ulrich bundles. Theorems \ref{thm:mainthmeven} and \ref{thm:mainthmodd} establish the existence of Ulrich bundles and describe their moduli spaces.

We begin with intersections of even-dimensional quadrics.

\begin{theoremletter}\label{thm:mainthmeven}
Let \(g \ge 2\) and let \(X = X^{2g-1} := Q_{0}^{2g} \cap Q_{\infty}^{2g} \subset \PP^{2g+1} = \PP V^{2g+2}\) be a smooth intersection of two even-dimensional quadrics.
\begin{enumerate}
  \item There is an indecomposable Ulrich bundle \(\cE\) on \(X\) if and only if \(\rank \cE = r2^{g-2}\) for some \(r \in \ZZ_{> 0}\) such that \(r \ge 2\) and \(rg \equiv 0 \pmod 2\). In particular, \(uc(X) = 2^{g-1}\).
  \item Let \(\cU_{r2^{g-2}}(X)\) be the moduli stack of rank \(r2^{g-2}\) semistable Ulrich bundles on \(X\). For any \(r\) satisfying the conditions in (1), there is an open embedding
  \[\cU_{r2^{g-2}}(X) \hookrightarrow \cM_C(r, r(g-2)/2),\]
  where the target is the moduli stack of rank \(r\), degree \(r(g-2)/2\) semistable bundles over a hyperelliptic curve \(C\) of genus \(g\).
\end{enumerate}
\end{theoremletter}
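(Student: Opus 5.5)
The plan is to go through the Kuznetsov/Bondal--Orlov semiorthogonal decomposition of \(\rD^b(X)\) for \(X = X^{2g-1}\), the intersection of two even-dimensional quadrics. Concretely,
\[\rD^b(X) = \langle \rD^b(C), \cO_X, \cO_X(1), \dots, \cO_X(2g-3)\rangle,\]
where \(C\) is the hyperelliptic curve of genus \(g\) that double covers the pencil \(\PP^1\) of quadrics, branched at the \(2g+2\) singular members. The embedding \(\Phi \colon \rD^b(C) \to \rD^b(X)\) is given by the universal spinor bundle, a family \(\cS_c\) of rank \(2^{g-1}\) bundles on \(X\) coming from spinor bundles on the quadrics \(Q_t \supset X\). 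An Ulrich bundle satisfies \(\rH^\bullet(\cE(-j)) = 0\) for \(1 \le j \le 2g-1\). By Serre duality, \(\omega_X = \cO_X(-2g+2)\), this is equivalent to \(\cE(-1)\) lying in the right orthogonal of \(\langle \cO_X, \dots, \cO_X(2g-3)\rangle\) together with one extra vanishing. After a twist or mutation, \(\cE\) therefore lies in the image of \(\Phi\). So the first step is to prove that \(\cE\) Ulrich implies \(\cE(-1) \simeq \Phi(F)\) for some object \(F \in \rD^b(C)\). The extra vanishing \(\rH^\bullet(\cE(-2g+1)) = 0\) then becomes a condition on \(F\) of the form \(\rH^\bullet(C, F \otimes L) = 0\) for a fixed line bundle \(L\) on \(C\).

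The second step is to pin down \(F\). Since \(\Phi(F)\) is a sheaf concentrated in one degree, \(F\) should be a sheaf, in fact a vector bundle on \(C\). To see this, take the spectral sequence of the cohomology sheaves of \(F\) and compare it with the Fourier--Mukai kernel. Pointwise, \(\Phi\) sends \(\cO_c\) to a twist of the spinor bundle, and torsion would produce summands of the form \(\cS_c\), which are not Ulrich. Using Grothendieck--Riemann--Roch (or just the kernel), compute
\[\rank \Phi(F) = \rank(F) \cdot 2^{g-2}\]
up to the normalisation, which gives the rank \(r2^{g-2}\) with \(r = \rank F\). The vanishing \(\chi(F \otimes L) = 0\) forces \(\deg F = r(g-2)/2\) by Riemann--Roch on \(C\). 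This gives the parity condition \(rg \equiv 0 \pmod 2\) (equivalently, \(r(g-2)/2\) is an integer). The condition \(r \ge 2\) comes from the fact that a line bundle \(F\) would give \(\Phi(F)\) failing the remaining vanishing; this is the known Eisenbud--Schreyer bound \(uc = 2^{g-1}\). Conversely, a bundle \(F\) of rank \(r\) and that degree with \(\rH^\bullet(F \otimes L) = 0\) is mapped to an Ulrich sheaf. Local freeness of \(\Phi(F)\) follows from the Ulrich vanishings (Ulrich sheaves are ACM, hence locally free on a smooth \(X\)).

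The third step is the existence of indecomposable Ulrich bundles and the moduli statement. Since \(\Phi\) is fully faithful, \(\rEnd(\Phi(F)) = \rEnd(F)\), so \(\Phi(F)\) is indecomposable exactly when \(F\) is. Stable bundles on \(C\) of rank \(r \ge 2\) and degree \(r(g-2)/2 = r(g-1) - r\) with generic cohomology vanishing against \(L\) exist. Here slope \(g-1\) after twisting by \(L\) is the theta-divisor situation: by Raynaud/Hirschowitz, a general stable bundle of slope \(g-1\) has no cohomology. For \(r = 1\) the degree condition and the required vanishing are incompatible with the extra condition, and this is what must be checked carefully. The locus of semistable \(F\) with \(\rH^\bullet(F \otimes L) = 0\) is open (the complement of the generalized theta divisor). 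One also needs that \(\Phi\) preserves and reflects semistability between Ulrich bundles and bundles on \(C\), via slope comparison: Ulrich bundles are semistable iff they have no Ulrich quotient destabilising, and subobjects correspond under \(\Phi\). Together these give the open embedding \(\cU_{r2^{g-2}}(X) \hookrightarrow \cM_C(r, r(g-2)/2)\).

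The main obstacle is the first two steps. I expect the delicate part to be identifying exactly which twist and which line bundle \(L\) make the Ulrich vanishings equivalent to membership in \(\Phi(\rD^b(C))\) plus a single theta-type condition, and ruling out rank one, since that requires the precise spinor kernel. I would first attempt this by computing \(\rH^\bullet(X, \cS_c(-j))\) via Bott-type vanishing for spinor bundles on quadrics, restricted to \(X\) through the Koszul complex of \(X \subset Q_t\).
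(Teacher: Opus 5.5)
\textbf{Gap 1: the orthogonality condition is against a rank-$2^g$ bundle, not a line bundle.} The condition $\rH^\bullet(X,\cE(-1))=0$ is $R\rHom(\cO_X,\Phi_{\cS}(\cF))=0$. By the projection formula this becomes $\rH^\bullet(C,\cF\otimes\cR)=0$, where $\cR=p_{C*}\cS$ is the Raynaud bundle, of rank $2^g$ and degree $g2^{g-1}$. Equivalently, $\cR^{*}$ is the image of $\cO_X$ under the left adjoint of $\Phi_{\cS}$. It is not a line bundle $L$, and this matters in two ways.
\begin{enumerate}
\item With a line bundle, Riemann--Roch gives $\deg\cF=r(g-1-\deg L)$, an integer for every $r$, so no parity condition could appear. (Also, your identity $r(g-2)/2=r(g-1)-r$ is false for $g>2$.)
\item A general line bundle $\cF$ of that degree would satisfy $\rH^\bullet(C,\cF\otimes L)=0$. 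That would produce Ulrich bundles with $r=1$, contradicting $uc(X)=2^{g-1}$.
\end{enumerate}
With $\cR$ one gets $0=\chi(\cF\otimes\cR)=2^g\deg\cF+r\bigl(g2^{g-1}+2^g(1-g)\bigr)$, so $\deg\cF=r(g-2)/2$. The constraint $rg\equiv0\pmod 2$ comes precisely from $\mu(\cR)=g/2$. The Grothendieck--Riemann--Roch rank $2^{g-1}\bigl(\deg\cF+r+r(1-g)/2\bigr)$ becomes $r2^{g-2}$ only after this substitution. In the converse direction, you must also show $\Phi_{\cS}(\cF)$ is a sheaf before invoking local freeness of Ulrich sheaves. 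The paper gets $\rH^1(C,\cF\otimes\cS_x)=0$ for all $x\in X$ from the surjection $\cR\twoheadrightarrow\cS_x$ together with $\rH^1(C,\cF\otimes\cR)=0$.

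\textbf{Gap 2 (decisive): existence.} This step is the real content of the theorem, namely the Eisenbud--Schreyer conjecture. Raynaud/Hirschowitz generic vanishing concerns a \emph{general} bundle of slope $g-1$, but $\cF\otimes\cR$ with $\cR$ fixed is never general. What you need is that $\{\cF : \rH^0(C,\cF\otimes\cR)\ne0\}$ is a proper subset of $\cM_C(r,r(g-2)/2)$, i.e.\ that $\cR$ lies outside the base locus of the level-$r$ generalized theta divisor.

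The name is a warning. For even $g$, $\rH^0(C,L\otimes\cR)\ne0$ for \emph{every} line bundle $L$ of degree $(g-2)/2$, since otherwise $uc(X)$ would be $2^{g-2}$. So the expected vanishing already fails at $r=1$, and nothing in your argument explains why it should hold at $r=2$ or $3$. Faltings' criterion gives some orthogonal $\cF$ once $\cR$ is semistable, but with no control on the rank.

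Direct sums, irreducibility of $\cM_C(r,d)$ and openness of orthogonality reduce everything to ranks $2$ and $3$ (just $2$ for odd $g$). These must be produced by hand, and the paper does so by a completely different mechanism:
\begin{enumerate}
\item It restricts Ulrich bundles from the odd-dimensional $X^{2g}$ with parameter $g+1$, whose Kuznetsov component is the root stack $\PP^1_{2g+3}$.
\item It computes $\cR$ there as an explicit parabolic bundle on $\PP^1$, from the Clifford action on $\Ext^\bullet_{P_X}(P_U,k)$.
\item It reduces $\pHom(\cG,\cR)=0$, for balanced parabolic bundles $\cG$ of rank $2$ and $3$ with general flags, to the generic invertibility of explicit square block matrices $\bW_{r,m}$. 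That invertibility is proved by Vandermonde eliminations and a change of basis.
\end{enumerate}
Without a substitute for this, your proposal establishes only the necessary conditions, with $r\ge2$ taken from Eisenbud--Schreyer's $uc(X)=2^{g-1}$.

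\textbf{What is fine.} The formal parts match the paper: indecomposability via full faithfulness, and the open embedding via openness and extension-closedness of orthogonality. One adjustment: get semistability of $\cF$ from $\rH^\bullet(C,\cF\otimes\cR)=0$ via Faltings' criterion, not from a subobject correspondence. A subbundle $\cF'\subset\cF$ need not be orthogonal to $\cR$, so $\Phi_{\cS}(\cF')$ need not be Ulrich or even a sheaf.
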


In particular, part (1) of Theorem~\ref{thm:mainthmeven} resolves Conjecture~\ref{conj:ESquadrics}. As an immediate corollary, we obtain:

\begin{corollary}\label{cor:evenintersection}
If \(g \ge 2\), the good moduli space \(\rU_{r2^{g-2}}(X)\) is an irreducible, normal, uniruled variety of dimension \(r^2(g-1)+1\).
\end{corollary}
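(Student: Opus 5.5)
The plan is to deduce the corollary from part (2) of Theorem~\ref{thm:mainthmeven} together with classical facts about moduli of semistable bundles on curves. The open embedding of stacks $\cU_{r2^{g-2}}(X) \hookrightarrow \cM_C(r, r(g-2)/2)$ should descend to the good moduli spaces. I will argue that it sends S-equivalence classes to S-equivalence classes, because the correspondence is an equivalence of abelian categories of semistable objects of the fixed slope (via the Kuznetsov/Bondal--Orlov equivalence $D^b(C) \hookrightarrow D^b(X)$ used for part (2)). Moreover, the image is a saturated open substack, since Ulrichness is preserved under extensions and Jordan--H\"older factors of the same slope. Granting this, we get an open immersion $\rU_{r2^{g-2}}(X) \hookrightarrow \rU_C(r, r(g-2)/2)$ onto a nonempty open subset. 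Nonemptiness follows from part (1), since $r \ge 2$ and $rg$ even.

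The properties then come from the target. The moduli space $\rU_C(r,d)$ of semistable bundles of rank $r$ and degree $d$ on a smooth curve of genus $g \ge 2$ is an irreducible normal projective variety of dimension $r^2(g-1)+1$ (Seshadri, Narasimhan--Ramanan). Irreducibility, normality and dimension all pass to nonempty open subsets.

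For uniruledness I would use the determinant map $\det\colon \rU_C(r,d) \to \Pic^d(C)$. It is an isotrivial fibration whose fibers $\rU_C(r,L)$ are unirational, indeed rationally connected, Fano-type varieties. So through a general point of $\rU_C(r,d)$ there passes a rational curve lying in a fiber. An open subset of a uniruled variety is uniruled in the birational sense, since a general point of the open set still lies on a rational curve meeting the open set. Hence $\rU_{r2^{g-2}}(X)$ is uniruled. One should note that here $r \ge 2$, so the fibers are positive-dimensional.

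The main obstacle is the descent from stacks to good moduli spaces, i.e.\ that the open embedding is saturated so that it induces an open immersion of good moduli spaces. I would verify this by checking that the image is closed under taking polystable replacements: the Jordan--H\"older factors of a semistable bundle in the image correspond to objects of equal slope whose extensions give Ulrich bundles. This in turn needs that the functor preserves and reflects the relevant exact sequences and slopes. If saturation is only available for the stable locus, I would fall back to arguing dimension and irreducibility on the stable locus and taking closures.
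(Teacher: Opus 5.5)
Your proposal is correct and follows the paper's own route. Saturation of the orthogonal locus comes from closure under extensions and under equal-slope Jordan--H\"older factors, as in Corollary~\ref{cor:Sequivalence} and the proof of Theorem~\ref{thm:maintheoremevenconditional}; it turns the open embedding of stacks into an open immersion of good moduli spaces onto a nonempty open subset of $\rM_C(r, r(g-2)/2)$, which then passes on irreducibility, normality, dimension $r^2(g-1)+1$ and, for $g \ge 2$, uniruledness. Your fallback to the stable locus is unnecessary, since saturation already holds for the full semistable orthogonal locus.
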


When \(g = 1\), \(X\) is an elliptic curve and the theory of Ulrich bundles on \(X\) is classical. We still have (1) and a variation of (2) of Theorem~\ref{thm:mainthmeven} on the level of good moduli spaces. However, Corollary~\ref{cor:evenintersection} encounters an exception. For instance, if \(g = 1\) and \(r = 2\), \(\rM_C(2, -1) \cong C\) is an elliptic curve, which is not uniruled. See Remark~\ref{rmk:genus1}.

We also obtain the parallel result for odd-dimensional quadrics, completing the existence problem for Ulrich bundles on intersections of two quadrics.

\begin{theoremletter}\label{thm:mainthmodd}
Let \(g \ge 2\) and let \(X = X^{2g-2} := Q_{0}^{2g-1} \cap Q_{\infty}^{2g-1} \subset \PP^{2g} = \PP V^{2g+1}\) be a smooth intersection of two odd-dimensional quadrics.
\begin{enumerate}
  \item There is an indecomposable Ulrich bundle \(\cE\) on \(X\) if and only if \(\rank \cE = r2^{g-3}\) for some \(r\in\ZZ_{\ge2}\) and \(r(g+1) \equiv 0 \pmod{2}\). In particular, \(uc(X) = 2^{g-2}\).
  \item Let \(\cU_{r2^{g-3}}(X)\) be the moduli stack of rank \(r2^{g-3}\) Ulrich bundles on \(X\). It has multiple connected components. For any \(r\) satisfying the conditions in (1), there is an open embedding
  \[\cU_{r2^{g-3}}(X) \hookrightarrow \cM_{\PP_{2g+1}^1}(r, r(g-3)/4),\]
  where \(\cM_{\PP_{2g+1}^1}(r, r(g-3)/4)\) denotes the moduli stack of rank \(r\), degree \(r(g-3)/4\) semistable bundles over \(\PP_{2g+1}^1\).
\end{enumerate}
\end{theoremletter}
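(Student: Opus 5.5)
The plan is to reduce to a derived-category statement about the hyperelliptic-type curve attached to the pencil of quadrics. For the odd-dimensional case \(X = Q_0^{2g-1}\cap Q_\infty^{2g-1}\subset \PP^{2g}\), the pencil \(\{Q_t\}\) has \(2g+1\) singular members. The natural base is therefore the orbifold (stacky) curve \(\PP^1_{2g+1}\): it is \(\PP^1\) with \(\mu_2\)-stabilizers at the \(2g+1\) degeneracy points, which one can also view as \([C/\iota]\) for the hyperelliptic double cover branched at those points together with \(\infty\). By Kuznetsov's homological projective duality for quadric fibrations, \(D^b(X)\) carries a semiorthogonal decomposition
\[ D^b(X) = \langle \Phi(D^b(\PP^1_{2g+1})), \cO_X(1),\dots,\cO_X(2g-3)\rangle, \]
where \(\Phi\) is a fully faithful functor given by the family of spinor bundles on the quadrics of the pencil. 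The twists in the exceptional collection are to be chosen so that \(\cE\) is Ulrich exactly when \(\cE\) is right orthogonal to \(\cO_X(-1),\dots,\cO_X(-(2g-2))\). Up to a twist by \(\cO_X(1)\), the Ulrich condition then says precisely that \(\cE(-1)\), say, lies in the image of \(\Phi\). So the first step is to fix the decomposition so that Ulrich sheaves correspond to objects \(F=\Phi^{-1}(\cE(-1))\) in \(D^b(\PP^1_{2g+1})\).

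The second step is to compute \(\Phi\) on sheaves: its rank and degree behaviour, and which objects \(F\) yield sheaves rather than complexes. Each spinor bundle on the smooth odd-dimensional quadric \(Q^{2g-1}\) has rank \(2^{g-2}\), and the restriction to \(X\) behaves like a half-spinor family, which should give \(\rank \Phi(F) = 2^{g-3}\rank F\) for vector bundles \(F\). I would then characterize the image of Ulrich bundles as exactly the semistable bundles \(F\) of a fixed slope, expected to be \(r(g-3)/4\) for rank \(r\). For this I would use Serre duality on \(\PP^1_{2g+1}\) together with the fact that Ulrich bundles are semistable and have the Ulrich Hilbert polynomial. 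The slope forces the degree to lie in \(\frac14\ZZ\)-type constraints coming from orbifold line bundles with \(\mu_2\)-characters. This is where the parity condition \(r(g+1)\equiv 0 \pmod 2\) should appear, as integrality of the Euler characteristic or of the determinant character. Conversely, for a semistable \(F\) of this slope, \(\Phi(F)\) being a vector bundle (and not just a complex) should follow from the vanishing \(\Hom(\mathrm{line\ bundles\ of\ small\ slope}, F)=0\) forced by semistability, since \(\Phi\) is built from a spinor kernel. This gives the open embedding in (2); the openness comes from the fact that being a vector bundle and being Ulrich are open conditions. The multiple connected components arise because the \(\mu_2\)-characters of \(F\) at the stacky points are discrete invariants.

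For (1), existence reduces to producing a stable, hence indecomposable, bundle \(F\) of rank \(r\ge 2\) and the required orbifold degree on \(\PP^1_{2g+1}\). Equivalently, this is a \(\iota\)-equivariant stable bundle on \(C\), or a parabolic bundle on \(\PP^1\) with \(2g+1\) marked points and weights \(0,\tfrac12\). For \(2g+1\ge 5\) points, the stable locus of parabolic bundles with generic-enough weights is nonempty as soon as the expected dimension is positive. So I would verify that the expected dimension \(r^2(g-1)+\ldots\) is positive and exhibit stability, for example by an extension or elementary modification argument. Rank \(1\) must be excluded: a line bundle would produce a rank-\(2^{g-3}\) Ulrich bundle, and I expect the slope computation to forbid this (or \(\Phi\) of it fails to be a sheaf). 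This gives \(uc(X)=2^{g-2}\).

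The main obstacle is step two: identifying precisely which objects of \(D^b(\PP^1_{2g+1})\) map to Ulrich vector bundles. In particular, one must show that every Ulrich bundle comes from a genuine semistable sheaf of the right slope, rather than from a complex or a torsion-containing object. I would attack this by computing \(\Phi^{-1}\) via its left adjoint on \(\cE(-1)\) and using the Ulrich vanishings to kill all but one cohomology sheaf. The remaining concern is to track the \(\mu_2\)-characters carefully enough to get the exact parity condition.
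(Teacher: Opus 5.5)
\textbf{There is a genuine gap at your first reduction, and it propagates through the rest of the argument.}

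With the decomposition you wrote, the image of $\Phi$ is $\langle\cO_X(1),\dots,\cO_X(2g-3)\rangle^{\perp}$. So the statement $\cE(-1)\in\Phi(\rD^b(\PP^1_{2g+1}))$ encodes only the vanishings $\rH^\bullet(X,\cE(-j))=0$ for $2\le j\le 2g-2$. The Ulrich condition contains one more vanishing, $\rH^\bullet(X,\cE(-1))=0$. No choice of twists absorbs it: since $\omega_X\cong\cO_X(3-2g)$, Serre duality shows that $2g-2$ consecutive line bundles are not semiorthogonal.

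Write $\cE(-1)=\Phi_{\cS}(\cF)$. Then the extra vanishing reads $R\Gamma(X,\Phi_{\cS}(\cF))\cong R\Gamma(\cC,\cF\otimes\cR)=0$. In other words, $\cF^*$ must be cohomologically orthogonal to the Raynaud bundle $\cR=p_{\cC*}\cS$. This condition does three jobs in the paper:
\begin{itemize}
\item It makes $\Phi_{\cS}(\cF)$ a sheaf, via $\rH^1(\cC,\cF\otimes\cR)=0$ and the surjection $\cR\twoheadrightarrow\cS_x$. Semistability alone does not do this.
\item Through $\chi(\cC,\cF\otimes\cR)=0$, it pins down $\deg\cF=r(g-3)/4$.
\item It determines which bundles actually occur.
\end{itemize}
Hence Ulrich bundles do not correspond to all semistable bundles of that slope. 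They correspond to the open locus $\{\rH^0(\cC,\cF\otimes\cR)=0\}$ in $\cM_{\PP^1_{2g+1}}(r,r(g-3)/4)$. This is the complement of a theta-type divisor, and a priori it may be empty on any given component.

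\textbf{Consequences for your existence and exclusion steps.} Your existence step, producing a stable bundle of the right rank and degree, does not prove anything. This is where essentially all of the paper's work lies:
\begin{itemize}
\item $\cR$ is identified with an explicit parabolic bundle on $(\PP^1,\bp)$, using the Clifford action on $\Ext^\bullet_{P_X}(P_U,k)$.
\item The condition $\pHom(\cG,\cR)=0$ is translated into invertibility of explicit block matrices.
\item These matrices are shown to be invertible for general flags in ranks $2$ and $3$. Rank $3$ needs a carefully chosen monomial and a change of basis.
\item Higher ranks are obtained by iterated nontrivial extensions, which remain orthogonal.
\end{itemize}

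Your rank-one exclusion also fails as proposed. For odd $g$ the numerics allow $r=1$, and line bundles of degree $(g-3)/4$ on $\PP^1_{2g+1}$ exist and are trivially stable. What rules them out is that every parabolic line bundle admits a nonzero parabolic map to $\cR$, built from Clifford multiplication by a product of the $e_\ell$.

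Similarly, the claim that the Ulrich stack has multiple connected components requires the orthogonal locus to be nonempty in several components of $\cM_{\PP^1_{2g+1}}$, i.e.\ for different flag types $\bm_I$. That again needs the explicit construction.

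Finally, $\rank\Phi_{\cS}(\cF)$ depends on $\deg\cF$, not only on $\rank\cF$. The formula $r2^{g-3}$ holds only at the forced degree.
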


The moduli stack \(\cM_{\PP_{2g+1}^1}(r,d)\) in general has many connected components. Each component is isomorphic to \(\cM_{(\PP^1, \bp)}(r, d', \bm)\), the moduli stack of parabolic semistable vector bundles with a certain degree \(d'\) and multiplicity \(\bm\).

The geometry of moduli spaces of parabolic bundles over \(\PP^1\) has been studied extensively; see, for example, \cite{MY21}. Therefore, we obtain:

\begin{corollary}\label{cor:oddintersection}
Let \(\rU_{r2^{g-3}}(X)\) be the good moduli space of \(\cU_{r2^{g-3}}(X)\). A connected component \(U\) of \(\rU_{r2^{g-3}}(X)\) in Theorem~\ref{thm:mainthmodd} is a normal rational variety.
\end{corollary}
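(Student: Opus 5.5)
By Theorem~\ref{thm:mainthmodd}(2), each connected component \(U\) of \(\rU_{r2^{g-3}}(X)\) is an open subvariety of the good moduli space of a connected component of \(\cM_{\PP^1_{2g+1}}(r, r(g-3)/4)\). Each such component is identified with \(\cM_{(\PP^1,\bp)}(r,d',\bm)\), a moduli stack of parabolic semistable bundles on \(\PP^1\) with \(2g+1\) marked points and fixed multiplicities. I would therefore argue in three steps. First, pass to good moduli spaces: show that the open embedding of stacks is saturated, i.e.\ compatible with S-equivalence, so that it descends to an open embedding \(U \hookrightarrow \rM_{(\PP^1,\bp)}(r,d',\bm)\). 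Second, check that the target is normal and rational. Third, deduce the same properties for the open subset \(U\).

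For the first step, I would note that semistable Ulrich bundles form an open condition in families. I would also check that the Jordan--Hölder factors of a semistable Ulrich bundle are again Ulrich, using the standard fact that Ulrich bundles are semistable and that extensions and slope-equal subquotients of Ulrich bundles are Ulrich. Under the correspondence, polystable objects on the orbifold curve should then match polystable Ulrich bundles. Hence the image is a union of closed-point equivalence classes, and its preimage in the good moduli space is open.

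For the second step, I would use that the good moduli space of semistable parabolic bundles is a GIT quotient of a smooth (or at least normal) parameter scheme. Concretely, it is a quotient of a product of flag varieties over an open subset of the space of bundles on \(\PP^1\), by a reductive group. Normality then follows, because GIT quotients of normal varieties are normal. For rationality, I would cite the known results for parabolic moduli on \(\PP^1\) from \cite{MY21}. Alternatively, I would argue directly: the generic parabolic semistable bundle has a rigid underlying bundle (balanced splitting type), so a dense open subset is a quotient of a product of flag varieties by \(\mathrm{Aut}\) of that bundle. This quotient is rational, since the generic stabilizer is just scalars and the quotient of products of flag varieties by a connected solvable-by-\(\mathrm{GL}\) group can be handled by slicing.

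The main obstacle is that \(U\) might be empty or the component of the parabolic moduli might be unstable-only or empty. Irreducibility of \(U\) also needs care, because a connected open subset of a normal variety is automatically irreducible. Since \(U\) is a connected component, it is an open connected subset of a normal irreducible variety, and so it is irreducible, normal and rational. The genuinely delicate input is the rationality of the parabolic moduli component. If \cite{MY21} does not cover all weights and multiplicities, I would first try a wall-crossing argument: move the parabolic weights to a chamber where the moduli space is visibly an iterated projective bundle or a toric/flag quotient, which is rational. I would then use that wall-crossings are birational whenever the stable locus is nonempty.
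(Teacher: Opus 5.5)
Your proposal is correct and follows essentially the paper's route: the paper deduces the corollary directly from the open embedding of Theorem~\ref{thm:mainthmodd}(2), whose descent to good moduli spaces uses the same saturation argument as Theorem~\ref{thm:maintheoremevenconditional}, together with the known normality and rationality of the parabolic moduli spaces \(\rM_{(\PP^1,\bp)}(r,d',\bm)\) from \cite{MY21}, since an open subset of a normal rational variety is again normal and rational. Your GIT, slicing and wall-crossing sketches go beyond what the paper writes down, but the wall-crossing fallback needs a nonempty stable locus, so for components made up entirely of strictly semistable points you still rely on the citation, exactly as the paper does.
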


In \S\ref{sec:existence}, we construct explicit parabolic data defining several connected components. In rank-two, this produces at least \(\binom{2g+1}{g+1}\) components of the same dimension. Higher ranks exhibit components of different dimensions; see Remark~\ref{rmk:dimension}.

Our proof does not rule out the possibility of the existence of extra components. However, based on some numerical calculations in low-rank, low-genus cases, we suspect that the only connected components of \(\rU_{2\cdot2^{g-3}}(X)\) are degree-zero twists of those constructed in the proof of Theorem~\ref{thm:mainthmodd}. In general, we do not know how many connected components of \(\rU_{r2^{g-3}}(X)\) exist.

\subsection{Known results}

These results are already known in several low-dimensional cases by different methods. For \(g = 1\), \(X^1 = Q_0^2 \cap Q_\infty^2 \subset \PP^3\) is an elliptic curve, and classical Brill--Noether theory provides Ulrich bundles. See Remark~\ref{rmk:genus1}. The intersection \(X^2 = Q_0^3 \cap Q_\infty^3 \subset \PP^4\) is a del Pezzo surface of degree four, and Ulrich bundles on it have been studied extensively in \cite{CKM13, MRPL14}. The next case, \(X^3 = Q_0^4 \cap Q_\infty^4 \subset \PP^5\), is a Fano threefold of index two. Theorem~\ref{thm:mainthmeven} in this case was obtained in \cite{CKL21}.

\subsection{Outline of proof}

The proof of Theorems \ref{thm:mainthmeven} and \ref{thm:mainthmodd} consists of several reductions.

Homological projective duality provides an explicit semiorthogonal decomposition of \(\rD^b(X)\) \cite{Kuz07}. Depending on the parity of \(\dim X\), the Kuznetsov component \(\cC\) is either a hyperelliptic curve \(C\) or a root stack \(\PP_{2g+1}^1\) \cite{Kuz08}; see Theorem~\ref{thm:SOD}. \S\ref{sec:spinorbundle} describes the relative spinor bundle \(\cS\) on \(\cC \times X\), which is the Fourier--Mukai kernel of the embedding \(\rD^b(\cC) \to \rD^b(X)\). A standard cohomological computation then reduces the construction of an Ulrich bundle on \(X\) to finding a vector bundle \(\cG\) on \(\cC\) that is cohomologically orthogonal to an explicit bundle \(\cR\), the so-called \textbf{Raynaud bundle}. This reduction is carried out in \S\S\ref{sec:Raynaud} -- \ref{sec:reduction}.

The reduction in \S\ref{sec:reduction} is useful for two reasons. Vector bundles on a one-dimensional stack are easier to study, and the correspondence preserves stability and Jordan--H\"older filtrations. In particular, an extension argument on \(\cC\) constructs higher-rank Ulrich bundles from lower-rank ones. It is therefore enough to construct orthogonal bundles of ranks two and three. Moreover, the case of \(\PP_{2g+1}^1\) implies the statement for \(C\), as shown in \S\ref{sec:existenceUlrich}.

By the work of Borne and Biswas-Dhillon in \cite{Bor07, BD11}, the category of vector bundles on a root stack \(\PP_{2g+1}^1\) is equivalent to the category of parabolic bundles on its coarse moduli space \(\PP^1\). Thus, finding a vector bundle orthogonal to \(\cR\) over \(\PP_{2g+1}^1\) is equivalent to constructing a parabolic bundle orthogonal to a certain parabolic bundle \(\cR\) over \((\PP^1, \bp)\). This parabolic bundle has a concrete description in terms of the relative Clifford algebra, and the parabolic bundle structure on \(\cR\) can be described explicitly in terms of Clifford multiplication. This is achieved in \S\ref{sec:reductionparabolic}.

The condition that a parabolic bundle \(\cG\) over \(\PP^1\) be orthogonal to \(\cR\) gives a concrete set of linear equations on the vector space \(\rHom(\cG, \cR)\). The question is therefore one of finite-dimensional linear algebra. \S\ref{sec:orthogonality} describes this linear algebra problem. Proving the invertibility of the resulting square matrix requires a nontrivial combinatorial computation. \S\ref{sec:existence} and \S\ref{sec:prf_main_thm} prove the required invertibility through a sequence of matrix reductions and a change of coordinates.

% \subsection*{Notation and conventions}
Throughout this paper, unless otherwise specified, \(g \ge 2\). The base field \(k\) is an algebraically closed field with \(\operatorname{char} k=0\).

\acknowledgement

JWJ was partially supported by the National Research Foundation of Korea(NRF) grant funded by the Korea government(MSIT) (RS-2024-00394189) and the POSCO Science Fellowship of POSCO TJ Park Foundation and the National Research Foundation of Korea (NRF) grant funded by the Korea government (MSIT) (No. RS-2026-25508638). KSL was partially supported by Basic Science Research Institute Fund, whose NRF grant number is RS -2021-NR060139, the National Research Foundation of Korea(NRF) grant funded by the Korea government(MSIT) (RS-2024-00394189) and the POSCO Science Fellowship of POSCO TJ Park Foundation. He also thanks Fordham University for hospitality during his visit. HBM is partially supported by AMS-Simons Research Enhancement Grant for PUI Faculty. He also thanks POSTECH and KIAS for their hospitality during his visit.

\medskip

\paragraph*{\textbf{Use of generative AI}} 
The authors conceived and developed all the mathematical arguments and wrote the proofs and the manuscript. They are solely responsible for the final text. After the mathematical arguments had been completed, ChatGPT 5.6 was used to assist with the preparation of the paper, including grammatical corrections, checking the logical consistency of the arguments, conducting literature searches, and constructing the explicit supporting examples in \cite{JLM26}.

\tableofcontents

%%%%%%%%%%%%%%%%%%%%%%%%%%%%%%%%%%%%%%%%%%%%%%%%
\section{Preliminaries}\label{sec:preliminary}

\subsection{Ulrich bundles}

We recall several standard characterizations and basic properties of Ulrich bundles.

\begin{definition}[{\cite[Proposition~2.1]{ES03}, \cite[Chapter~3]{CMRPL21}}]\label{def:Ulrich}
Let \(X \subset \PP V\) be an \(n\)-dimensional integral projective variety with a fixed embedding given by a very ample line bundle \(\cO_X(1)\). Let \(d\) be the degree of \(X\) and \(c = \codim \; X\).

A coherent sheaf \(\cE\) of rank \(r\) on \(X\) is called \textbf{Ulrich} if it satisfies any, hence all, of the following equivalent conditions:
\begin{enumerate}[label=\textup{(\roman*)}]
  \item \(\rH^\bullet(X,\cE(-j))=0\) for all \(1\le j\le n\).
  \item \(\rH^i(X,\cE(t))=0\) for all \(0<i<n\) and \(t\in\ZZ\), and \(\rH^0(X,\cE(-1))=0,\ h^0(X,\cE)=rd\).
  \item If \(\imath:X\hookrightarrow\PP V\) is the given embedding, then \(\imath_*\cE\) has a linear minimal locally free resolution on \(\PP V\):
  \[0\to \cO_{\PP V}(-c)^{\oplus b_{c}} \to\cdots\to \cO_{\PP V}(-1)^{\oplus b_1} \to \cO_{\PP V}^{\oplus b_0} \to\imath_*\cE\to0.\]
  \item For a general finite linear projection \(\pi:X\to\PP^n\), one has \(\pi_*\cE\cong\cO_{\PP^n}^{\oplus rd}\).
  \item The graded module
  \[\rH^0_*(X,\cE):=\bigoplus_{m\in\ZZ}\rH^0(X,\cE(m))\]
  is generated in degree \(0\) by \(rd\) elements and has a linear minimal free resolution over the homogeneous coordinate ring of \(\PP V\).
\end{enumerate}
\end{definition}

When \(X\) is smooth, the standard numerical consequences of the Ulrich condition give
\begin{equation}\label{eqn:Ulrichnumeric}
b_j=rd\binom{c}{j}, \quad\chi(X,\cE(t))=rd\binom{t+n}{n}, \quad \deg \cE = \frac{r((n+1)H + K_X)H^{n-1}}{2},
\end{equation}
where \(H\) is a hyperplane class. See \cite[Corollary~2.2]{ES03} and \cite[Proposition~3.2.5, Corollary~3.2.10]{CMRPL21}.

\begin{remark}
By \cite[\S2.1]{CMRPL21}, every positive-rank Ulrich sheaf on a smooth variety \(X\) is locally free. Thus, for a smooth \(X\), we use Ulrich sheaf and Ulrich bundle interchangeably.
\end{remark}

\begin{remark}
Ulrich bundles are semistable with respect to \(\cO_X(1)\), and the Jordan--H\"older factors of a strictly semistable Ulrich bundle are again Ulrich; see \cite[Proposition~3.3.14]{CMRPL21}.
\end{remark}

\begin{examples}
\begin{enumerate}[label=\textup{(\arabic*)}]
  \item On \(\PP^n\), the only Ulrich bundles are \(\cO_{\PP^n}^{\oplus r}\).
  \item On a rational normal curve \(\nu_d:\mathbb{P}^1 \cong C\hookrightarrow\mathbb{P}^d\), the line bundle on \(C\) associated to \(\cO_{\mathbb{P}^1}(d-1)\) is Ulrich \cite[\S4.1.1]{Cos17}.
  \item On a smooth quadric, the classical spinor bundles are Ulrich: there is one spinor bundle in odd dimension and two half-spinor bundles in even dimension; see \cite{Ott88}.
  \item Stable Ulrich bundles are known in several classes of Fano threefolds: in every rank \(r \ge 2\) on cubic threefolds \cite{LMS15}; in every rank \(r\ge2\) on del Pezzo threefolds \cite{CFK23}; and in every even rank on prime Fano threefolds of index one \cite{CFK24}.
  \item For a smooth intersection of two \(4\)-dimensional quadrics, Cho--Kim--Lee constructed Ulrich bundles of every rank \(r\ge2\) and described the moduli space of stable Ulrich bundles \cite{CKL21}.
  \item Suppose that \(\cE\) is an Ulrich bundle on \(X \subset \PP V\). Then for any smooth hyperplane section \(X_H := X \cap H \subset H\), \(\cE|_{X_H}\) is an Ulrich bundle on \(X_H\).
\end{enumerate}
\end{examples}

\begin{example}\label{ex:intquadrics}
Let \(X\subset\PP^{n+2}\) be a smooth intersection of two quadrics. Then \(d=4\). \textit{If} there is a rank \(r\) Ulrich bundle \(\cE\) on \(X\), it has a resolution
\[0\to \cO_{\PP^{n+2}}(-2)^{\oplus 4r} \to \cO_{\PP^{n+2}}(-1)^{\oplus 8r} \to \cO_{\PP^{n+2}}^{\oplus 4r} \to \imath_*\cE\to 0.\]
In particular, \(h^0(X,\cE)=4r\). Since \(\omega_X\cong\cO_X(1-n)\), the degree formula in \eqref{eqn:Ulrichnumeric} gives \(\mu(\cE)=4\), or equivalently, \(\mu(\cE^*(1))=0\).
\end{example}

\subsection{Intersection of quadrics and common isotropic subspaces}\label{ssec:isotropicspace}

Let \(V\) be a \(k\)-vector space of dimension \(n\). Let \(q_0\) and \(q_\infty\) be two nondegenerate quadratic forms on \(V\). They define two smooth quadrics \(Q_0\) and \(Q_\infty\) in \(\PP V \cong \PP^{n-1}\). Assume that they intersect transversely, so that \(X := Q_0 \cap Q_\infty\) is an \(n-3\)-dimensional smooth projective variety.

Fix a basis \(\{e_1,\ldots,e_{n}\}\) of \(V\), and let \(\{x_1,\ldots,x_{n}\}\) be the dual basis of \(V^*\). The homogeneous coordinate ring of \(X\) is \(P_X:=k[V^*]/(q_0,q_\infty)\), where \(q_0,q_\infty\in k[V^*]_2\) are linearly independent. After a change of basis, we may assume
\begin{equation}\label{eqn:quadforms}
q_0=\sum_{i=1}^{n}x_i^2,\qquad q_\infty=\sum_{i=1}^{n}\lambda_i x_i^2,
\end{equation}
where the scalars \(\lambda_i\in k^*\) are pairwise distinct; this entails no loss of generality for a smooth intersection of two quadrics \cite[Proposition~2.1]{Rei73}.

Fix a maximal isotropic subspace \(U\subset V\), that is, a maximal-dimensional subspace such that \(\PP U\subset X\). If \(\dim V\) is even (resp. odd), \(\dim U = (n-2)/2\) (resp. \((n-1)/2\)). Set
\begin{equation}\label{def:PU}
P_U:=k[V^*]/(U^\perp),\qquad U^\perp:=\ker(V^*\xrightarrow{\mathrm{res}}U^*).
\end{equation}
For later use, write \(\overline{(\cdot)}:V\to V/U\) for the quotient map, and let \(\sigma_i \in \rEnd(V)\) denote the reflection of the \(i\)-th coordinate \(\sigma_i(e_j) = (-1)^{\delta_{ij}}e_j\). When \(\dim V\) is odd, these reflections act transitively on the set of maximal isotropic subspaces in \(X\) \cite[Theorem~3.8]{Rei73}.

\subsection{Clifford representation and spinor bundles on a quadric}\label{ssec:spinor}

Let \(q\) be a quadratic form on \(V\). The \textbf{Clifford algebra} of \((V, q)\) is \(\Cl_q := \bigoplus_{d \ge 0}V^{\otimes d}/\langle v \otimes v - q(v)\cdot1\mid v \in V\rangle\). This is a \(\ZZ/2\ZZ\)-graded algebra. We denote its even (resp. odd) part by \(\Cl_{q, 0}\) (resp. \(\Cl_{q, 1}\)).

If the quadratic form \(q\) is nondegenerate, \(\Cl_q\) and \(\Cl_{q,0}\) are semisimple. Suppose that \(\dim V = 2g + 2\). Then \(\Cl_q\) is isomorphic to a matrix algebra of size \(2^{g+1}\), and \(\Cl_{q,0}\) is a product of two matrix algebras of size \(2^g\). If \(\dim V = 2g+1\), then \(\Cl_q\) is a product of two matrix algebras of size \(2^g\), and \(\Cl_{q,0}\) is a matrix algebra of size \(2^g\).

We need the following one-parameter family of Clifford algebras. Define the \(\ZZ\)-graded Clifford algebra \(\Cl\) associated with the pencil \(sq_0+tq_\infty\) by
\begin{equation}\label{eqn:defCl}
\begin{split}
  \Cl:=&\left(k[s,t]\otimes_k\bigoplus_{d\ge0}V^{\otimes d}\right)/\langle v\otimes v-sq_0(v)-tq_\infty(v)\ |\ v\in V\rangle,\quad\deg s=\deg t=2\\
  =& \; k \oplus V \oplus \left(\wedge^2 V \oplus \langle s, t\rangle\right) \oplus\left(\wedge^3 V \oplus V \otimes \langle s, t\rangle\right) \\
  &\quad\quad \oplus \left(\wedge^4 V \oplus \wedge^2 V \otimes \langle s, t\rangle \oplus \langle s^2, st, t^2\rangle\right) \oplus \dots.
\end{split}
\end{equation}
By Koszul duality \cite[Theorem~3.1]{ES25}, there are graded algebra isomorphisms
\[\Cl\cong\Ext^\bullet_{P_X}(k,k),\qquad P_X\cong\Ext^\bullet_{\Cl}(k,k).\]

We review a construction of the spinor sheaves on a possibly singular quadric, given by Addington \cite{Add11}. Let \(q\) be a quadratic form on \(V\) and let \(Q\) be the associated quadric in \(\PP V\).

Let \(W \subset V\) be an isotropic subspace with \(\codim\; W > 1\), and fix a basis \(\{w_1, w_2, \cdots, w_m\}\) of \(W\). Let \(I\) be the right \(\Cl_q\)-ideal generated by \(w_1 w_2 \cdots w_m\). The ideal \(I\) is independent of the choice of basis of \(W\). Since \(\Cl_q\) is \(\ZZ/2\ZZ\)-graded, write \(I = I_0 \oplus I_1\). Then \(\dim I_0 = \dim I_1\) as \(k\)-vector spaces. Assume further that \(q\) is nondegenerate and \(W\) is a maximal isotropic subspace. If \(\dim V = 2g+2\), then \(\dim W = g+1\), and \(I\) is the unique nontrivial irreducible representation of \(\Cl_q\), of dimension \(2^{g+1}\). Similarly, if \(\dim V = 2g+1\), then \(\dim W = g\), and \(I_0 \cong I_1\) is the unique irreducible representation of \(\Cl_{q,0}\).

Over \(\PP V\), the tautological map \(\cO_{\PP V}(-1) \to \cO_{\PP V}\otimes V\) induces two maps
\begin{equation}\label{eqn:tautologicalmaps}
I_0 \otimes \cO_{\PP V}(-1) \stackrel{\varphi}{\to} I_1 \otimes \cO_{\PP V}, \qquad I_1 \otimes \cO_{\PP V}(-1) \stackrel{\psi}{\to} I_0 \otimes \cO_{\PP V}.
\end{equation}
Indeed, the cokernels of \(\varphi, \psi\) are supported on \(Q\) and \(\varphi, \psi\) are injective because they give a matrix factorization of \(q\). Thus, we obtain exact sequences
\begin{equation}\label{eqn:spinorresolution}
0 \to I_0 \otimes \cO_{\PP V}(-1) \stackrel{\varphi}{\to} I_1 \otimes \cO_{\PP V} \to \tcS_q \to 0, \quad 0 \to I_1 \otimes \cO_{\PP V}(-1) \stackrel{\psi}{\to} I_0 \otimes \cO_{\PP V} \to \widetilde{\cT}_q \to 0.
\end{equation}
It is known that the supports of \(\tcS_q\) and \(\widetilde{\cT}_q\) are \(Q\), and they are rank-\(2^{\codim\; W - 2}\) vector bundles on \(Q \setminus (Q^{\mathrm{sing}} \cap \PP W)\) \cite[Proposition~2.1]{Add11}. The sheaves \(\tcS_q\) and \(\widetilde{\cT}_q\) are called \textbf{spinor sheaves}.

\begin{remark}\label{rmk:Ulrichquadric}
The existence of a locally free resolution in \eqref{eqn:spinorresolution} implies that \(\tcS_q\) and \(\widetilde{\cT}_q\) are Ulrich sheaves on \(Q\). See (iii) of Definition \ref{def:Ulrich}.
\end{remark}

We are primarily interested in the case where \(W\) is maximal. Suppose \(q\) is nondegenerate. If \(\dim V = 2g+2\), then \(\tcS_q\) and \(\widetilde{\cT}_q\) are two classical \textbf{spinor bundles}. If \(\dim V = 2g+1\), then \(\tcS_q \cong \widetilde{\cT}_q\) is the unique classical spinor bundle. Note that if \(q\) has corank one, then there is a unique singular point \(p\) on \(Q\), and \(\tcS_q\) and \(\widetilde{\cT}_q\) are vector bundles away from \(p\).

\subsection{Root stack}\label{ssec:rootstack}

Let \(Y\) be a scheme. The category of pairs \((\cL, s)\), where \(\cL\) is an invertible sheaf on \(Y\) and \(s\) is a section of \(\cL\), is equivalent to the category of morphisms \(Y \to [\AA^{1}/\GG_{m}]\). Such a morphism is represented by a diagram
\[\xymatrix{P \ar[r]^{\rho} \ar[d] & \AA^{1}\\ Y}\]
where \(\rho\) is a \(\GG_{m}\)-equivariant morphism. Then we obtain a \(\GG_{m}\)-invariant morphism
\[(\mathrm{id}, \rho) : P \to P \times \AA^{1} \to P \times^{\GG_{m}}\AA^{1},\]
which gives a morphism \(s : Y \to P \times^{\GG_{m}}\AA^{1}\) corresponding to a section of \(\cL\). The pair \((\cL, s)\) is called a \textbf{generalized effective Cartier divisor}.

The morphism \(\AA^{1} \to \AA^{1}\) given by \(t \mapsto t^{r}\) induces a morphism of stacks
\[\theta_{r} : [\AA^{1}/\GG_{m}] \to [\AA^{1}/\GG_{m}].\]

\begin{definition}\label{def:rootstack}
The \textbf{root stack} \(Y_{(\cL, s, r)}\) is the fiber product
\[\xymatrix{Y_{(\cL, s, r)} \ar[r] \ar[d] & [\AA^{1}/\GG_{m}] \ar[d]^{\theta_{r}}\\ Y \ar[r]^{(\cL, s)} & [\AA^{1}/\GG_{m}]}.\]
\end{definition}

The following example provides an explicit local description of a root stack as a quotient stack.

\begin{example}\label{ex:localrootstack}
Suppose \(Y = \spec R\), \(\cL = \cO_{Y}\), and \(s = f \in \Gamma(Y, \cL) = R\). Then
\[Y_{(\cL, s, r)} = [\spec(R[t]/(t^{r}-f))/\mu_{r}].\]
\end{example}

More generally, we obtain the following global result.

\begin{proposition}[\protect{\cite[Corollary~3.8]{Bor07}}]\label{prop:rootstackasaquotientstack}
Let \(\pi : Z \to Y\) be a uniform cyclic covering of degree \(r\). Let \(D\) (resp. \(E\)) be the branch divisor (resp. ramification divisor), and let \(s\) (resp. \(t\)) be the canonical section of \(\cL = \cO_{Y}(D)\) (resp. \(\cN = \cO_{Z}(E)\)). We have an isomorphism \(\cN^{\otimes r} \cong \pi^{*}\cL\). Then the morphism \(Z \to Y_{(\cL, s, r)}\) induces an isomorphism of \(Y\)-stacks
\[[Z/\mu_{r}] \cong Y_{(\cL, s, r)}.\]
\end{proposition}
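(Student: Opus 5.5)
The plan is to construct the comparison morphism explicitly and then check it is an isomorphism locally on \(Y\), using the local model of Example~\ref{ex:localrootstack}. Because \(\pi\) is a uniform cyclic cover of degree \(r\), we have \(\pi_*\cO_Z = \bigoplus_{i=0}^{r-1}\cL^{-i}\), on which \(\mu_r\) acts with the \(i\)-th summand of weight \(i\). The ramification divisor satisfies \(rE = \pi^*D\), so \(\cN^{\otimes r}\cong\pi^*\cL\) with \(t^r\mapsto \pi^*s\). The pair \((\cN,t)\) on \(Z\) is a generalized effective Cartier divisor whose \(r\)-th power is \(\pi^*(\cL,s)\). This is exactly a \(2\)-commutative square
\[Z\to[\AA^1/\GG_m]\xrightarrow{\theta_r}[\AA^1/\GG_m],\qquad Z\xrightarrow{\pi}Y\xrightarrow{(\cL,s)}[\AA^1/\GG_m],\]
so Definition~\ref{def:rootstack} gives a morphism \(Z\to Y_{(\cL,s,r)}\). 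Next I would check \(\mu_r\)-invariance. The group \(\mu_r\) acts on \(Z\) over \(Y\), and it acts on \(\cN\) and \(t\) through the character \(\zeta\mapsto\zeta\). This character is absorbed by the \(\GG_m\)-automorphisms of \((\cN,t)\), which are exactly the \(\mu_r\subset\GG_m\) preserving \(t^r\). So the map is \(\mu_r\)-equivariant for the trivial action on the target, and it descends to \([Z/\mu_r]\to Y_{(\cL,s,r)}\) over \(Y\).

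To show this is an isomorphism, I will work locally on \(Y\), since both sides are stacks over \(Y\) and formation of both commutes with flat base change. Take \(Y=\spec R\) small enough that \(\cL\) is trivial with \(s=f\). Then the cyclic cover is \(Z=\spec R[u]/(u^r-f)\), where \(u\) trivializes \(\cN\) and corresponds to \(t\), and \(\mu_r\) acts by \(u\mapsto\zeta u\). This is the standard structure of a uniform cyclic cover: \(\pi_*\cO_Z\) is generated by a local generator of \(\cL^{-1}\) with \(u^r=f\). Example~\ref{ex:localrootstack} identifies \(Y_{(\cO,f,r)}\) with \([\spec(R[t]/(t^r-f))/\mu_r]\). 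I then need to check that under this identification our morphism is the identity on \([\spec R[u]/(u^r-f)/\mu_r]\). That amounts to unwinding the fiber product: an object over \(T\) is a line bundle \(M\) with section \(\sigma\) and an isomorphism \((M^r,\sigma^r)\cong(\cO,f)\). Étale locally \(M\) is trivial, so \(\sigma\) is an \(r\)-th root of \(f\) up to \(\mu_r\). This matches the family \((\cN,t)\) pulled back from \(Z\).

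Finally the local isomorphisms must glue. They do, because the global morphism was constructed first and being an isomorphism is local on \(Y\). The main obstacle is the \(2\)-categorical bookkeeping in the local identification, namely checking that the isomorphism of Example~\ref{ex:localrootstack} is compatible with the morphism built from \((\cN,t)\) rather than with some twist of it. I would handle this by computing both functors of points on \(T\)-valued objects, reducing everything to pairs (trivial line bundle, \(r\)-th root of \(f\)), and comparing their automorphism groups, which are \(\mu_r\) on both sides.
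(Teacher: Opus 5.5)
The paper gives no proof of this statement. It is imported from \cite[Corollary~3.8]{Bor07} and used only to obtain Example~\ref{ex:P1r}. Your outline is a legitimate self-contained proof: build \(Z\to Y_{(\cL,s,r)}\) from \((\cN,t)\), descend it along the \(\mu_r\)-action, and check locally on \(Y\) that it is an isomorphism, using Example~\ref{ex:localrootstack}. One ingredient is misstated, however, and one step is stated loosely.

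\textbf{The misstated ingredient.} \(\pi_*\cO_Z\) is not \(\bigoplus_{i=0}^{r-1}\cL^{-i}\) for \(\cL=\cO_Y(D)\). Such an algebra structure needs a map \(\cL^{-r}\to\cO_Y\), i.e.\ a section of \(\cL^{\otimes r}\) rather than of \(\cL\). The correct description is as follows. A uniform cyclic cover branched along \(D\) comes from a line bundle \(\mathcal M\) on \(Y\) with \(\mathcal M^{\otimes r}\cong\cL\). Then \(\pi_*\cO_Z\cong\bigoplus_{i=0}^{r-1}\mathcal M^{-i}\), with multiplication induced by \(\mathcal M^{-r}\cong\cL^{-1}\xrightarrow{s}\cO_Y\); moreover \(\cN\cong\pi^*\mathcal M\) and \(t\) is the tautological section. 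In Example~\ref{ex:P1r}, for instance, \(\pi_*\cO_C\cong\cO\oplus\cO(-g-1)\) while \(\cL\cong\cO(2g+2)\). With this correction your local step is right. Shrink \(Y\) until \(\mathcal M\) is trivial, and trivialize \(\cL\) by the \(r\)-th power of that trivialization. Then \(Z=\mathrm{Spec}\,R[u]/(u^r-f)\), with \(u\) the local expression of \(t\).

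\textbf{The descent step.} State the equivariant structure precisely. Let \(\zeta\in\mu_r\) act by \(u\mapsto\zeta u\). The required 2-isomorphism \(\zeta^*(\cN,t)=(\cN,\zeta t)\to(\cN,t)\) is multiplication by \(\zeta^{-1}\). It respects \(\cN^{\otimes r}\cong\pi^*\cL\) because \(\zeta^r=1\), and the cocycle condition holds because \(\zeta\mapsto\zeta^{-1}\) is a homomorphism. These are isomorphisms between different objects, not automorphisms of \((\cN,t)\) as your wording suggests.

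\textbf{What the two routes buy.} Citing Borne costs the paper nothing. Your argument is self-contained except that it leans on Example~\ref{ex:localrootstack}, which the paper also states without proof. The compatibility check with that identification is exactly the bookkeeping you flag as the main obstacle.

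You can bypass it by proving directly that \(Z\to Y_{(\cL,s,r)}\) is a \(\mu_r\)-torsor. Take an object \((T\to Y,\ \mathcal P,\ \sigma,\ \psi\colon\mathcal P^{\otimes r}\cong\cL_T)\) with \(\psi(\sigma^{\otimes r})=s\). A lift to \(Z\) consists of a section \(\tau\) of \(\mathcal M_T\) with \(\tau^{\otimes r}\mapsto s\), together with an isomorphism \(\beta\colon\mathcal M_T\to\mathcal P\). The isomorphism \(\beta\) must send \(\tau\) to \(\sigma\) and be compatible with the two identifications of \(r\)-th powers with \(\cL_T\). Here \(\tau=\beta^{-1}(\sigma)\) is forced, and it then automatically satisfies \(\tau^{\otimes r}\mapsto s\). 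So the fiber product is the sheaf of \(r\)-th roots of a fixed isomorphism \(\mathcal M_T^{\otimes r}\cong\mathcal P^{\otimes r}\). This is a \(\mu_r\)-torsor compatible with the action on \(Z\), which gives \([Z/\mu_r]\cong Y_{(\cL,s,r)}\) with no local model needed.
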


For our purpose, the following concrete example is important.

\begin{example}\label{ex:P1r}
For distinct \(\lambda_1, \lambda_2, \dots, \lambda_m \in k\), let \(p_{\ell} := [-\lambda_\ell : 1] \in \PP^1\) and \(D := p_1 + \dots + p_m\). Denote by \(\PP^{1}_{m}:=\PP^{1}_{(\cL_{m},s,2)}\) the root stack associated with the divisor \(D\), where \(\cL_{m} = \cO(D) \cong \cO(m)\) and \(s \in \rH^{0}(\PP^1, \cL_m)\) is the canonical section satisfying \(\mathrm{div} \;s = D\).

When \(m\) is even, Proposition~\ref{prop:rootstackasaquotientstack} applies. Let \(C_{g}\) be a hyperelliptic curve of genus \(g\) with \(m = 2g+2\) ramification points \(q_{i}=\pi^{-1}([-\lambda_{i}:1])\), where \(\pi : C_{g} \to \PP^{1}\) is the quotient map by the hyperelliptic involution \(\iota\). The proposition gives
\[[C_{g}/\langle \iota\rangle ] \cong \PP^{1}_{2g+2}.\]
\end{example}

\begin{lemma}[\protect{\cite[Lemma~3.10]{Bor07}}]
Suppose that \(D\) and \(D'\) are two disjoint divisors, \(\cL = \cO_{Y}(D)\), \(\cL' = \cO_{Y}(D')\), and \(s\) and \(s'\) are the canonical sections of \(\cL\) and \(\cL'\), respectively. Then there is a canonical isomorphism
\[Y_{(\cL, s, r)}\times_{Y}Y_{(\cL', s', r)} \cong Y_{(\cL \otimes \cL', s \otimes s', r)}.\]
\end{lemma}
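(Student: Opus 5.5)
The plan is to reduce to the local situation, where the root stack has the explicit quotient description of Example~\ref{ex:localrootstack}, and then check that the natural comparison map is an isomorphism there. First I construct the comparison morphism globally, using the universal property of the root stack. An object of \(Y_{(\cL,s,r)}\times_Y Y_{(\cL',s',r)}\) over \(T \to Y\) consists of pairs \((\cM,t)\) and \((\cM',t')\) with isomorphisms \((\cM^{\otimes r},t^r)\cong(\cL,s)|_T\) and \((\cM'^{\otimes r},t'^r)\cong(\cL',s')|_T\). I send this to \((\cM\otimes\cM', t\otimes t')\) together with the induced isomorphism \(((\cM\otimes\cM')^{\otimes r},(t\otimes t')^r)\cong(\cL\otimes\cL',s\otimes s')|_T\). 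Equivalently, this is the map induced by the multiplication morphism \([\AA^1/\GG_m]\times[\AA^1/\GG_m]\to[\AA^1/\GG_m]\), which commutes with \(\theta_r\). This construction is canonical and compatible with base change on \(Y\), so whether it is an isomorphism can be checked Zariski locally on \(Y\).

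Since \(D\) and \(D'\) are disjoint, \(Y\) is covered by the open sets \(Y\setminus D\) and \(Y\setminus D'\). On \(Y\setminus D'\), the section \(s'\) is invertible. A root stack of a line bundle with a nowhere vanishing section is trivial: the map \(Y_{(\cL',s',r)}\to Y\) is an isomorphism there, because \(\cM'\) is forced to be trivialized by \(t'\), which has no automorphisms compatible with \(t'\). So on \(Y\setminus D'\) the left-hand side is \(Y_{(\cL,s,r)}\). On the right-hand side, \((\cL\otimes\cL',s\otimes s')\cong(\cL,s)\) via the trivialization of \(\cL'\) given by \(s'\). Here I still need to check that the comparison functor becomes this identification. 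In concrete terms, given \((\cM,t)\), the trivial pair \((\cO,1)\) is the unique object over \(s'\) up to the chosen isomorphism, and \(\cM\otimes\cO\cong\cM\).

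The set \(Y\setminus D\) is handled symmetrically. To be safe I can also verify the result on an affine chart \(\spec R\) with \(\cL,\cL'\) trivial and \(s=f\), \(s'=f'\), where \((f,f')=R\). Using Example~\ref{ex:localrootstack}, the comparison is
\[[\spec R[u,u']/(u^r-f,u'^r-f')/\mu_r\times\mu_r]\to[\spec R[w]/(w^r-ff')/\mu_r], \qquad w=uu'.\]
This is checked further on the two localizations where \(f\) or \(f'\) is a unit, and there both sides reduce to the same one-variable quotient.

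The main obstacle, modest as it is, is the claim that on the locus where \(s'\) is invertible the root-stack factor is canonically just \(Y\), and that the identification is compatible with the tensor-product functor. I handle this by writing down explicit quasi-inverse functors on that locus rather than arguing with coarse spaces. I then conclude by descent, since isomorphisms of stacks glue along an open cover.
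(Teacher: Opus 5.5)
Your proposal is correct. Note that the paper gives no argument of its own here; it simply quotes \cite[Lemma~3.10]{Bor07}, so there is no in-paper proof to compare against.

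Your route is the standard one, and it suffices.
\begin{itemize}
\item You build the comparison map by tensoring roots. Equivalently, you use the multiplication map on \([\AA^1/\GG_m]\), which commutes with \(\theta_r\).
\item Disjointness of \(D\) and \(D'\) gives the open cover \(\{Y\setminus D,\ Y\setminus D'\}\).
\item On \(Y\setminus D'\), the factor \(Y_{(\cL',s',r)}\) is trivial, and the twist by \((\cL',s')\) on the right-hand side is trivialized by \(s'\). Your check that the composite of the comparison map with this identification is naturally isomorphic to the first projection is exactly what is needed; the argument on \(Y\setminus D\) is symmetric.
\item Equivalences of stacks over \(Y\) can be checked on an open cover of \(Y\), which finishes the proof.
\end{itemize}
The triviality of the root stack over the locus where the section is invertible also holds independently of the characteristic, since \(\theta_r\) is an isomorphism over the open point \([\GG_m/\GG_m]\).

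The only loose spot is the optional affine check. When \(f'\) is a unit, \(\spec R[u']/(u'^r-f')\) is in general a nontrivial \(\mu_r\)-torsor, and \(ff'\ne f\). So the two quotient presentations agree only as stacks, not literally as ``the same one-variable quotient.'' Your functorial argument already covers this, so that paragraph can simply be dropped.
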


In particular, we obtain a sequence of stacks
\[\dots\rightarrow\PP^{1}_{r+1}\rightarrow\PP^{1}_{r}\rightarrow\PP^{1}_{r-1}\rightarrow\dots\rightarrow\PP^{1}_{0}=\PP^{1}.\]
Each morphism can be understood as a relative coarse moduli space map.

\subsection{Sheafifications of the graded Clifford algebra}\label{ssec:sheafification}

The \(\ZZ\)-graded Clifford algebra \(\Cl\) from \eqref{eqn:defCl} can be sheafified as an \(\cO_{\PP^1}\)-module
\[
\cC\ell := \cO_{\PP^1} \oplus V \otimes \cO_{\PP^1} \oplus \left(\wedge^2 V \otimes \cO_{\PP^1} \oplus \cO_{\PP^1}(1)\right) \oplus \left(\wedge^3 V \otimes \cO_{\PP^1} \oplus V \otimes \cO_{\PP^1}(1) \right) \oplus \dots,
\]
which is a locally free sheaf of Clifford algebras. This algebra is \(\mathfrak{B}_\sigma\) in \cite[\S3.1]{Kuz08}. By a version of Serre correspondence, a quotient of the category of graded \(\cC\ell\)-modules is equivalent to the category of \(\cC\ell_0\)-modules \cite[Proposition~3.7]{Kuz08}, where
\[\cC\ell_0 := \cO_{\PP^1} \oplus \wedge^2 V \otimes \cO_{\PP^1}(-1) \oplus \wedge^4 V \otimes \cO_{\PP^1}(-2) \oplus \dots.\]

Depending on the parity of \(V\), we may associate the following two variants of the sheafified Clifford algebra. Recall that we used \(q_0\) and \(q_\infty\) of \eqref{eqn:quadforms} in the definition of \(\Cl\) in \eqref{eqn:defCl}. In particular, the associated pencil has \(\dim V\) distinct points parametrizing corank-one singular quadrics.

Suppose that \(\dim V = 2g+2\). Let \(\pi : C \to \PP^1\) be the associated hyperelliptic curve of genus \(g\), ramified over the \(2g+2\) points parametrizing singular quadrics. There is a sheaf of even Clifford algebras \(\widetilde{\cC\ell}_0\) on \(C\) such that \(\pi_*\widetilde{\cC\ell}_0 = \cC\ell_0\), and \(\widetilde{\cC\ell}_0\) is also a sheaf of Azumaya algebras \cite[Proposition~3.13]{Kuz08}. Furthermore, the category of \(\cC\ell_0\)-modules \(\mathsf{Coh}(\PP^1, \cC\ell_0)\) and that of \(\widetilde{\cC\ell}_0\)-modules \( \mathsf{Coh}(C, \widetilde{\cC\ell}_0)\) are equivalent \cite[Corollary~3.13]{Kuz08}.

If \(\dim V = 2g + 1\), the root stack \(\PP_{2g+1}^1\) plays the role of \(C\). For the coarse moduli map \(\pi : \PP_{2g+1}^1 \to \PP^1\), there is a sheaf of Clifford algebras \(\widetilde{\cC\ell}_0\) on \(\PP_{2g+1}^1\) such that \(\pi_*\widetilde{\cC\ell}_0 = \cC\ell_0\). It has the same structural properties: the locally free sheaf \(\widetilde{\cC\ell}_0\) is a sheaf of Azumaya algebras \cite[Proposition~3.15]{Kuz08} over \(\PP_{2g+1}^1\), and \(\mathsf{Coh}(\PP^1, \cC\ell_0) \cong \mathsf{Coh}(\PP_{2g+1}^1, \widetilde{\cC\ell}_0)\) \cite[Corollary~3.16]{Kuz08}.

For notational simplicity, we use the following notation.

\begin{definition}\label{def:curlyC}
If \(\dim V = 2g+2\), or equivalently if \(\dim X = 2g-1\), we denote the hyperelliptic curve \(C\) of genus \(g\) above by \(\cC\). If \(\dim V = 2g+1\), or equivalently if \(\dim X = 2g-2\), \(\cC\) denotes the stack \(\PP_{2g+1}^1\).
\end{definition}

\subsection{Parabolic bundles}\label{ssec:parabolicbundle}

In this section, we recall the relation between vector bundles on a root stack and parabolic bundles on its coarse moduli space. For notational simplicity, we introduce a specialized version of the definition.

\begin{definition}\label{def:parabolicbundle}
Let \(C\) be a smooth projective curve and \(\bp = (p_1, p_2, \dots, p_n)\) be \(n\) distinct points of \(C\). A rank \(r\) \textbf{parabolic bundle} \(\cE := (\underline{\cE}, \{V_i\})\) consists of the following data:
\begin{enumerate}
  \item \(\underline{\cE}\) is a rank \(r\) vector bundle on \(C\);
  \item for each \(1\le i \le n\), \(V_i \subset \underline{\cE}|_{p_i}\) is a subspace.
\end{enumerate}
We set \(m_i := \dim V_i\). The sequence \(\bm = (m_1, m_2, \dots, m_n)\) is the combinatorial type of \(\cE\).
\end{definition}

The \textbf{parabolic degree} of \(\cE\) is defined as
\[\pardeg (\cE) := \deg \underline{\cE} + \frac{1}{2}\sum_{i=1}^n m_i.\]
Then we define the \textbf{parabolic slope} \(\parmu (\cE) := \pardeg (\cE)/r\).

\begin{remark}
More generally, one may choose parabolic weights \(\ba = (a_1, a_2, \dots, a_n)\) with \(0 < a_i < 1\). In this paper, \(a_i = 1/2\) for all \(i\).
\end{remark}

For any subbundle \(\underline{\cF} \subset \underline{\cE}\), we obtain an induced parabolic subbundle \(\cF = (\underline{\cF}, \{W_i\})\) by setting \(W_i := \underline{\cF}|_{p_i} \cap V_i\). We say a parabolic bundle \(\cE\) is \textbf{(semi-)stable} if for any proper parabolic subbundle \(\cF\),
\[\parmu(\cF) \;(\le) < \parmu(\cE).\]

Let \(\cC\) be a one-dimensional smooth root stack with \(n\) stacky points \(\bq = (q_1, q_2, \dots, q_n)\). Suppose that each \(q_i\) is a \(\ZZ/2\ZZ\)-point. Let \(\pi : \cC \to C\) be the coarse moduli map and \(\pi(q_i)=p_i\).

The category of vector bundles on \(\cC\) is equivalent to the category of parabolic bundles on \(C\) \cite[Theorem~3.13]{Bor07}. Set \(\cN := \cO_{\cC}(\sum q_i)\). Then \(\cN\) is a line bundle on \(\cC\) such that \(\cN^{\otimes 2} \cong \pi^*\cO_{C}(\sum p_i)\). For any vector bundle \(\widetilde{\cE}\) on \(\cC\), the corresponding parabolic bundle \(\cE = (\underline{\cE}, \{V_i\})\) on \(C\) is obtained as follows. First, set
\[\underline{\cE} := \pi_*\widetilde{\cE}, \quad \underline{\cE}' := \pi_*(\widetilde{\cE} \otimes \cN^{-1}).\]
Then \(\underline{\cE}'\) is a non-saturated locally free subsheaf of \(\underline{\cE}\). Set
\[V_i := \im (\underline{\cE}'|_{p_i} \to \underline{\cE}|_{p_i}).\]
\begin{remark}
More generally, the abelian category of coherent sheaves on \(\cC\) is equivalent to the category of parabolic sheaves on \(C\) \cite[Theorem~6.1]{BV12}. In particular, this exact equivalence induces an equivalence of bounded derived categories and identifies the corresponding cohomology groups.
\end{remark}

One may also define the degree of a bundle \(\widetilde{\cE}\) on \(\cC\) by setting \(\deg \pi^*\cL := \deg \cL\) and extending linearly; this degree may be rational. Then \(\deg \widetilde{\cE} = \pardeg \cE\) and \(\mu(\widetilde{\cE}) = \parmu(\cE)\) \cite[Theorem~4.3]{Bor07}. Moreover, the notions of (semi-)stability of \(\widetilde{\cE}\) and \(\cE\) coincide \cite[Remark~5.9]{Bor07}.

Here we discuss a particularly important example of this correspondence. Let \(\PP_{2g+1}^1\) be the one-dimensional root stack in Example \ref{ex:P1r}. Denote by \(\pi:\PP_{2g+1}^1\to\PP^1\) the coarse moduli map, and let \(q_1, q_2, \dots, q_{2g+1}\) be stacky points with \(\pi(q_i) = p_i\). Recall that \(\cN = \cO_{\PP_{2g+1}^1}(\sum q_i)\). Then \(\cN^{\otimes2}\cong \pi^*\cO_{\PP^1}(\sum p_i) \cong \pi^*\cO_{\PP^1}(2g+1)\), and the tautological section \(y\) of \(\cN\) satisfies
\[y^2=f(s,t):=\prod_{i=1}^{2g+1}(s+\lbd_i t).\]
Set
\[\cO_{\PP_{2g+1}^1}(1):=\cN\otimes \pi^*\cO_{\PP^1}(-g).\]
Then \(\cO_{\PP_{2g+1}^1}(1)^{\otimes2}\cong \pi^*\cO_{\PP^1}(1)\), and the associated section ring is
\[R_{\PP_{2g+1}^1}:=\bigoplus_{n\ge0}\rH^0\!\left(\PP_{2g+1}^1,\cO_{\PP_{2g+1}^1}(n)\right)\cong k[s,t,y]/(y^2-f(s,t)),\]
where \(\deg s=\deg t=2\), and \(\deg y=2g+1\). It is a normal domain, with \(f(s,t)\) square-free. It is also finite and free, hence finite flat, over \(k[s,t]\); moreover, the extension is integral.

On the other hand, recall that we have an associated \(\ZZ\)-graded Clifford algebra \(\Cl\) constructed in \S\ref{ssec:spinor}. The pseudoscalar \(e_1\dots e_{2g+1} \in \Cl\) lies in the center of \(\Cl\) and satisfies
\[(e_1\dots e_{2g+1})^2 = (-1)^g f(s,t).\]
Fix the identification \(y=\sqrt{-1}^{\,g}e_1\dots e_{2g+1}\). Then \(\Cl\) is naturally an \(R_{\PP_{2g+1}^1}\)-algebra.

For finitely generated graded \(R_{\PP_{2g+1}^1}\)-modules, we use the correspondence of \cite[Lemma~3.2]{HO20} between reflexive graded modules and vector bundles on the associated orbifold curve \(\PP_{2g+1}^1\). Thus, we need to check the reflexivity.

\begin{lemma}\label{lem:reflexive_R_module}
Let \(M\) be a finitely generated graded right \(\Cl\)-module which is free over \(k[s,t]\). Then \(M\) is reflexive as a graded \(R_{\PP_{2g+1}^1}\)-module.
\end{lemma}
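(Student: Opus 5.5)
We argue via depth. The ring \(R := R_{\PP_{2g+1}^1} = k[s,t,y]/(y^2-f)\) is a two-dimensional graded normal domain, and it is finite and free over \(A := k[s,t]\) with basis \(\{1, y\}\). For a finitely generated module over a normal noetherian domain of dimension two, reflexivity is equivalent to being torsion-free and satisfying Serre's condition \((S_2)\). In the graded local setting at the irrelevant ideal \(\mathfrak m_R\), this comes down to \(\depth_{\mathfrak m_R} M = 2\), i.e.\ \(M\) being maximal Cohen--Macaulay. So the plan is to show \(M\) is a graded maximal Cohen--Macaulay \(R\)-module and then invoke the standard fact that MCM modules over a normal domain of dimension two are reflexive.

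First, \(M\) is an \(R\)-module because \(R \subset \Cl\) is central: the pseudoscalar \(y\) and \(s,t\) lie in the center of \(\Cl\). Hence right \(\Cl\)-multiplication restricts to an \(R\)-action. It is finitely generated over \(R\), since it is already finitely generated over \(A \subset R\); indeed \(\Cl\) is finite over \(A\), so a finitely generated \(\Cl\)-module is finite over \(A\). Next, depth is insensitive to finite extensions. Because \(R\) is finite over \(A\) and \(\sqrt{\mathfrak m_A R} = \mathfrak m_R\), we get
\[\depth_{\mathfrak m_R} M = \depth_{\mathfrak m_A} M = 2,\]
the last equality because \(M\) is free over the regular ring \(A\) of dimension two; concretely, \(s,t\) is an \(M\)-regular sequence lying in \(\mathfrak m_R\). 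Therefore \(M\) is graded MCM over \(R\).

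To conclude, we would apply the depth criterion directly. Torsion-freeness holds because any \(r \in R\setminus 0\) has nonzero norm \(N(r) = r\bar r \in A\), which acts injectively on the \(A\)-free module \(M\); hence \(r\) acts injectively as well. For \((S_2)\), localize at height-one primes \(\mathfrak p\) of \(R\). The localization \(M_{\mathfrak p}\) is torsion-free over the DVR \(R_{\mathfrak p}\), hence free, and at \(\mathfrak m_R\) the depth is \(2\). Then the natural map \(M \to M^{**}\) is an isomorphism: it is an isomorphism in codimension one, and its cokernel, supported at \(\mathfrak m_R\), vanishes because \(\depth M \ge 2\), via the local cohomology sequence \(H^0_{\mathfrak m}(\mathrm{coker}) \hookrightarrow H^1_{\mathfrak m}(M) = 0\). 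Alternatively, and more simply, we can cite that over \(A\) a free module is reflexive, \(\rHom_A(R, A) \cong R\) up to shift since \(R\) is Gorenstein, and \(\rHom_R(M,R) \cong \rHom_A(M,A)\) by duality for the finite flat extension. This identifies the \(R\)-double dual with the \(A\)-double dual, which equals \(M\).

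The main obstacle is bookkeeping rather than substance: checking that the centrality of \(y\) makes the \(R\)-structure well defined, and correctly justifying either the depth criterion or the duality isomorphism \(\rHom_R(M,R)\cong\rHom_A(M,\omega_{R/A})\) in the graded setting. I would take the depth route, since it only needs that \(s,t\) is a regular sequence on \(M\).
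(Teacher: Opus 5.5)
Your proposal is correct and follows essentially the paper's route: torsion-freeness via the norm \(r\bar r\in k[s,t]\), depth inherited from freeness over \(k[s,t]\), and the criterion that a torsion-free \((S_2)\) module over a normal domain is reflexive. The only difference is where depth is checked. The paper verifies \(\depth M_{\mathfrak p}\ge\dim R_{\mathfrak p}\) at every prime \(\mathfrak p\), using a regular system of parameters of \(k[s,t]_{\mathfrak q}\) with \(\mathfrak q=\mathfrak p\cap k[s,t]\). You check only height-one primes and \(\mathfrak m_R\), then use the grading: the cokernel of \(M\to M^{**}\) is graded, hence supported at \(\mathfrak m_R\), and \(H^0_{\mathfrak m}(M^{**})=0\). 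This is equally valid.
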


\begin{proof}
It suffices to prove that \(M\) is torsion-free and satisfies \((S_2)\) over \(R_{\PP_{2g+1}^1}\) \cite[Tag~0AVT]{SP}.

Let \(0\ne a+by\in R_{\PP_{2g+1}^1}\), with \(a,b\in k[s,t]\). Its conjugate \(a-by\) is nonzero, and the norm \((a+by)(a-by) = a^2 - b^2f(s,t)\) is also nonzero. If \(m(a+by)=0\), then
\[0=m(a+by)(a-by)=m(a^2-b^2f(s,t)).\]
Since \(R_{\PP_{2g+1}^1}\) is a domain and \(M\) is free over \(k[s,t]\), this gives \(m=0\). Hence \(M\) is torsion-free over \(R_{\PP_{2g+1}^1}\).

Let \(\mathfrak p\subset R_{\PP_{2g+1}^1}\) be a prime ideal, and set \(\mathfrak q:=\mathfrak p\cap k[s,t]\). Since \(M_{\mathfrak q}\) is free over \(k[s,t]_{\mathfrak q}\), a regular system of parameters of \(k[s,t]_{\mathfrak q}\) remains \(M_{\mathfrak p}\)-regular after localization. Hence
\[
\depth_{R_{\PP_{2g+1}^1,\mathfrak p}}M_{\mathfrak p}\ge\depth_{k[s,t]_{\mathfrak q}}M_{\mathfrak q}=\dim k[s,t]_{\mathfrak q}=\dim R_{\PP_{2g+1}^1,\mathfrak p}.
\]
Thus \(M_{\mathfrak p}\) is maximal Cohen--Macaulay over \(R_{\PP_{2g+1}^1,\mathfrak p}\), hence \(M\) satisfies \((S_2)\). Therefore \(M\) is reflexive over \(R_{\PP_{2g+1}^1}\). Since the bidual map is homogeneous, \(M\) is reflexive as a graded \(R_{\PP_{2g+1}^1}\)-module.
\end{proof}

Let \(M\) be as in Lemma~\ref{lem:reflexive_R_module}, and let \(\widetilde{\cM}\) be the vector bundle on \(\PP_{2g+1}^1\) associated with \(M\), as in \cite[Lemma~3.2]{HO20}. Then \(\widetilde{\cM}\) gives a parabolic bundle \(\cM := (\underline{\cM},\{U_\ell\}_{\ell=1}^{2g+1})\) on \(\PP^1\), where
\[
\underline{\cM}=\pi_*\widetilde{\cM},\qquad U_\ell:=\im\Bigl((\pi_*y)|_{p_\ell}:(\pi_*\widetilde{\cM}(-\sum q_i))|_{p_\ell}\to(\pi_*\widetilde{\cM})|_{p_\ell}\Bigr)\subset \underline{\cM}|_{p_\ell}.
\]

As graded \(k[s,t]\)-modules, \(M\) and \(M(-\sum q_i)\) sheafify to \(\widetilde{M}^{\mathrm{ev}}\) and \(\widetilde{M}^{\mathrm{odd}}\) on \(\PP^1\), since \(\deg s=\deg t=2\). Indeed,
\[
\underline{\cM}=\widetilde{M}^{\mathrm{ev}},\quad(\pi_*\widetilde{\cM}(-\sum q_i))|_{p_\ell}\cong M^{\mathrm{odd}}|_{p_\ell},\quad\underline{\cM}|_{p_\ell}\cong M^{\mathrm{ev}}|_{p_\ell},
\]
where we use the same notation \((\cdot)|_p\) for graded \(k[s,t]\)-modules, meaning localization at the corresponding homogeneous prime and tensoring with its residue field.

The action of \(y\) on \(M\) is a nonzero scalar multiple of \(e_\ell\prod_{i\ne\ell}e_i\). Locally, the factor \(\prod_{i\ne\ell}e_i\) is invertible on \(M|_{p_\ell}\), since \(e_i^2|_{p_\ell}=(s+\lbd_i t)|_{p_\ell}\neq 0\) for each \(i\ne\ell\). Therefore, we obtain the following simpler description of its parabolic structure.

\begin{proposition}\label{prop:Cliffordmodule}
Let \(\widetilde{\cM}\) be a vector bundle on \(\PP_{2g+1}^1\), induced by a finitely generated graded right \(\Cl\)-module \(M\). Then the corresponding parabolic bundle is \(\cM = (\underline{\cM}, \{U_\ell\})\), where
\[U_\ell=\im\bigl((-) \cdot e_\ell :M^{\mathrm{odd}}|_{p_\ell}\to M^{\mathrm{ev}}|_{p_\ell}\bigr)\subset \underline{\cM}|_{p_\ell}.\]
\end{proposition}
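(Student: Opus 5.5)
The plan is to start from the description of the parabolic structure given just before the statement and reduce the action of \(y\) at \(p_\ell\) to right multiplication by \(e_\ell\). By the Borne correspondence and \cite[Lemma~3.2]{HO20}, the bundle \(\widetilde{\cM}\) corresponds to a parabolic bundle \((\underline{\cM},\{U_\ell\})\). Here \(\underline{\cM}=\widetilde{M}^{\mathrm{ev}}\), and \(U_\ell\) is the image of multiplication by the tautological section \(y\) of \(\cN\), taken from the fiber of \(\pi_*\widetilde{\cM}(-\sum q_i)\) to the fiber of \(\pi_*\widetilde{\cM}\) at \(p_\ell\). Under the identifications \((\pi_*\widetilde{\cM}(-\sum q_i))|_{p_\ell}\cong M^{\mathrm{odd}}|_{p_\ell}\) and \(\underline{\cM}|_{p_\ell}\cong M^{\mathrm{ev}}|_{p_\ell}\), which come from \(\deg y=2g+1\) being odd and \(\deg s=\deg t=2\), this map is the map \(M^{\mathrm{odd}}|_{p_\ell}\to M^{\mathrm{ev}}|_{p_\ell}\) induced by the module action of \(y\).

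Next we rewrite the action of \(y\). By definition \(y=\sqrt{-1}^{\,g}e_1\cdots e_{2g+1}\), and \(y\) is central. Up to a sign, we move \(e_\ell\) to the front, so that \(y=\pm\sqrt{-1}^{\,g}\, e_\ell\, P_\ell\) with \(P_\ell:=\prod_{i\ne\ell}e_i\), taken in increasing order. Then right multiplication by \(y\) on \(M\) is \(m\mapsto \pm\sqrt{-1}^{\,g}\,(m e_\ell)P_\ell\). The map \(m\mapsto me_\ell\) sends \(M^{\mathrm{odd}}\) to \(M^{\mathrm{ev}}\) in the \(\ZZ\)-grading with a shift of one. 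Right multiplication by \(P_\ell\) has degree \(2g\), which is even, so it preserves parity and sends \(M^{\mathrm{ev}}\) to \(M^{\mathrm{ev}}\) after the appropriate twist by \(\cO_{\PP^1}(g)\). That twist is harmless on a fiber.

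The main point is that \(P_\ell\) acts invertibly on the fiber \(M^{\mathrm{ev}}|_{p_\ell}\). In \(\Cl\) we have \(e_i^2=s+\lbd_i t\), from the pencil \(sq_0+tq_\infty\) with diagonal forms \eqref{eqn:quadforms}. Hence \(P_\ell\cdot P_\ell^{\mathrm{rev}}=\prod_{i\ne\ell}(s+\lbd_i t)\), where \(P_\ell^{\mathrm{rev}}\) is the reversed product. This product is a homogeneous element of \(k[s,t]\) that does not vanish at \(p_\ell=[-\lbd_\ell:1]\), because the \(\lbd_i\) are distinct. After localizing at the homogeneous prime of \(p_\ell\) and passing to the residue field, it becomes a nonzero scalar, so \(P_\ell\) is invertible on \(M|_{p_\ell}\) with inverse a scalar multiple of \(P_\ell^{\mathrm{rev}}\).

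It remains to check that these fiberwise operations are well defined. Right multiplication by elements of \(\Cl\) is \(k[s,t]\)-linear, since \(s,t\) are central, so it descends to the fibers \(M|_{p_\ell}=M_{(p_\ell)}\otimes\kappa(p_\ell)\). Therefore the image of \(y\) equals the image of \(e_\ell\) composed with an automorphism \(\pm\sqrt{-1}^{\,g}P_\ell\) of \(M^{\mathrm{ev}}|_{p_\ell}\), up to a twist. This gives \(U_\ell=\im(e_\ell)\) only up to that automorphism.

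This is the expected obstacle: the automorphism \(P_\ell\) need not preserve \(\im(e_\ell)\), so we must show \(\im(e_\ell\cdot P_\ell)=\im(e_\ell)\) literally. We argue by moving \(P_\ell\) to the other side using centrality. Since \(y\) is central, \(y=\pm\sqrt{-1}^{\,g}P_\ell' e_\ell\) for a suitable reordering \(P_\ell'\) of the other generators. Then \(m\cdot y=\pm\sqrt{-1}^{\,g}(mP_\ell')e_\ell\). Here \(m\mapsto mP_\ell'\) is an automorphism of \(M^{\mathrm{odd}}|_{p_\ell}\) by the same invertibility argument. So the image of \(y\) equals the image of \(e_\ell\) restricted to \(M^{\mathrm{odd}}|_{p_\ell}\), which is exactly the description of \(U_\ell\) in the statement.
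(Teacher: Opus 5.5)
Your proof is correct and is essentially the paper's argument: write \(y\) as a nonzero scalar times \(e_\ell\) and the complementary product \(P_\ell=\prod_{i\ne\ell}e_i\), which acts invertibly on the fiber at \(p_\ell\) because each \(e_i^2=s+\lbd_i t\) is nonzero there. The ordering issue you raise does not actually arise, and resolving it needs no appeal to centrality: \(e_\ell\) anticommutes with each of the \(2g\) other generators (the forms are diagonal), hence commutes with \(P_\ell\), so \(m\cdot y=\pm\sqrt{-1}^{\,g}(mP_\ell)e_\ell\) directly.
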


%%%%%%%%%%%%%%%%%%%%%%%%%%%%%%%%%%
\section{Relative spinor bundle}\label{sec:spinorbundle}

In this section, we introduce relative spinor bundles that will play a key role in the construction of Ulrich bundles.

Later, we need to study an intersection of two quadrics together with a hyperplane section. Thus, we will use the following notation from now on. Let \(V = V^{2g+2}\) be a \(k\)-vector space of dimension \(2g+2\), with a fixed basis \(\{e_1, e_2, \dots, e_{2g+2}\}\) and its dual basis \(\{x_1, x_2, \dots, x_{2g+2}\}\). We fix two nondegenerate quadratic forms
\begin{equation}\label{eqn:quadraticodd}
q_0 = \sum_{i=1}^{2g+2}x_i^2, \qquad q_\infty = \sum_{i=1}^{2g+2}\lambda_i x_i^2
\end{equation}
with \(2g+2\) distinct scalars \(\lambda_i \in k^*\). Let \(Q_0 := V(q_0)\), \(Q_\infty := V(q_\infty)\) be two quadrics. Finally, let \(X = X^{2g-1} := Q_0 \cap Q_\infty \subset \PP V\).

Consider the restrictions of the above objects to a hyperplane \(H := V(x_{2g+2})\), and denote each restriction by the subscript \(H\). Thus \(V_H := \mathrm{Span}\{e_1, e_2, \dots, e_{2g+1}\}\),
\[q_{0,H}:= \sum_{i=1}^{2g+1}x_i^2, \quad q_{\infty,H}:= \sum_{i=1}^{2g+1}\lambda_i x_i^2, \quad Q_{0,H} := V(q_{0,H}), \quad Q_{\infty,H} := V(q_{\infty,H}),\]
and finally, \(X_H := X^{2g-2}:=Q_{0, H} \cap Q_{\infty, H} \subset\PP V_H\). Then \(\hX\) is a smooth intersection of two odd-dimensional quadrics. Denote the inclusion by \(j : X_H \hookrightarrow X\).

\begin{remark}
We will use the notation \(X_H\) when we have to use both \(X\) and \(X_H\) in the same statement or proof. If there is no chance of confusion, we will use \(X\) for both even and odd-dimensional intersections.
\end{remark}

\begin{remark}
One may observe that in the construction below, \(\cS\) depends on a choice of a common isotropic subspace \(U\), hence \(\cS_U\) is a more appropriate notation. But replacing \(U\) by another isotropic subspace \(U'\) gives a bundle \(\cS_{U'}\) isomorphic to \(\cS_U \otimes p_{\cC}^*\cL\) for some \(\cL \in \Pic^0(\cC)\), where \(p_{\cC} : \cC \times X \to \cC\). This does not affect the result we will prove and use later, because tensoring by \(\cL \in \Pic(\cC)\) is an autoequivalence of \(\rD^b(\cC)\). Thus, we will suppress the subscript \(U\) and write \(\cS\), and restore the notation \(\cS_U\) only when the choice of \(U\) matters.
\end{remark}

We review a construction of the relative spinor bundle \(\cS\) for an intersection of two even-dimensional quadrics \(X\), given by \cite{BO95}. Let \(C\to\PP^1\) be the double cover ramified over \(p_i=[-\lambda_i:1]\), \(1\le i\le 2g+2\). Then \(C\) is a hyperelliptic curve of genus \(g\). Let \(\tau\) be its hyperelliptic involution. Let \(\OGr_k \to \PP^1\) be the relative space of isotropic subspaces of vector-space dimension \(k\) in \(V\). Here \(\PP^1\) parametrizes the pencil of quadrics \(sq_0 + t q_\infty\). Then \(\OGr_1 \to \PP^1\) is the universal quadric, and \(\OGr_{g+1} \to \PP^1\) is the space of maximal isotropic subspaces. Since \(\OGr_1\) is the universal quadric, we also use another notation \(\cQ_{\PP^1} := \OGr_1\). Over an unramified point, the fiber of \(\OGr_{g+1} \to \PP^1\) has two connected components, each an orthogonal Grassmannian \(\OGr(g+1, V)\). Over a branch point of \(C \to \PP^1\), the corresponding quadric has corank one and the reduced fiber of \(\OGr_{g+1} \to \PP^1\) is smooth and irreducible. The Stein factorization gives a morphism \(h_{g+1} : \OGr_{g+1} \to C\) with connected fibers. Denote the fiber product \(\cQ_{\PP^1} \times_{\PP^1} C\) by \(\cQ_C\).

Let \(\OGr_{1, g+1} \subset \cQ_C \times_C \OGr_{g+1}\) be the relative incidence variety, with projections
\begin{equation}\label{eqn:projections}
\xymatrix{ & \OGr_{1,g+1}\ar[ld]_{f_1} \ar[rd]^{f_{g+1}}\\ \cQ_C \ar[rd]_{h_1} && \OGr_{g+1} \ar[ld]^{h_{g+1}}\\ &C}.
\end{equation}

We take a relative hyperplane section \(\cQ_{C,H}:=\cQ_C\cap(C\times\PP V_H)\). Then we have the following inclusion morphisms:
\begin{equation}\label{eqn:inclusions}
\xymatrix{
  C \times X_H \ar[d]_{\imath_{X_H}} \ar[r]^{\mathrm{id}_C \times j} & C \times X \ar[d]^{\imath_X} \\
  \cQ_{C,H} \ar[r]^k \ar[d]_{\imath_{\cQ_H}} & \cQ_C \ar[d]^{\imath_{\cQ}}\\
  C \times \PP V_H \ar[r]^{\mathrm{id}_C \times \ell} & C \times \PP V.
}
\end{equation}

For any maximal common isotropic subspace \(U \subset V\) of dimension \(g\),
\[D_U:=\{(c,W)\in\OGr_{g+1}:W\cap U\ne0\}\]
is a relative Schubert divisor on \(\OGr_{g+1}\), and fiberwise it defines the ample generator of \(\Pic(\OGr(g+1, V))\).

\begin{definition}\label{def:spinorbundleeven}
The \textbf{universal spinor sheaf} associated with \(U\) is the coherent sheaf \(\tcS\) on the universal quadric \(\cQ_C\) obtained by
\[f_{1*}f_{g+1}^*\cO _{\OGr_{g+1}}(D_U).\]
The \textbf{relative spinor bundle} is \(\cS := \imath_X^{*} \tcS\) over \(C \times X\).
\end{definition}

\begin{remark}
The sheaf \(\tcS\) is not locally free at the singular points of singular quadrics as discussed in \S\ref{ssec:spinor}. However, the smooth intersection of quadrics \(C \times X\) does not meet any of these points. Thus, \(\cS\) is locally free.
\end{remark}

\begin{remark}\label{rem:alternativedefinitionspinor}
Several constructions of a spinor bundle on a fixed quadric appear in the literature; see, for instance, \cite{Ott88} and \cite{Add11}. The construction above is a relative version of the one in \cite{Add11}. Note also that for each fiber, our spinor bundle is the dual of the spinor bundle in \cite{Ott88}.
\end{remark}

To obtain a \(\tau\)-linearized spinor bundle on \(C \times X_H\), choose \(U\) contained in \(V_H \subset V\). There are finitely many such choices \cite[Theorem~3.8]{Rei73}.

Let \(\sigma_{2g+2}: V \to V\) be the reflection
\[\sigma_{2g+2}(e_i)=e_i\quad(1\le i\le2g+1),\qquad\sigma_{2g+2}(e_{2g+2})=-e_{2g+2}.\]
Then \(\sigma_{2g+2}\) induces an involution on each quadric \(V(s q_0 + t q_\infty)\) and hence an action on \(\OGr_k\). It acts nontrivially on \(C\) as the hyperelliptic involution \(\tau\), while fixing \(\PP V_H \subset \PP V\) pointwise. Thus, on \(C\times\PP V_H\), the \(\sigma_{2g+2}\)-action is \(\tau\times\operatorname{id}_{\PP V_H}\).

\begin{proposition} \label{prop:restricted_spinor_linearized}
Let \(k : \cQ_{C,H} \hookrightarrow \cQ_C\) be the inclusion. We set \(\tcS' := k^*\tcS\). Then \(\tcS'\) is \(\tau\)-linearized. Moreover, if \(q \in C\) is the ramification point over \(p_{2g+2}\), we may choose the linearization so that the \(\tau\)-action on \(\widetilde{\cS}'|_{q\times Q_{p_{2g+2},H}}\) is trivial, where \(Q_{p_{2g+2},H}:=Q_{p_{2g+2}}\cap\PP V_H.\)
\end{proposition}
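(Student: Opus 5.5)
The plan is to obtain the linearization functorially from the reflection \(\sigma := \sigma_{2g+2}\). This reflection acts on every object in the diagram \eqref{eqn:projections}, and the sheaf \(\tcS\) is built from these objects. Because \(\sigma\) fixes \(\PP V_H\) pointwise and \(U \subset V_H\), we have \(\sigma(U) = U\). Hence \(\sigma\) preserves the relative Schubert divisor, \(\sigma(D_U) = D_U\): a point \((c,W)\) lies in \(D_U\) exactly when \(\sigma(W) \cap U \neq 0\). It follows that \(\sigma\) acts on \(\OGr_{g+1}\), on \(\cQ_C\), and on \(\OGr_{1,g+1}\), covering \(\tau\) on \(C\), and that \(f_1, f_{g+1}, h_1, h_{g+1}\) are \(\sigma\)-equivariant.

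The canonical section of \(\cO_{\OGr_{g+1}}(D_U)\) is \(\sigma\)-invariant, so \(\cO_{\OGr_{g+1}}(D_U)\) carries a natural \(\sigma\)-linearization: the one for which this section is invariant. Pulling back along \(f_{g+1}\) and pushing forward along \(f_1\), both equivariant, gives a \(\sigma\)-linearization on \(\tcS = f_{1*}f_{g+1}^*\cO(D_U)\). The cocycle condition holds automatically because \(\sigma^2 = \mathrm{id}\) and the linearization of \(\cO(D_U)\) is involutive; the invariant section pins it down. Restricting along \(k\) to \(\cQ_{C,H} \subset C \times \PP V_H\), where \(\sigma\) acts as \(\tau \times \mathrm{id}\), turns this into a \(\tau\)-linearization of \(\tcS' = k^*\tcS\).

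For the second claim, note that a linearization of an involution is determined only up to a global sign \(\pm 1\), since \(\cQ_{C,H}\) is connected. Over the fixed point \(q\), the involution acts on the fiber \(\tcS'|_{q\times Q_{p_{2g+2},H}}\) by a bundle automorphism squaring to the identity. The key steps are therefore the following.
\begin{enumerate}
  \item Show that this automorphism is a scalar \(\pm 1\). The fiber over \(q\) should be a simple bundle, since it is a spinor sheaf of a corank-one quadric restricted to the smooth quadric \(Q_{p_{2g+2},H} \cong\) a nondegenerate \((2g-1)\)-dimensional quadric, where it becomes the spinor bundle. A scalar is also forced directly on the open set where \(\sigma\) acts trivially on the base.
  \item Conclude that the automorphism is \(\pm 1\), since it squares to the identity.
  \item If the sign is \(-1\), twist the linearization by the character \(-1\).
\end{enumerate}

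The main obstacle is step (1), namely justifying that the induced automorphism of \(\tcS'|_{q\times Q_{p_{2g+2},H}}\) is scalar. One could try to identify the action concretely. At \(q\), the corank-one quadric \(Q_{p_{2g+2}}\) has kernel \(e_{2g+2}\), and its maximal isotropic subspaces are of the form \(W = W' \oplus \langle e_{2g+2}\rangle\). The reflection \(\sigma\) fixes every such \(W\). Hence \(\sigma\) acts trivially on \(h_{g+1}^{-1}(q)\), and via \(f_1, f_{g+1}\) it acts trivially on the relevant part of the incidence variety over \(q \times Q_{p_{2g+2},H}\). The linearized line bundle \(\cO(D_U)\) restricted to this fixed locus is acted on by a locally constant character, and this character equals \(1\) because the canonical section is invariant and nonzero generically on each component. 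The pushforward action is therefore trivial, which proves the claim even without the sign adjustment. In the write-up, simplicity of the spinor bundle would serve only as a fallback argument.
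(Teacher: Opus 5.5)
Your proof is correct. The first part is the paper's argument: \(\sigma_{2g+2}\)-invariance of \(D_U\), the linearization of \(\cO(D_U)\) singled out by the invariant canonical section, pull--push along \(f_{g+1}\) and \(f_1\), and restriction to \(\cQ_{C,H}\), where \(\sigma_{2g+2}\) acts as \(\tau\times\mathrm{id}\). Your steps (1)--(3) are also the paper's argument. The paper identifies \(\tcS\) over \(q\times\PP V\) with the pullback of the spinor bundle of the smooth quadric in \(\PP V_H\) (citing Bondal--Orlov), uses simplicity to see that \(\tau\) acts by a character, and twists by the sign if necessary.

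The argument you say you would actually write up is a different and sharper route. Every maximal isotropic subspace of the corank-one quadric at \(p_{2g+2}\) contains the kernel \(e_{2g+2}\). Hence the reflection fixes pointwise the fiber \(Z\) of \(\OGr_{g+1}\to C\) over \(q\), and also all of \(f_1^{-1}(q\times Q_{p_{2g+2},H})\). It therefore acts on \(\cO(D_U)|_Z\) by a constant sign. This sign is \(+1\) because the invariant canonical section does not vanish identically on \(Z\): a general \(g\)-dimensional isotropic \(W'\subset V_H\) meets \(U\) trivially. So the natural linearization already has the required property, and you need neither a twist nor a simplicity statement.

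What this costs is a base-change step you pass over in silence. To go from triviality on \(f_1^{-1}(q\times Q_{p_{2g+2},H})\) to triviality on \(\tcS'|_{q\times Q_{p_{2g+2},H}}\), you need the fiber of \(f_{1*}f_{g+1}^*\cO(D_U)\) at \((q,x)\) to be \(\rH^0\) of the restriction to \(f_1^{-1}(q,x)\). That requires flatness of \(f_1\) near these points, together with constant \(h^0\) and vanishing higher cohomology on the fibers. This is standard, and it is the same input hidden in the paper's citation, but you should state it.

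One small correction. A linearization is not determined up to a global sign merely because \(\cQ_{C,H}\) is connected. Two linearizations differ by an automorphism of \(\tcS'\), and that is a scalar only if \(\tcS'\) is simple. You never actually use this: you only need that the sign twist of a linearization is again a linearization.
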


\begin{proof}
Since \(U\subset V_H\), the involution \(\sigma_{2g+2}\) fixes \(U\). Hence, for every maximal isotropic subspace \(W \in \OGr_{g+1}\) in a fiber, \(W\cap U\ne 0\) if and only if \(\sigma_{2g+2}(W)\cap U\ne 0\). Hence \(D_U\) is invariant under the induced \(\sigma_{2g+2}\)-action. Therefore, \(\cO_{\OGr_{g+1}}(D_U)\), and consequently the spinor bundle \(\tcS\), inherit a \(\sigma_{2g+2}\)-linearization. Since \(\sigma_{2g+2}\) fixes \(\PP V_H\) pointwise, \(\tcS'\) is \(\sigma_{2g+2}\)-linearized, and this action coincides with the \(\tau\)-action.

It remains to check the last assertion. At \(p_{2g+2}\), the fiber of \(\cQ_C \to \PP^1\) is defined by
\[\sum_{i=1}^{2g+1}(\lambda_i-\lambda_{2g+2})x_i^2=0,\]
so it is a cone in \(\PP V \) with vertex \([e_{2g+2}]\) over a smooth quadric in \(\PP V_H\). The fiber of \(\tcS\) over \(q \times \PP V \cong \PP V\) is the pullback of the spinor bundle in \(\PP V_H\) \cite[p.24]{BO95}, and the spinor bundle is simple; the \(\tau\)-action is given by a character of \(\tau\). Twisting the linearization by the sign character if necessary makes the action trivial.
\end{proof}

Let \(\rho : C \to \PP_{2g+1}^1\) be the composition
\[C \to [C/\tau] \cong \PP_{2g+2}^1 \to \PP_{2g+1}^1,\]
where the first map is the quotient map, and the second map is the relative coarse moduli space map forgetting the stacky structure at \(p_{2g+2}\).

Recall that
\[\cQ_{\PP^1,H}:=\cQ_{\PP^1}\cap(\PP^1\times\PP V_H)\subset\PP^1\times\PP V_H\]
is the universal quadric of the restricted pencil on \(V_H\). Denote its base changes to \(\PP_{2g+2}^1\) and \(\PP_{2g+1}^1\) by \(\cQ_{\PP_{2g+2}^1}\) and \(\cQ_{\PP_{2g+1}^1}\), respectively. Then we have morphisms
\[\cQ_{C,H}\to\cQ_{\PP_{2g+2}^1}\to\cQ_{\PP_{2g+1}^1}\to\cQ_{\PP^1,H}.\]

Since the sheaf \(\tcS'\) admits a \(\tau\)-linearization, it descends to a sheaf (denoted by the same symbol) on \(\cQ_{\PP_{2g+2}^1}\). Since the stabilizer over \(p_{2g+2}\) acts trivially, it descends further to \(\cQ_{\PP_{2g+1}^1}\). Therefore, we obtain the following corollary.

\begin{corollary} \label{cor:descended_spinor_kernel}
There exists a coherent sheaf \(\tcS\) on \(\cQ_{\PP_{2g+1}^1}\) such that its pullback to \(\cQ_{C,H}\) is \(\tcS'\).
\end{corollary}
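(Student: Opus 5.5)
The plan is a two-step descent along the tower \(\cQ_{C,H}\to\cQ_{\PP_{2g+2}^1}\to\cQ_{\PP_{2g+1}^1}\), using Proposition~\ref{prop:restricted_spinor_linearized} for the needed equivariance. First I will identify \(\cQ_{\PP_{2g+2}^1}\) with the quotient stack \([\cQ_{C,H}/\langle\tau\rangle]\). The \(\sigma_{2g+2}\)-action on \(C\times\PP V_H\) is \(\tau\times\mathrm{id}\), and \(\cQ_{C,H}=\cQ_{\PP^1,H}\times_{\PP^1}C\). Moreover, Example~\ref{ex:P1r} gives \([C/\tau]\cong\PP^1_{2g+2}\) over \(\PP^1\). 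Base-changing along the flat morphism \(\cQ_{\PP^1,H}\to\PP^1\) then yields
\[[\cQ_{C,H}/\tau]\cong \cQ_{\PP^1,H}\times_{\PP^1}[C/\tau]\cong\cQ_{\PP_{2g+2}^1}.\]
Quasi-coherent sheaves on this quotient stack are exactly \(\tau\)-linearized sheaves on \(\cQ_{C,H}\). So the linearization of \(\tcS'\) from Proposition~\ref{prop:restricted_spinor_linearized} defines a coherent sheaf \(\tcS_1\) on \(\cQ_{\PP_{2g+2}^1}\) whose pullback along the quotient map is \(\tcS'\).

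Second, I will descend along the relative coarse moduli map \(\PP^1_{2g+2}\to\PP^1_{2g+1}\), which forgets the \(\mu_2\)-stabilizer over \(p_{2g+2}\) only. Away from \(p_{2g+2}\) the map \(\cQ_{\PP_{2g+2}^1}\to\cQ_{\PP_{2g+1}^1}\) is an isomorphism, so the question is local near \(p_{2g+2}\). There, by Example~\ref{ex:localrootstack}, the map is \([\spec R[u]/(u^2-f)/\mu_2]\to\spec R\), base-changed to the quadric family. I take \(\tcS:=\) the pushforward of \(\tcS_1\) (the \(\mu_2\)-invariants), glued with \(\tcS_1\) on the complement. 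The claim that its pullback recovers \(\tcS_1\) is the standard criterion for sheaves on a root stack: a sheaf descends to the coarse space (relative to that point) precisely when the stabilizer acts trivially on its fibre over the stacky locus. In parabolic language this means the parabolic filtration at \(p_{2g+2}\) is trivial. The second half of Proposition~\ref{prop:restricted_spinor_linearized} supplies exactly this condition, since \(\tau\) acts trivially on \(\tcS'|_{q\times Q_{p_{2g+2},H}}\). Composing the two pullbacks then gives the corollary.

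The main obstacle is the second step, because \(\tcS'\) is not locally free everywhere. It fails to be locally free at the cone points of singular quadrics, although \(Q_{p_{2g+2},H}\) itself is smooth, since the vertex \([e_{2g+2}]\) lies outside \(\PP V_H\). So the descent criterion must be applied to a coherent sheaf, not to a vector bundle. I would first restrict to a neighbourhood of \(p_{2g+2}\) in which \(\tcS'\) is locally free, which is possible because the other singular points lie over other \(p_i\). Then I would argue locally on \(\spec A[u]/(u^2-f)\): a \(\mu_2\)-equivariant free module whose fibre over \(u=0\) carries the trivial character is generated by invariant sections, by Nakayama applied to the invariant part. Hence it is pulled back from \(\spec A\).
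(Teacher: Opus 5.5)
Your proposal is correct and follows the paper's argument. You descend \(\tcS'\) first to \(\cQ_{\PP^1_{2g+2}}\cong[\cQ_{C,H}/\tau]\) via the \(\tau\)-linearization of Proposition~\ref{prop:restricted_spinor_linearized}, and then along the relative coarse map to \(\cQ_{\PP^1_{2g+1}}\), using that the stabilizer over \(p_{2g+2}\) acts trivially. Your extra details fill in what the paper leaves implicit: that \(\tcS'\) is locally free near the fibre over \(p_{2g+2}\) because the vertex \([e_{2g+2}]\notin\PP V_H\), and the Nakayama argument on \(A[u]/(u^2-f)\).
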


\begin{definition}\label{def:spinorbundleodd}
We define the \textbf{relative spinor bundle} \(\cS\) over \(\PP_{2g+1}^1 \times X_H\) by the restriction of \(\tcS\) to \(\PP_{2g+1}^1 \times X_H\hookrightarrow \cQ_{\PP_{2g+1}^1}\). Even though \(\tcS\) is not locally free in general, the image of \(\PP_{2g+1}^1 \times X_H \hookrightarrow \cQ_{\PP_{2g+1}^1}\) avoids the non-locally free locus, hence \(\cS\) is a vector bundle.
\end{definition}

In summary, we have a relative spinor bundle \(\cS\) over \(\cC \times X\) for every \(\cC\).

%%%%%%%%%%%%%%%%%%%%%%%%%%%%%%%%%%%%%%%%%%
\section{Raynaud bundle}\label{sec:Raynaud}

Denote the two projections \(\cC \times X \to \cC\) and \(\cC \times X \to X\) by \(p_{\cC}\) and \(p_X\), respectively.

\begin{definition}\label{def:Raynaud}
Let \(\cS\) be a relative spinor bundle on \(\cC \times X\). The \textbf{Raynaud bundle} is \(\cR := p_{\cC*}\cS\).
\end{definition}

\begin{lemma}\label{lem:Rvb}
For the relative spinor bundle \(\cS\), \(\cR\) is a vector bundle on \(\cC\).
\end{lemma}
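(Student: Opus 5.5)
The plan is to use cohomology and base change for the flat proper projection \(p_{\cC} : \cC \times X \to \cC\). When \(\cC\) is the stack \(\PP_{2g+1}^1\), we work étale locally, or equivalently pass to the atlas \(C \times X_H\) and take \(\tau\)-invariants. Since \(\cS\) is locally free on \(\cC \times X\) and \(p_{\cC}\) is flat, \(\cS\) is flat over \(\cC\). By Grauert's theorem it therefore suffices to show that \(h^0(X, \cS|_{c \times X})\) is independent of \(c \in \cC\). A cleaner route is to show that \(\rH^i(X, \cS_c) = 0\) for all \(i > 0\) and all \(c\). Then \(R^ip_{\cC*}\cS = 0\) for \(i>0\), and \(p_{\cC*}\cS\) is locally free, with fibers \(\rH^0(X, \cS_c)\).

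To compute the fiber, write \(\cS_c = \tcS_c|_X\), where \(\tcS_c\) is the spinor sheaf on the quadric \(Q_c = V(sq_0 + tq_\infty)\) over \(\pi(c)\). We use the resolution \eqref{eqn:spinorresolution} for a maximal isotropic subspace of \(Q_c\). For smooth \(Q_c\) this gives the classical spinor bundle. For the corank-one quadrics at the branch points, Addington's construction applies, and \(\cS_c\) is still locally free because \(X\) avoids the vertex. We then restrict to \(X = Q_c \cap Q'\), where \(Q'\) is any other quadric of the pencil. This gives a Koszul-type exact sequence
\[0 \to \tcS_c(-2) \to \tcS_c \to \cS_c \to 0 \]
on \(Q_c\). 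It is exact because \(X\) is a Cartier divisor in \(Q_c\) and \(\tcS_c\) is a maximal Cohen--Macaulay sheaf, hence has no torsion along \(X\).

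By Remark~\ref{rmk:Ulrichquadric}, \(\tcS_c\) is Ulrich on \(Q_c\), so its intermediate cohomology vanishes in all twists. Moreover, \(\rH^{\dim Q_c}(\tcS_c(t)) = 0\) for \(t \ge -\dim Q_c\), since these twists are in or above the Ulrich vanishing range. In particular \(\tcS_c(-2)\) has no higher cohomology as long as \(\dim Q_c \ge 2\), which holds since \(g\ge 2\). Hence \(\rH^i(X,\cS_c)=0\) for \(i>0\), and \(h^0(X, \cS_c) = h^0(\tcS_c) - h^0(\tcS_c(-2)) = h^0(\tcS_c)\). This equals \(\dim I_1\) by \eqref{eqn:spinorresolution}, which is constant in \(c\).

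The main obstacle is uniformity at the branch points, where \(Q_c\) is singular. There the cohomology of Addington's spinor sheaf still follows from \eqref{eqn:spinorresolution}, which is a linear resolution on \(\PP V\). Since that resolution holds for every \(c\), we can run the whole argument on \(\PP V\) instead: we restrict the resolution of \(\tcS_c\) along \(X\) using the Koszul complex of \((q_0,q_\infty)\). A second point to check is that the fiber of \(\cS\) at \(c\) really is the Addington sheaf. This comes from the relative construction in Definition~\ref{def:spinorbundleeven} and, in the odd case, from its descent in Corollary~\ref{cor:descended_spinor_kernel}. In the odd case, constancy of fiber dimensions on the atlas \(C\) and flatness of descent give local freeness on \(\PP_{2g+1}^1\).
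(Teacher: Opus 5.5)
Your proposal is correct and is essentially the paper's proof. Both use flatness of \(\cS\) over \(\cC\), the restriction sequence \(0\to\tcS_c(-2)\to\tcS_c\to\cS_c\to0\) on \(Q_c\), and the Ulrich vanishing of \(\tcS_c\) and \(\tcS_c(-2)\) for \(\dim Q_c\ge 2\) to get \(R^ip_{\cC*}\cS=0\) for \(i>0\), hence local freeness. Your extra details (exactness via the maximal Cohen--Macaulay property, top-degree vanishing, constancy of \(h^0\), flat descent from \(C\)) fill in what the paper leaves implicit ("the other case is identical"). One adjustment is needed in the odd case. On \(C\times X_H\) the fiber is \(\tcS_c|_{X_H}\), and \(X_H\) has codimension two in \(Q_c\), so you should first restrict \(\tcS_c\) to \(Q_c\cap\PP V_H\) and only then apply the quadric-section sequence. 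This restriction stays Ulrich because its linear resolution restricts.
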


\begin{proof}
We give the proof of the case \(\cC = C\). The other case is identical.

Since \(\cS\) is flat over \(C\), it is enough to show that \(R^ip_{\cC*}\cS = 0\) for all \(i>0\). If we denote the spinor sheaf over \(Q_c\) by \(\widetilde{\cS}_c\), we have an exact sequence
\begin{equation}\label{eqn:spinor_hyperplane_exact}
0 \to \widetilde{\cS}_c(-2) \to \widetilde{\cS}_c \to \cS_c \to 0.
\end{equation}
Because \(\widetilde{\cS}_c\) is an Ulrich sheaf on \(Q_c\) (Remark~\ref{rmk:Ulrichquadric}), \(\rH^{i}(Q_c, \widetilde{\cS}_c)=\rH^{i}(Q_c, \widetilde{\cS}_c(-2)) = 0\) for all \(i > 0\) if \(\dim Q_c \geq 2\). The long exact sequence in cohomology then gives \(\rH^{i}(X, \cS_c)=0\) and hence \(R^ip_{\cC*}\cS = 0\) for all \(i \geq 1\).
\end{proof}

As we mentioned earlier, the relative spinor bundle \(\cS\), and hence \(\cR\), depend on a choice of maximal isotropic subspace \(U\). We suppress \(U\) from the notation whenever no confusion can arise.

In this section, we describe \(\cR\) in terms of Clifford algebras, and show that \(\cR\) is indeed isomorphic to a vector bundle \(\cF\) constructed in \cite[Proposition~4.5]{ES25}.

\subsection{Raynaud bundle as a Clifford module}

In this section, we provide a description of \(\cR\) as a \(\widetilde{\cC\ell}_0\)-module.

Recall that \(\cQ_{\cC} \to \cC\) is the family of relative quadrics described in \S\ref{sec:spinorbundle}. Let
\[\imath_{\cQ} :\cQ_{\cC} \hookrightarrow \cC \times\PP V,\qquad \imath_X: \cC\times X\hookrightarrow\cQ_{\cC}\]
be the two natural inclusions; we also have the two projections \(\rho_{\cC} : \cC \times \PP V \to \cC\) and \(\rho_{\PP V} : \cC \times \PP V \to \PP V\).

We first construct a relative version of the exact sequence in \eqref{eqn:spinorresolution}.

\begin{proposition}\label{prop:relativeMF}
The Raynaud bundle \(\cR\) is locally free of rank \(2^g\) and carries a right \(\widetilde{\cC\ell}_0\)-module structure. There is a locally free right \(\widetilde{\cC\ell}_0\)-module \(\cR'\) and an exact sequence
\begin{equation}\label{eqn:relative_spinor_ambient_resolution}
0\longrightarrow \cR'\boxtimes\cO_{\PP V}(-1) \longrightarrow \cR\boxtimes\cO_{\PP V} \longrightarrow (\imath_{\cQ})_*\widetilde{\cS} \longrightarrow0.
\end{equation}
Its restriction to every geometric fiber is the second sequence in \eqref{eqn:spinorresolution}.
\end{proposition}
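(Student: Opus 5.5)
We will prove Proposition~\ref{prop:relativeMF} by relativizing the fiberwise matrix factorization \eqref{eqn:spinorresolution} over \(\cC\). We treat the case \(\cC = C\) in detail. The case \(\cC=\PP^1_{2g+1}\) then follows, because every construction below is compatible with the \(\tau\)-linearization of Proposition~\ref{prop:restricted_spinor_linearized}, and so descends as in Corollary~\ref{cor:descended_spinor_kernel}.

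\emph{Step 1: the resolution of \((\imath_{\cQ})_*\tcS\).} For each \(c\in C\), the sheaf \(\tcS|_{Q_c}\) is the spinor sheaf \(\widetilde{\cT}_{q_c}\) of Addington, with the conventions of Remark~\ref{rem:alternativedefinitionspinor}. It is Ulrich on \(Q_c\), possibly singular when \(c\) is a ramification point (Remark~\ref{rmk:Ulrichquadric}). Consequently, \(\rH^i(Q_c,\tcS_c(t))=0\) for all \(0<i<\dim Q_c\) and all \(t\). Since \(\tcS\) is flat over \(C\), cohomology and base change show that \(\cR_0:=\rho_{C*}(\imath_{\cQ})_*\tcS\) is locally free of rank \(h^0(Q_c,\tcS_c)=\dim I_0=2^g\).

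Consider the relative Beilinson resolution on \(C\times\PP V\) over \(C\), or more simply the relative Castelnuovo--Mumford regularity argument: \(\tcS_c\) is \(0\)-regular with a linear resolution of length one. This shows that the evaluation map \(\rho_C^*\cR_0\to(\imath_{\cQ})_*\tcS\) is surjective. Its kernel \(\cK\) is flat over \(C\), and on each fiber it restricts to \(I_1\otimes\cO_{\PP V}(-1)\). Hence \(\cK\otimes\cO_{\PP V}(1)\) is fiberwise trivial. We therefore set \(\cR':=\rho_{C*}(\cK(1))\), which is locally free of rank \(2^g\), and obtain \(\cK\cong\cR'\boxtimes\cO_{\PP V}(-1)\). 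This gives the exact sequence \eqref{eqn:relative_spinor_ambient_resolution} with \(\cR_0\) in the middle, and its restriction to each fiber is the second sequence of \eqref{eqn:spinorresolution}.

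\emph{Step 2: identification with the Raynaud bundle.} Restricting from \(\cQ_C\) to \(C\times X\) cuts out one further quadric, so we have a relative version of \eqref{eqn:spinor_hyperplane_exact}:
\[0\to\tcS\otimes\cO_{\PP V}(-2)\to\tcS\to\imath_{X*}\cS\to0.\]
Apply \(p_{C*}\). The fiberwise vanishings \(\rH^0(\tcS_c(-2))=\rH^1(\tcS_c(-2))=0\), which hold because \(\tcS_c\) is Ulrich on a quadric of dimension at least \(2\), yield \(\cR=p_{C*}\cS\cong\cR_0\). This proves that \(\cR\) is locally free of rank \(2^g\), and the same argument gives Lemma~\ref{lem:Rvb}.

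\emph{Step 3: the module structure.} Tensor the sequence with the dual of the tautological sequence, that is, apply Clifford multiplication by \(V\). On each fiber, \(v\in V\) induces the maps \(\varphi,\psi\) of \eqref{eqn:tautologicalmaps}, which intertwine \(I_0\) and \(I_1\). Globally, the tautological map \(\cO(-1)\to V\otimes\cO\) composed with \eqref{eqn:relative_spinor_ambient_resolution} gives a map \(\cR'\otimes V\to\cR\). Conversely, the lift of the identity on \((\imath_{\cQ})_*\tcS\) through the two-periodic matrix factorization gives a map \(\cR\otimes V\to\cR'\otimes\cO_{\PP^1}(1)\), pulled back to \(C\). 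Composing the two maps yields an action of \(\wedge^2V\otimes\cO(-1)\) on \(\cR\). The Clifford relation \(v\cdot v=sq_0(v)+tq_\infty(v)\) holds because it holds on every fiber, where these maps constitute the matrix factorization of \(sq_0+tq_\infty\). This makes \(\pi_*\cR\) a right \(\cC\ell_0\)-module. By the equivalence \(\Coh(\PP^1,\cC\ell_0)\cong\Coh(C,\widetilde{\cC\ell}_0)\) of \S\ref{ssec:sheafification}, and because the \(\cO_C\)-structure is compatible with the central \(y\), \(\cR\) becomes a \(\widetilde{\cC\ell}_0\)-module. The same construction makes \(\cR'\) a \(\widetilde{\cC\ell}_0\)-module, and the maps in the sequence are module maps.

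\emph{Main obstacle.} We expect the main difficulty to be the ramification points, where \(Q_c\) is singular and the Stein factorization \(\OGr_{g+1}\to C\) is ramified. There we must check two things. First, flatness of \(\tcS\) over \(C\) and the fiber identification \(\tcS|_{Q_c}\cong\widetilde{\cT}_{q_c}\) must hold; for this we would invoke the description in \cite[p.24]{BO95}, which expresses the fiber as a pullback of the spinor bundle of \(\PP V_H\) from the cone. Second, the \(\widetilde{\cC\ell}_0\)-action, as opposed to a \(\cC\ell_0\)-action on \(\pi_*\cR\), must be well defined there. For the latter, our first approach is to work étale-locally and use the Azumaya property of \(\widetilde{\cC\ell}_0\).
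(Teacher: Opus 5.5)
Your argument is correct in substance, but it runs in the opposite direction from the paper's. The paper first obtains local freeness exactly as in your Step~2: restriction from $\cQ_{\cC}$ to $\cC\times X$, fiberwise vanishing, Grauert. (You drop the twist by $p_{\cC}^*\cL$ in the ideal sheaf of $\cC\times X\subset\cQ_{\cC}$, which is harmless by the projection formula.) It then \emph{takes} the right $\widetilde{\cC\ell}_0$-action on $\cR$ from the relative half-spin construction and \emph{defines} $\cR':=\cR\otimes_{\widetilde{\cC\ell}_0}(\widetilde{\cC\ell}_1\otimes\pi^*\cO_{\PP^1}(-1))$. It uses relative odd Clifford multiplication as the first arrow and identifies the cokernel with $(\imath_{\cQ})_*\tcS$ fiberwise. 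You instead produce the sequence intrinsically from $\tcS$ (relative $0$-regularity, flatness of the kernel, $\cR':=\rho_{C*}(\cK(1))$), so exactness and the identification of the cokernel are automatic. You then recover the Clifford structure from the resulting two-periodic factorization of $q$. Your route buys that the compatibility of the Clifford action with the geometric spinor sheaf never has to be assumed; the paper's fiberwise cokernel criterion leaves that compatibility implicit. The paper's route buys $\cR'$ directly as a Clifford tensor product, which is the form used in Proposition~\ref{prop:MoritaKuznetsov}. With your definition you would add the identification of $\rho_{C*}(\cK(1))$ with that tensor product, which is fiberwise and follows from irreducibility. Flatness of $\tcS$ over $\cC$, which you flag, is used just as much by the paper's appeal to Grauert.

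Several points in Step~3 should be repaired.

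(i) Passing from a $\cC\ell_0$-module structure on $\pi_*\cR$ to a $\widetilde{\cC\ell}_0$-module structure on $\cR$ is a global central-character check, not a local issue at ramification points, and you only assert it. The argument is short. The (suitably normalized) pseudoscalar gives an $\cO_C$-linear map $\Omega\colon\cR\otimes\pi^*\cO_{\PP^1}(-g-1)\to\cR$ commuting with $y$ and satisfying $(\Omega-y)(\Omega+y)=0$. At an unramified $c$, $\cR_c\cong I_{0,c}$ is an irreducible $\Cl_{q_c,0}$-module, so $\Omega_c=\pm y(c)$. The closed loci where $\Omega=y$ and where $\Omega=-y$ are disjoint and cover the connected complement of the ramification points, so $\Omega=\pm y$ on all of $C$. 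The sign is absorbed into the identification of $C$ with the spectrum of the center, or handled as in (ii) if that identification is fixed. No \'etale-local Azumaya argument is needed.

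(ii) For $\cR'$ the sign flips. Since $\dim V$ is even, the pseudoscalar anticommutes with $V$, so it acts by $-y$ on $\cR'_c\cong I_{1,c}$; compare $\cT\cong(\tau\times\mathrm{id}_X)^*\cS$ in Proposition~\ref{prop:filtrationeven}. With the $\cO_C$-structure imposed by the sequence, $\cR'$ is naturally a $\tau^*\widetilde{\cC\ell}_0$-module. ``The same construction'' makes it a $\widetilde{\cC\ell}_0$-module only after composing with the automorphism induced by $\sigma_{2g+2}$, which preserves $q_0,q_\infty$ and negates the pseudoscalar.

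(iii) Drop the claim that the arrows are module maps. Right multiplication by $v$ does not commute with the right $\cC\ell_0$-action, and $(\imath_{\cQ})_*\tcS$ carries no such action. The statement only needs an exact sequence of $\cO$-modules.

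(iv) For $\cC=\PP^1_{2g+1}$ you must first restrict to $C\times\PP V_H$, where $\sigma_{2g+2}$ acts as $\tau\times\mathrm{id}$; the restricted first arrow stays injective because $q_c|_{V_H}\neq0$. Moreover, the algebra on $\PP^1_{2g+1}$ is the one attached to $V_H$. It is cleaner to rerun Steps~1--3 on $\cQ_{\PP^1_{2g+1}}$ with the sheaf of Corollary~\ref{cor:descended_spinor_kernel}. There, (i) is replaced by the compatibility of the odd pseudoscalar $y$ of \S\ref{ssec:parabolicbundle} with the root-stack structure.
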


\begin{proof}
The relative Cartier divisor \(\cC\times X\subset\cQ_{\cC}\) has the ideal sheaf \(\cO_{\cQ_{\cC}}(-2)\otimes p_{\cC}^*\cL\) for some \(\cL\in\Pic(\cC)\). Tensoring its restriction sequence with \(\widetilde{\cS}\) gives
\begin{equation}\label{eqn:StildeS}
0\longrightarrow \widetilde{\cS}(-2)\otimes p_{\cC}^*\cL \longrightarrow \widetilde{\cS} \longrightarrow (\imath_X)_*\cS \longrightarrow0.
\end{equation}
The fiberwise resolution \eqref{eqn:spinorresolution} gives \(R\Gamma(Q_c,\widetilde{\cS}_c(-2))=0\). Hence the derived direct image of the first term vanishes and
\[R\rho_{\cC*}(\imath_{\cQ})_*\widetilde{\cS}\cong Rp_{\cC*}\cS=\cR.\]
The same resolution gives \(H^0(Q_c,\widetilde{\cS}_c)\cong I_{0,c}\) and \(H^i(Q_c,\widetilde{\cS}_c)=0\) for \(i>0\). Grauert's theorem implies that \(\cR\) is locally free of rank \(2^g\), with fiber \(I_{0,c}\).

The relative half-spin construction equips \(\cR\) with its natural right \(\widetilde{\cC\ell}_0\)-action. Let \(\widetilde{\cC\ell}_1\) be the odd Clifford bimodule and set
\[\cR':=\cR\otimes_{\widetilde{\cC\ell}_0} \bigl(\widetilde{\cC\ell}_1\otimes\pi^*\cO_{\PP^1}(-1)\bigr).\]
Then \(\cR'_c\cong I_{1,c}\). Relative odd Clifford multiplication gives the first arrow in \eqref{eqn:relative_spinor_ambient_resolution}; its restriction to every geometric fiber is
\[I_{1,c}\otimes\cO_{\PP V}(-1) \longrightarrow I_{0,c}\otimes\cO_{\PP V}.\]
The fiberwise cokernel criterion identifies the cokernel with \((\imath_{\cQ})_*\widetilde{\cS}\).
\end{proof}

\begin{corollary}\label{cor:Cliffordendomorphismalgebra}
The right Clifford action induces an isomorphism
\[\widetilde{\cC\ell}_0^{\mathrm{op}} \xrightarrow{\sim}\cEnd(\cR).\]
\end{corollary}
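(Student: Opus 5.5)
The plan is to check the claim fiberwise on \(\cC\) and then globalize using that \(\widetilde{\cC\ell}_0\) is Azumaya. By Proposition~\ref{prop:relativeMF}, \(\cR\) is a locally free right \(\widetilde{\cC\ell}_0\)-module of rank \(2^g\). The right action, \(r\cdot a := ra\), gives an \(\cO_{\cC}\)-algebra homomorphism
\[\widetilde{\cC\ell}_0^{\mathrm{op}} \to \cEnd(\cR).\]
Both sides are locally free. Indeed, \(\widetilde{\cC\ell}_0\) is Azumaya, so it is locally free; I will check that its rank is \(2^{2g} = (2^g)^2\). Since both sides are locally free of the same rank, a map between them is an isomorphism as soon as it is surjective, equivalently injective, on every geometric fiber. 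This uses Nakayama on the stack \(\cC\), which can be checked étale-locally, for instance on the atlas \(C\to\PP^1_{2g+2}\) in the odd case.

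First I verify the ranks. In the even case, \(\cC=C\) and \(\pi_*\widetilde{\cC\ell}_0=\cC\ell_0\). The sheaf \(\cC\ell_0\) has rank \(2^{2g+1}\) over \(\PP^1\), since it is the even part of a rank-\(2^{2g+2}\) Clifford algebra. Because \(\pi\) has degree \(2\), \(\widetilde{\cC\ell}_0\) has rank \(2^{2g}\). In the odd case, \(\dim V_H = 2g+1\) and \(\cC\ell_0\) has rank \(2^{2g}\). Here \(\pi:\PP^1_{2g+1}\to\PP^1\) is an isomorphism away from the stacky points, so the generic rank is again \(2^{2g}\). This matches \(\rank\cEnd(\cR)=2^{2g}\).

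Next I check the fibers at a geometric point \(c\). The fiber \((\widetilde{\cC\ell}_0)_c\) is a central simple algebra because \(\widetilde{\cC\ell}_0\) is Azumaya: it is \(M_{2^g}(k)\). Concretely, over a nonbranch point it is one of the two simple factors of \(\Cl_{q_c,0}\), and in the odd-dimensional case it is all of \(\Cl_{q_c,0}\). Proposition~\ref{prop:relativeMF} identifies \(\cR_c\) with \(I_{0,c}\), which has dimension \(2^g\) and is a nonzero module over \((\widetilde{\cC\ell}_0)_c\). A nonzero module over a simple algebra is faithful, so the fiber map \((\widetilde{\cC\ell}_0)_c^{\mathrm{op}}\to\rEnd(I_{0,c})\) is injective. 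Both sides have dimension \(2^{2g}\), so it is an isomorphism. Alternatively, simplicity of the kernel argument works globally: the kernel of a map out of an Azumaya algebra is a two-sided ideal, hence of the form \(J\cdot\widetilde{\cC\ell}_0\) for an ideal \(J\subset\cO_{\cC}\). Since \(\cR\) is locally free and nonzero, \(J=0\).

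The main obstacle is making sure that the fiber of \(\cR\) at the special points is a genuinely nonzero module over the \emph{Azumaya fiber}. These special points are the ramification points of \(C\) in the even case and the stacky points of \(\PP^1_{2g+1}\) in the odd case. At those points the quadric is singular, and \(\Cl_{q_c,0}\) itself is not semisimple; the fiber \((\widetilde{\cC\ell}_0)_c\) is not simply a factor of \(\Cl_{q_c,0}\). I would handle this by avoiding fiberwise identification there. Injectivity of a map between vector bundles of equal rank follows from injectivity at a general point, by torsion-freeness. The cokernel is then a torsion sheaf, and it is measured by the degree of the determinant. Both \(\widetilde{\cC\ell}_0\) (being Azumaya) and \(\cEnd(\cR)\) have trivial determinant, since \(\cEnd\) of any bundle is self-dual. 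Therefore the determinant of the map is a nonzero section of a degree-\(0\) line bundle, hence nowhere vanishing. This finishes the proof without any local analysis at the special points.
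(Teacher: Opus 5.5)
Your proof is correct and is essentially the paper's argument: an isomorphism on every geometric fiber, combined with the equality of ranks \(2^{2g}\). You deduce the fiberwise isomorphism from faithfulness of the nonzero module \(\cR_c\) over the central simple fiber \((\widetilde{\cC\ell}_0)_c\), whereas the paper uses irreducibility of \(\cR_c\cong I_{0,c}\). The obstacle you raise at ramification and stacky points is not real: \(\widetilde{\cC\ell}_0\) is Azumaya there too, and \(\cR\otimes k(c)\) is automatically a nonzero module over its fiber, so your fiberwise argument already covers those points and the determinant argument, though valid, is redundant.
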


\begin{proof}
The fiber \(\cR_c\cong I_{0,c}\) is an irreducible \(\widetilde{\cC\ell}_{0,c}\)-module of dimension \(2^g\). The Clifford morphism is an isomorphism on every geometric fiber, since both sides are locally free of rank \(2^{2g}\). Hence it is an isomorphism.
\end{proof}

\subsection{Tautological sequence}

We relativize the tautological filtration of \cite[Lemma~6.1]{Kuz08b}. Let \(\widetilde p_{2g+2}\in C\) be the ramification point over \(p_{2g+2}\) fixed in Proposition~\ref{prop:restricted_spinor_linearized}, and set
\begin{equation}\label{eqn:half_pencil_bundle}
\cO_{\cC}(1):=\begin{cases} \cO_C(\widetilde p_{2g+2}),&\dim V=2g+2,\\ \cO_{\PP_{2g+1}^1}(1),&\dim V=2g+1. \end{cases}\qquad\cO_{\cC}(2)\cong\pi^*\cO_{\PP^1}(1).
\end{equation}
The definitions in \S\ref{ssec:parabolicbundle} also give
\begin{equation}\label{eqn:half_pencil_pullback}
\rho^*\cO_{\PP_{2g+1}^1}(1)\cong\cO_C(\widetilde p_{2g+2}).
\end{equation}
Define the normalized tautological line bundle on \(\cQ_{\cC}\subset\cC\times\PP V\) by
\begin{equation}\label{eqn:normalized_tautological}
\cU:=\imath_{\cQ}^*\bigl(\cO_{\cC}(-1)\boxtimes\cO_{\PP V}(-1)\bigr),
\end{equation}
and use the same notation for its restriction to \(\cC\times X\).

\begin{remark}
The factor \(\cO_{\cC}(-1)\) in \eqref{eqn:normalized_tautological} is essential. On \(\cC\times\PP V\), the universal quadratic form \(q\) is a section of
\[\pi^*\cO_{\PP^1}(1)\boxtimes\cO_{\PP V}(2)\cong \cO_{\cC}(2)\boxtimes\cO_{\PP V}(2).\]
Its restriction to \(\cQ_{\cC}\) is therefore valued in
\[\cU^{-2}\cong \imath_{\cQ}^*\bigl(\pi^*\cO_{\PP^1}(1)\boxtimes\cO_{\PP V}(2)\bigr).\]
The two normalized Clifford compositions below are consequently \(q\cdot\mathrm{id}\) and arise by restricting a matrix factorization of \(q\). Without the factor \(\cO_{\cC}(-1)\), the residual twist \(\pi^*\cO_{\PP^1}(1)\) would remain.
\end{remark}

\begin{proposition}\label{prop:filtrationeven}
Suppose that \(X\) is a \((2g-1)\)-dimensional intersection of quadrics. There is an exact sequence
\begin{equation}\label{eqn:Raynaudfiltrationeven}
0 \to (\tau \times \mathrm{id}_X)^*\cS \otimes \cU \to p_C^*\cR \to \cS \to 0
\end{equation}
over \(C \times X\), where \(\cU\) is the tautological bundle of the universal quadric restricted to \(C \times X\).
\end{proposition}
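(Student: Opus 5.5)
The plan is to build the sequence by restricting the relative matrix factorization of Proposition~\ref{prop:relativeMF} to \(C\times X\), and then to identify the kernel using the involution \(\tau\).

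\textbf{Step 1: the surjection.} Pull back \eqref{eqn:relative_spinor_ambient_resolution} along \(\imath_{\cQ}\circ\imath_X : C\times X\hookrightarrow C\times\PP V\). The surjection \(\cR\boxtimes\cO_{\PP V}\to(\imath_{\cQ})_*\widetilde{\cS}\) restricts to a surjection \(p_C^*\cR\to\cS\). This is the evaluation map \(p_C^*p_{C*}\cS\to\cS\), which confirms that it is the natural map. Fiberwise over \(c\in C\), it is the map \(I_{0,c}\otimes\cO_X\to\cS_c\), that is, the restriction of the second sequence in \eqref{eqn:spinorresolution} to \(X\subset Q_c\).

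\textbf{Step 2: the kernel as a rank count and Tor computation.} Restricting a two-term resolution of a sheaf supported on the divisor \(Q_c\) to \(X\subset Q_c\) produces a four-term exact sequence
\[0\to \mathcal{T}or_1(\widetilde{\cS}_c,\cO_X)\to I_{1,c}\otimes\cO_X(-1)\to I_{0,c}\otimes\cO_X\to \cS_c\to 0.\]
Since \(\widetilde{\cS}_c\) lives on the hypersurface \(Q_c\), this Tor term is \(\widetilde{\cS}_c|_X\otimes\cO(-2)\). The kernel of \(I_{0,c}\otimes\cO_X\to\cS_c\) is therefore the image of \(\psi|_X\), which is the cokernel of \(\varphi|_X\) twisted by \(-1\), i.e.\ \(\widetilde{\cT}_c|_X(-1)\). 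Globally, the same argument over \(C\times X\) identifies the kernel with the restriction of the \emph{other} spinor sheaf of the matrix factorization, twisted by \(\cU\). The factor \(\cO_C(-1)\) in \(\cU\) appears exactly as in the Remark preceding the statement, so that the matrix factorization is of \(q\) and not of \(q\otimes\pi^*\cO(1)\). Counting ranks, the kernel has rank \(2^g-2^{g-1}=2^{g-1}\), which matches the rank of \(\cS\).

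\textbf{Step 3: identifying \(\widetilde{\cT}\) with \(\tau^*\widetilde{\cS}\) (the main obstacle).} For \(c\in C\) not a ramification point, the two spinor bundles on the smooth quadric \(Q_{\pi(c)}\) are the ones attached to the two families of maximal isotropic subspaces, namely the two points \(c,\tau(c)\) of \(\pi^{-1}(\pi(c))\). Since \(\tcS\) is built from the component \(h_{g+1}^{-1}(c)\), the complementary module \(I_{1,c}\) corresponds to \(\tau(c)\). Thus \(\widetilde{\cT}_c\cong\widetilde{\cS}_{\tau(c)}\) fiberwise. To globalize, I would compare \(\cR'=\cR\otimes_{\widetilde{\cC\ell}_0}\widetilde{\cC\ell}_1(-1)\) with \(\tau^*\cR\). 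Both are right modules over \(\tau^*\widetilde{\cC\ell}_0\), which is \(\widetilde{\cC\ell}_0\) with the action twisted by odd conjugation. By Corollary~\ref{cor:Cliffordendomorphismalgebra}, two such Azumaya-simple modules differ by a line bundle on \(C\). Equivalently, on \(C\times X\) the kernel is \((\tau\times\mathrm{id})^*\cS\otimes\cU\otimes p_C^*\cM\) for some \(\cM\in\Pic(C)\), since \((\tau\times\mathrm{id})^*\cS\) is fiberwise simple, and \(\cHom\) from it to the kernel pushes forward to a line bundle.

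\textbf{Step 4: showing \(\cM\) is trivial.} Compare degrees, or use the ramification point: at \(\widetilde p_{2g+2}\) the two spinor bundles coincide, and the normalization of Proposition~\ref{prop:restricted_spinor_linearized} fixes the identification. A determinant or \(c_1\) computation of the sequence on \(C\times\{x\}\) then gives \(\deg\cM=0\). Proving \(\cM\cong\cO_C\) itself, rather than merely degree zero, is where I expect the real difficulty. I would try to produce a global nowhere-vanishing map directly from relative Clifford multiplication by the vector \(e_{2g+2}\) (the reflection \(\sigma_{2g+2}\) realizes \(\tau\)). Alternatively, I would absorb \(\cM\) into the choice of \(\cO_C(1)=\cO_C(\widetilde p_{2g+2})\) in \eqref{eqn:half_pencil_bundle}, which is presumably why that twist was chosen.
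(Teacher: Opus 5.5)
Your Steps~1--3 are sound and essentially reproduce the first half of the paper's argument. You restrict the relative matrix factorization (the paper uses the $2$-periodic complex of $\alpha,\beta$ with $\cR_1=\cR'\otimes\cO_C(1)$, you use $\mathcal{T}or_1$). This identifies the kernel of $p_C^*\cR\to\cS$ with $\cT\otimes\cU$, where $\cT$ is the restriction of the other cokernel. Your seesaw argument then gives $\cT\cong(\tau\times\mathrm{id}_X)^*\cS\otimes p_C^*\cM$ for some $\cM\in\Pic(C)$.

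The gap is Step~4, which is the actual content of the proposition, and none of your suggested routes closes it.
\begin{itemize}
\item Restricting to $C\times\{x\}$ only gives $2\deg(\cS|_{C\times\{x\}})+2^{g-1}(\deg\cM-1)=\deg\cR$. You have no independent value of $\deg(\cS|_{C\times\{x\}})$; the paper in fact derives $\ch_1(\cS)$ in Lemma~\ref{lem:chofShat} from this very sequence. And even $\deg\cM=0$ would not suffice.
\item Absorbing $\cM$ into $\cO_C(1)$ proves a different statement. It is also incompatible with $\cO_C(2)\cong\pi^*\cO_{\PP^1}(1)$, which is what makes the Clifford compositions equal to $q\cdot\mathrm{id}$, unless $\cM^{\otimes2}\cong\cO_C$.
\item More fundamentally, your argument never uses that $U$ is stable under $\sigma_{2g+2}$ (the choice $U\subset V_H$ of \S\ref{sec:spinorbundle}), and this input cannot be avoided. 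Replacing $U$ by another $U'$ tensors $\cR$, $\cS$ and $\cT$ by $p_C^*\cL$ with $\cL\in\Pic^0(C)$. Since $\tau^*\cL\cong\cL^{-1}$, this replaces $\cM$ by $\cM\otimes\cL^{\otimes2}$. So whether $\cM$ is trivial depends on how $U$ is chosen.
\end{itemize}

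The missing step is the one you list only as a fallback, and it is exactly how the paper concludes.
\begin{itemize}
\item Conjugation by $e_{2g+2}$ induces the reflection $\sigma_{2g+2}$ on the even Clifford algebra. Hence right Clifford multiplication by $e_{2g+2}$ carries one cokernel of the matrix factorization onto the $(\mathrm{id}_C\times\sigma_{2g+2})$-pullback of the other. This is the global form of your fiberwise observation, since $\sigma_{2g+2}$ has determinant $-1$ and swaps the two rulings.
\item Because $e_{2g+2}^2=s+\lambda_{2g+2}t$, whose pullback to $C$ is the square of the canonical section of $\cO_C(\widetilde p_{2g+2})$, this map is divisible by that section. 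Dividing yields an isomorphism $\cT\cong(\mathrm{id}_C\times\sigma_{2g+2})^*\cS$. This is precisely why the half-pencil $\cO_C(1)=\cO_C(\widetilde p_{2g+2})$ appears in $\cU$ and in $\cR_1$.
\item Finally, $\sigma_{2g+2}(U)=U$ makes $D_U$ invariant under the $\sigma_{2g+2}$-action on $\OGr_{g+1}$, which covers $\tau$ on $C$. So $(\tau\times\sigma_{2g+2})^*\cS\cong\cS$, and therefore $(\mathrm{id}_C\times\sigma_{2g+2})^*\cS\cong(\tau\times\mathrm{id}_X)^*\cS$.
\end{itemize}
Without this explicit map and the invariance of $D_U$, your argument establishes the sequence only up to an undetermined twist by $p_C^*\cM$.
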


\begin{proof}
Set \(\cR_1:=\cR'\otimes\cO_C(1)\), where \(\cR'\) is the module in Proposition~\ref{prop:relativeMF}. Clifford multiplication gives
\[\alpha : \cR_1 \otimes \cU \longrightarrow \cR, \qquad \beta : \cR \otimes \cU \longrightarrow \cR_1.\]
Their compositions are
\[\alpha \circ (\beta \otimes \mathrm{id}_\cU)=q\cdot\mathrm{id},\qquad \beta \circ (\alpha \otimes \mathrm{id}_\cU)=q\cdot\mathrm{id}.\]
They vanish on \(\cQ_C\), and the resulting complex
\[\cdots \longrightarrow\cR \otimes \cU^{\otimes 2}\xrightarrow{\beta \otimes \mathrm{id}_{\cU}}\cR_1 \otimes\cU\xrightarrow{\alpha}\cR\longrightarrow 0\]
is exact except at the rightmost term. Since \(\tcS=\operatorname{coker}\alpha\), setting \(\widetilde{\cT}:=\operatorname{coker}\beta\) gives
\[0 \to \widetilde{\cT} \otimes \cU \to \cR \to \tcS \to 0.\]
Set \(\cT:=\imath_X^*\widetilde{\cT}\). Both cokernels are locally free along \(C\times X\), so pullback along \(\imath_X\) gives
\[0\longrightarrow \cT \otimes \cU \longrightarrow p_C^*\cR \longrightarrow \cS \longrightarrow 0.\]
Clifford multiplication by \(e_{2g+2}\), divided by the canonical section of \(\cO_C(\widetilde p_{2g+2})\), gives
\[\cT\cong(\mathrm{id}_C\times\sigma_{2g+2})^*\cS\cong(\tau\times\mathrm{id}_X)^*\cS,\]
where the second isomorphism follows from the invariance of \(D_U\) under \(\sigma_{2g+2}\).
\end{proof}

\begin{proposition}\label{prop:filtrationodd}
Suppose that \(X\) is a \((2g-2)\)-dimensional intersection of quadrics. There is an exact sequence
\begin{equation}\label{eqn:Raynaudfiltrationodd}
0 \to \cS \otimes \cU \to p_{\PP_{2g+1}^1}^*\cR \to \cS \to 0
\end{equation}
over \(\PP_{2g+1}^1 \times X\), where \(\cU\) is the tautological bundle of the universal quadric restricted to \(\PP_{2g+1}^1 \times X\).
\end{proposition}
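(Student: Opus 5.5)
The plan is to derive \eqref{eqn:Raynaudfiltrationodd} from the even-dimensional sequence \eqref{eqn:Raynaudfiltrationeven} by restriction and descent.

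\textbf{Step 1: restrict.} Write \(X=X_H\subset X^{2g-1}\). Pull \eqref{eqn:Raynaudfiltrationeven} back along \(\mathrm{id}_C\times j : C\times X_H\to C\times X^{2g-1}\). Every term is locally free, so the pullback stays exact. This gives
\[0\to(\tau\times\mathrm{id})^*\cS'\otimes\cU\to p_C^*(\cR^{\mathrm{ev}})|_{C\times X_H}\to\cS'\to0,\]
where \(\cS'\) is the restriction of the even relative spinor bundle. Here \(\cR^{\mathrm{ev}}\) is the even Raynaud bundle; I expect it to agree with \(\rho^*\) of the odd Raynaud bundle (see Step 3).

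\textbf{Step 2: drop the twist.} Since \(\sigma_{2g+2}\) fixes \(\PP V_H\) pointwise and acts on \(C\) as \(\tau\), the \(\tau\)-linearization of Proposition~\ref{prop:restricted_spinor_linearized} gives \((\tau\times\mathrm{id})^*\cS'\cong\cS'\). So the left term is \(\cS'\otimes\cU\).

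\textbf{Step 3: identify \(\cS'\) and \(\cR\).} By Corollary~\ref{cor:descended_spinor_kernel} and Definition~\ref{def:spinorbundleodd}, \(\cS'=(\rho\times\mathrm{id})^*\cS\), with \(\cS\) the odd relative spinor bundle. The tautological bundle also pulls back correctly, because of \eqref{eqn:half_pencil_pullback}: \(\rho^*\cO_{\PP^1_{2g+1}}(1)\cong\cO_C(\widetilde p_{2g+2})\). For the middle term I would avoid pulling back the even \(\cR\) directly. Instead, push forward the restricted sequence along \(p_C\). Lemma~\ref{lem:Rvb}, together with the vanishing \(H^\bullet(\cS_c\otimes\cU)=H^\bullet(\cS_c(-1))=0\) that comes from the Ulrich property of the fiberwise spinor bundle restricted to \(X_H\), shows that the middle term is \(p_C^*\rho^*\cR\). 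Here \(\cR=p_{\PP^1_{2g+1}*}\cS\) is the odd Raynaud bundle, and flat base change along \(\rho\) gives \(p_{C*}\cS'\cong\rho^*\cR\).

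\textbf{Step 4: descend.} I now have
\[0\to(\rho\times\mathrm{id})^*(\cS\otimes\cU)\to(\rho\times\mathrm{id})^*p^*\cR\to(\rho\times\mathrm{id})^*\cS\to0.\]
The maps in it are Clifford multiplications by elements of \(V_H\). These commute with \(\sigma_{2g+2}\), so they are \(\tau\)-equivariant for the chosen linearizations and hence descend to \([C/\tau]\times X_H=\PP^1_{2g+2}\times X_H\). Exactness descends because \(C\to[C/\tau]\) is faithfully flat. Finally, the stabilizer at \(p_{2g+2}\) acts trivially on all three terms: on \(\cS\) by Proposition~\ref{prop:restricted_spinor_linearized}, and on \(\cU\) by the choice of \(\cO(1)\). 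Therefore the sequence descends further to \(\PP^1_{2g+1}\times X_H\), which gives \eqref{eqn:Raynaudfiltrationodd}.

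\textbf{Main obstacle.} The hardest part is the equivariance bookkeeping in Steps 2–4. The linearization on \(\cS'\) must be compatible with the isomorphism \(\cT\cong(\tau\times\mathrm{id})^*\cS\) from the proof of Proposition~\ref{prop:filtrationeven}. That isomorphism uses multiplication by \(e_{2g+2}\) divided by a section vanishing at \(\widetilde p_{2g+2}\), and it may introduce a sign character. I would check the trivial-character condition at \(\widetilde p_{2g+2}\) fiberwise, using the cone description in the proof of Proposition~\ref{prop:restricted_spinor_linearized}, and if necessary twist by the sign character as done there.

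Alternatively, one could rerun the proof of Proposition~\ref{prop:filtrationeven} directly on \(\PP^1_{2g+1}\). Since \(\dim V_H\) is odd, \(\cR'\otimes\cO(1)\cong\cR\) through multiplication by the central pseudoscalar \(y\), and so \(\widetilde{\cT}\cong\tcS\) without any twist.
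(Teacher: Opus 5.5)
Your proposal is correct and follows the paper's proof. Both arguments restrict \eqref{eqn:Raynaudfiltrationeven} to \(C\times X_H\), use Proposition~\ref{prop:restricted_spinor_linearized} and \eqref{eqn:half_pencil_pullback} to write the outer terms as \((\rho\times\mathrm{id}_{X_H})\)-pullbacks, identify the middle term with \(p_C^*\rho^*\) of the odd Raynaud bundle via a fiberwise Ulrich vanishing plus flat base change, and descend along \(\rho\).

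The only variation is how you obtain \(\cR^{\mathrm{ev}}\cong Rp_{C*}\cS'\). You push forward the restricted filtration itself, using \(\rH^\bullet(X_H,\cS'_c(-1))=0\). The paper instead uses the hyperplane sequence \(0\to\cS(-1)\to\cS\to(\mathrm{id}_C\times j)_*\cS'\to0\) on \(C\times X\) together with \(Rp_{C*}\cS(-1)=0\). Your attention to \(\tau\)-equivariance is more explicit than the paper's, which simply invokes descent along \(\rho\).
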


\begin{proof}
Use the notation \(X_H\subset X\) and \(\rho:C\to\PP_{2g+1}^1\) from \S\ref{sec:spinorbundle}, and set \(\cS':=(\mathrm{id}_C\times j)^*\cS\). Restricting \eqref{eqn:Raynaudfiltrationeven} to \(C\times X_H\) gives
\[0 \longrightarrow (\tau \times \mathrm{id}_{X_H})^*\cS' \otimes \cU|_{C \times X_H} \longrightarrow p_C^*\cR \longrightarrow \cS' \longrightarrow 0.\]
Proposition~\ref{prop:restricted_spinor_linearized} and \eqref{eqn:half_pencil_pullback} identify
\[(\tau\times\mathrm{id}_{X_H})^*\cS'\cong\cS' \cong(\rho\times\mathrm{id}_{X_H})^*\cS_H, \qquad \cU|_{C\times X_H}\cong(\rho\times\mathrm{id}_{X_H})^*\cU_H.\]
It remains to identify \(\cR\cong\rho^*\cR_H\).

The hyperplane restriction sequence is
\[0 \longrightarrow \cS(-1) \longrightarrow \cS \longrightarrow (\mathrm{id}_C \times j)_*\cS' \longrightarrow 0.\]
Tensoring \eqref{eqn:StildeS} with \(\cO_{\PP V}(-1)\) and applying
\(R\rho_{\cC*}(\imath_{\cQ})_*\), the fiberwise Ulrich vanishings give
\[R\rho_{\cC*}(\imath_{\cQ})_*\widetilde{\cS}(-1)=0,\qquad R\rho_{\cC*}(\imath_{\cQ})_*\bigl(\widetilde{\cS}(-3)\otimes\pi^*\cO_{\PP^1}(1)\bigr)=0.\]
Hence \(Rp_{C*}\cS(-1)=0\), and flat base change yields
\[\cR\cong Rp_{C*}\cS' \cong Rp_{C*}(\rho\times\mathrm{id}_{X_H})^*\cS_H \cong\rho^*Rp_{\PP_{2g+1}^1*}\cS_H \cong\rho^*\cR_H.\]
The restricted sequence is therefore the pullback of \eqref{eqn:Raynaudfiltrationodd}; descent along \(\rho\) proves the proposition.
\end{proof}

%%%%%%%%%%%%%%%%%%%%%%%%%%%%%%%%%%%%%%%%%
\section{Semiorthogonal decomposition and spinor bundle}\label{sec:SOD}

As a concrete example of his theory of homological projective duality, Kuznetsov described the following semiorthogonal decompositions of the intersection of two quadrics.

\begin{theorem}[\protect{\cite[Corollary~5.7]{Kuz08}}]\label{thm:SOD}
The bounded derived category of the intersection of two quadrics \(X\) has the following semiorthogonal decomposition
\begin{equation}\label{eqn:SOD}
\rD^b(X) = \langle \rD^b(\cC), \cO(1), \cO(2), \dots, \cO(\dim X - 1)\rangle.
\end{equation}
\end{theorem}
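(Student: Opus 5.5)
Since Theorem~\ref{thm:SOD} is quoted from \cite[Corollary~5.7]{Kuz08}, the plan is to recover it from homological projective duality (HPD) for quadrics, which is how Kuznetsov argues, and to record the features we need later, namely that the embedding \(\rD^b(\cC)\to\rD^b(X)\) is a Fourier--Mukai functor. Set \(n=\dim V\), so \(\dim X=n-3\), and regard \(X=Q_0\cap Q_\infty\) as the base locus of the pencil \(\PP^1\subset\PP(\Sym^2V^*)\). Let \(\cQ\subset\PP V\times\PP^1\) be the universal quadric, and view its projection to \(\PP V\) as a quadric fibration (the blow-up of \(\PP V\) along \(X\)). There are three steps.

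\emph{Step 1.} Apply Kuznetsov's description of the derived category of a quadric fibration. The double Veronese embedding \(\PP V\to\PP(\Sym^2V)\) has a Lefschetz decomposition \(\rD^b(\PP V)=\langle\cO,\cO(1),\dots,\cO(n-1)\rangle\), with blocks \(\langle\cO,\cO(1)\rangle\) indexed by \(\cO_{\PP(\Sym^2V)}(1)=\cO_{\PP V}(2)\). Its HPD dual is the noncommutative variety \((\PP(\Sym^2V^*),\cB_0)\), where \(\cB_0\) is the sheaf of even Clifford algebras. Restricted to our pencil, this is \(\mathsf{Coh}(\PP^1,\cC\ell_0)\). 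The general HPD theorem, applied to the linear section \(L=\PP^1\), then gives
\[\rD^b(X)=\langle\rD^b(\PP^1,\cC\ell_0),\cO(1),\dots,\cO(n-3)\rangle.\]
To confirm the count of exceptional objects: the ambient has \(n\) objects, and intersecting with two quadrics removes \(2\cdot 2=4\) of them, leaving \(n-4=\dim X-1\), which matches \eqref{eqn:SOD}.

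\emph{Step 2.} Identify \(\rD^b(\PP^1,\cC\ell_0)\) with \(\rD^b(\cC)\) using the equivalences from \S\ref{ssec:sheafification}. If \(n=2g+2\), the equivalence is \(\mathsf{Coh}(\PP^1,\cC\ell_0)\cong\mathsf{Coh}(C,\widetilde{\cC\ell}_0)\). If \(n=2g+1\), it is \(\mathsf{Coh}(\PP^1,\cC\ell_0)\cong\mathsf{Coh}(\PP^1_{2g+1},\widetilde{\cC\ell}_0)\). In both cases \(\widetilde{\cC\ell}_0\) is Azumaya, so it remains to show that \(\widetilde{\cC\ell}_0\) is Morita trivial, i.e.\ \(\widetilde{\cC\ell}_0\cong\cEnd(\cE)\) for some vector bundle \(\cE\). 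The bundle \(\cE\) can be taken to be \(\cR\), because Corollary~\ref{cor:Cliffordendomorphismalgebra} gives \(\widetilde{\cC\ell}_0^{\mathrm{op}}\cong\cEnd(\cR)\). Then \(-\otimes_{\widetilde{\cC\ell}_0}\cR^{*}\)-type Morita equivalence produces \(\mathsf{Coh}(\cC,\widetilde{\cC\ell}_0)\cong\mathsf{Coh}(\cC)\). The existence of \(\cR\) depends on the common isotropic subspace \(U\), which exists by \cite{Rei73}; this is the classical vanishing of the Brauer class.

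\emph{Step 3.} Composing the equivalences, the resulting embedding \(\rD^b(\cC)\to\rD^b(X)\) is the Fourier--Mukai transform with kernel \(\cS\) (up to a twist by \(\cL\in\Pic\cC\)). I would check this by following a skyscraper \(\cO_c\): it maps to the spinor bundle \(\cS_c\) on \(X\).

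The main obstacle is the HPD input in Step 1, namely verifying that the Clifford category really is the HPD dual for the double Veronese with exactly this Lefschetz decomposition. This is Kuznetsov's theorem on quadric fibrations and I would cite it directly rather than reprove it. The remaining checks are routine given the results above. They are semiorthogonality of \(\cO(i)\) against the image of \(\rD^b(\cC)\), which follows from the Ulrich vanishing \(\rH^\bullet(X,\cS_c(-j))=0\) for \(1\le j\le\dim X-1\), and fullness, which is inherited from HPD.
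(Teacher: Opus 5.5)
Your route is the one the paper takes. The paper quotes Kuznetsov's linear-section theorem for the double Veronese HPD, and then, in Proposition~\ref{prop:MoritaKuznetsov} and Corollary~\ref{cor:FMfullyfaithful}, composes the Clifford equivalences of \S\ref{ssec:sheafification} with the Morita equivalence given by \(\cR\) (Corollary~\ref{cor:Cliffordendomorphismalgebra}) and identifies the resulting kernel with \(\cS\). Two small fixes: your Step~1 display should end at \(\cO(n-4)=\cO(\dim X-1)\), as your own count of \(n-4\) objects shows; and in Step~3 the paper identifies the kernel globally, by tensoring the pulled-back Clifford resolution \eqref{eqn:Kuznetsov_kernel_resolution_pullback} with \(\cR\), since checking skyscrapers alone does not pin down a kernel (Step~3 is not needed for the decomposition itself anyway).
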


When \(\dim X\) is odd, Theorem~\ref{thm:SOD} was obtained earlier by Bondal and Orlov in \cite[Theorem~2.7]{BO95}. Furthermore, the embedding \(\rD^b(C) \hookrightarrow \rD^b(X)\) was described as a Fourier--Mukai transform whose kernel is the relative spinor bundle \(\cS \in \rD^b(C \times X)\). We extend the result to the case where \(\dim X\) is even.

Let \(\iota:\cQ_{\PP^1} \hookrightarrow \PP^1\times\PP V\) be the inclusion, and let \(\delta:=\sum_i x_i\otimes e_i\in V^*\otimes V\) be the element corresponding to \(\mathrm{id}_V\). Right multiplication by \(\delta\) gives a morphism of left \(\cC\ell_0\)-modules
\[\delta'_{0,0}: \cC\ell_1\otimes\cO_{\PP^1}(-1)\boxtimes\cO_{\PP V}(-1) \longrightarrow \cC\ell_0\boxtimes\cO_{\PP V},\]
where
\[\cC\ell_1 := V \otimes \cO_{\PP^1} \oplus \wedge^3 V \otimes \cO_{\PP^1}(-1) \oplus \wedge^5 V \otimes \cO_{\PP^1}(-2) \oplus \dots\]
is a \((\cC\ell_0, \cC\ell_0)\)-bimodule. Since \((\delta'_{0,0})^2\) is multiplication by the universal quadratic form, \cite[Lemma~4.5]{Kuz08} gives a left \(\cC\ell_0\)-module \(\cE'_{0,0}\) on \(\cQ_{\PP^1}\) and an exact sequence
\begin{equation}\label{eqn:E00prime}
0\to \cC\ell_1\otimes\cO_{\PP^1}(-1)\boxtimes\cO_{\PP V}(-1) \xrightarrow{\delta'_{0,0}} \cC\ell_0\boxtimes\cO_{\PP V} \to \iota_*\cE'_{0,0} \to 0.
\end{equation}
Set \(\cK:=(\PP^1\times X\hookrightarrow\cQ_{\PP^1})^*\cE'_{0,0}\). The long Clifford resolution \cite[(22)]{Kuz08} restricts to
\begin{align}\label{eqn:Kuznetsov_kernel_resolution}
\cdots\to\bigl(\cC\ell_0\otimes\cO_{\PP^1}\bigr)(-1)\boxtimes\cO_X(-2) \to\bigl(\cC\ell_1\otimes\cO_{\PP^1}(-1)\bigr)&\boxtimes\cO_X(-1) \\
&\to \cC\ell_0\boxtimes\cO_X \to \cK \to 0.\nonumber
\end{align}
Let \(\rD^b(\PP^1, \cC\ell_0)\) be the bounded derived category of right \(\cC\ell_0\)-modules over \(\PP^1\). We define a functor \(\Phi_{\cK} : \rD^b(\PP^1, \cC\ell_0) \to \rD^b(X)\) as
\[\Phi_{\cK}(F) := Rp_{X*} \bigl( p_{\PP^1}^*F \otimes_{p_{\PP^1}^*\cC\ell_0}^{\mathbf L} \cK \bigr),\]
where \(p_{\PP^1}:\PP^1\times X\to\PP^1\) and \(p_{X} :\PP^1\times X\to X\) are the two projections.

Here and below, pullback along \(\pi\times\mathrm{id}_X\) is taken in the category of Clifford modules; in particular, it includes extension of scalars along \(p_{\cC}^*\pi^*\cC\ell_0\to p_{\cC}^*\widetilde{\cC\ell}_0\).

\begin{proposition}\label{prop:MoritaKuznetsov}
The functor
\[\mathsf M_{\cR}:=\pi_*(-\otimes_{\cO_{\cC}}\cR):\rD^b(\cC)\xrightarrow{\sim}\rD^b(\PP^1,\cC\ell_0)\]
is a Morita equivalence. Moreover, there is an isomorphism
\begin{equation}\label{eqn:Kuznetsov_spinor_comparison}
p_{\cC}^*\cR\otimes_{p_{\cC}^*\widetilde{\cC\ell}_0}(\pi\times\mathrm{id}_X)^*\cK\cong\cS
\end{equation}
on \(\cC\times X\). Consequently, the diagram
\[
\begin{tikzcd}[column sep=6.2em,row sep=4.2em]\rD^b(\cC)\arrow[rr,"\mathsf M_{\cR}","\sim"']\arrow[dr,"\Phi_{\cS}"']&&\rD^b(\PP^1,\cC\ell_0)\arrow[dl,"\Phi_{\cK}"]\\ &\rD^b(X)&\end{tikzcd}
\]
commutes up to a natural isomorphism.
\end{proposition}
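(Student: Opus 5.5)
The proof splits into three steps: the Morita equivalence, the kernel comparison \eqref{eqn:Kuznetsov_spinor_comparison}, and the commutativity of the triangle, which follows formally from the first two.

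\emph{Morita equivalence.} By Corollary~\ref{cor:Cliffordendomorphismalgebra}, $\cR$ is a locally free right $\widetilde{\cC\ell}_0$-module of rank $2^g$ with $\widetilde{\cC\ell}_0^{\mathrm{op}}\cong\cEnd(\cR)$. Since $\widetilde{\cC\ell}_0$ is Azumaya of rank $2^{2g}$, $\cR$ is fiberwise the unique irreducible module, hence a local progenerator. So $-\otimes_{\cO_{\cC}}\cR:\Coh(\cC)\to\Coh(\cC,\widetilde{\cC\ell}_0)$ is an equivalence, with inverse $\cHom_{\widetilde{\cC\ell}_0}(\cR,-)$. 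I compose it with Kuznetsov's equivalence $\pi_*:\Coh(\cC,\widetilde{\cC\ell}_0)\cong\Coh(\PP^1,\cC\ell_0)$ (\cite[Corollaries~3.13, 3.16]{Kuz08}). Both functors are exact: the first because $\cR$ is locally free, the second because $\pi$ is finite (or a coarse moduli map of a tame stack). Hence the composite $\mathsf M_{\cR}$ extends to an equivalence of bounded derived categories.

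\emph{Kernel comparison.} I construct a map from the left side of \eqref{eqn:Kuznetsov_spinor_comparison} to $\cS$ and check it fiberwise. Pull back the resolution \eqref{eqn:E00prime} along $\pi\times\mathrm{id}$, extend scalars to $\widetilde{\cC\ell}_0$, and apply $p_{\cC}^*\cR\otimes_{p_{\cC}^*\widetilde{\cC\ell}_0}(-)$, which is exact because $\cR$ is fiberwise projective. The middle term becomes $\cR\boxtimes\cO_{\PP V}$. The left term becomes $\cR\otimes_{\widetilde{\cC\ell}_0}\widetilde{\cC\ell}_1\otimes\pi^*\cO_{\PP^1}(-1)\boxtimes\cO_{\PP V}(-1)=\cR'\boxtimes\cO_{\PP V}(-1)$, with $\cR'$ exactly as defined in the proof of Proposition~\ref{prop:relativeMF}. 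The map between them is right multiplication by $\delta$, which is relative odd Clifford multiplication by the tautological vector. This identifies the transformed sequence with \eqref{eqn:relative_spinor_ambient_resolution}, so its cokernel is $(\imath_{\cQ})_*\widetilde{\cS}$. Restricting to $\cC\times X$ and using $\cS=\imath_X^*\widetilde{\cS}$ gives the isomorphism. The restriction commutes with the tensor product since both sides are locally free near $\cC\times X$, and by \eqref{eqn:Kuznetsov_kernel_resolution} there are no higher Tor terms.

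This step is the main obstacle. The difficulty is verifying that the two maps agree, including the twists: $\cO_{\cC}(1)$ versus $\pi^*\cO_{\PP^1}(1)$, and the normalization by $\cU$. I would settle this by checking on a geometric fiber over $c\in\cC$. There both maps reduce to the matrix factorization \eqref{eqn:spinorresolution} given by Clifford multiplication on $I_{0,c}$, and any two such identifications differ by an automorphism of the simple bundle $\cR$, hence by a unit. In the stacky case I would carry out the same check on the $\mu_2$-equivariant cover $C\to\PP^1_{2g+2}$, and descend using Corollary~\ref{cor:descended_spinor_kernel}.

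\emph{Commutativity.} For $F\in\rD^b(\cC)$,
\[
\Phi_{\cK}(\mathsf M_{\cR}F)=Rp_{X*}\bigl(p_{\PP^1}^*\pi_*(F\otimes\cR)\otimes^{\mathbf L}_{\cC\ell_0}\cK\bigr).
\]
I would first apply flat base change for $\pi\times\mathrm{id}_X$, which is valid because $p_{\PP^1}$ is flat. Then the projection formula for the finite (or coarse) map $\pi\times\mathrm{id}_X$ rewrites this as
\[
Rp_{X*}\bigl(p_{\cC}^*F\otimes p_{\cC}^*\cR\otimes_{p_{\cC}^*\widetilde{\cC\ell}_0}(\pi\times\mathrm{id})^*\cK\bigr).
\]
This uses the extension-of-scalars convention fixed just before the statement. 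By \eqref{eqn:Kuznetsov_spinor_comparison}, the last expression equals $Rp_{X*}(p_{\cC}^*F\otimes\cS)=\Phi_{\cS}(F)$. Each isomorphism used is natural in $F$, so the triangle commutes up to natural isomorphism.
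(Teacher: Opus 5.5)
Your proposal is correct and follows essentially the same route as the paper: the Morita equivalence via $\widetilde{\cC\ell}_0^{\mathrm{op}}\cong\cEnd(\cR)$ composed with Kuznetsov's $\pi_*$-equivalence; the kernel identified as the cokernel of relative odd Clifford multiplication $\cR'\to\cR$ from Proposition~\ref{prop:relativeMF}; and commutativity by flat base change and the projection formula. The only difference is minor: you compare cokernels on $\cC\times\PP V$ using the two-term sequence \eqref{eqn:E00prime} and then restrict to $\cC\times X$, whereas the paper tensors the restricted long resolution \eqref{eqn:Kuznetsov_kernel_resolution} with $\cR$ directly on $\cC\times X$, and that long resolution is also what supplies the Tor-vanishing you invoke.
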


\begin{proof}
Corollary~\ref{cor:Cliffordendomorphismalgebra} identifies \(\widetilde{\cC\ell}_0^{\mathrm{op}}\) with \(\cEnd(\cR)\), so tensoring with \(\cR\) gives a Morita equivalence from \(\rD^b(\cC)\) to \(\rD^b(\cC,\widetilde{\cC\ell}_0)\). The exact equivalence of \S\ref{ssec:sheafification} is induced by \(\pi_*\), and their composition is \(\mathsf M_{\cR}\).

Pulling \eqref{eqn:Kuznetsov_kernel_resolution} back in the category of Clifford modules gives the exact sequence
\begin{align}\label{eqn:Kuznetsov_kernel_resolution_pullback}
\cdots\to\bigl(\widetilde{\cC\ell}_0\otimes\pi^*\cO_{\PP^1}(-1)\bigr)\boxtimes\cO_X(-2)\to\bigl(\widetilde{\cC\ell}_1&\otimes\pi^*\cO_{\PP^1}(-1)\bigr)\boxtimes\cO_X(-1)\\
&\to\widetilde{\cC\ell}_0\boxtimes\cO_X\to(\pi\times\mathrm{id}_X)^*\cK\to0.\nonumber
\end{align}
Since \(\cR\) is locally projective as a right \(\widetilde{\cC\ell}_0\)-module, tensoring \eqref{eqn:Kuznetsov_kernel_resolution_pullback} with \(p_{\cC}^*\cR\) preserves exactness and gives
\begin{align}\label{eqn:Morita_Kuznetsov_resolution}
\cdots\to\bigl(\cR\otimes\pi^*\cO_{\PP^1}(-1)\bigr)\boxtimes\cO_X(-2)&\to\cR'\boxtimes\cO_X(-1)\\
&\to\cR\boxtimes\cO_X\to p_{\cC}^*\cR\otimes_{p_{\cC}^*\widetilde{\cC\ell}_0}(\pi\times\mathrm{id}_X)^*\cK\to0.\nonumber
\end{align}
The Clifford multiplication \(\cR'\boxtimes\cO_X(-1)\to\cR\boxtimes\cO_X\) in \eqref{eqn:Morita_Kuznetsov_resolution} is the restriction of the multiplication in \eqref{eqn:relative_spinor_ambient_resolution}. Its cokernel is \(\cS\), so exactness proves \eqref{eqn:Kuznetsov_spinor_comparison}.

For \(F\in\rD^b(\cC)\), flat base change and the projection formula give
\begin{align*}
(\Phi_{\cK}\circ\mathsf M_{\cR})(F) &=Rp_{X*}\bigl(p_{\PP^1}^*\pi_*(F\otimes\cR)\otimes_{p_{\PP^1}^*\cC\ell_0}^{\mathbf L}\cK\bigr)\\
&\cong R\bigl(p_X\circ(\pi\times\mathrm{id}_X)\bigr)_*\bigl(p_{\cC}^*F\otimes^{\mathbf L}\bigl(p_{\cC}^*\cR\otimes_{p_{\cC}^*\widetilde{\cC\ell}_0}(\pi\times\mathrm{id}_X)^*\cK\bigr)\bigr)\\
&\cong R\bigl(p_X\circ(\pi\times\mathrm{id}_X)\bigr)_*\bigl(p_{\cC}^*F\otimes^{\mathbf L}\cS\bigr)=\Phi_{\cS}(F).
\end{align*}
Hence \(\Phi_{\cK}\circ\mathsf M_{\cR}\cong\Phi_{\cS}\).
\end{proof}

It remains to identify \(\Phi_{\cK}\) with Kuznetsov's Clifford embedding. Apply the linear-section theorem \cite[Theorem~6.3, \S\S6.1, 6.4]{Kuz07} to the double Veronese HPD of \cite[Theorem~5.4]{Kuz08} and the pencil \(L:=\langle q_0,q_\infty\rangle\subset S^2V^*\). Here \(r\) denotes the vector-space dimension of the chosen space of quadrics. The \(r=1\) family restricted to \(\PP L\) is \(\OGr_1\), with kernel \(\cE'_{0,0}\) by \cite[Lemma~4.5, Remark~4.8]{Kuz08}, while the fiber of the \(r=2\) family over \([L]\) is \(X=Q_0\cap Q_\infty\). On this fiber, \(\zeta_2\) restricts to the inclusion \(\PP L\times X\hookrightarrow\OGr_1\); hence the induced kernel is
\[(\PP L\times X\hookrightarrow\OGr_1)^*\cE'_{0,0}=\cK.\]
The smoothness of \(X\) makes \(L\) admissible, so faithful base change identifies \(\Phi_{\cK}\) with the fully faithful functor realizing the Clifford component in \cite[Theorem~5.5]{Kuz08}. Since \(\mathsf M_{\cR}\) is an equivalence, Proposition~\ref{prop:MoritaKuznetsov} gives \(\Phi_{\cS}(\rD^b(\cC))=\Phi_{\cK}(\rD^b(\PP^1,\cC\ell_0))\).

\begin{corollary}\label{cor:FMfullyfaithful}
The Fourier--Mukai transform \(\Phi_{\cS}:\rD^b(\cC)\to\rD^b(X)\) is fully faithful, and
\[\rD^b(X)=\langle\Phi_{\cS}(\rD^b(\cC)),\cO_X(1),\cO_X(2),\dots,\cO_X(\dim X-1)\rangle.\]
\end{corollary}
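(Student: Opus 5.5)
The plan is to deduce both assertions of Corollary~\ref{cor:FMfullyfaithful} directly from Proposition~\ref{prop:MoritaKuznetsov}, combined with the identification of \(\Phi_{\cK}\) with Kuznetsov's Clifford embedding made in the paragraph just before the corollary.

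For full faithfulness, Proposition~\ref{prop:MoritaKuznetsov} gives a natural isomorphism \(\Phi_{\cS}\cong\Phi_{\cK}\circ\mathsf M_{\cR}\). Here \(\mathsf M_{\cR}\) is an equivalence. Once \(\Phi_{\cK}\) is known to be fully faithful, the composite is fully faithful, and so is \(\Phi_{\cS}\), since full faithfulness is invariant under natural isomorphism.

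The full faithfulness of \(\Phi_{\cK}\) is where the real content lies, and I expect it to be the main obstacle. I would argue it as follows. Kuznetsov's HPD for the double Veronese embedding \cite[Theorem~5.4]{Kuz08} makes \((\PP(S^2V^*),\cC\ell_0)\) homologically projectively dual to \(\PP V\) with the Lefschetz decomposition by \(\cO,\cO(1),\dots\). The linear-section theorem \cite[Theorem~6.3]{Kuz07}, applied to the pencil \(L\), then yields a fully faithful functor \(\rD^b(\PP L,\cC\ell_0)\to\rD^b(X)\). Its kernel is the base change of the universal HPD kernel, and the semiorthogonal decomposition is
\[\rD^b(X)=\langle \rD^b(\PP L,\cC\ell_0),\cO_X(1),\dots,\cO_X(\dim X-1)\rangle.\]

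The point to verify is that this base-changed kernel is exactly \(\cK\). For this I would use the following chain:
\begin{itemize}
\item the universal family over \(\PP L\) is \(\OGr_1=\cQ_{\PP^1}\), with kernel \(\cE'_{0,0}\) from \eqref{eqn:E00prime}, by \cite[Lemma~4.5, Remark~4.8]{Kuz08};
\item the relevant fiber is \(\PP L\times X\hookrightarrow\OGr_1\);
\item the faithfulness of base change, guaranteed because smoothness of \(X\) forces \(L\) to be admissible, i.e.\ the expected dimension is attained, lets us restrict kernels along this inclusion.
\end{itemize}
This gives \(\Phi_{\cK}\cong\) Kuznetsov's embedding. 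The Lefschetz pieces \(\cO_X(i)\) that appear are the twists \(1\le i\le \dim X-1\), matching Theorem~\ref{thm:SOD}. For \(\dim V=2g+1\) I would take \(\dim X=2g-2\), and then the count of exceptional objects is still \(\dim V-3\) in both parities.

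For the decomposition itself, since \(\mathsf M_{\cR}\) is essentially surjective onto \(\rD^b(\PP^1,\cC\ell_0)\), the essential images agree:
\[\Phi_{\cS}(\rD^b(\cC))=\Phi_{\cK}(\rD^b(\PP^1,\cC\ell_0)).\]
Substituting this into the decomposition above gives the asserted semiorthogonal decomposition, which is consistent with Theorem~\ref{thm:SOD} once \(\rD^b(\PP^1,\cC\ell_0)\simeq\rD^b(\cC)\).
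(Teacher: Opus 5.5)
Your proposal is correct and follows the paper's argument: \(\Phi_{\cS}\cong\Phi_{\cK}\circ\mathsf M_{\cR}\) by Proposition~\ref{prop:MoritaKuznetsov}, and \(\Phi_{\cK}\) is identified with Kuznetsov's fully faithful Clifford embedding. That identification comes from applying the linear-section theorem to the double Veronese HPD and the pencil \(L\), restricting \(\cE'_{0,0}\) along \(\PP L\times X\hookrightarrow\OGr_1\), and using admissibility of \(L\) (from smoothness of \(X\)) for faithful base change, after which the essential images coincide. The only slip is a harmless miscount: the exceptional objects \(\cO_X(1),\dots,\cO_X(\dim X-1)\) number \(\dim X-1=\dim V-4\), not \(\dim V-3\).
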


\begin{remark}
Using the Bondal-Orlov criterion for Deligne--Mumford stack \cite{LP21}, we may prove the full faithfulness of \(\Phi_{\cS} : \rD^b(\PP_{2g+1}^1) \to \rD^b(X)\) directly. We leave the computational details to the interested reader.
\end{remark}

%%%%%%%%%%%%%%%%%%%%%%%%%%%%%%%%%%%%%%%%%%%%%%%%%%%
\section{Reduction to curves}\label{sec:reduction}

The semiorthogonal decomposition of \(\rD^b(X)\) in Theorem~\ref{thm:SOD}, together with its Lefschetz structure, reduces the construction of Ulrich bundles on \(X\) to the construction of a vector bundle on the Kuznetsov component. We describe this reduction. We retain the notational convention of \S\ref{sec:Raynaud}.

Suppose \(\cE\) is an Ulrich bundle on \(X\). By definition, \(\rH^*(X, \cE(-j)) = 0\) for \(1 \le j \le d = \dim X\). We may rephrase this vanishing condition as
\begin{equation}\label{eqn:Ulrichcondition}
\rHom_{\rD^b(X)}(\cO(j), \cE(-1)[i]) = 0, \quad i \in \ZZ, \; 0 \le j \le d - 1.
\end{equation}
This implies \(\rHom_{\rD^b(X)}(\cO, \cE(-1)[i]) = 0\) and \(\cE(-1) \in \langle \cO(1), \cO(2), \dots, \cO(d - 1)\rangle^\perp\). This orthogonal complement is \(\Phi_{\cS}(\rD^b(\cC))\). Thus, there is \(\cF^\bullet \in \rD^b(\cC)\) such that \(\cE(-1) = \Phi_{\cS}(\cF^\bullet)\).

\begin{proposition}\label{prop:FMofUlrichisvb}
Let \(\Phi_{\cS}^! : \rD^b(X) \to \rD^b(\cC)\) be the right adjoint functor of \(\Phi_{\cS}\). For any Ulrich bundle \(\cE\) on \(X\), the object \(\cF^\bullet = \Phi_{\cS}^!(\cE(-1))\in \rD^b(\cC)\) is a vector bundle on \(\cC\).
\end{proposition}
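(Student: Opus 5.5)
Since $\cE(-1)=\Phi_{\cS}(\cF^\bullet)$ and $\Phi_{\cS}$ is fully faithful by Corollary~\ref{cor:FMfullyfaithful}, we have $\cF^\bullet\cong\Phi_{\cS}^!\Phi_{\cS}(\cF^\bullet)$. Since $\cC$ is a smooth curve (or a smooth one-dimensional DM stack), the plan is to show that the cohomology sheaves $\mathcal H^i(\cF^\bullet)$ vanish for $i\neq 0$ and that $\mathcal H^0$ is torsion-free. On a curve or orbifold curve, every object of $\rD^b(\cC)$ is isomorphic to the direct sum of its shifted cohomology sheaves, and every coherent sheaf is a direct sum of a vector bundle and a torsion sheaf. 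So it suffices to test $\cF^\bullet$ against skyscraper-type objects, namely the point sheaves $\cO_c$ for $c\in\cC$, together with the twisted residue gerbe points $\cO_{q}\otimes\chi$ at the stacky points when $\cC=\PP^1_{2g+1}$.

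The key computation uses adjunction:
\[\rHom_{\rD^b(\cC)}(\cF^\bullet,\cO_c[i])\cong\rHom_{\rD^b(X)}(\Phi_{\cS}\cF^\bullet,\Phi_{\cS}\cO_c[i])=\Ext^i_X(\cE(-1),\cS_c),\]
where $\cS_c=\cS|_{\{c\}\times X}$ is the spinor bundle of the quadric $Q_c$ restricted to $X$ (Definitions~\ref{def:spinorbundleeven} and \ref{def:spinorbundleodd}). At a stacky point the analogous object is the $\pm$-isotypic piece. The condition that $\cF^\bullet$ is a vector bundle placed in degree $0$ is then equivalent to
\[\Ext^i_X(\cE(-1),\cS_c)=0\quad\text{for } i\neq 0,\]
for all points $c$ (including the twisted points). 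Indeed, if $\cF^\bullet=\bigoplus \mathcal H^j[-j]$, then a nonzero $\mathcal H^j$ gives a nonzero $\rHom(\mathcal H^j[-j],\cO_c[i])$ for $i=-j$, and torsion in $\mathcal H^0$ gives a nonzero $\Ext^1(\mathcal H^0,\cO_c)$, i.e.\ degree $i=1$. Conversely, vanishing in all degrees $i\neq0$ forces $\cF^\bullet$ to be locally free in degree $0$.

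Next I rewrite these groups as cohomology on $X$: $\Ext^i_X(\cE(-1),\cS_c)=\rH^i(X,\cE^*(1)\otimes\cS_c)$. I use the Ulrich property of $\cE$ together with the Koszul-type resolution of $\cS_c$. By \eqref{eqn:spinor_hyperplane_exact} and \eqref{eqn:spinorresolution}, $\cS_c$ is a quotient of $I_{0,c}\otimes\cO_X$ with kernel built from $\cO_X(-1)$-twists. Concretely, the restriction of the matrix-factorization resolution gives the two-periodic resolution \eqref{eqn:Raynaudfiltrationeven} and \eqref{eqn:Raynaudfiltrationodd} fiberwise:
\[\cdots\to I_{1,c}\otimes\cO_X(-1)\to I_{0,c}\otimes\cO_X\to\cS_c\to0.\]
The cleaner route is Serre duality, which turns the groups into $\rH^{n-i}(X,\cE(-1)\otimes\cS_c^*\otimes\omega_X)$ with $\omega_X=\cO_X(1-n)$. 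Since $\cS_c^*$ is a twist of a spinor bundle (the dual of the spinor on $Q_c$ is $\widetilde{\cT}_c(1)$-type), it is also resolved by sums of $\cO_X(-j)$. Tensoring the resulting finite-length truncation with $\cE(-1)$ or with $\cE^*(1)$, the vanishings $\rH^\bullet(\cE(-j))=0$ for $1\le j\le n$, together with the intermediate-cohomology vanishing of (ii) in Definition~\ref{def:Ulrich}, kill all terms except one. This leaves $\Ext^i$ concentrated in a single degree, which must then be degree $0$ since $\cE(-1)\in\Phi_{\cS}(\rD^b(\cC))$ is a sheaf.

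The hardest step is this last one. The resolution of $\cS_c$ by $\cO_X(-j)$'s is infinite, so I cannot simply apply the Ulrich vanishing term by term. I would instead pass through $\cE(-1)=\Phi_{\cS}(\cF^\bullet)$ and compute $\rH^\bullet(X,\cE(-1))$-type groups by the projection formula, using $Rp_{\cC*}\cS=\cR$ (Lemma~\ref{lem:Rvb}) and the sequences \eqref{eqn:Raynaudfiltrationeven} and \eqref{eqn:Raynaudfiltrationodd}. These sequences let me express $\Phi_{\cS}^!$ of twists in terms of $\cR$, $\tau^*$ and $\otimes\,\cU$, so only finitely many twists $\cO_X(-j)$ with $0\le j\le n$ ever appear. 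Alternatively, I would reduce to $\cF^\bullet$ having amplitude in $[0,0]$ by the standard argument: for a Fourier--Mukai kernel flat over $\cC$ with fibers that are sheaves, if $\Phi_{\cS}(\cF^\bullet)$ is a sheaf then the top and bottom cohomology sheaves of $\cF^\bullet$ survive in the spectral sequence. Both routes need care at the stacky points for $\PP^1_{2g+1}$, where I would check the isotypic components separately using the $\tau$-linearization of Proposition~\ref{prop:restricted_spinor_linearized}.
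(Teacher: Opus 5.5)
Your reduction is sound. By full faithfulness, $\rHom(\cF^\bullet,\cO_c[i])\cong\Ext^i_X(\cE(-1),\cS_c)$. Since $\Coh(\cC)$ is hereditary, vanishing of these groups for all $i\ne0$ and all points (both characters at stacky points) is equivalent to $\cF^\bullet$ being a vector bundle in degree $0$. This would even give local freeness directly, instead of the paper's constancy of $h^1=-\chi$.

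The gap is that you never prove the vanishing $\Ext^i_X(\cE(-1),\cS_c)=0$ for $i\neq0$, which is the whole content of the proposition. Three problems:
\begin{enumerate}
\item The truncation sketch does not work as written. A truncated line-bundle resolution is exact only if capped by a spinor bundle, and then nothing ``kills all terms except one''. Intermediate cohomology is also irrelevant here.
\item Your reason for the surviving degree being $0$ is wrong. Sheafiness of $\cE(-1)$ does not exclude $\cF^\bullet=G[1]$ with $G$ a vector bundle such that $\Phi_{\cS}(G)$ is concentrated in degree $1$; the groups would then sit in degree $1$.
\item The fallback spectral-sequence argument fails for the same reason. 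Since $p_X$ has relative dimension one, $\Phi_{\cS}(\cF^\bullet)$ being a sheaf only gives $\mathcal H^j(\cF^\bullet)=0$ for $j\notin\{-1,0\}$, with $\Phi_{\cS}(\mathcal H^{-1})$ concentrated in degree $1$ and $\Phi_{\cS}(\mathcal H^0)$ in degree $0$. By itself it excludes neither $\mathcal H^{-1}$ nor torsion in $\mathcal H^0$, since torsion sheaves go to sheaves ($\Phi_{\cS}(\cO_c)=\cS_c$).
\end{enumerate}
The projection-formula route is not specified enough to assess.

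The missing idea is to use the two-periodicity as a \emph{short} exact sequence with spinor kernel, iterated exactly $n=\dim X$ times, rather than as an infinite line-bundle resolution. The fiber over $c$ of \eqref{eqn:Raynaudfiltrationeven} (resp.\ \eqref{eqn:Raynaudfiltrationodd}) is $0\to\cS_{\tau c}(-1)\to\cO_X^{\oplus 2^g}\to\cS_c\to0$; in the odd case read $\tau c=c$. Tensor it with $\cE^*(1-j)$ for $0\le j\le n-1$. By Serre duality with $\omega_X\cong\cO_X(1-n)$, $\rH^k(X,\cE^*(1-j))\cong\rH^{n-k}(X,\cE(j-n))^*=0$, so the middle terms are acyclic and
\[
\Ext^k_X(\cE(-1),\cS_c)=\rH^k(X,\cE^*(1)\otimes\cS_c)\cong\rH^{k+1}(X,\cE^*\otimes\cS_{\tau c})\cong\cdots\cong\rH^{k+n}(X,\cE^*(1-n)\otimes\cS_{c'}).
\]
The right side vanishes for $k>0$ because $k+n>\dim X$, and $k<0$ is trivial.

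This is the paper's argument in dual form. The paper tensors Ottaviani's sequence $0\to\cS_c^*\to\cO_X^{\oplus2^g}\to\cS_{\tau c}^*(1)\to0$ with $\cE(-j)$, $1\le j\le n$. It kills degree $-1$ via $\rH^0(X,\cS_c^*\otimes\cE(-1))\subset\rH^0(X,\cE(-1))^{\oplus 2^g}=0$. The $n$ Ulrich vanishings are exactly what pushes the degree past $\dim X$; that dimension bound, not sheafiness, forces the surviving degree to be $0$. At stacky points, run the same argument on the isotypic pieces of the fiber of \eqref{eqn:Raynaudfiltrationodd}. With this, your point-object criterion completes the proof.
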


\begin{proof}
We have
\[\cF^\bullet = \Phi_{\cS}^!(\cE(-1)) = Rp_{\cC*}(p_X^*(\cE(-1)) \otimes \cS^*) \otimes \omega_\cC[1].\]
Thus, \(\cF^\bullet\) is a complex of coherent sheaves whose \(i\)-th cohomology is given fiberwise by
\[\rH^{i+1}(X, \cE(-1) \otimes \cS_{c}^*) \cong \Ext_X^{i+1}(\cE^*(1), \cS_{c}^*)\]
for every \(c \in \cC\). These groups vanish for \(i < -1\).

Suppose \(d = 2g - 1\) is odd, so \(\cC = C\). For a general \(c \in C\), there is an exact sequence
\begin{equation}\label{eqn:Ottavianieven}
0 \to \cS_{c}^* \to \cO_{X}^{\oplus 2^g} \to \cS_{\tau c}^*(1) \to 0
\end{equation}
obtained from \cite[Theorem~2.8]{Ott88} by intersecting with a complementary quadric \(Q_c'\). Here \(\tau : C \to C\) is the hyperelliptic involution. For a special \(c \in C\) with \(\tau c = c\), the corresponding quadric is singular. But the spinor bundle is the pullback of one on a lower-dimensional odd quadric, so we have a similar exact sequence, again from \cite[Theorem~2.8]{Ott88}. Tensoring with \(\cE(-1)\) yields an inclusion
\[0 \to \rH^0(X, \cS_c^* \otimes \cE(-1)) \to \rH^0(X, \cE(-1))^{\oplus 2^g}.\]
Since \(\cE\) is Ulrich, the latter is zero. Thus, \(\Ext_X^0(\cE^*(1), \cS_c^*) \cong \rH^0(X, \cS_c^* \otimes \cE(-1)) = 0\).

On the other hand, exchanging \(c\) and \(\tau c\), tensoring \eqref{eqn:Ottavianieven} with \(\cE(-2)\), and taking the long exact sequence gives \(\rH^{i+1}(X, \cE(-1) \otimes \cS_{c}^*) \cong \rH^{i+2} (X, \cE(-2) \otimes \cS_{\tau c}^*)\). Applying the same computation with \(\cE(-j)\) for \(j \le 2g-1\) gives
\[\rH^{i+1}(X, \cE(-1) \otimes \cS_{c}^*) \cong \rH^{i+3}(X, \cE(-3) \otimes \cS_{c}^*) \cong \dots \cong \rH^{i+2g-1}(X, \cE(-2g+1) \otimes \cS_{c}^*).\]
The last cohomology is zero for \(i > 0\). Hence, the only potentially nonzero cohomology sheaf of \(\cF^\bullet\) occurs in degree zero and \(\cF^\bullet = \cF\in \Coh(\cC)\). Since \(\cS^* \otimes p_X^*\cE(-1)\) is flat, \(\dim \rH^1(X, \cS_{c}^* \otimes \cE(-1)) = -\chi(X, \cS_{c}^* \otimes \cE(-1))\) is a constant. Therefore, \(\cF\) is a vector bundle.

Suppose \(d = 2g - 2\), hence \(\cC = \PP_{2g+1}^1\). The proof is nearly identical to the previous case. For any non-stacky point, we have an exact sequence
\begin{equation}\label{eqn:Ottavianiodd}
0 \to \cS_c^* \to \cO_X^{\oplus 2^g} \to \cS_c^*(1) \to 0
\end{equation}
from \cite[Theorem~2.8]{Ott88}. Tensoring \(\cE(-j)\) for \(1 \le j \le 2g-2\), we obtain
\[0 \to \cE(-j) \otimes \cS_c^* \to \cE(-j)^{\oplus 2^g} \to \cE(-j+1) \otimes \cS_c^* \to 0.\]
Since \(\cE\) is Ulrich, \(\rH^i(X, \cE(-j)) = 0\) for all \(i\). This implies \(\Ext^0_X(\cE^*(1), \cS_c^*) = \rH^0(X, \cS_c^* \otimes \cE(-1)) = 0\) and
\[\rH^{i+1}(X, \cE(-j+1)\otimes \cS_c^*) \cong \rH^{i+2}(X, \cE(-j) \otimes \cS_c^*).\]
Iterating the same argument, we obtain
\[\rH^{i+1}(X, \cE(-1) \otimes \cS_c^*) \cong \rH^{i+2g-2}(X, \cE(-2g+2)\otimes \cS_c^*) = 0\]
for \(i > 0\). For a stacky point \(c\), the bundle \(\cS_c^*\) splits into a direct sum of two spinor bundles from a lower-dimensional even quadric, but we have the same exact sequence in \eqref{eqn:Ottavianieven}. Hence the same vanishing argument works. Thus, \(\cF^\bullet = \cF\) for some vector bundle \(\cF\).
\end{proof}

A partial converse (Theorem~\ref{thm:existenceUlrich}) is also true. This is a key ingredient in the construction of Ulrich bundles on \(X\) and \(\hX\).

We say \(A^\bullet \in \rD^b(Y)\) is \textbf{cohomologically (left) orthogonal} to \(B^\bullet \in \rD^b(Y)\) if \(\rHom_{\rD^b(Y)}(A^\bullet, B^\bullet[i]) = 0\) for all \(i\).

\begin{theorem}\label{thm:existenceUlrich}
Let \(\cF\) be a vector bundle on \(\cC\) such that \(\cF^*\) is cohomologically orthogonal to \(\cR\). Then \(\Phi_{\cS}(\cF)(1)\) is an Ulrich bundle on \(X\).
\end{theorem}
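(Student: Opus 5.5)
The plan has three parts: show \(\cE:=\Phi_{\cS}(\cF)(1)\) is a sheaf, show it is locally free, and verify the Ulrich vanishings in the form \eqref{eqn:Ulrichcondition}, i.e.
\[\rHom_{\rD^b(X)}(\cO(j),\Phi_{\cS}(\cF)[i])=0\quad\text{for } 0\le j\le d-1.\]

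\textbf{Step 1 (sheaf).} By definition \(\Phi_{\cS}(\cF)=Rp_{X*}(p_{\cC}^*\cF\otimes\cS)\). The fiber of \(p_{\cC}^*\cF\otimes\cS\) over a point \(x\in X\) is \(\cF\otimes\cS|_{\cC\times\{x\}}\), a vector bundle on the one-dimensional \(\cC\). So \(\Phi_{\cS}(\cF)\) has cohomology only in degrees \(0\) and \(1\). I will show the \(R^1\) vanishes, which by base change then makes \(\Phi_{\cS}(\cF)\) locally free.

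\textbf{Step 2 (the cases \(1\le j\le d-1\)).} These come for free from Corollary~\ref{cor:FMfullyfaithful}. The image \(\Phi_{\cS}(\rD^b(\cC))\) sits leftmost in the decomposition, so it lies in \(\langle\cO(1),\dots,\cO(d-1)\rangle^\perp\), giving \(\rHom(\cO(j),\Phi_{\cS}(\cF)[i])=0\).

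\textbf{Step 3 (the case \(j=0\)).} Here we must compute \(R\Gamma(X,\Phi_{\cS}(\cF))\). By the projection formula and Definition~\ref{def:Raynaud},
\[R\Gamma(X,\Phi_{\cS}(\cF))=R\Gamma(\cC,\cF\otimes Rp_{\cC*}\cS)=R\Gamma(\cC,\cF\otimes\cR).\]
Lemma~\ref{lem:Rvb} gives \(Rp_{\cC*}\cS=\cR\) with no higher terms. The right-hand side equals \(\rHom^\bullet_{\cC}(\cF^*,\cR)\), which vanishes by the orthogonality hypothesis.

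\textbf{Step 4 (local freeness, the main obstacle).} It remains to show \(H^1(\cC,\cF\otimes\cS|_{\cC\times\{x\}})=0\) for every \(x\in X\). Restricting \eqref{eqn:Raynaudfiltrationeven} (or \eqref{eqn:Raynaudfiltrationodd}) to \(\cC\times\{x\}\) gives
\[0\to \cS_x'\otimes\cO_{\cC}(-1)\to\cR\to\cS_x\to 0,\]
where \(\cS_x'\) is \(\tau^*\cS_x\) in the even case and \(\cS_x\) in the odd case. Tensor this with \(\cF\). Since \(H^\bullet(\cF\otimes\cR)=0\) by Step 3, the long exact sequence yields
\[H^1(\cF\otimes\cS_x)\cong H^2(\cF\otimes\cS'_x(-1))=0,\]
because \(\cC\) has dimension one.

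Hence \(R^1p_{X*}\) vanishes fiberwise, so \(\Phi_{\cS}(\cF)\) is a vector bundle. Combined with Steps 2 and 3, \(\cE\) satisfies \(\rH^\bullet(X,\cE(-j))=0\) for \(1\le j\le d\), which is the Ulrich condition (i) of Definition~\ref{def:Ulrich}. On the stack \(\PP_{2g+1}^1\), the argument uses that coherent cohomology vanishes above degree one, which holds in characteristic \(0\).
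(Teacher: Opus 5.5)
Your proof is correct and follows the paper's argument essentially step for step. The cases \(2\le j\le d\) come from the semiorthogonal decomposition, the case \(j=1\) from the identification \(R\Gamma(X,\Phi_{\cS}(\cF))\cong R\rHom(\cF^*,\cR)\), and local freeness from right exactness of \(\rH^1\) on the one-dimensional \(\cC\) applied to a surjection \(\cR\twoheadrightarrow\cS_x\), followed by cohomology and base change. The only difference is that you take this surjection from the tautological sequences \eqref{eqn:Raynaudfiltrationeven}/\eqref{eqn:Raynaudfiltrationodd} rather than from the resolution in Proposition~\ref{prop:relativeMF}, which is immaterial.
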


\begin{proof}
For any vector bundle \(\cF\) on \(\cC\),
\begin{equation}\label{eqn:duality}
\begin{split}
R\rHom_{\rD^b(\cC)}(\cF^*, \cR) &\cong R\Gamma(\cC, \cF \otimes p_{\cC*}\cS) \cong R\Gamma(\cC \times X, p_{\cC}^*\cF \otimes \cS)\\
&\cong R\Gamma(X, \Phi_{\cS}(\cF)) \cong R\rHom_{\rD^b(X)}(\cO_X, \Phi_{\cS}(\cF)).
\end{split}
\end{equation}

From the cohomological orthogonality, \(\rHom_{\rD^b(X)}(\cO, \Phi_{\cS}(\cF)[i]) = \rHom_{\rD^b(\cC)}(\cF^*, \cR[i]) = 0\). Corollary~\ref{cor:FMfullyfaithful} gives
\[\rH^i(X, \cE^\bullet (-j)) = \rHom_{\rD^b(X)}(\cO_X(j-1), \Phi_{\cS}(\cF)[i]) = 0\]
for \(2 \le j \le d\). Thus, \(\cE^\bullet (-1) = \Phi_{\cS}(\cF)\) is an Ulrich-like object.

Proposition~\ref{prop:relativeMF} gives a surjection \(\cR\twoheadrightarrow\cS_x\) for every \(x\in X\). Since \(\cC\) has cohomological dimension one, tensoring with \(\cF\) and taking cohomology gives
\[0=\rH^1(\cC,\cF\otimes\cR)\longrightarrow\rH^1(\cC,\cF\otimes\cS_x)\longrightarrow0.\]
The bundle \(p_{\cC}^*\cF\otimes\cS\) is flat over \(X\), so cohomology and base change imply that \(\Phi_{\cS}(\cF)\) is a vector bundle concentrated in degree zero.
\end{proof}

\begin{proposition}\label{prop:Fissemistable}
Let \(\cE\) be an Ulrich bundle on \(X\) and let \(\cF = \Phi_{\cS}^!(\cE(-1))\). Then \(\cF\) (hence \(\cF^*\)) is semistable.
\end{proposition}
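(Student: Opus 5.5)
We argue by contradiction. Suppose \(\cF=\Phi_{\cS}^!(\cE(-1))\) is not semistable, and let \(\cF'\subset\cF\) be its maximal destabilizing subsheaf. Since \(\cC\) is a smooth one-dimensional (orbifold) curve, \(\cF'\) is a subbundle, and we have \(\mu(\cF')>\mu(\cF)\). Every rank \(r'\) bundle \(\cG\) with \(\cG^*\perp\cR\) has the same slope, so it suffices to show that the slope of \(\cF\) is forced by orthogonality while \(\cF'\) violates the resulting inequality. The vanishing available is \(R\rHom(\cF^*,\cR)=R\Gamma(X,\cE(-1))=0\) by \eqref{eqn:duality}. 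By Riemann--Roch on \(\cC\) this gives \(\chi(\cF\otimes\cR)=0\), and hence pins down \(\mu(\cF)\) as a fixed number \(\mu_0\) determined by \(\deg\cR\), \(\rank\cR=2^g\) and the genus (respectively the orbifold Euler characteristic).

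The first key step is to turn the orthogonality into a vanishing for subsheaves. Since \(\cF'\subset\cF\), we get \(\rH^0(\cC,\cF'\otimes\cR)\subset \rH^0(\cC,\cF\otimes\cR)=\rH^0(X,\cE(-1))=0\). Here the identification is \eqref{eqn:duality}, and \(\cE(-1)\) has no sections because \(\cE\) is Ulrich.

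The second step is a lower bound on \(h^0(\cF'\otimes\cR)\). From \(h^0\ge\chi=\deg(\cF'\otimes\cR)+\rank(\cF'\otimes\cR)\,\chi(\cO_{\cC})\), we obtain \(\chi(\cF'\otimes\cR)=2^g r'(\mu(\cF')-\mu_0)\cdot(\text{positive constant})\). This uses \(\chi(\cF\otimes\cR)=0\) to identify \(\mu_0\), and the formula is linear in \(\deg\cF'\) with positive coefficient \(2^g\). Hence \(\mu(\cF')>\mu_0\) forces \(\chi(\cF'\otimes\cR)>0\) and therefore \(h^0(\cF'\otimes\cR)>0\), contradicting the first step.

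On the stack \(\cC=\PP^1_{2g+1}\), Riemann--Roch has correction terms at the stacky points, so the formula is not literally linear in \(\deg\). This is the main obstacle. To handle it, I would use the parabolic dictionary together with the structure \(\cEnd(\cR)\cong\widetilde{\cC\ell}_0^{\mathrm{op}}\). Alternatively, I would pull everything back along the \(\tau\)-cover \(\rho\colon C\to\PP^1_{2g+1}\); note that \(\rho^*\cR\) is the Raynaud bundle on \(C\) by Proposition~\ref{prop:filtrationodd}. Pullback preserves maximal destabilizing subsheaves, since these are unique and hence invariant, and \(\rho^*\) scales slopes by \(\deg\rho\). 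The argument therefore reduces to the curve case, where Riemann--Roch is clean, using \(\rH^0(\cC,-)=\rH^0(C,\rho^*-)^{\tau}\) and noting that the \(\tau\)-invariant part of the bound still survives.

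Finally, \(\cF^*\) is semistable because duals of semistable bundles on a curve, or on an orbifold curve, are semistable.
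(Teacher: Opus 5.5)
Your argument for \(\cC=C\) is correct. It is exactly the standard proof of the criterion the paper cites from \cite{Pop13}.

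The gap is the case \(\cC=\PP_{2g+1}^1\). You correctly flag the orbifold correction terms as the obstacle, but neither of your remedies removes it. The Azumaya remark is only a gesture. The pullback route fails as stated:
\begin{itemize}
\item Since \(\rH^0(\cC,-)=\rH^0(C,\rho^*-)^{\tau}\), you need a lower bound on the \emph{invariant} part of \(h^0(C,\rho^*(\cF'\otimes\cR))\). That invariant part is governed by the stacky Euler characteristic \(\chi(\cC,\cF'\otimes\cR)\), which is precisely the quantity carrying the correction terms.
\item Riemann--Roch on \(C\) only bounds the total \(h^0\). The anti-invariant part, coming from the second summand of \(\rho_*\cO_C\), is not controlled by your hypothesis.
\item Indeed \(\mu(\rho^*\cF)=2\mu(\cF)=(g-3)/2\ne(g-2)/2\). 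By the Riemann--Roch computation in Proposition~\ref{prop:Ulrichrankeven}, \(\rho^*\cF^*\) is therefore not orthogonal to \(\rho^*\cR\).
\item Moreover, \(\mu(\rho^*\cF')>(g-3)/2\) does not even give \(\chi(C,\rho^*(\cF'\otimes\cR))>0\).
\end{itemize}

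This is not a technicality, because the naive stacky statement is false. Take \(\cC=[\PP^1/\mu_2]\) with \(\mu_2\) acting by \(z\mapsto -z\). Let \(\cR=\cO(-1)\) and \(\cF=\cO\oplus\cO(1)\), with linearizations chosen so that \(\cO(1)\otimes\cR\cong\cO\) carries the sign character. Then \(\rH^*(\cC,\cF\otimes\cR)=0\), yet \(\cF\) has slopes \(0\) and \(1/2\). So some specific property of \(\cR\) must enter, and your proposal never names it.

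The missing input is that \(\cR\) is balanced at every stacky point: the \(\pm1\)-eigenspaces of \(\cR|_{q_\ell}\) both have dimension \(2^{g-1}\). This is the statement \(\dim W_\ell=2^{g-1}=\tfrac12\rank\cR\) of Corollary~\ref{cor:par_rk_deg}. It also follows directly from \eqref{eqn:Raynaudfiltrationodd}: restrict to \(\{q_\ell\}\times\{x\}\). There \(\cU\) restricts to the sign character, so \(\cR|_{q_\ell}\cong\cS_{q_\ell,x}\otimes(\mathbf 1\oplus\mathrm{sgn})\) as \(\mu_2\)-representations.

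The local correction at \(q_\ell\) in orbifold Riemann--Roch for \(\chi(\cF'\otimes\cR)\) is proportional to \(\mathrm{tr}(-1\mid\cF'_{q_\ell})\,\mathrm{tr}(-1\mid\cR_{q_\ell})\), and this vanishes by balancedness. Hence, for \emph{every} subbundle \(\cF'\) of rank \(r'\),
\(\chi(\cC,\cF'\otimes\cR)=2^{g-2}\bigl(4\deg\cF'+r'(3-g)\bigr)\),
as in Proposition~\ref{prop:Ulrichrankodd}. In parabolic terms, by \eqref{eq:par_ex_seq}, \(\chi\) of the parabolic Hom sheaf into \(\cR\) depends on the flags of the source only through its parabolic degree, because \(2^g-\dim W_\ell=2^{g-1}\) for every \(\ell\).

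With this input, your curve argument goes through verbatim on the stack. It then gives a self-contained substitute for the paper's citation of \cite{BD11}.
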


\begin{proof}
The computation in \eqref{eqn:duality} shows that \(\rH^i(\cC, \cF \otimes \cR) = \rHom_{\rD^b(\cC)}(\cF^*, \cR[i]) = 0\) for all \(i\). By \cite[Exercise~2.8]{Pop13}, \(\cF\) and \(\cR\) are semistable if \(\cC = C\). When \(\cC = \PP_{2g+1}^1\), this follows from \cite[Theorem~6.1]{BD11}.
\end{proof}

\begin{remark}
Note that \cite[Theorem~6.1]{BD11} is stated in terms of parabolic bundles, but we may use the equivalence between the category of bundles on \(\PP_{2g+1}^1\) and the category of parabolic bundles on its coarse moduli space, as described in \S\ref{ssec:parabolicbundle}. Indeed, its proof is based on the cohomology computation on the stack.
\end{remark}

\begin{proposition}\label{prop:Sequivalence}
Suppose that there is an exact sequence
\begin{equation}\label{eqn:sesUlrich}
0 \to \cE' \to \cE \to \cE'' \to 0
\end{equation}
of Ulrich bundles on \(X\). Then the functor \(\Phi_{\cS}^!(- \otimes \cO(-1))\) induces an exact sequence of bundles on \(\cC\)
\begin{equation}\label{eqn:sesorthogonal}
0 \to \cF' \to \cF \to \cF'' \to 0
\end{equation}
whose duals are cohomologically orthogonal to \(\cR\). Conversely, if there is a short exact sequence of bundles whose duals are orthogonal to \(\cR\) as in \eqref{eqn:sesorthogonal}, the functor \(\Phi_{\cS}(-)\otimes \cO(1)\) induces an exact sequence \eqref{eqn:sesUlrich} of Ulrich bundles on \(X\).
\end{proposition}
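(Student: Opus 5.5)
The plan is to use that \(\Phi_{\cS}\) and \(\Phi_{\cS}^!\) are exact functors of triangulated categories, and that on the relevant objects they are inverse to each other. For the forward direction, twist \eqref{eqn:sesUlrich} by \(\cO(-1)\) and apply \(\Phi_{\cS}^!\). This gives a distinguished triangle
\[\cF'\to\cF\to\cF''\to\cF'[1]\]
in \(\rD^b(\cC)\). By Proposition~\ref{prop:FMofUlrichisvb}, each of the three terms is a vector bundle concentrated in degree zero. A triangle of sheaves in degree zero is a short exact sequence in \(\Coh(\cC)\): the long exact sequence of cohomology sheaves forces it. This yields \eqref{eqn:sesorthogonal}.

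For the orthogonality of the duals, note that for an Ulrich \(\cE\) with \(\cE(-1)\in\Phi_{\cS}(\rD^b(\cC))\), full faithfulness (Corollary~\ref{cor:FMfullyfaithful}) gives \(\Phi_{\cS}(\cF)\cong\cE(-1)\). The chain \eqref{eqn:duality} then identifies
\[R\rHom(\cF^*,\cR)\cong R\Gamma(X,\cE(-1)),\]
which vanishes because \(\cE\) is Ulrich. The same argument applies to \(\cF'\) and \(\cF''\).

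Conversely, suppose \eqref{eqn:sesorthogonal} is given with all duals cohomologically orthogonal to \(\cR\). Theorem~\ref{thm:existenceUlrich} shows that \(\Phi_{\cS}(\cF')(1)\), \(\Phi_{\cS}(\cF)(1)\) and \(\Phi_{\cS}(\cF'')(1)\) are Ulrich bundles, and in particular sheaves in degree zero. Applying the exact functor \(\Phi_{\cS}(-)\otimes\cO(1)\) to the triangle associated with \eqref{eqn:sesorthogonal} gives a triangle of three sheaves in degree zero, hence a short exact sequence \eqref{eqn:sesUlrich}.

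The only point needing care is the identification \(\cE(-1)\cong\Phi_{\cS}\Phi_{\cS}^!(\cE(-1))\), that is, the counit being an isomorphism. This holds because \(\cE(-1)\) lies in the image of \(\Phi_{\cS}\) by the Ulrich vanishings \eqref{eqn:Ulrichcondition} and the semiorthogonal decomposition, and a fully faithful functor has invertible counit on its essential image. Beyond this, the argument is formal.
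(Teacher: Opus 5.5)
Your proposal is correct and follows the paper's proof: apply the exact functors \(\Phi_{\cS}^!(-\otimes\cO(-1))\) and \(\Phi_{\cS}(-)\otimes\cO(1)\), use Proposition~\ref{prop:FMofUlrichisvb} (resp.\ Theorem~\ref{thm:existenceUlrich}) to see that all three terms are sheaves in degree zero, and conclude that the resulting triangle is a short exact sequence. You also justify the orthogonality of the duals explicitly, via \eqref{eqn:duality} and the invertibility of the counit on the essential image of \(\Phi_{\cS}\), which spells out a step the paper states without argument.
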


\begin{proof}
Suppose that there is an exact sequence \eqref{eqn:sesUlrich}. Applying the functor \(\Phi_{\cS}^!(- \otimes \cO(-1))\), we obtain a distinguished triangle \(\cF' \to \cF \to \cF''\to \cF'[1]\). From Proposition~\ref{prop:FMofUlrichisvb}, this is a short exact sequence of vector bundles \(0 \to \cF' \to \cF \to \cF'' \to 0\) whose duals are orthogonal to \(\cR\). The converse is similar. Applying \(\Phi_{\cS}(-)\otimes \cO(1)\), we have a distinguished triangle \(\cE' \to \cE \to \cE'' \to \cE'[1]\). This is a short exact sequence of Ulrich bundles, as each object is concentrated in degree zero.
\end{proof}

\begin{corollary}\label{cor:stability}
Let \(\cE\) be an Ulrich bundle on \(X\). The bundle \(\cE\) is stable (resp. strictly semistable) if and only if \(\cF = \Phi_{\cS}^!(\cE(-1))\) is.
\end{corollary}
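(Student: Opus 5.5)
The plan is to turn destabilizing subsheaves on one side into destabilizing subbundles on the other, using the exact correspondence of Proposition~\ref{prop:Sequivalence} together with the semistability results already proved. Ulrich bundles are semistable, and their Jordan--H\"older factors are Ulrich. By Proposition~\ref{prop:Fissemistable}, \(\cF\) is semistable. So in both directions the only thing to decide is whether the bundle is stable or strictly semistable, and it suffices to show that \(\cE\) is strictly semistable if and only if \(\cF\) is.

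First suppose \(\cE\) is strictly semistable. Take a proper Jordan--H\"older subsheaf \(\cE'\subset\cE\) with the same slope. By the cited property of Ulrich bundles, \(\cE'\) and \(\cE''=\cE/\cE'\) are Ulrich bundles. Proposition~\ref{prop:Sequivalence} then gives an exact sequence \(0\to\cF'\to\cF\to\cF''\to0\) of nonzero vector bundles whose duals are orthogonal to \(\cR\). Next we compare slopes. Orthogonality gives \(\chi(\cF'\otimes\cR)=0\). By Riemann--Roch on \(\cC\), this says \(\mu(\cF')=-\mu(\cR)+(g-1)\) for \(\cC=C\), with the analogous orbifold Riemann--Roch (or parabolic degree) formula for \(\cC=\PP^1_{2g+1}\). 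The same holds for \(\cF\), so \(\mu(\cF')=\mu(\cF)\). Since \(0<\rank\cF'<\rank\cF\), the bundle \(\cF\) is strictly semistable.

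Conversely, suppose \(\cF\) is strictly semistable, and choose a saturated subbundle \(\cF'\subset\cF\) with \(\mu(\cF')=\mu(\cF)\) and \(0<\rank\cF'<\rank\cF\). We must show that \(\cF'^*\) and \(\cF''^*\) are orthogonal to \(\cR\). Dualizing gives \(0\to\cF''^*\to\cF^*\to\cF'^*\to0\). Here \(\cF^*\) is semistable and \(\cF''^*\) has the same slope, so \(\cF''^*\) is semistable with \(\chi(\cF''^*\otimes\cR)=0\) by the slope computation above. From \(H^0(\cF\otimes\cR)=0\) and the inclusion \(\cF'\subset\cF\) we get \(H^0(\cF'\otimes\cR)=0\), and since \(\chi(\cF'\otimes\cR)=0\) also \(H^1(\cF'\otimes\cR)=0\). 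The long exact sequence of \(0\to\cF'\otimes\cR\to\cF\otimes\cR\to\cF''\otimes\cR\to0\) then gives \(H^\bullet(\cF''\otimes\cR)=0\). Via \eqref{eqn:duality}, these vanishings say \(\cF'^*\) and \(\cF''^*\) are orthogonal to \(\cR\). The converse half of Proposition~\ref{prop:Sequivalence} now produces \(0\to\cE'\to\cE\to\cE''\to0\) with \(\cE',\cE''\) nonzero Ulrich bundles. By Example~\ref{ex:intquadrics} all Ulrich bundles on \(X\) have slope \(4\), so \(\cE'\) destabilizes strictly and \(\cE\) is strictly semistable.

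The step I expect to need the most care is the slope bookkeeping on the stacky curve \(\PP^1_{2g+1}\). There one has to check that \(\chi(\cG\otimes\cR)\) depends only on \(\mu(\cG)\) and \(\rank\cG\). For bundles on a root stack the Euler characteristic can also depend on the local types at the stacky points, which would break the slope comparison. I would handle this through the parabolic description: \(\cR\) has all parabolic subspaces of half dimension, since \(U_\ell=\im(\cdot e_\ell)\) and \(e_\ell\) is nilpotent of half rank on \(M|_{p_\ell}\). Because of this, the local correction terms in \(\chi(\cG\otimes\cR)\) should cancel, making \(\chi(\cG\otimes\cR)\) proportional to \(\rank\cG\,(\mu(\cG)+\mu(\cR)+1-g)\). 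If that fails, the fallback is to pull everything back along \(\rho:C\to\PP^1_{2g+1}\) and argue on \(C\), using that stability is preserved under this pullback.
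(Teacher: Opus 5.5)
Your proposal is correct and follows the paper's proof essentially step for step. In one direction you transport a destabilizing sequence of Ulrich bundles through Proposition~\ref{prop:Sequivalence} and compare slopes via $\chi(\cF'\otimes\cR)=0$. In the other you use Riemann--Roch and the long exact sequence of $0\to\cF'\otimes\cR\to\cF\otimes\cR\to\cF''\otimes\cR\to0$ to get orthogonality of ${\cF'}^*$ and ${\cF''}^*$, then apply Proposition~\ref{prop:Sequivalence} back.

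Your observation that $\cR$ has half-dimensional flags at every stacky point (Corollary~\ref{cor:par_rk_deg}) is exactly what justifies the Riemann--Roch step on $\PP^1_{2g+1}$, which the paper leaves unstated: it makes $(\cG\otimes\cR)_q$ have balanced $\pm$-eigenspaces, so the stacky correction vanishes. There the constant is $(3-2g)/4$ rather than $1-g$. Separately, your aside asserting $\chi({\cF''}^*\otimes\cR)=0$ is false as written (the relevant quantity is $\chi(\cF''\otimes\cR)$), but it plays no role in the argument.
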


\begin{proof}
An Ulrich bundle is semistable \cite[Proposition~3.3.14]{CMRPL21}. Suppose that \(\cE\) is not stable. Then there is a destabilizing sequence \(0 \to \cE' \to \cE \to \cE'' \to 0\). Then both \(\cE'\) and \(\cE''\) are Ulrich \cite[Proposition~3.3.1, Proposition~3.3.16]{CMRPL21}. By Proposition~\ref{prop:Sequivalence}, we obtain an exact sequence \eqref{eqn:sesorthogonal}. Propositions \ref{prop:Ulrichrankeven} and \ref{prop:Ulrichrankodd} show that \(\mu(\cF') = \mu(\cF) = \mu(\cF'')\). Therefore, \(\cF\) is strictly semistable.

Conversely, suppose that \(\cF\) is strictly semistable and we have a destabilizing sequence \(0 \to \cF' \to \cF \to \cF'' \to 0\). Tensoring with \(\cR\), we have
\begin{equation}\label{eqn:sesR}
0 \to \cF' \otimes \cR \to \cF \otimes \cR \to \cF'' \otimes \cR \to 0.
\end{equation}
Then \(\mu(\cF') = \mu(\cF) = \mu(\cF'')\). For \(\cF\), the orthogonality yields \(\chi(\cC, \cF \otimes \cR) = 0\). Then Riemann--Roch gives \(\chi(\cC, \cF' \otimes \cR) = \chi(\cC, \cF'' \otimes \cR) = 0\), too. Chasing the long exact sequence from \eqref{eqn:sesR}, we obtain \(\rH^*(\cC, \cF' \otimes \cR) = \rH^*(\cC, \cF'' \otimes \cR) = 0\). Thus, we obtain the orthogonality of \({\cF'}^*\) and \({\cF''}^*\). Now applying Proposition~\ref{prop:Sequivalence}, we conclude \(\cE\) is strictly semistable.
\end{proof}

\begin{corollary}\label{cor:Sequivalence}
\begin{enumerate}
  \item Let \(\cE\) be an Ulrich bundle on \(X\). Then the functor \(\Phi_{\cS}^!(-\otimes \cO(-1))\) preserves Jordan--H\"older filtration.
  \item Let \(\cF\) be a semistable bundle on \(\cC\) such that \(\cF^*\) is cohomologically orthogonal to \(\cR\). Then the functor \(\Phi_{\cS}(-)\otimes \cO(1)\) preserves Jordan--H\"older filtration.
\end{enumerate}
\end{corollary}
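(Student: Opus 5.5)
The plan is to deduce both parts by induction on the length of a Jordan--H\"older filtration, using Proposition~\ref{prop:Sequivalence} to transport short exact sequences and Corollary~\ref{cor:stability} to transport stability. The key observation is that both functors are exact on the relevant subcategories. Let \(\mathsf{U}\) be the category of Ulrich bundles on \(X\). Let \(\mathsf{O}\) be the category of semistable bundles \(\cF\) on \(\cC\) with \(\cF^*\) cohomologically orthogonal to \(\cR\). Then Theorem~\ref{thm:existenceUlrich} and Propositions~\ref{prop:FMofUlrichisvb}, \ref{prop:Fissemistable} show that \(\Phi_{\cS}(-)\otimes\cO(1)\) and \(\Phi_{\cS}^!(-\otimes\cO(-1))\) are mutually inverse functors \(\mathsf{O}\leftrightarrow\mathsf{U}\). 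Full faithfulness of \(\Phi_{\cS}\) (Corollary~\ref{cor:FMfullyfaithful}) gives \(\Phi_{\cS}^!\Phi_{\cS}\cong\mathrm{id}\), and \(\cE(-1)\) lies in the essential image of \(\Phi_{\cS}\) by the discussion preceding Proposition~\ref{prop:FMofUlrichisvb}.

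For (1), I take a Jordan--H\"older filtration \(0=\cE_0\subset\cE_1\subset\dots\subset\cE_m=\cE\) of the Ulrich bundle \(\cE\). By \cite[Proposition~3.3.14]{CMRPL21}, every \(\cE_i\) and every quotient \(\cE_i/\cE_{i-1}\) is Ulrich, and all have slope \(4\) by Example~\ref{ex:intquadrics}. Applying Proposition~\ref{prop:Sequivalence} to each sequence \(0\to\cE_{i-1}\to\cE_i\to\cE_i/\cE_{i-1}\to0\) gives exact sequences \(0\to\cF_{i-1}\to\cF_i\to\cF_i/\cF_{i-1}\to0\) of bundles on \(\cC\), whose duals are orthogonal to \(\cR\). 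I then need to identify \(\Phi_{\cS}^!\) of the quotient with \(\cF_i/\cF_{i-1}\); this holds because the triangle image is a short exact sequence. Next, Corollary~\ref{cor:stability} shows that each graded piece \(\cF_i/\cF_{i-1}\) is stable, since \(\cE_i/\cE_{i-1}\) is. The slopes are all equal because the slope of \(\cF\) is determined by the rank via Propositions~\ref{prop:Ulrichrankeven} and \ref{prop:Ulrichrankodd}, as used in the proof of Corollary~\ref{cor:stability}. Hence \((\cF_i)\) is a filtration of the semistable bundle \(\cF\) with stable graded pieces of equal slope, that is, a Jordan--H\"older filtration. By exactness, the associated graded objects correspond.

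For (2), the argument is symmetric. I take a Jordan--H\"older filtration \((\cF_i)\) of \(\cF\), with graded pieces stable of slope \(\mu(\cF)\). I first show that each \(\cF_i\) and each quotient has dual orthogonal to \(\cR\). This is the Euler characteristic argument from the proof of Corollary~\ref{cor:stability}. Since \(\cR\) is semistable of fixed slope, Riemann--Roch makes \(\chi(\cC,\cG\otimes\cR)\) proportional to \(\operatorname{rank}\cG\cdot(\mu(\cG)+\mu(\cR)+1-g_{\cC})\), with the appropriate orbifold version on \(\PP^1_{2g+1}\). Hence \(\chi\) vanishes for every subquotient of slope \(\mu(\cF)\). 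On the other hand, \(\rH^0(\cC,\cF_i\otimes\cR)\subset\rH^0(\cC,\cF\otimes\cR)=0\) for subbundles. Therefore \(\rH^*\) vanishes on subobjects, and then on quotients by the long exact sequence. I proceed inductively along the filtration. Once this is established, Proposition~\ref{prop:Sequivalence} yields exact sequences of Ulrich bundles. Corollary~\ref{cor:stability} shows that the graded pieces \(\Phi_{\cS}(\cF_i/\cF_{i-1})(1)\) are stable, and they are Ulrich of slope \(4\), so the image is a Jordan--H\"older filtration.

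The main obstacle is the bookkeeping that the graded pieces land in the correct subcategories. In (1), the issue is the slope equality on \(\cC\), which relies on Propositions~\ref{prop:Ulrichrankeven} and \ref{prop:Ulrichrankodd}. In (2), it is the cohomology vanishing of all subquotients, which is needed before Proposition~\ref{prop:Sequivalence} can be invoked. I would handle (2) by first proving the \(\rH^0\) vanishing for subbundles, then deducing \(\chi=0\) from slope equality, and finally deducing \(\rH^1=0\), inductively in \(i\).
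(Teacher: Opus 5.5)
Your proposal is correct and follows the paper's argument. As in the paper, you transport the filtration through Proposition~\ref{prop:Sequivalence}, get stability of the graded pieces from Corollary~\ref{cor:stability}, and get equal slopes from Propositions~\ref{prop:Ulrichrankeven} and \ref{prop:Ulrichrankodd}; for (2), your Euler-characteristic argument (taken from the proof of Corollary~\ref{cor:stability}) supplies the orthogonality of the subquotients, which the paper only covers with ``identical.''

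One small correction: on \(\PP^1_{2g+1}\), it is not the semistability of \(\cR\) that makes \(\chi(\cC,\cG\otimes\cR)\) depend only on \(\rank\cG\) and \(\deg\cG\). What does it is that \(\dim W_\ell=\frac12\rank\cR\) at every marked point (Corollary~\ref{cor:par_rk_deg}), so the stacky correction terms vanish and the formula in the proof of Proposition~\ref{prop:Ulrichrankodd} applies.
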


\begin{proof}
Suppose that \(\cE\) admits a nontrivial Jordan--H\"older filtration \(0= \cE_0 \subset \cE_1 \subset \cE_2 \subset \cdots \subset \cE_k = \cE\). Repeated application of \cite[Proposition~3.3.1, Proposition~3.3.16]{CMRPL21} shows that all graded pieces \(\cE_i/\cE_{i-1}\) are stable Ulrich bundles. Moreover, because any extension of cohomologically orthogonal bundles is also orthogonal, we obtain a filtration
\[0= \cF_0 \subset \cF_1 \subset \cF_2 \subset \cdots \subset \cF_k = \cF\]
of bundles such that \(\cF_i/\cF_{i-1}\) are all orthogonal. Moreover, Propositions \ref{prop:Ulrichrankeven} and \ref{prop:Ulrichrankodd} imply that \(\mu(\cF_i/\cF_{i-1}) = \mu(\cF)\). Since each \(\cF_i/\cF_{i-1}\) is stable, the filtration is a Jordan--H\"older filtration of \(\cF\). This proves Item (1).

The proof of Item (2) is identical.
\end{proof}

%%%%%%%%%%%%%%%%%%%%%%%%%%%%%%%%%%%%%%%%%%%%%%%
\section{Conditional existence of Ulrich bundles}\label{sec:existenceUlrich}

The purpose of this section is twofold. First, we characterize the rank of Ulrich bundles on \(X\) in Theorems \ref{thm:mainthmeven} and \ref{thm:mainthmodd}. Second, we show that the existence of \textit{an} Ulrich bundle for a few low-rank cases for intersections of odd-dimensional quadrics implies both Theorems \ref{thm:mainthmeven} and \ref{thm:mainthmodd}.

\begin{definition}\label{def:modulispace}
\begin{enumerate}
  \item Let \(\cU_r(X)\) (resp. \(\rU_r(X)\)) be the moduli stack (resp. good moduli space) of rank \(r\) Ulrich bundles on \(X\). Note that any Ulrich bundle is semistable.
  \item Let \(\cM_\cC(r, d)\) (resp. \(\rM_\cC(r, d)\)) be the moduli stack (resp. good moduli space) of rank \(r\), degree \(d\) semistable vector bundles on \(\cC\).
\end{enumerate}
\end{definition}

Our goal is to prove that \(\cU_r(X)\) is nonempty for the relevant ranks and to study its geometry.

\subsection{Even-dimensional quadrics}\label{ssec:evenRaynaud}

Suppose \(g\ge2\). We treat the exceptional \(g = 1\) case in Remark~\ref{rmk:genus1}. Then \(\dim X=2g-1\), \(X\) is an intersection of two even-dimensional quadrics, and \(\cC=C\) is a hyperelliptic curve of genus \(g\).

Let \(p_C \in \rH^2(C, k)\) be the point class on \(C\) and \(h\in \rH^2(X, k)\) be the hyperplane class. Since \(g \ge 2\), \(\rH^1(X, k) = 0\), and
\[\rH^2(C \times X, k) \cong \rH^0(C, k) \otimes \rH^2(X, k) \oplus \rH^2(C, k) \otimes \rH^0(X, k) \cong k p_C \oplus k h.\]

\begin{lemma}\label{lem:chofShat}
We have
\[\ch_0(\cS) = 2^{g-1},\quad\ch_1(\cS) = (g+1)2^{g-2}p_C + 2^{g-2}h.\]
\end{lemma}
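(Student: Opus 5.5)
We compute the two pieces of \(\ch_1(\cS)\) separately, by restricting to the two kinds of fibers of \(C\times X\). Write \(\ch_1(\cS)=a\,p_C+b\,h\). The rank and \(b\) can be read off a single fiber \(\{c\}\times X\); the coefficient \(a\) needs the global structure over \(C\).

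\textbf{Rank and \(b\).} For \(c\in C\) away from the ramification points, \(\cS|_{\{c\}\times X}\) is the restriction to \(X\) of the spinor bundle \(\tcS_c\) on the smooth quadric \(Q_c\subset\PP^{2g+1}\). By \eqref{eqn:spinorresolution} and the discussion in \S\ref{ssec:spinor}, it has rank \(2^{(2g+2)-(g+1)-2}=2^{g-1}\); this gives \(\ch_0(\cS)=2^{g-1}\).

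For \(b\), I would use that \(\tcS_c\) is Ulrich on \(Q_c\) (Remark~\ref{rmk:Ulrichquadric}). For \(Q^n\) of degree \(2\) we have \(K=-nH\), so the degree formula \eqref{eqn:Ulrichnumeric} gives
\[c_1(\tcS_c)\cdot H^{n-1}=\tfrac{r\,((n+1)H-nH)\,H^{n-1}}{2}=r.\]
Writing \(c_1(\tcS_c)=b\,H\) and using \(H^n=2\) gives \(b=r/2=2^{g-2}\). As a cross-check, the resolution \eqref{eqn:spinorresolution} gives \(c_1(\imath_*\tcS_c)=2^g H\) on \(\PP V\), and this equals \(r[Q_c]=2^{g-1}\cdot 2H\). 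Restriction to \(X\) preserves the coefficient of \(h\).

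\textbf{The coefficient \(a\).} This is \(\deg\cS|_{C\times\{x\}}\) for a point \(x\in X\). I would restrict the sequence \eqref{eqn:Raynaudfiltrationeven} of Proposition~\ref{prop:filtrationeven} to \(C\times\{x\}\), which gives
\[0\to \tau^*(\cS|_{C\times x})\otimes\cO_C(-\widetilde p_{2g+2})\to\cR\to\cS|_{C\times x}\to0.\]
Here \(\cU|_{C\times x}\cong\cO_C(-1)=\cO_C(-\widetilde p_{2g+2})\), and \(\tau^*\) preserves degree. Taking degrees yields \(\deg\cR=2a-2^{g-1}\). So the whole problem reduces to showing \(\deg\cR=g\,2^{g-1}\), which is equivalent to \(a=(g+1)2^{g-2}\).

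I expect computing \(\deg\cR\) to be the main obstacle. I would try two routes.

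The first route uses the resolution \eqref{eqn:relative_spinor_ambient_resolution} on \(C\times\PP V\). Its first term is \(\cR'=\cR\otimes_{\widetilde{\cC\ell}_0}(\widetilde{\cC\ell}_1\otimes\pi^*\cO_{\PP^1}(-1))\), and Corollary~\ref{cor:Cliffordendomorphismalgebra} should let me express \(\det\cR'\) in terms of \(\det\cR\). To find the determinant of \(\widetilde{\cC\ell}_1\), I would push forward by \(\pi\), using the explicit splitting of \(\cC\ell_1\) and \(\pi_*\cO_C=\cO\oplus\cO(-g-1)\). Computing \(c_1\) of \((\imath_{\cQ})_*\tcS\) from this resolution, and then restricting to \(C\times X\) via \eqref{eqn:StildeS}, pins down the \(p_C\)-coefficient in terms of \(\deg\cR\). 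Combined with the relation \(\deg\cR=2a-2^{g-1}\) above, this should close up.

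The second route, and the one I would try first as a sanity check, is to use the identification of \(\cR\) with the bundle \(\cF\) of \cite[Proposition~4.5]{ES25}, whose degree is computable from their matrix factorization. The target value is \(\mu(\cR)=g/2\). This is consistent with Proposition~\ref{prop:Fissemistable}: there \(\cR\) is orthogonal to a semistable bundle of slope \(-(g-2)/2\) (Theorem~\ref{thm:mainthmeven}(2)), so \(\chi=0\) forces \(\mu(\cR)=(g-1)-(g-2)/2=g/2\). This consistency check gives the value to aim for, though as a proof it would be circular.
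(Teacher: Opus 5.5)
Your skeleton matches the paper's: both combine the filtration \eqref{eqn:Raynaudfiltrationeven} with the value $\ch(\cR)=2^g+g2^{g-1}p_C$. The paper reads the rank and both coefficients of $\ch_1(\cS)$ off the single class identity $\ch(\cS)+\ch(\cU)\,(\tau\times\mathrm{id}_X)^*\ch(\cS)=p_C^*\ch(\cR)$ in degrees $0$ and $1$, using $c_1(\cU)=-p_C-h$. Your fiberwise treatment is a fine alternative: the rank from Addington's construction, and $b$ from the Ulrich degree formula on $Q_c$. Your restriction to $C\times\{x\}$ correctly gives $\deg\cR=2a-2^{g-1}$. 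Note, however, that your ``cross-check'' $c_1(\imath_*\tcS_c)=2^gH$ only confirms the rank, not $b$: $c_1$ of a sheaf pushed forward from a divisor is just its rank times the divisor class.

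The gap is in your first route to $\deg\cR=g2^{g-1}$, which cannot work. Its ingredients are $\cEnd(\cR)\cong\widetilde{\cC\ell}_0^{\mathrm{op}}$, the formula $\cR'=\cR\otimes_{\widetilde{\cC\ell}_0}(\widetilde{\cC\ell}_1\otimes\pi^*\cO_{\PP^1}(-1))$, $\det\widetilde{\cC\ell}_1$, and the resolution \eqref{eqn:relative_spinor_ambient_resolution}. All of them survive unchanged if you replace $\tcS,\cR,\cR'$ by their twists by a line bundle $\cL$ on $C$, while $\deg\cR$ shifts by $2^g\deg\cL$. So this route can only produce twist-invariant relations.

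Concretely, $c_1((\imath_{\cQ})_*\tcS)=2^{g-1}[\cQ_C]=2^g(p_C+h)$ only yields $\deg\cR-\deg\cR'=2^g$. The $\ch_2$-term from Grothendieck--Riemann--Roch for $\imath_{\cQ}$ then returns exactly $2a=\deg\cR+2^{g-1}$, the relation you already had. The two equations coincide, so nothing closes up.

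The absolute degree is fixed only by the normalization $\cO_{\OGr_{g+1}}(D_U)$. This is precisely what Proposition~\ref{prop:Raynaudidentical} handles: Morita uniqueness gives $\cF_U\cong\cR_U\otimes\cL$, and the line $\ell_U$ forces $\cL\cong\cO_{\cC}$. Only after that can one import $\ch(\cF_U)$ from \cite[Proposition~4.5]{ES25}. So your ``second route'' is not a sanity check but the proof itself, and it is exactly what the paper does. Drop the first route and make the citation the key step.
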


\begin{proof}
We have \(c_1(\cU) = -p_C - h\). The Chern character of \(\cR_U \cong \cF_U\) was computed in \cite[Proposition~4.5]{ES25} as \(2^g + g2^{g-1}p_C\).

Since \(\tau\) acts trivially on \(\rH^2(C, k)\) and \(\rH^0(C, k)\), \(\ch_1((\tau \times \mathrm{id}_X)^*\cS) = \ch_1(\cS)\). By Proposition~\ref{prop:filtrationeven}, we have
\[\ch(\cS) + \ch(\cU)(\tau \times \mathrm{id}_X)^*\ch(\cS) = p_{\cC}^*\ch(\cR) = 2^g + g2^{g-1}p_C.\]
Calculating low-degree terms, we obtain the result.
\end{proof}

\begin{proposition}\label{prop:Ulrichrankeven}
Let \(\cF\) be a rank-\(r\) degree-\(d\) vector bundle on \(C\) such that \(\Phi_{\cS}(\cF) = \cE(-1)\) for some Ulrich bundle \(\cE\) on \(X\). Then \(d = r(g-2)/2\) and \(\rank \cE = r2^{g-2}\).
\end{proposition}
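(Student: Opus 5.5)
The plan is to extract both numerical invariants from Riemann--Roch on the curve \(C\), using the Chern character data of Lemma~\ref{lem:chofShat} and the Raynaud bundle computation \(\ch(\cR)=2^g+g2^{g-1}p_C\) quoted in its proof.

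\textbf{Degree.} First I determine \(d\) from the Ulrich vanishing. Since \(\Phi_{\cS}(\cF)=\cE(-1)\) with \(\cE\) Ulrich, \(\rH^\bullet(X,\cE(-1))=0\). The chain of isomorphisms \eqref{eqn:duality} identifies this with \(R\rHom_{\rD^b(C)}(\cF^*,\cR)=R\Gamma(C,\cF\otimes\cR)\), so \(\chi(C,\cF\otimes\cR)=0\). The bundle \(\cR\) has rank \(2^g\) and degree \(g2^{g-1}\). Riemann--Roch on \(C\) then gives
\[0=\deg(\cF\otimes\cR)+\rank(\cF\otimes\cR)(1-g)=2^g d+r g2^{g-1}+r2^g(1-g).\]
This rearranges to \(2^g d=r2^{g-1}(g-2)\), that is, \(d=r(g-2)/2\).

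\textbf{Rank.} Next I compute \(\rank\cE=\rank\Phi_{\cS}(\cF)\) fiberwise. For \(x\in X\), write \(\cS_x:=\cS|_{C\times\{x\}}\). By cohomology and base change, the fiber of \(\Phi_{\cS}(\cF)\) at \(x\) is computed by \(R\Gamma(C,\cF\otimes\cS_x)\). The surjection \(\cR\twoheadrightarrow\cS_x\) from Proposition~\ref{prop:relativeMF} (used in the proof of Theorem~\ref{thm:existenceUlrich}), together with \(\rH^1(C,\cF\otimes\cR)=0\), forces \(\rH^1(C,\cF\otimes\cS_x)=0\). Hence
\[\rank\cE=h^0(C,\cF\otimes\cS_x)=\chi(C,\cF\otimes\cS_x).\]
By Lemma~\ref{lem:chofShat}, restricting to \(C\times\{x\}\) kills \(h\), so \(\cS_x\) has rank \(2^{g-1}\) and degree \((g+1)2^{g-2}\). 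Riemann--Roch then gives
\[\chi(C,\cF\otimes\cS_x)=2^{g-1}d+r(g+1)2^{g-2}+r2^{g-1}(1-g)=2^{g-2}\bigl(2d+r(g+1)+2r(1-g)\bigr).\]
Substituting \(2d=r(g-2)\), the bracket collapses to \(r\), so \(\rank\cE=r2^{g-2}\).

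\textbf{Where the difficulty lies.} The substantive input is already in place: the Chern character of \(\cS\) and the value of \(\ch(\cR)\) from \cite{ES25}. The only point needing care is the justification that \(\Phi_{\cS}(\cF)\) is a genuine vector bundle whose rank equals the Euler characteristic on a single fiber. I will handle this exactly as in the last paragraph of the proof of Theorem~\ref{thm:existenceUlrich}, via flatness of \(p_C^*\cF\otimes\cS\) over \(X\). The remaining steps are routine arithmetic.
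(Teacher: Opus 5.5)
Your proposal is correct and follows the paper's proof. The degree comes from the same Riemann--Roch identity for \(\chi(C,\cF\otimes\cR)=0\), and your fiberwise computation of \(\chi(C,\cF\otimes\cS_x)\) is exactly the \(\ch_0\)-component of the paper's Grothendieck--Riemann--Roch pushforward \(p_{X*}\bigl(\ch(p_C^*\cF)\ch(\cS)\td(\cT_C)\bigr)\); the \(\rH^1\)-vanishing step is harmless but unnecessary, since \(\cE(-1)\) is already a vector bundle and its rank equals the Euler characteristic of the derived fiber.
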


\begin{proof}
From the orthogonality, we have \(\rH^*(C, \cF \otimes \cR) = 0\). By Riemann--Roch,
\[d2^g + rg2^{g-1} + r2^g(1-g) = \chi(C, \cF \otimes \cR) = 0.\]
This yields \(d = r(g-2)/2\).

To apply Grothendieck-Riemann--Roch, we compute
\[\ch(p_{\cC}^*\cF)\ch(\cS)\td(\cT_C) = (r+dp_C)\cdot\ch(\cS)\cdot (1+(1-g)p_C).\]
Expanding it, we obtain
\[r2^{g-1} + 2^{g-1}\left(d+r+r(1-g)/2\right)p_C + r2^{g-2}h + \dots.\]
Taking the push-forward, we have
\[\ch(\cE(-1)) = p_{X*}(\ch(p_{\cC}^*\cF)\ch(\cS)\td(\cT_C)) = 2^{g-1}((d+r)+\frac{r}{2}(1-g)) + \dots.\]
Therefore,
\[\rank \cE = \rank \cE(-1) = 2^{g-1}\left(d+r+r(1-g)/2\right) = r2^{g-2}. \qedhere\]
\end{proof}

Summing up the previous results, we obtain the following \emph{conditional} result.

\begin{theorem}\label{thm:maintheoremevenconditional}
Suppose that there is a rank \(r\), degree \(r(g-2)/2\) vector bundle \(\cF\) on \(C\) such that \(\cF^*\) is cohomologically orthogonal to \(\cR\). Then Theorem~\ref{thm:mainthmeven} is true for this value of \(r\).
\end{theorem}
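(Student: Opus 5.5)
Assume a rank \(r\), degree \(r(g-2)/2\) bundle \(\cF\) on \(C\) with \(\cF^*\) cohomologically orthogonal to \(\cR\) is given. We must establish both parts of Theorem~\ref{thm:mainthmeven} for this \(r\): existence of an indecomposable Ulrich bundle of rank \(r2^{g-2}\), the necessity of the numerical conditions, and the open embedding of moduli stacks.

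\emph{Necessity.} Let \(\cE\) be any Ulrich bundle on \(X\). By Proposition~\ref{prop:FMofUlrichisvb}, \(\cF_\cE:=\Phi_{\cS}^!(\cE(-1))\) is a vector bundle with \(\Phi_{\cS}(\cF_\cE)=\cE(-1)\). Proposition~\ref{prop:Ulrichrankeven} then gives \(\rank\cE=r'2^{g-2}\) and \(\deg\cF_\cE=r'(g-2)/2\), where \(r'=\rank\cF_\cE\). Integrality of the degree forces \(r'g\equiv 0\pmod 2\).

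To rule out \(r'=1\) when \(g\) is even, note that \(\cF_\cE^*\) would be a line bundle \(L\) of degree \(-(g-2)/2\) with \(\rH^*(C,L^{-1}\otimes\cR)=0\). I expect to exclude this using the filtration of Proposition~\ref{prop:filtrationeven}, or by appealing to the known bound \(uc(X)=2^{g-1}\) of \cite{ES25}. This is the one step not supplied directly by earlier results, so I would cite \cite{ES25} for it.

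\emph{Existence and indecomposability.} Set \(\cE:=\Phi_{\cS}(\cF)(1)\). By Theorem~\ref{thm:existenceUlrich}, \(\cE\) is an Ulrich bundle, and by Proposition~\ref{prop:Ulrichrankeven} its rank is \(r2^{g-2}\). Orthogonality is an open condition, and the semistability of \(\cF\) follows from \cite{Pop13}. Hence I may deform \(\cF\) inside \(\cM_C(r,r(g-2)/2)\) to a general, and therefore stable, bundle that is still orthogonal; here I use that this moduli stack is irreducible and that stable bundles are dense in it for \(g\ge2\). By Corollary~\ref{cor:stability}, the resulting \(\cE\) is stable, hence simple, hence indecomposable.

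\emph{Moduli.} The Ulrich condition is open in families, and \(\Phi_{\cS}^!(-\otimes\cO(-1))\) is defined for families. By Proposition~\ref{prop:FMofUlrichisvb} and Proposition~\ref{prop:Fissemistable} it sends families of Ulrich bundles to families of semistable bundles of rank \(r\) and degree \(r(g-2)/2\). Since \(\Phi_{\cS}\) is fully faithful (Corollary~\ref{cor:FMfullyfaithful}), this defines a monomorphism of stacks that is bijective on automorphisms. Its image is the locus of \(\cF\) with \(\rH^*(C,\cF\otimes\cR)=0\), which is open by semicontinuity. The inverse on this locus is \(\Phi_{\cS}(-)\otimes\cO(1)\), by Theorem~\ref{thm:existenceUlrich}. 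Together these give the open embedding \(\cU_{r2^{g-2}}(X)\hookrightarrow\cM_C(r,r(g-2)/2)\). The image is nonempty by hypothesis, which completes part~(2).
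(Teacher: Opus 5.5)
Your proposal is correct and follows essentially the same route as the paper. Orthogonal bundles are semistable; the open nonempty orthogonal locus meets the dense stable locus of the irreducible stack $\cM_C(r,r(g-2)/2)$; Corollary~\ref{cor:stability} gives a stable, hence indecomposable, Ulrich bundle; and $\Phi_{\cS}(-)(1)$ and $\Phi_{\cS}^!(-\otimes\cO_X(-1))$ identify $\cU_{r2^{g-2}}(X)$ with the open orthogonal locus. The differences are minor: you spell out the necessity direction, using Proposition~\ref{prop:Ulrichrankeven} and \cite{ES25} for $r\ge2$, which the paper leaves implicit. The paper, in turn, adds that the orthogonal locus is saturated (Corollary~\ref{cor:Sequivalence}), so the embedding also descends to good moduli spaces; the stack-level statement does not need this.
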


\begin{proof}
Such bundles are semistable by Theorem~\ref{thm:existenceUlrich} and Proposition~\ref{prop:Fissemistable}, and their nonempty open locus meets the dense stable locus. Corollary~\ref{cor:stability} and Proposition~\ref{prop:Ulrichrankeven} therefore yield a stable Ulrich bundle of rank \(r2^{g-2}\).

Proposition~\ref{prop:FMofUlrichisvb} and Theorem~\ref{thm:existenceUlrich}, together with full faithfulness of \(\Phi_{\cS}\) and base change, give an equivalence of stacks
\[\Phi_{\cS}(-)(1)\colon\cM_C(r,r(g-2)/2)^{\perp\cR}\xrightarrow{\sim}\cU_{r2^{g-2}}(X)\]
with inverse \(\Phi_{\cS}^!(-\otimes\cO_X(-1))\). By Corollary~\ref{cor:Sequivalence} and closure under extensions, the orthogonal locus is saturated, so the induced morphism of good moduli spaces is an open embedding.
\end{proof}

The following observation reduces the problem to low-rank cases.

\begin{proposition}\label{prop:lowrankisenougheven}
\begin{enumerate}
  \item Suppose that \(g\) is even. If there are vector bundles \(\cF_2\) and \(\cF_3\) of ranks two and three whose duals are cohomologically orthogonal to \(\cR\), then for every \(r \ge 2\), there is a stable bundle \(\cF_r\) whose dual is cohomologically orthogonal to \(\cR\).
  \item Suppose that \(g\) is odd. If we have a rank-two vector bundle \(\cF_2\) whose dual is cohomologically orthogonal to \(\cR\), then for any even \(r \ge 2\), we have a rank-\(r\) stable bundle \(\cF_r\) whose dual is cohomologically orthogonal to \(\cR\).
\end{enumerate}
In particular, Theorem~\ref{thm:mainthmeven} is true for any \(r \ge 2\) with \(rg \equiv 0 \;(\mathrm{mod}\; 2)\).
\end{proposition}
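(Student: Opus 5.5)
The plan is to use the fact that the orthogonality condition is an open condition on the moduli stack, and to build higher-rank examples by extensions of low-rank ones. The first observation is that if $\cF', \cF''$ are vector bundles whose duals are cohomologically orthogonal to $\cR$, then so is every extension $0\to\cF'\to\cF\to\cF''\to0$. Indeed, tensoring with $\cR$ and using the long exact sequence gives $\rH^*(C,\cF\otimes\cR)=0$. Using sums of copies of $\cF_2$ and $\cF_3$ then produces a bundle of every rank $r=2a+3b$, which covers all $r\ge2$ when $g$ is even, and all even $r$ via $\cF_2^{\oplus r/2}$ when $g$ is odd. By Proposition~\ref{prop:Ulrichrankeven}, such a bundle automatically has degree $r(g-2)/2$; this forces $rg$ to be even, consistent with the parity restriction. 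The remaining issue is to upgrade these polystable objects to \emph{stable} bundles.

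For stability I would argue by openness and irreducibility. The moduli stack $\cM_C(r,r(g-2)/2)$ is irreducible, and its stable locus is dense and nonempty for $g\ge2$. The locus of semistable $\cF$ with $\rH^0(C,\cF\otimes\cR)=\rH^1(C,\cF\otimes\cR)=0$ is open by semicontinuity. Note that $\chi(\cF\otimes\cR)=0$ holds automatically for this rank and degree, so only $\rH^0=0$ needs to be imposed. The direct sum constructed above lies in this open locus, so the locus is nonempty. A nonempty open subset of an irreducible stack meets the dense stable locus, and this yields a stable $\cF_r$ whose dual is orthogonal to $\cR$. I also need $\cF_2,\cF_3$ themselves to be semistable of the correct slope. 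This follows from Proposition~\ref{prop:Fissemistable} applied via Theorem~\ref{thm:existenceUlrich}: the bundles $\cF_2,\cF_3$ give Ulrich bundles, and orthogonality to $\cR$ gives semistability. All the pieces therefore have the same slope $(g-2)/2$, so their direct sum is semistable of the right slope.

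Alternatively, one could avoid density arguments by taking nonsplit extensions of stable pieces and deforming. However, the openness argument is cleaner and is what I would use. The final claim then follows from Theorem~\ref{thm:maintheoremevenconditional} applied for each such $r$. The rank restriction $r\ge2$ comes from the fact that $d=r(g-2)/2$ must be an integer, together with the "only if" direction in Proposition~\ref{prop:Ulrichrankeven}; the case $r=1$ would additionally have to be excluded by the low-rank input, which I take as already given by the hypotheses.

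The main subtlety is the irreducibility of the semistable moduli stack and the density of its stable locus in genus $g\ge2$. This is classical, but it should be cited rather than reproved. I would also check that the orthogonality locus is genuinely open at the level of stacks, which follows from upper semicontinuity in flat families.
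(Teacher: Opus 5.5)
Your proposal is correct and follows the paper's proof. You form $\cF_2^{\oplus a}\oplus\cF_3^{\oplus b}$ with $r=2a+3b$ (or $\cF_2^{\oplus r/2}$ for odd $g$), observe that it is a semistable point of the open orthogonality locus in the irreducible stack $\cM_C(r,r(g-2)/2)$, and conclude that this locus meets the dense stable locus. Getting semistability of $\cF_2,\cF_3$ from Proposition~\ref{prop:Fissemistable}, instead of stability from Theorem~\ref{thm:maintheoremevenconditional} as the paper does, is immaterial, since equal slopes are all the direct sum needs.

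Your closing remark about $r=1$ is unnecessary but harmless. Integrality of $r(g-2)/2$ only gives the parity condition, and excluding rank one is not part of this statement.
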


\begin{proof}
We first consider even \(g\). By Theorem~\ref{thm:maintheoremevenconditional}, we may assume that \(\cF_2\) and \(\cF_3\) are stable. Any integer \(r \ge 2\) can be written as \(r = 2a + 3b\) for some nonnegative integers \(a, b\). Then \(\cF_2^{\oplus a}\oplus \cF_3^{\oplus b}\) is a semistable orthogonal bundle. Note that \(\cM_C(r, r(g-2)/2)\) is irreducible and the stable locus is nonempty open. Since the orthogonality is also an open condition, we may choose a rank \(r\) stable, hence indecomposable orthogonal bundle \(\cF_r\). This proves (1).

When \(g\) is odd, for any even \(r\), we may choose \(\cF_2^{\oplus r/2}\) and apply the same argument.
\end{proof}

\begin{remark}\label{rmk:genus1}
Suppose \(g = 1\), so \(X\) is an elliptic curve of degree 4 in \(\PP^3\). On a curve \(X\), a vector bundle \(\cE\) is Ulrich if and only if \(\rH^0(\cE(-1)) = \rH^1(\cE(-1)) = 0\). Here \(\cE(-1)\) is a degree zero bundle. Atiyah showed the existence of indecomposable bundles \(\cF\) of degree zero on \(X\) of arbitrary rank \(r\) and classified them \cite[Theorem~5]{Ati57}. Moreover, the proof of the theorem also shows that a general degree zero indecomposable bundle is Brill-Noether general, hence satisfies \(\rH^0(\cF) = \rH^1(\cF) = 0\). Thus, taking \(\cE := \cF(1)\), we obtain Ulrich bundles of arbitrary rank. By definition, \(\rU_{r2^{g-2}}(X) = \rU_{r/2}(X)\) is a subspace of \(\rM_X(r/2, 2r) \cong \sym^{r/2}(X) \cong \rM_X(r, r/2)\) \cite[Theorem~1]{Tu93}.

Thus, the statements of Theorem~\ref{thm:mainthmeven} for \(g = 1\) hold, on the level of good moduli spaces. However, the proof does not rely on the construction of Fourier-Mukai transform. Moreover, Corollary~\ref{cor:evenintersection} does not hold. When \(r = 2\), \(\rM_X(2, -1) \cong X\) is not uniruled, and for \(r \ge 4\), \(\rU_{r2^{g-2}}(X)\subset \sym^{r/2}(X)\) is of dimension \(r/2\).
\end{remark}

\subsection{Odd-dimensional quadrics}

The argument for odd-dimensional quadrics requires some extra work, as not all of the components of \(\cM_{\PP_{2g+1}^1}(r, d)\) have a stable bundle. In this section, \(\dim X = 2g - 2\) and \(\cC = \PP_{2g+1}^1\).

\begin{lemma}\label{lem:chofS}
Let \(\xi:=c_1(\cO_{\cC}(1))\). Then for the relative spinor bundle \(\cS\) on \(X\), we have
\[\ch_0(\cS) = 2^{g-1}, \quad \ch_1(\cS) = (g+1)2^{g-2}\xi + 2^{g-2}h.\]
\end{lemma}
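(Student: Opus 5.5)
Lemma~\ref{lem:chofS} can be proved in the same way as Lemma~\ref{lem:chofShat}, now using the filtration of Proposition~\ref{prop:filtrationodd} in place of Proposition~\ref{prop:filtrationeven}. Alternatively, one can pull back along \(\rho\times\mathrm{id}_{X_H}\colon C\times X_H\to\PP_{2g+1}^1\times X_H\) and reduce to the even case. I would run both arguments and check that they agree.

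\textbf{Setup.} Since \(\dim X=2g-2\ge 2\) and \(H^1(\PP^1_{2g+1},k)=0\), the rational degree-two classes on \(\PP^1_{2g+1}\times X\) are spanned by \(\xi\) and \(h\). Write \(\ch_0(\cS)=a\) and \(\ch_1(\cS)=b\xi+ch\).

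\textbf{Rank.} The rank \(a=2^{g-1}\) is immediate. At a non-stacky point \(c\), the fiber \(\cS_c\) is the restriction to \(X\) of a spinor bundle on a smooth \((2g-1)\)-dimensional quadric, of rank \(2^{\codim W-2}=2^{g-1}\). This agrees with Proposition~\ref{prop:filtrationodd}, since \(\rank\cR=2^g=2a\).

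\textbf{Degree-one part from the filtration.} The sequence \eqref{eqn:Raynaudfiltrationodd} gives
\[
\ch(\cS)\bigl(1+\ch(\cU)\bigr)=p^*\ch(\cR),
\qquad c_1(\cU)=-\xi-h,
\]
where the formula for \(c_1(\cU)\) follows from \eqref{eqn:normalized_tautological}. In degree one this reads
\[
2\ch_1(\cS)-a(\xi+h)=c_1(\cR)\,\xi ,
\]
with \(c_1(\cR)\) now a rational number. The \(h\)-coefficient gives \(2c=a\), so \(c=2^{g-2}\). The \(\xi\)-coefficient gives \(b=\bigl(a+\deg\cR\bigr)/2\), where \(\deg\cR\) is measured in units of \(\xi\).

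\textbf{Computing \(\deg\cR\).} From the proof of Proposition~\ref{prop:filtrationodd} we have \(\rho^*\cR_H\cong\cR\), the Raynaud bundle on \(C\). By \cite[Proposition~4.5]{ES25}, as used in Lemma~\ref{lem:chofShat}, the latter has degree \(g2^{g-1}\). By \eqref{eqn:half_pencil_pullback}, \(\rho^*\xi=p_C\), and \(\rho\) has degree one on these classes because \(\cO(1)\) pulls back to a degree-one line bundle. Hence \(c_1(\cR_H)=g2^{g-1}\xi\), and therefore
\[
b=\frac{2^{g-1}+g2^{g-1}}{2}=(g+1)2^{g-2}.
\]

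\textbf{Cross-check by pullback.} Pulling back via \(\rho\times\mathrm{id}\) and using \(\cS'\cong(\rho\times\mathrm{id})^*\cS_H\), Lemma~\ref{lem:chofShat} restricted to \(C\times X_H\) gives the same coefficients, since \(p_C\) corresponds to \(\xi\) and \(h\) to \(h\).

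\textbf{Main obstacle.} The delicate point is justifying degree bookkeeping on the root stack. One needs that pullback along \(\rho\) identifies \(\xi\) with \(p_C\) rationally (with \(\rho^*\) of degree \(1\) rather than \(2\) with respect to the chosen normalization \(\cO_{\cC}(1)\)), and that Chern characters on \(\PP^1_{2g+1}\times X\) in rational cohomology behave as on a scheme. I would handle this by working in rational Chow or cohomology of the coarse space. There \(\xi\) is half the class of a point of \(\PP^1\), and the identity \(\rho^*\cO_{\PP_{2g+1}^1}(1)\cong\cO_C(\widetilde p_{2g+2})\) pins down the normalization.
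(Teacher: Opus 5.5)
Your proof is correct. Your ``cross-check by pullback'' is exactly the paper's proof. The paper notes \(\cS'\cong(\rho\times\mathrm{id}_{X})^*\cS\), reads off \(\ch_0(\cS')\) and \(\ch_1(\cS')\) from Lemma~\ref{lem:chofShat}, and concludes from \(\rho^*\xi=p_C\). Implicitly it also uses that \((\rho\times\mathrm{id})^*\) is injective on rational cohomology, which holds because \(\rho\) is finite.

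Your primary route redoes the Lemma~\ref{lem:chofShat} computation directly on \(\PP^1_{2g+1}\times X\) using Proposition~\ref{prop:filtrationodd}. It is also valid, and it reduces everything to the single number \(c_1(\cR_H)\). However, you obtain that number by pulling back along \(\rho\) to the even Raynaud bundle, and Proposition~\ref{prop:filtrationodd} is itself proved by descent along \(\rho\). So this route is not more independent than the paper's one-line argument. If you want to compute \(c_1(\cR_H)\) directly on the stack, use Corollary~\ref{cor:par_rk_deg}: \(\pardeg\cR=g2^{g-2}\) together with \(\deg\xi=\tfrac12\) gives \(c_1(\cR_H)=g2^{g-1}\xi\), in agreement with your value.

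Two small corrections. First, \(\rho\) is a morphism of degree \(2\), not one, since \(C\to[C/\tau]\) is a double cover. All you actually use is \(\rho^*\xi=p_C\) from \eqref{eqn:half_pencil_pullback}, which is consistent with \(\deg\xi=\tfrac12\).

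Second, for \(g=2\) the surface \(X_H\) is a quartic del Pezzo surface, so \(\rH^2(X_H,\mathbb{Q})\) is not spanned by \(h\). Your ansatz should therefore allow an arbitrary \(\gamma\in\rH^2(X_H,\mathbb{Q})\) in place of \(ch\). The \(X_H\)-component of your degree-one identity, \(2\gamma=2^{g-1}h\), still forces \(\gamma=2^{g-2}h\), so nothing breaks.
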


\begin{proof}
Let \(\cS'\) be the restriction to \(C\times X\) of the even-dimensional relative spinor bundle. The bundle \(\cS\) is the descent of \(\cS'\); equivalently,
\[\cS'\cong(\rho\times\operatorname{id}_X)^*\cS.\]
Hence Lemma~\ref{lem:chofShat} gives
\[\ch_0(\cS')=2^{g-1},\qquad\ch_1(\cS')=(g+1)2^{g-2}p_C+2^{g-2}h.\]
Since \(\rho^*\xi=p_C\), these are precisely the pullbacks of the asserted formulas for \(\cS\).
\end{proof}

\begin{proposition}\label{prop:Ulrichrankodd}
Let \(\cF\) be a rank-\(r\) degree-\(d\) vector bundle on \(\PP_{2g+1}^1\) such that \(\Phi_{\cS}(\cF) = \cE(-1)\) for some Ulrich bundle \(\cE\) on \(X\). Then \(d = r(g-3)/4\) and \(\rank \cE = r2^{g-3}\).
\end{proposition}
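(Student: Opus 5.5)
The plan is to run the proof of Proposition~\ref{prop:Ulrichrankeven} again on $\cC=\PP_{2g+1}^1$. Both Riemann--Roch steps now need orbifold corrections at the $2g+1$ stacky points. I would express every Euler characteristic on $\cC$ through the coarse space, using
\[
\chi(\cC,\cV)=\chi(\PP^1,\pi_*\cV)=\deg\cV+\rank\cV-\tfrac12\sum_{\ell=1}^{2g+1}n_\ell(\cV),
\]
where $n_\ell(\cV)$ is the dimension of the nontrivial $\mu_2$-eigenspace of $\cV|_{q_\ell}$. In parabolic language, $n_\ell$ is the codimension of the flag, since $\deg\underline{\cV}=\pardeg\cV-\tfrac12\sum m_\ell$.

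First I would pin down the invariants of $\cR$. The proof of Proposition~\ref{prop:filtrationodd} gives $\rho^*\cR_H\cong\cR$, where $\rho\colon C\to\PP^1_{2g+1}$ has degree $2$. By \cite[Proposition~4.5]{ES25} the bundle $\cR$ on $C$ has rank $2^g$ and degree $g2^{g-1}$, so $\cR_H$ has rank $2^g$ and degree $g2^{g-2}$. Next I need $n_\ell(\cR_H)$. Proposition~\ref{prop:Cliffordmodule} identifies the parabolic subspace at $p_\ell$ with the image of right multiplication by $e_\ell$ from the odd part to the even part. Since $e_\ell^2=s+\lambda_\ell t$ vanishes at $p_\ell$, this map has square zero on the fibre. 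The fibre is a module over the degenerate Clifford algebra, which is a spinor module of the smooth quadric in $V_H/\langle e_\ell\rangle$ tensored with the exterior algebra on $e_\ell$. On it, $\ker e_\ell=\im e_\ell$ has dimension exactly half. Hence $n_\ell(\cR)=2^{g-1}$ for every $\ell$.

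Consequently, for any $\cF$ of rank $r$, the fibre $(\cF\otimes\cR)|_{q_\ell}$ is a tensor product of a $\mu_2$-representation with a balanced one, so it is itself balanced: $n_\ell(\cF\otimes\cR)=r2^{g-1}$ independently of $\cF$. As in the even case, orthogonality gives $\chi(\cC,\cF\otimes\cR)=0$. The corrected formula then reads
\[
0=d2^g+rg2^{g-2}+r2^g-\tfrac12(2g+1)r2^{g-1}=2^g\Bigl(d+\tfrac{r(3-g)}{4}\Bigr),
\]
which is $d=r(g-3)/4$.

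For the rank I would avoid orbifold GRR on $\cC\times X$. Since $\cE$ is Ulrich of degree four, Example~\ref{ex:intquadrics} gives $\rank\cE=h^0(\cE)/4=\chi(X,\cE)/4$. The projection formula gives
\[
\chi(X,\cE)=\chi\bigl(X,\Phi_{\cS}(\cF)(1)\bigr)=\chi(\cC,\cF\otimes\cR_1),\qquad \cR_1:=p_{\cC*}\bigl(\cS\otimes p_X^*\cO_X(1)\bigr).
\]
To compute $\cR_1$, I would tensor the filtration \eqref{eqn:Raynaudfiltrationodd} with $\cO_X(1)$ and push forward, or twist the resolution in Proposition~\ref{prop:relativeMF} by $\cO_{\PP V}(1)$. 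This yields a sequence relating $\cR_1$ to $\cR\otimes V$, $\cR'$ and $\cR\otimes\pi^*\cO_{\PP^1}(1)$, from which I read off its rank, its degree (using Lemma~\ref{lem:chofS}) and its local types $n_\ell$. Again the fibres should be balanced, because $\cR'$ arises from the odd Clifford bimodule exactly as $\cR$ arises from the even one.

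Plugging $d=r(g-3)/4$ into the corrected Riemann--Roch for $\chi(\cC,\cF\otimes\cR_1)$ should give $4r2^{g-3}$, and hence $\rank\cE=r2^{g-3}$. As a consistency check, I would pull back along $\rho$. The pushforward of $(\rho\times\mathrm{id})^*$ of the kernel decomposes as $\Phi_{\cS}(\cF)\oplus\Phi_{\cS}(\cF\otimes\cL^{-})$, with $\cL^{-}$ the anti-invariant summand of $\rho_*\cO_C$, and this must match the even-dimensional computation in Proposition~\ref{prop:Ulrichrankeven}.

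The main obstacle is the local bookkeeping at the stacky points: proving that $\cR$, and $\cR_1$, have exactly half-dimensional isotropy (parabolic) subspaces at every $q_\ell$. Everything else is degree arithmetic. I would do this explicitly at one $p_\ell$ with the basis of $I_0\oplus I_1$ from \S\ref{ssec:spinor}, checking that $e_\ell$ acts with rank $2^{g-1}$.
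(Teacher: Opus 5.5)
Your proposal is correct. For the rank, it takes a genuinely different route from the paper.

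\textbf{Degree step.} This coincides with the paper's argument. The paper applies orbifold Riemann--Roch with $\td(\cT_{\PP^1_{2g+1}})=1+\frac{3-2g}{2}\xi$ and leaves the stacky correction implicit. That is legitimate precisely because every fibre $\cR|_{q_\ell}$ is balanced, and you make this explicit. The input you flag as the main obstacle is already in the paper: Corollary~\ref{cor:par_rk_deg} gives $\dim W_\ell=2^{g-1}$, via Lemma~\ref{lem:exactness}. There is one minor slip. Given $\deg\underline{\cV}=\pardeg\cV-\frac12\sum m_\ell$, your $n_\ell$ is the \emph{dimension} of the flag, not its codimension. This is harmless here, since every fibre involved is balanced.

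\textbf{Rank step.} The paper uses stacky GRR along $p_X$, with $\ch(\cS)$ taken from Lemma~\ref{lem:chofS}. It kills the correction term in $\ch_0$ using $\rank\cS_i^+=\rank\cS_i^-$. You instead use $\chi(X,\cE)=4\rank\cE$ and stay on the curve.

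Of your two options for $\cR_1$, the filtration is the clean one. Twisting \eqref{eqn:Raynaudfiltrationodd} by $\cO_X(1)$ turns $\cS\otimes\cU$ into $\cS\otimes p_{\cC}^*\cO_{\cC}(-1)$. All higher direct images vanish, so
\[0\to\cR\otimes\cO_{\cC}(-1)\to\cR\otimes V_H^*\to\cR_1\to0.\]
Hence $\cR_1$ has balanced fibres, rank $g2^{g+1}$, and degree $2^{g-1}(g^2+1)$. You need neither Lemma~\ref{lem:chofS} nor $\deg\cR'$. Riemann--Roch then gives
\[\chi(\cC,\cF\otimes\cR_1)=2^{g-1}\bigl(4gd+r(g^2+1)+rg(3-2g)\bigr).\]
At $d=r(g-3)/4$ this equals $r2^{g-1}=4\cdot r2^{g-3}$, as you predicted.

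\textbf{Comparison.} Your route needs only Riemann--Roch on the orbifold curve and the balancedness of $\cR$. It avoids GRR on $\cC\times X$ and the eigenbundle ranks of $\cS|_{q_i\times X}$. The paper's route computes $\rank\Phi_{\cS}(\cF)$ for every $\cF$ of that degree without invoking the Ulrich numerics. It also gives access to higher Chern data. Your consistency check via $\rho$ is unnecessary.
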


\begin{proof}
Let \(q_i\) denote the \(i\)-th stacky point. Since \(\omega_{\PP_{2g+1}^1} = \pi^*\omega_{\PP^1} \otimes \cO(\sum q_i)\), we have \(\td(\cT_{\PP_{2g+1}^1}) = 1 + \frac{3-2g}{2}\xi\). Riemann--Roch yields
\[\chi(\cC, \cF \otimes \cR) = 2^{g-2}(4d + r(3-g)).\]
Orthogonality therefore implies \(d = r(g-3)/4\).

Let \(\cS_i:=\cS|_{\{q_i\}\times X}\), and let \(\cS_i^\pm\) denote its \(\pm1\)-eigenbundles under \(\tau\). Write \(\dim\cF_{q_i}^+=a_i\) and \(\dim\cF_{q_i}^-=b_i\). We know that \(\rank \cS_i^+ = \rank \cS_i^-\).

The stacky Grothendieck--Riemann--Roch theorem yields
\[\ch(\Phi_{\cS}(\cF))=p_{X*}(\ch(p_{\cC}^*\cF)\ch(\cS)\td(\cT_{\PP_{2g+1}^1}))+\frac14\sum_{i=1}^{2g+1}(a_i - b_i)(\ch(\cS_i^+)-\ch(\cS_i^-)).\]

The latter stacky term does not contribute to \(\ch_0\) once we have \(\rank \cS_i^+ = \rank \cS_i^-\), hence we may ignore it.

We have
\[\ch(p_{\cC}^*\cF)\ch(\cS)\td(\cT_{\PP_{2g+1}^1})=r2^{g-1} + 2^{g-2}(4d+r(4-g))\xi + \dots.\]
Taking the pushforward and noting that \(p_{X*}\xi = 1/2\), we have
\[\ch(\cE(-1)) = p_{X*}(\ch(p_{\cC}^*\cF)\ch(\cS)\td(\cT_{\PP_{2g+1}^1})) = 2^{g-3}(4d+r(4-g)) + \dots.\]
From \(d = r(g-3)/4\), we have \(\rank \cE = 2^{g-3}(4d + r(4-g)) = r2^{g-3}\).
\end{proof}

In the following sections, we construct explicit low-rank vector bundles that are cohomologically orthogonal to \(\cR\). To state the result more explicitly, we use the equivalence between coherent sheaves on \(\cC\) and parabolic sheaves on \((\PP^1, \bp)\). Here \(\bp = (p_1, p_2, \dots, p_{2g+1})\) is the set of the images of the stacky points under \(\cC \to \PP^1\); see \S\ref{ssec:parabolicbundle}. By abuse of notation, we will use the same notation for the identified objects in these two categories.

\begin{theorem}\label{thm:initialconst}
\begin{enumerate}
  \item Let \(g \ge 3\) be odd. Let \(\cG_2\) be a parabolic bundle on \((\PP^1, \bp)\) with underlying bundle \(\underline{\cG}_2 = \cO((1-g)/2)^{\oplus 2}\), bit vector \(\bm=(m_i)\) with \(g+1\) nonzero entries, and with a general choice of flags.
  \item Let \(g \ge 3\) be odd. Let \(\cG_3\) be a parabolic bundle on \((\PP^1, \bp)\) with underlying bundle \(\underline{\cG}_3 = \cO((1-g)/2)^{\oplus 3}\), bit vector \(\bn=(n_i)\) with \(3(g+1)/2\) nonzero entries, and a general choice of flags.
  \item Let \(g \ge 2\) be even. Let \(\cG_2^{\mathrm{ev}}\) be a parabolic bundle on \((\PP^1, \bp)\) with underlying bundle \(\underline{\cG}_2^{\mathrm{ev}} = \cO((2-g)/2)\oplus \cO(-g/2)\), bit vector \(\bm=(m_i)\) with \(g+1\) nonzero entries, and a general choice of flags.
\end{enumerate}
Then \(\cG_2\), \(\cG_3\), and \(\cG_2^{\mathrm{ev}}\) are stable and cohomologically orthogonal to \(\cR\).
\end{theorem}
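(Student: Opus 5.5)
Fix one of the three cases and write \(\cG\) for \(\cG_2\), \(\cG_3\) or \(\cG_2^{\mathrm{ev}}\), of rank \(r\). The plan is to turn cohomological orthogonality into the invertibility of one square matrix, prove that invertibility for a convenient special choice of flags, and then pass to general flags by openness. Stability is handled at the end.

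\emph{Step 1: reduce to \(\rH^0\).} First compute the parabolic degree of \(\cG\) and check that it makes \(\chi(\cC,\cG^*\otimes\cR)=0\), using the Riemann--Roch formula from the proof of Proposition~\ref{prop:Ulrichrankodd}. This is where the required degree and the number of nonzero bits enter. For instance, for \(\cG_2\) we get
\[\pardeg=(1-g)+(g+1)/2=(3-g)/2,\]
so the dual has slope \((g-3)/4\), as required. Once \(\chi=0\), orthogonality is equivalent to \(\rHom(\cG^*\text{ or }\cG,\cR)=0\), matching the convention of Theorem~\ref{thm:existenceUlrich}. So it suffices to show that a single space of parabolic homomorphisms vanishes.

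\emph{Step 2: write that space as a kernel.} Use Proposition~\ref{prop:Cliffordmodule} to describe \(\cR\) as a parabolic bundle. Its underlying bundle is the sheafification \(\widetilde{M}^{\mathrm{ev}}\) of the Clifford module attached to \(U\). This is a sum of line bundles on \(\PP^1\) of explicit degrees, indexed by a Clifford basis built from \(U\) and its complement. The flag at \(p_\ell\) is the image of right multiplication by \(e_\ell\). A parabolic map \(\cG\to\cR\) is then a map of underlying bundles, namely a tuple of polynomials in \(s,t\) of bounded degree. It must satisfy one linear condition at each \(p_\ell\) with \(m_\ell\neq 0\): the flag line of \(\cG\) goes into \(\im(e_\ell)\), equivalently the map composed with a complementary projection vanishes. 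Count parameters against conditions. The degree/bit data in the statement are chosen exactly so that the resulting matrix is square; this count is another form of \(\chi=0\). Orthogonality is therefore the nonvanishing of one determinant, which is polynomial in the flag parameters. It suffices to exhibit one choice of flags, for example coordinate-like lines at the marked points, where the determinant is nonzero.

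\emph{Step 3: the determinant (main obstacle).} Choose flags adapted to the Clifford basis, so that the conditions at \(p_\ell\) involve evaluating polynomials at \(-\lambda_\ell\) and Clifford sign patterns. I would try to block-triangularize the matrix by ordering the basis by word length in the \(e_i\), so that it reduces to Vandermonde-type blocks in the distinct \(\lambda_i\). Generic distinctness of the \(\lambda_i\) then gives nonzero determinant. If a single choice resists, allow the \(\lambda_i\) or the flags to degenerate and take a leading term. This combinatorial invertibility is the real work. The rank-3 case needs a separate bookkeeping of the \(3(g+1)/2\) conditions, and the even-\(g\) case has an unbalanced splitting type.

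\emph{Step 4: stability.} With general flags, check parabolic stability directly: every sub-line-bundle \(\cO(a)\subset\underline{\cG}\) with \(a\) maximal meets only a few general flag lines, so its parabolic slope is less than \(\parmu(\cG)\). Alternatively, orthogonality already gives semistability by Proposition~\ref{prop:Fissemistable}, and generality excludes equality, since equality would force a sub-line-bundle to contain \(\lceil \cdot\rceil\) many general flags.
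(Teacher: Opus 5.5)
Your Steps 1--2 match the paper's setup: \(\chi_{\rpar}=0\), then a square, block upper bidiagonal matrix \(\bW_r\) whose diagonal blocks must be invertible. Step 4 matches the paper's route to stability: semistability from Proposition~\ref{prop:Fissemistable}, then a dimension count for general flags. There is one omission in Step 4: for \(\cG_3\) you must also exclude destabilizing rank-two subbundles, which you never address. (For \(\cG_2^{\mathrm{ev}}\), equality of slopes is excluded by parity alone.)

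The genuine gap is Step 3. It is a plan rather than a proof, yet it carries the entire orthogonality claim, and the one concrete choice you suggest cannot work. If the flag lines \(z_\ell\) are taken from a fixed basis of \(k^r\), then \(\cG\) splits as a direct sum of parabolic line bundles. Since \(\chi_{\rpar}\) is additive and no parabolic line bundle is orthogonal to \(\cR\) (Theorem~\ref{thm:line_bundle_case}), such a \(\cG\) always has a nonzero parabolic map to \(\cR\). Concretely, in rank two the \(m=1\) block indexed by \(I=\{a,b\}\) is \(\det(z_a,z_b)\). It vanishes whenever \(z_a\) and \(z_b\) are proportional, and with only two coordinate lines for \(g+1\ge 4\) points this must happen.

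What works in rank two is the following. Relabel so that the flagged points are \(p_1,\dots,p_{g+1}\). Then \(\overline e_1,\dots,\overline e_{g+1}\) is a basis of \(V/U\) (Lemma~\ref{lem:adapted_matrix}), and the flags of \(\cR\) at these points are the coordinate subspaces \(\im(\varepsilon_{\overline e_\ell})\). The matrix is then block diagonal with \(2m\times 2m\) blocks \((z_{\ell,\varrho}\lambda_\ell^d)\) (Lemma~\ref{lem:rank2_block_decomposition}). The \emph{non-split} choice \(z_\ell=(1,\lambda_\ell^m)\) makes each block an honest Vandermonde matrix.

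For \(\cG_3\), your claim that the matrix reduces to ``Vandermonde-type blocks'' is false. There are \(3(g+1)/2>g+1\) flagged points, so at least \((g+1)/2\) of them are bad. At a bad point the flag of \(\cR\) is governed by \(\overline e_\ell=\sum_i a_{i,\ell}\overline e_i\), which is not a coordinate vector. Consequently the blocks \(\star\bW_{3,m}\) are neither block diagonal nor Vandermonde, and their entries are built from the minors \(a_{I,J}\).

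Your fallback of taking a leading term is what the paper does. However, the lexicographically leading monomial has coefficient zero: in Example~\ref{ex:rk3_forcing} the residual minor is the singular matrix of a rank-one bundle. The paper needs four further ingredients:
\begin{enumerate}
  \item the specific monomial of Algorithm~\ref{alg:detcom};
  \item a change to \(\Om_{\mathrm{bad}}\)-coordinates;
  \item the nonvanishing of every minor \(a_{I,J}\) (Lemma~\ref{lem:A_minors}, from Reid's theorem);
  \item a comparison of the block \(\bU\) with an auxiliary rank-two bundle (Lemmas~\ref{lem:BpVp-invertible} and~\ref{lem:big-invertible}).
\end{enumerate}

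Degenerating the \(\lambda_i\) is not available either. The statement is for every smooth \(X\), and \(U\), \(a_{i,\ell}\) and \(\cR\) all move with \(\lambda\). When the \(\lambda_i\) collide, the intersection becomes singular. Nonvanishing at a special \(\lambda\) would only give the result for general \(X\).
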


\begin{proof}
The cohomological orthogonality follows from the computations in \S\S\ref{sec:existence} and \ref{sec:prf_main_thm}. Proposition~\ref{prop:Fissemistable} then gives the semistability of \(\cG_2\), \(\cG_3\), and \(\cG_2^{\mathrm{ev}}\). Tensoring \(\cG_2\) and \(\cG_3\) by \(\cO_{\PP^1}((g-1)/2)\) preserves parabolic stability and makes their underlying bundles trivial.

Let \(\cH\subset\cO_{\PP^1}^{\oplus r}\) be a saturated subbundle of rank \(s\) and degree \(-d\), and set \(m(\cH):=\sum_i\dim(\underline{\cH}|_{p_i}\cap V_i)\). The parameter dimensions are \(r(d+1)-1\) for \(s=1\) and \(3d+2\) for \(s=2\), the latter via \(\cO_{\PP^1}^{\oplus3}\twoheadrightarrow\cO_{\PP^1}(d)\), while each incidence condition has codimension \(r-s\). Since only finitely many \(d\) can destabilize, general flags give
\begin{align*}
(r,s)=(2,1):\quad &m(\cH)\le2d+1, &\parmu(\cH)&\le\frac12<\frac{g+1}{4},\\
(r,s)=(3,1):\quad &2m(\cH)\le3d+2, &\parmu(\cH)&\le\frac12-\frac d4<\frac{g+1}{4},\\
(r,s)=(3,2):\quad &m(\cH)\le3d+2. &&
\end{align*}
Here \((g+1)/4\) is the parabolic slope of both twisted bundles.

A destabilizing rank-two subbundle \(\cH\subset\cG_3\) would satisfy \(m(\cH)\ge2d+g+1\). Hence
\[m(\cH)\le3d+2\quad\Longrightarrow\quad d\ge g-1,\qquad m(\cH)\le\frac{3(g+1)}2\quad\Longrightarrow\quad d\le\frac{g+1}{4}<g-1,\]
a contradiction. Therefore, \(\cG_2\) and \(\cG_3\) are stable.

Finally, if \(\cG_2^{\mathrm{ev}}\) were not stable, its semistability would give a line subbundle \(\cL\) of degree \(e\) with \(\parmu(\cL)=\parmu(\cG_2^{\mathrm{ev}})\). Hence
\[4e+2m(\cL)=3-g,\]
which is impossible since \(g\) is even. Therefore, \(\cG_2^{\mathrm{ev}}\) is stable.
\end{proof}

It remains to construct indecomposable bundles orthogonal to \(\cR\) in every admissible rank.

\begin{theorem}\label{thm:maintheoremoddconditional}
\begin{enumerate}
  \item Suppose that \(g \ge 3\) is odd. For each rank \(r \ge 2\), there is an indecomposable vector bundle \(\cF_r\) whose dual is orthogonal to \(\cR\).
  \item Suppose that \(g \ge 2\) is even. For each even \(r \ge 2\), there is an indecomposable vector bundle \(\cF_r\) whose dual is orthogonal to \(\cR\).
\end{enumerate}
\end{theorem}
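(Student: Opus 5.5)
We will mimic the argument of Proposition~\ref{prop:lowrankisenougheven}, now on the stack $\cC=\PP_{2g+1}^1$, starting from the stable orthogonal bundles of Theorem~\ref{thm:initialconst}. Orthogonality of $\cF^*$ to $\cR$ means $\rH^\bullet(\cC,\cF\otimes\cR)=0$, so it is closed under extensions and direct sums. It is also an open condition in flat families, by semicontinuity together with $\chi(\cC,\cF\otimes\cR)=0$, which is fixed once rank and degree (and the local type) are fixed. Since the parabolic stability of Theorem~\ref{thm:initialconst} is stability on $\cC$, the duals $\cF_2:=\cG_2^*$, $\cF_3:=\cG_3^*$ (for $g$ odd) and $\cF_2:=(\cG_2^{\mathrm{ev}})^*$ (for $g$ even) are stable bundles whose duals are orthogonal to $\cR$.

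For $g$ odd we write $r=2a+3b$ with $a,b\ge0$. For $g$ even and $r$ even we take $b=0$, $a=r/2$. Set $\cF^0:=\cF_2^{\oplus a}\oplus\cF_3^{\oplus b}$. This is polystable of slope $\mu$ and its dual is orthogonal to $\cR$. Here we cannot simply appeal to irreducibility of the moduli stack, since $\cM_{\PP^1_{2g+1}}(r,d)$ has many components and not all contain stable points. Instead we produce an indecomposable bundle directly, by a nonsplit iterated extension. Since $\cF_2,\cF_3$ are stable of the same slope, Serre duality on $\cC$ and Riemann--Roch give
\[\dim\Ext^1(\cF_i,\cF_j)-\dim\rHom(\cF_i,\cF_j)=-\chi(\cF_i^*\otimes\cF_j),\]
where $\rHom(\cF_i,\cF_j)=k\cdot\delta_{ij}$. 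Stability forces $\Ext^1(\cF_i,\cF_i)\ne0$ as soon as $\cF_i$ moves in a positive-dimensional family. This holds because the parabolic flags in Theorem~\ref{thm:initialconst} are general, so $\cF_i$ moves in a positive-dimensional family with $\dim\Ext^1(\cF_i,\cF_i)\ge1$ (roughly, the number of flag parameters minus $\dim\mathrm{PGL}$). Similarly we expect $\Ext^1(\cF_2,\cF_3)\neq0$, and one can simply check this by the Euler characteristic, since $\chi(\cF_2^*\otimes\cF_3)<0$ is forced by the small number of automorphisms.

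We then build the chain $0=\cF^{(0)}\subset\cF^{(1)}\subset\cdots\subset\cF^{(a+b)}=:\cF_r$ with graded pieces $\cF_2$ (taken $a$ times) followed by $\cF_3$ (taken $b$ times). At each step we choose a nonsplit extension
\[0\to\cF^{(m-1)}\to\cF^{(m)}\to\cF_{i_m}\to0,\]
using a class in $\Ext^1(\cF_{i_m},\cF^{(m-1)})$ that maps nontrivially to $\Ext^1(\cF_{i_m},\cF_{i_{m-1}})$ under the quotient to the top graded piece of $\cF^{(m-1)}$. Each $\cF^{(m)}$ is then semistable with stable Jordan--H\"older factors and is orthogonal by extension-closure. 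To show $\cF_r$ is indecomposable, argue as for Atiyah-type uniserial bundles. Any direct summand is semistable of the same slope, so its Jordan--H\"older factors are among the $\cF_i$. Since at each step the extension class restricts nontrivially to the previous factor, the socle $\rHom(\cF_i,\cF_r)$ is one-dimensional, spanned by $\cF^{(1)}$. Hence $\cF_r$ has a simple socle in the abelian category of semistable bundles of slope $\mu$, and it is therefore indecomposable. For the socle claim we argue inductively: a map $\cF_i\to\cF^{(m)}$ not factoring through $\cF^{(m-1)}$ would split the last extension after composing to the top factor.

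The main obstacle is the nonvanishing of the relevant $\Ext^1$ groups on the stack: $\Ext^1(\cF_2,\cF_2)$, $\Ext^1(\cF_3,\cF_3)$ and $\Ext^1(\cF_2,\cF_3)$ or $\Ext^1(\cF_3,\cF_2)$, together with the surjectivity needed to lift classes along the filtration. We would first try to establish these by stacky Riemann--Roch, computing $\chi(\cF_i^*\otimes\cF_j)$ via the parabolic data (underlying degrees and flag dimensions $m_i,n_i$, which contribute through the local types at the $q_\ell$). We expect this to give $\chi<0$ for $i\ne j$ and $\chi\le0$ for $i=j$ in the ranges $g\ge2$. If it fails in a small case, we would fall back on deforming $\cF^0$ within its component to a stable point, using the dimension count from the proof of Theorem~\ref{thm:initialconst} to show that stable parabolic bundles of the combined type exist.
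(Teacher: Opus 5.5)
\textbf{Gap at $g=2$.} Your overall strategy is the paper's: iterated nonsplit extensions of the stable orthogonal bundles of Theorem~\ref{thm:initialconst}, with indecomposability from a simple socle. For $g$ odd and for $g$ even with $g\ge4$ it works once the Ext groups are actually computed. No heuristics are needed there, since \eqref{eqn:par_long_seq} gives $\dim\pExt^1(\cG_2,\cG_2)=g-2$, $\dim\pExt^1(\cG_3,\cG_3)=3g-5$, $\dim\pExt^1(\cG_2,\cG_3)=\dim\pExt^1(\cG_3,\cG_2)=2g-4$, and $\dim\pExt^1(\cG_2^{\mathrm{ev}},\cG_2^{\mathrm{ev}})=g-2$.

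The case $g=2$ of part~(2) breaks your argument.
\begin{itemize}
\item There $\chi_{\rpar}(\cG_2^{\mathrm{ev}},\cG_2^{\mathrm{ev}})=4-(g+1)=1$, so $\pExt^1(\cG_2^{\mathrm{ev}},\cG_2^{\mathrm{ev}})=0$.
\item Equivalently, the three flag parameters are exactly absorbed by the $3$-dimensional group $\mathrm{Aut}(\cO\oplus\cO(-1))/k^*$. So general flags do \emph{not} make $\cF_2$ move, contrary to your claim, and your expectation $\chi\le 0$ for $i=j$ is false at $g=2$.
\item Consequently every self-extension of $\cF_2$ splits, and your chain cannot produce anything beyond rank two.
\item Your fallback also fails. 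The bundle $\cF_2^{\oplus a}$ is rigid, and its component (local type $a\bm$, with multiplicity $a$ at three points) has expected dimension $1-4a^2+3a^2=1-a^2<0$ for $a\ge2$. So it contains no stable bundle to deform to.
\end{itemize}

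\textbf{Missing idea.} You need two non-isomorphic stable orthogonal rank-two bundles of \emph{different} parabolic types, as the paper uses: $\cG^{\mathrm{ev}}_{2,I}$ of type $\bm_I=(1,1,1,0,0)$ and $\cG^{\mathrm{ev}}_{2,J}$ of type $\bm_J=(0,0,1,1,1)$.
\begin{itemize}
\item By \eqref{eqn:par_long_seq}, $\chi_{\rpar}(\cG^{\mathrm{ev}}_{2,I},\cG^{\mathrm{ev}}_{2,J})=4-(2+2+1)=-1$, and the same holds in the other direction.
\item The $\pHom$ vanishes both ways, since these are non-isomorphic stable bundles of equal slope. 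Hence both mixed $\pExt^1$ groups are one-dimensional.
\item Take successive nonsplit extensions whose factors alternate between the two bundles (dualized, if you prefer working with the $\cF$'s).
\item Your socle argument then applies verbatim, because consecutive factors are non-isomorphic stable bundles of the same slope with nonzero $\Ext^1$. This yields indecomposable orthogonal bundles of every even rank.
\end{itemize}
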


\begin{proof}
Taking the dual bundle preserves indecomposability. Thus, it is sufficient to construct an indecomposable bundle orthogonal to \(\cR\).

\textbf{Case 1.} \(g\) is odd.

We choose \(\bm\) and \(\bn\) so that \(n_i \ge m_i\) for all \(i\). From the definition of parabolic morphisms, for two parabolic bundles \(\cG := (\underline{\cG}, \{V_i\})\) and \(\cH := (\underline{\cH}, \{W_i\})\), there is a short exact sequence
\begin{align}\label{eq:par_ex_seq}
0\to \cpHom(\cG,\cH)\to\cHom\bigl(\underline{\cG},\underline{\cH}\bigr)\to\bigoplus_{i=1}^{2g+1}\rHom(V_i,\underline{\cH}|_{p_i}/W_i)\otimes k(p_i)\to 0,
\end{align}
which induces a long exact sequence of cohomology groups:
\begin{equation}\label{eqn:par_long_seq}
\begin{split}
  0\to \pHom(\cG,\cH)\to&\rHom\bigl(\underline{\cG},\underline{\cH}\bigr)\to\bigoplus_{i=1}^{2g+1}\rHom(V_i,\underline{\cH}|_{p_i}/W_i)\otimes k(p_i)\\
  &\to \pExt^1(\cG, \cH) \to \Ext^1(\underline{\cG}, \underline{\cH}) \to 0.
\end{split}
\end{equation}
Applying this long exact sequence to stable bundles \(\cG_2\) and \(\cG_3\), we obtain
\[\dim \pExt^1(\cG_2, \cG_2) = g - 2, \quad \dim \pExt^1(\cG_3, \cG_3) = 3g - 5,\]
and
\[\dim \pExt^1(\cG_2, \cG_3) = \dim \pExt^1(\cG_3, \cG_2) = 2g - 4.\]
Since \(g \ge 3\), all extension groups are nontrivial.

For each \(t \in \ZZ_{>0}\), construct \(\cG_{2,t}\) as follows. Set \(\cG_{2,1}:=\cG_2\), and suppose that \(\cG_{2,t-1}\) has been constructed. Then choose \(\cG_{2,t}\) to be a nontrivial extension
\begin{equation}\label{eqn:iteration}
0 \to \cG_2 \to \cG_{2,t} \to \cG_{2,t-1} \to 0.
\end{equation}
There is a nontrivial extension because \(\pExt^1(\cG_{2,t-1},\cG_2)\to\pExt^1(\cG_2,\cG_2)\) is surjective and one may lift a nontrivial extension.

Applying \(\pHom(\cG_2,-)\) to \eqref{eqn:iteration}, we inductively obtain \(\pHom(\cG_2,\cG_{2,t})\cong k\). Since every simple factor of \(\cG_{2,t}\) is isomorphic to \(\cG_2\), this shows that \(\cG_{2,t}\) has a unique simple subobject and hence is indecomposable.

If \(r\) is even, take \(\cG':=\cG_{2,r/2}\). If \(r\) is odd, the same argument applies by replacing the last factor by \(\cG_3\), using \(\pExt^1(\cG_3,\cG_2)\ne0\).

\textbf{Case 2.} \(g\) is even and \(\ge 4\).

In this case, we may apply the same argument. For \(\cG_2^{\mathrm{ev}}\) in Theorem~\ref{thm:initialconst}, using \eqref{eqn:par_long_seq}, we have \(\dim \pExt^1(\cG_2^{\mathrm{ev}}, \cG_2^{\mathrm{ev}}) = g-2\). Therefore, if \(g \ge 4\), the same iterative construction with \(\cG_2^{\mathrm{ev}}\) in place of \(\cG_2\) gives an indecomposable bundle of every even rank \(r\), orthogonal to \(\cR\).

\textbf{Case 3.} \(g = 2\).

In this case, one may choose \(\cG_{2, I}^{\mathrm{ev}} \in \cM_{(\PP^1, \bp)}(2, -1, \bm_I)\) and \(\cG_{2, J}^{\mathrm{ev}} \in \cM_{(\PP^1, \bp)}(2, -1, \bm_J)\), where \(\bm_I = (1, 1, 1, 0, 0)\) and \(\bm_J = (0, 0, 1, 1, 1)\). Both are stable. Using \eqref{eqn:par_long_seq}, we have \(\pExt^1(\cG_{2, I}^{\mathrm{ev}}, \cG_{2, J}^{\mathrm{ev}})\cong\pExt^1(\cG_{2, J}^{\mathrm{ev}}, \cG_{2, I}^{\mathrm{ev}})\cong k\). Starting with either bundle and taking nontrivial extensions successively, alternating \(\cG_{2,I}^{\mathrm{ev}}\) and \(\cG_{2,J}^{\mathrm{ev}}\), the same proof gives an indecomposable bundle of every even rank \(r\), orthogonal to \(\cR\).
\end{proof}

Using these bundles, we may construct higher-rank indecomposable Ulrich bundles. The proof is a minor adjustment of that of Theorem~\ref{thm:maintheoremevenconditional}.

\begin{proposition}\label{prop:maintheoremoddconditional}
Suppose that there is an indecomposable rank \(r\) parabolic bundle \(\cG\) whose dual is orthogonal to \(\cR\). Then there is an indecomposable Ulrich bundle of rank \(r2^{g-3}\), and there is an open embedding
\[\cU_{r2^{g-3}}(X)\hookrightarrow\cM_{\cC}(r,r(g-3)/4).\]
In particular, Theorem~\ref{thm:mainthmodd} is true for any \(r\ge2\) with \(r(g+1)\equiv0\pmod2\).
\end{proposition}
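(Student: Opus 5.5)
Let \(\cF:=\cG^*\), a vector bundle on \(\cC=\PP_{2g+1}^1\) identified with a parabolic bundle via \S\ref{ssec:parabolicbundle}. By hypothesis \(\cF^*=\cG\) is cohomologically orthogonal to \(\cR\), and \(\cF\) is indecomposable because dualizing preserves indecomposability.

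We first produce the Ulrich bundle. Theorem~\ref{thm:existenceUlrich} shows that \(\cE:=\Phi_{\cS}(\cF)(1)\) is an Ulrich bundle on \(X\), and Proposition~\ref{prop:Ulrichrankodd} gives \(\deg\cF=r(g-3)/4\) and \(\rank\cE=r2^{g-3}\). Indecomposability of \(\cE\) comes from full faithfulness of \(\Phi_{\cS}\) (Corollary~\ref{cor:FMfullyfaithful}). Indeed, \(\rEnd(\cE)\cong\rEnd(\cF)\) as algebras, and \(\cF\) is indecomposable, so \(\rEnd(\cF)\) is local. Hence \(\rEnd(\cE)\) has no nontrivial idempotents. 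Equivalently, a splitting \(\cE=\cE_1\oplus\cE_2\) would give idempotents transported to a splitting of \(\cF\).

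We then build the moduli comparison, following Theorem~\ref{thm:maintheoremevenconditional}. Let \(\cM_{\cC}(r,r(g-3)/4)^{\perp\cR}\) be the locus of semistable \(\cF\) with \(\rH^\bullet(\cC,\cF\otimes\cR)=0\). This locus is open by semicontinuity applied to the flat family \(\cF\otimes\cR\) over the stack, since \(\chi(\cF\otimes\cR)=0\) by Riemann–Roch.

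For the inclusion of Ulrich bundles into this locus, take any Ulrich \(\cE\) of rank \(r2^{g-3}\). Proposition~\ref{prop:FMofUlrichisvb} gives \(\Phi_{\cS}^!(\cE(-1))\) as a vector bundle, orthogonal by \eqref{eqn:duality}. It is semistable by Proposition~\ref{prop:Fissemistable}, which uses \cite[Theorem~6.1]{BD11} in the stacky setting, and its rank and degree are fixed by Proposition~\ref{prop:Ulrichrankodd}.

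Conversely, Theorem~\ref{thm:existenceUlrich} sends orthogonal bundles to Ulrich bundles. The two functors are mutually inverse on these loci by full faithfulness, and they work in families by cohomology and base change, since \(\cS\) is flat over both factors. This gives an isomorphism \(\cU_{r2^{g-3}}(X)\cong\cM_{\cC}(r,r(g-3)/4)^{\perp\cR}\), hence an open embedding. Corollary~\ref{cor:Sequivalence} shows the locus is closed under taking S-equivalence, which yields the statement on good moduli spaces.

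The final sentence needs inputs for each \(r\ge2\) with \(r(g+1)\) even. For odd \(g\) this means every \(r\ge2\), and for even \(g\) every even \(r\). These inputs are the indecomposable bundles of Theorem~\ref{thm:maintheoremoddconditional}, built from the bundles of Theorem~\ref{thm:initialconst}.

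For the \"only if\" direction, Proposition~\ref{prop:Ulrichrankodd} forces \(4d=r(g-3)\), and \(d\) lies in \(\tfrac12\ZZ\). The claim to prove is that the parabolic degree has the form integer plus \(m/2\) with \(m=\sum m_i\), and that Riemann–Roch parity then forces \(r(g+1)\) to be even. An indecomposable rank \(r2^{g-3}\) with \(r=1\) is also to be excluded: a rank-one orthogonal parabolic line bundle would have to satisfy \(4\deg=g-3\) together with a parity constraint, which should be impossible.

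The main obstacle is this parity and \(r=1\) exclusion, since the degree on the root stack is a half-integer. I would handle it by writing \(\deg\cF=e+\tfrac12\sum m_i\) and comparing with the stacky Riemann–Roch correction terms \(a_i-b_i\) in the proof of Proposition~\ref{prop:Ulrichrankodd}.
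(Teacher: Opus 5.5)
Your argument for the core claims follows the paper. Theorem~\ref{thm:existenceUlrich} turns an orthogonal bundle into an Ulrich bundle, and full faithfulness of \(\Phi_{\cS}\) transfers indecomposability. The functors \(\Phi_{\cS}(-)(1)\) and \(\Phi_{\cS}^!(-\otimes\cO_X(-1))\) then identify \(\cU_{r2^{g-3}}(X)\) with the open, saturated orthogonal locus in \(\cM_{\cC}(r,r(g-3)/4)\).

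There is one minor slip. The hypothesis says \(\cG^*\) is orthogonal to \(\cR\), so you should apply Theorem~\ref{thm:existenceUlrich} to \(\cF:=\cG\) itself. With \(\cF:=\cG^*\) you land in degree \(r(3-g)/4\) instead of \(r(g-3)/4\).

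\textbf{The gap.} Your proposal omits the assertion of Theorem~\ref{thm:mainthmodd}(2) that \(\cU_{r2^{g-3}}(X)\) has multiple connected components. The ``in particular'' clause includes it, and the paper's proof still has to carry it out after establishing the embedding. The missing idea is that the combinatorial type \(\bm\) of a parabolic bundle is locally constant in families. Equivalently, the \(\mu_2\)-characters of the fibers at the stacky points are discrete invariants. So bundles of different types lie in different connected components of \(\cM_{\cC}(r,d)\).

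Theorem~\ref{thm:initialconst} allows any \(g+1\) of the \(2g+1\) marked points to carry the flags. Hence the duals of the resulting rank-two bundles meet at least \(\binom{2g+1}{g+1}\) distinct components of \(\cM_{\cC}(2,(g-3)/2)\). Each component met by the open orthogonal locus contributes a nonempty open and closed piece of \(\cU_{2\cdot 2^{g-3}}(X)\). So the open embedding forces at least that many components, and the same device works in higher rank.

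\textbf{The ``only if'' direction.} You single this out as the main obstacle, but it is not part of this proposition. The paper obtains it from Proposition~\ref{prop:Ulrichrankodd} and Theorem~\ref{thm:line_bundle_case}. Your proposed route for \(r=1\) would also fail.

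The parity part is immediate. Since \(d\in\frac12\ZZ\) and \(4d=r(g-3)\), the integer \(r(g-3)\), hence \(r(g+1)\), must be even; no stacky correction terms are needed.

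However, for odd \(g\) and \(r=1\), the value \(d=(g-3)/4\) is a perfectly good half-integer. So no numerical or parity argument can exclude rank one. The paper instead shows that every candidate parabolic line bundle \(\cL\) admits a nonzero parabolic morphism \(\xi=\eta\cdot\prod_\ell e_\ell\) to \(\cR\). This morphism comes from Clifford multiplication by the \(e_\ell\) at the flagged points, and it contradicts \(\pHom(\cL,\cR)=0\).
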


\begin{proof}
As in the proof of Theorem~\ref{thm:maintheoremevenconditional}, the Fourier--Mukai functors identify \(\cU_{r2^{g-3}}(X)\) with \(\cM_{\cC}(r,r(g-3)/4)^{\perp\cR}\). Since cohomological orthogonality is open, this is an open substack of \(\cM_{\cC}(r,r(g-3)/4)\), giving the asserted open embedding. Moreover, \(\Phi_{\cS}(\cG)(1)\) is indecomposable because \(\Phi_{\cS}\) is fully faithful.

It remains to prove the existence of multiple components. Choosing \(\bm\) amounts to choosing \(g+1\) of the \(2g+1\) parabolic points. Distinct choices give bundles \(\cG_2^*\) in distinct connected components of \(\cM_{\cC}(2,(g-3)/2)\). Hence \(\cU_{2\cdot2^{g-3}}(X)\) has at least \(\binom{2g+1}{g+1}\) connected components. The same construction gives multiple components in every higher admissible rank.
\end{proof}

There is one further reduction: the odd-dimensional case implies the even-dimensional case.

\begin{proposition}\label{prop:oddimplieseven}
The existence part of Theorem~\ref{thm:mainthmodd} with parameter \(g\) implies the existence part of Theorem~\ref{thm:mainthmeven} for parameter \(g -1\).
\end{proposition}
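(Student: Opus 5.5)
The plan is to pass from the odd case to the even case by a hyperplane section, and then feed the result into the conditional Theorem~\ref{thm:maintheoremevenconditional}.

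\textbf{Step 1: realize the target variety as a hyperplane section.} Set \(g' := g-1\). Let \(X' = X^{2g'-1}\subset \PP^{2g'+1}=\PP^{2g-1}\) be any smooth intersection of two even-dimensional quadrics. By \cite[Proposition~2.1]{Rei73} it is given by diagonal forms \(q_0=\sum_{i=1}^{2g}x_i^2\) and \(q_\infty=\sum_{i=1}^{2g}\lambda_ix_i^2\) with pairwise distinct \(\lambda_i\in k^*\). Choose \(\lambda_{2g+1}\in k^*\) different from all \(\lambda_i\). Then the forms \(\sum_{i=1}^{2g+1}x_i^2\) and \(\sum_{i=1}^{2g+1}\lambda_ix_i^2\) define a smooth \(X=X^{2g-2}\subset\PP^{2g}\), and \(X'=X\cap V(x_{2g+1})\) is a smooth hyperplane section. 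This is the same mechanism as the pair \(X_H\subset X\) in \S\ref{sec:spinorbundle}, shifted down by one.

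\textbf{Step 2: restrict an Ulrich bundle and read off its data on \(C\).} Fix \(r\ge 2\) with \(r(g+1)\equiv 0\pmod 2\). Since \(g'=g-1\), this is equivalent to \(rg'\equiv0\pmod 2\), so the parity conditions in the two theorems match. The existence part of Theorem~\ref{thm:mainthmodd} gives an Ulrich bundle \(\cE\) on \(X\) of rank \(r2^{g-3}=r2^{g'-2}\). By Examples~(6), \(\cE|_{X'}\) is Ulrich on \(X'\), of the same rank. Proposition~\ref{prop:FMofUlrichisvb} then shows that \(\cF:=\Phi_{\cS}^!(\cE|_{X'}(-1))\) is a vector bundle on the genus-\(g'\) hyperelliptic curve \(C\). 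The computation \eqref{eqn:duality} shows that \(\cF^*\) is cohomologically orthogonal to \(\cR\). Proposition~\ref{prop:Ulrichrankeven} shows that \(\rank\cF = r\) and \(\deg\cF=r(g'-2)/2\).

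\textbf{Step 3: conclude from the conditional theorem.} Theorem~\ref{thm:maintheoremevenconditional} now applies to \(\cF\) and yields Theorem~\ref{thm:mainthmeven} for \(g'\) and this \(r\). This includes a stable, hence indecomposable, Ulrich bundle of rank \(r2^{g'-2}\), together with the open embedding of moduli. Note that \(\cE|_{X'}\) itself may well decompose. This causes no difficulty, because indecomposability is recovered from irreducibility of \(\cM_C(r,r(g'-2)/2)\), density of its stable locus, and openness of orthogonality. The "only if" direction and the value \(uc=2^{g'-1}\) come from Proposition~\ref{prop:Ulrichrankeven} (which forces \(rg'\) even) together with \cite{ES25}. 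Only the existence part is claimed here in any case.

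\textbf{Expected obstacle.} The only delicate point is that \(g'=g-1\ge 2\) is needed to use the even-case machinery (Lemma~\ref{lem:chofShat} requires \(\rH^1(X',k)=0\)). So one needs \(g\ge3\), with \(g'=1\) handled separately by Remark~\ref{rmk:genus1}. I also need the identification of ranks under restriction: the rank is unchanged, and \(2^{g-3}=2^{g'-2}\).
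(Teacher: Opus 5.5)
Your proposal is correct and follows essentially the paper's route: restrict Ulrich bundles to a smooth hyperplane section, pass through $\Phi_{\cS}^!$ to obtain bundles on the genus-$(g-1)$ hyperelliptic curve whose duals are orthogonal to $\cR$, and invoke the even-case conditional machinery. The only difference is that the paper restricts just the ranks $2$ and $3$ (only $2$ when $g$ is even) and then applies Proposition~\ref{prop:lowrankisenougheven}, so it needs only the low-rank part of Theorem~\ref{thm:mainthmodd}; you instead apply Theorem~\ref{thm:maintheoremevenconditional} rank by rank using the full hypothesis, and your Step~1 makes explicit that every $X^{2g-3}$ arises as such a hyperplane section, which the paper leaves implicit.
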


\begin{proof}
Suppose first that \(g\) is odd. By the assumed existence part of Theorem~\ref{thm:mainthmodd}, there are Ulrich bundles \(\cE_2\) and \(\cE_3\) on \(X\) of ranks \(2\cdot 2^{g-3}\) and \(3\cdot 2^{g-3}\), respectively. If \(g\) is even, take only an Ulrich bundle \(\cE_2\) of rank \(2\cdot 2^{g-3}\). Let \(Y:=X\cap H\) be a smooth hyperplane section. Then \(Y\) is the intersection of two even-dimensional quadrics with parameter \(g-1\), and the restrictions \(\cE_i|_Y\) are Ulrich. Set
\[\cF_i:=\Phi_{\cS}^!(\cE_i|_Y(-1)).\]
By Proposition~\ref{prop:FMofUlrichisvb}, the \(\cF_i\) are vector bundles on the hyperelliptic curve of genus \(g-1\). Equation \eqref{eqn:duality} and the Ulrich vanishings show that the bundles \(\cF_i^*\) are cohomologically orthogonal to \(\cR\), while Proposition~\ref{prop:Ulrichrankeven} gives \(\rank\cF_i=i\). Since \(g-1\) is even when \(g\) is odd and odd when \(g\) is even, Proposition~\ref{prop:lowrankisenougheven} gives the existence part of Theorem~\ref{thm:mainthmeven} for parameter \(g-1\).
\end{proof}

Therefore, we may focus on the odd-dimensional quadrics.

\begin{remark}\label{rmk:dimension}
The moduli space \(\rM_{\cC}(r, d)\) has multiple connected components, and they are not equidimensional. This can be seen using parabolic bundles. Each component of \(\rM_{\cC}(r, d)\) is identified with the moduli space \(\rM_{(\PP^1, \bp)}(r, d', \bm)\) of parabolic bundles over \(\PP^1\) with parabolic points \(\bp = (p_1, p_2, \dots, p_{2g+1})\), and the multiplicity \(\bm = (m_1, m_2, \dots, m_{2g+1})\) with
\[d = d' + \frac{1}{2}\sum m_i.\]
If the stable locus is nonempty, this component has dimension
\[
1 - r^2 + \sum_{i=1}^n m_i (r-m_i).
\]

In \S\ref{sec:existence}, when \(g\) is odd, we construct a component \(U\) admitting an injective morphism into \(\rM_{(\PP^1, \bp)}(2, (1-g), \bm)\) where
\[m_i = \begin{cases} 1, & 1 \le i \le g+1,\\ 0, & g+2 \le i \le 2g+1. \end{cases}\]
Hence \(\dim \rM_{(\PP^1, \bp)}(2, (1-g), \bm) = g - 2\).

More generally, for any \((g+1)\)-subset \(I \subset \{1, 2, \dots, 2g+1\}\), define \(\bm_I\) by
\[\bm_{I,i} = \begin{cases} 1, & i \in I,\\ 0, & i \notin I. \end{cases}\]
Then the nonempty orthogonality locus in \(\rM_{(\PP^1, \bp)}(2, 1-g, \bm_I)\) identifies with another connected component of \(\rU_{2\cdot 2^{g-3}}(X)\). Thus, \(\rU_{2\cdot 2^{g-3}}(X)\) has at least \(\binom{2g+1}{g+1}\) connected components. The components obtained in this way all have dimension \(g-2\). We do not know if, up to degree-zero twists, there are any extra components of \(\rU_{2\cdot 2^{g-3}}(X)\) or whether \(\rU_{2\cdot 2^{g-3}}(X)\) is equidimensional. Our numerical search for small values of \(g\) suggests that there are no additional components up to degree-zero twists, but at this point, we do not have a proof.

In higher rank, equidimensionality fails; let \(g\ge3\) be odd, and set \(I=\{1,\dots,g+1\}\) and \(J=\{g+1,\dots,2g+1\}\). Choose stable orthogonal rank-two bundles \(\cF_2,\cF_2'\) of types \(\bm_I,\bm_J\), respectively. Since \(\dim\pExt^1(\cF_2,\cF_2)=g-2\), the local deformation quotient at \(\cF_2^{\oplus2}\) is \(\operatorname{Mat}_2(k)^{g-2}\git\operatorname{PGL}_2\) under simultaneous conjugation. This gives
\[\dim\rM_{(\PP^1,\bp)}(4,2-2g,2\bm_I)=\begin{cases} 2,&g=3,\\4g-11,&g\ge5. \end{cases}\]
At \(\cF_2\oplus\cF_2'\), both mixed extension spaces have dimension \(2g-3\) by \eqref{eqn:par_long_seq}, and the effective stabilizer \(\mathbb G_m\) acts on them with opposite weights. Hence
\[\dim\rM_{(\PP^1,\bp)}(4,2-2g,\bm_I+\bm_J)=2(g-2)+2(2g-3)-1=6g-11.\]
Openness of orthogonality gives components of \(\rU_{4\cdot2^{g-3}}(X)\) of these distinct dimensions.
\end{remark}

%%%%%%%%%%%%%%%%%%%%%%%%%%%%%%%%%%%%%%%%%%%%%%%%%%%%
\section{Reduction to parabolic bundle}\label{sec:reductionparabolic}

Eisenbud and Schreyer realized the Raynaud bundle for \(\dim V=2g+2\) by sheafifying the even part of an algebraic Clifford module \cite[\S4]{ES25}. The Ext construction and its Clifford action are defined in either parity. We identify the resulting bundle on \(\cC\) with the geometric Raynaud bundle and compute its parabolic flags in odd dimension.

\subsection{Algebraic realization of the Raynaud bundle}\label{ssec:Raynaudalgebraic}

Fix a maximal common isotropic subspace \(U\subset V\). Set \(P_X:=k[V^*]/(q_0,q_\infty)\) and \(P_U:=k[V^*]/(U^\perp)\).

\begin{definition}\label{def:Raynaudmodule}
The algebraic \textbf{Raynaud module} associated with \(U\) is the graded right \(\Cl\)-module
\[F_U:=\Ext_{P_X}^\bullet(P_U,k),\]
with the Yoneda action induced by \(\Ext_{P_X}^\bullet(k,k)\cong\Cl\).
\end{definition}

Let \(\cF_U\) be the coherent right \(\widetilde{\cC\ell}_0\)-module on \(\cC\) corresponding to the sheafification of \(F_U\) under the equivalences in \S\ref{ssec:sheafification}. If \(\dim V=2g+2\), the even part of \(F_U\) sheafifies to a vector bundle on \(C\) by \cite[Proposition~4.5]{ES25}. If \(\dim V=2g+1\), Proposition~\ref{prop:Ray_act} allows us to apply the construction of \S\ref{ssec:parabolicbundle} to \(F_U\), giving a vector bundle on \(\PP_{2g+1}^1\). Hence \(\cF_U\) is a vector bundle in either parity.

Write \(\cR_U\) for the Raynaud bundle of Definition~\ref{def:Raynaud} constructed from the same isotropic subspace \(U\), and retain the half-pencil line bundle \(\cO_{\cC}(1)\) fixed in \eqref{eqn:half_pencil_bundle}.

\begin{proposition}\label{prop:Raynaudidentical}
There is an isomorphism \(\cF_U\cong\cR_U\) of right \(\widetilde{\cC\ell}_0\)-modules. Moreover, there is an isomorphism of graded right \(\Cl\)-modules
\[F_U\cong\bigoplus_{n\in\ZZ}H^0\!\left(\cC, \cR_U\otimes\cO_{\cC}(n) \right).\]
\end{proposition}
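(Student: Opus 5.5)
The plan is to compute \(F_U=\Ext_{P_X}^\bullet(P_U,k)\) geometrically. I will show that its sheafification is the graded module of sections of the Clifford-module bundle \(\cR_U\) on \(\cC\), and that the \(\Cl\)-actions agree.

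First, resolve \(P_U\) over the ambient polynomial ring \(S=k[V^*]\). Since \(U\subset X\), the ideal \(U^\perp\) contains \((q_0,q_\infty)\). Koszul duality (\cite[Theorem~3.1]{ES25}) then identifies \(\Ext_{P_X}^\bullet(P_U,k)\) with a graded \(\Cl\)-module. By the standard change-of-rings argument (Shamash/Eisenbud operators), this module is obtained from the Koszul dual \(\Ext_S^\bullet(P_U,k)=\wedge^\bullet(U^\perp)^\ast\) by adjoining the polynomial variables \(s,t\) acting as the CI operators.

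Concretely, I expect a presentation
\[F_U\cong \Cl\otimes_{\Cl(U)}k,\]
that is, \(F_U\) is the right \(\Cl\)-ideal generated by \(u_1\cdots u_m\) for a basis \(u_1,\dots,u_m\) of \(U\), up to a shift. This is the relative (\(k[s,t]\)-linear) version of Addington's construction of the spinor module \(I\) in \S\ref{ssec:spinor}.

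Second, I will localize at a point \([s:t]\in\PP^1\), or at a point of \(\cC\). The fiber of this ideal is \(I=I_0\oplus I_1\) for the quadric \(sq_0+tq_\infty\) and the isotropic subspace \(U\). This has dimension \(2^{g+1}\) in total, so each graded piece sheafifies to a rank-\(2^g\) bundle. This matches the fiber \(\cR_c\cong I_{0,c}\) obtained in Proposition~\ref{prop:relativeMF}.

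To produce a global map, I will compare with the resolution \eqref{eqn:relative_spinor_ambient_resolution}. Its global sections \(\bigoplus_n H^0(\cC,\cR\otimes\cO_{\cC}(n))\), together with the Clifford multiplications \(\alpha,\beta\) of Proposition~\ref{prop:filtrationeven}, form a graded \(\Cl\)-module. The generator \(u_1\cdots u_m\) corresponds to the canonical section of \(\cR_U\) (twisted by \(\cO_{\cC}(1)\) as appropriate) coming from \(D_U\). That section is the pushforward of the canonical section of \(\cO(D_U)\) in Definition~\ref{def:spinorbundleeven}. This gives a \(\Cl\)-linear map \(F_U\to\bigoplus_n H^0(\cC,\cR_U(n))\).

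This map is an isomorphism on fibers by the previous paragraph. Since both sheaves are locally free on \(\cC\) (Lemma~\ref{lem:Rvb} and the freeness of \(F_U\) over \(k[s,t]\)), the sheafified map \(\cF_U\to\cR_U\) is an isomorphism of \(\widetilde{\cC\ell}_0\)-modules.

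For the graded statement, I will use the reflexivity of Lemma~\ref{lem:reflexive_R_module} in odd dimension, and normality of the section ring of \(C\) in even dimension. A reflexive graded module over a two-dimensional normal graded ring equals the module of sections of its sheafification, \(\bigoplus_n H^0\) (cf. \cite[Lemma~3.2]{HO20}). This requires that \(F_U\) be free over \(k[s,t]\), which follows from the explicit ideal description.

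The main obstacle is the first step: identifying \(\Ext_{P_X}^\bullet(P_U,k)\) with the Clifford ideal, including its Yoneda \(\Cl\)-action, uniformly in both parities. There are two ways I would try. The first is to write the minimal \(P_X\)-resolution of \(P_U\) explicitly as an Eisenbud–Shamash resolution built from the Koszul complex on \(U^\perp\) and a system of higher homotopies. The second, if that becomes unwieldy, is to work one point \([s:t]\) at a time via matrix factorizations of \(sq_0+tq_\infty\), and then glue using the \(k[s,t]\)-freeness.
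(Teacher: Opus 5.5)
Your strategy differs from the paper's. The paper never constructs a map. It uses Morita uniqueness to get $\cF_U\cong\cR_U\otimes\cL$, then shows $\cL\cong\cO_{\cC}$ by comparing the lines $\ell_U=\bigcap_{u\in U}\ker(\cdot u)$ in the two modules. You instead want an explicit map $F_U\to\bigoplus_n\rH^0(\cC,\cR_U(n))$ out of a cyclic presentation. That route can work. Moreover, the step you flag as the main obstacle is actually the easy one. By Proposition~\ref{prop:Ray_act}, $1\in F_U^0$ generates $F_U$, since the leading term of $1\cdot v_1\cdots v_d$ is a wedge of the $\overline v_i$. Also $1\cdot u=\varepsilon_{\overline u}1=0$ for $u\in U$. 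Finally, $F_U$ and $\Cl/U\Cl$ are both free of rank $2^{\dim V/U}$ over $k[s,t]$. Hence $F_U\cong\Cl/U\Cl$, which is your ideal $u_1\cdots u_g\Cl$ up to shift, with no Eisenbud--Shamash bookkeeping.

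The genuine gap is in the part you treat as formal.

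(a) Sending $1\mapsto\sigma_U$ defines a $\Cl$-linear map only if $\sigma_U\cdot u=0$ for all $u\in U$. You assert that the generator ``corresponds'' to $\sigma_U$, but give no reason.

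(b) ``Isomorphism on fibers by the previous paragraph'' does not follow, because that paragraph only matches dimensions. You need two facts: $\sigma_U(c)\neq0$ for every $c\in\cC$, and $\cR_c$ is irreducible over $\widetilde{\cC\ell}_{0,c}$ (Corollary~\ref{cor:Cliffordendomorphismalgebra}). Then $\sigma_U\cdot\widetilde{\cC\ell}_0=\cR_U$, so the sheafified map is a surjection of bundles of equal rank, hence an isomorphism.

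(c) Your hedge ``twisted by $\cO_{\cC}(1)$ as appropriate'' sits exactly on the content of the statement. Any twist would give $\cF_U\cong\cR_U(n)$, not $\cF_U\cong\cR_U$. You must check that $\cR_U\cong h_{g+1*}\cO_{\OGr_{g+1}}(D_U)$. This follows from $\tcS=f_{1*}f_{g+1}^*\cO(D_U)$, $f_{g+1*}\cO=\cO$, and the vanishing in the proof of Proposition~\ref{prop:relativeMF}. Then $\sigma_U$ is an honest degree-$0$ element of $\rH^0(\cC,\cR_U)$.

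Points (a) and (b) are exactly what the paper's proof supplies. On each smooth fiber, write $V=W_1\oplus W_2$ with $U\subset W_1$. Then $\rH^0(\OGr(g+1,V),\cO(D_U))\cong\bigwedge^{\mathrm{ev}}W_2$, and $\sigma_U$ corresponds to $1$. This vector is nonzero and is killed by $W_1\supset U$ acting by contraction. In the odd case one must also check that this section descends.

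Finally, your rank count is wrong in even dimension. There $\dim U=g$, not $g+1$, so the fiber of $F_U$ over a point of $\PP^1$ has total dimension $2^{g+2}$. Consequently $F_U^{\mathrm{ev}}$ must be matched with $\pi_*\cR_U$, of rank $2^{g+1}$, rather than with a single fiber $\cR_c$.
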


\begin{proof}
The Clifford action exhibits \(\cF_U\) as a splitting bundle for \(\widetilde{\cC\ell}_0\), while Corollary~\ref{cor:Cliffordendomorphismalgebra} does the same for \(\cR_U\). Morita uniqueness gives \(\cF_U\cong\cR_U\otimes\cL\) for some \(\cL\in\Pic(\cC)\).

For any irreducible half-spin representation \(M^{\mathrm{ev}}\), extend it to \(M=M^{\mathrm{ev}}\oplus M^{\mathrm{odd}}\) and set
\[\ell_U(M):=\bigcap_{u\in U}\ker\bigl(\mathord{\cdot}u:M^{\mathrm{ev}}\longrightarrow M^{\mathrm{odd}}\bigr).\]
This subspace is one-dimensional. Proposition~\ref{prop:Ray_act} shows that \(1\in\Ext_{P_X}^0(P_U,k)\) lies in \(\ell_U(F_U)\). It therefore defines a nowhere-vanishing section of \(\ell_U(\cF_U)\), so \(\ell_U(\cF_U)\cong\cO_{\cC}\).

The canonical section \(\sigma_U\) of \(\cO_{\OGr_{g+1}}(D_U)\) determines the corresponding line in \(\cR_U\). On each smooth geometric fiber, choose a decomposition \(V=W_1\oplus W_2\) into maximal isotropic subspaces with \(U\subset W_1\), where \(V\) is the even-dimensional ambient quadratic space fixed in \S\ref{sec:spinorbundle}. Under the identification \(\rH^0(\OGr_{g+1},\cO_{\OGr_{g+1}}(D_U))\cong\bigwedge^{\mathrm{ev}}W_2\), the section \(\sigma_U\) corresponds to \(1\in\bigwedge^0W_2\) \cite[\S1]{Muk95}. Since \(W_1\) acts by contraction, this vector is annihilated by \(U\). The resulting nowhere-vanishing section descends together with \(\cR_U\), and hence \(\ell_U(\cR_U)\cong\cO_{\cC}\).

The Morita isomorphism induces \(\ell_U(\cF_U)\cong\ell_U(\cR_U)\otimes\cL\), so \(\cL\cong\cO_{\cC}\). The section-module formula follows from \cite[Proposition~4.5]{ES25} in even dimension and from Lemma~\ref{lem:reflexive_R_module} and \cite[Lemma~3.2]{HO20} in odd dimension.
\end{proof}

For the remainder of this subsection, assume that \(\dim V=2g+1\), so that \(\cC=\PP_{2g+1}^1\). Retain the notation of \S\ref{ssec:isotropicspace}; the vectors \(\overline e_\ell\), the coefficients \(a_{i,\ell}\), and the vectors \(x_i^+\) are defined in \S\ref{ssec:KT_res}.

\begin{theorem}\label{thm:raynaud_parabolic}
The parabolic bundle associated with \(\cF_U\cong\cR_U\) is
\[\cR:= \left( \bigoplus_{d\ge0} \bigwedge^{2d}(V/U)\otimes \cO_{\PP^1}(-d), \{W_\ell\}_{\ell=1}^{2g+1} \right),\]
where
\[
W_\ell= \begin{cases}
  \operatorname{im}(\varepsilon_{\overline e_\ell}), &1\le\ell\le g+1,\\[4pt]
  \operatorname{im}\!\left( \varepsilon_{\overline e_\ell} +\displaystyle\sum_{i=1}^{g+1} a_{i,\ell}(\lambda_\ell-\lambda_i)\iota_{x_i^+} \right), &g+2\le\ell\le2g+1,
\end{cases}
\]
and their annihilators with respect to the pairing \eqref{eq:perf_pair} are
\[
W_\ell^\perp= \begin{cases}
  \operatorname{im}(\iota_{\overline e_\ell}), &1\le\ell\le g+1,\\[4pt]
  \operatorname{im}\!\left( \iota_{\overline e_\ell} +\displaystyle\sum_{i=1}^{g+1} a_{i,\ell}(\lambda_\ell-\lambda_i)\varepsilon_{x_i^+} \right), &g+2\le\ell\le2g+1.
\end{cases}
\]
\end{theorem}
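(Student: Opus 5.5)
By Proposition~\ref{prop:Raynaudidentical} we may replace \(\cR_U\) by the graded right \(\Cl\)-module \(F_U=\Ext^\bullet_{P_X}(P_U,k)\), which is free over \(k[s,t]\). By Lemma~\ref{lem:reflexive_R_module} and Proposition~\ref{prop:Cliffordmodule}, the parabolic bundle is then \((\widetilde{F_U}^{\mathrm{ev}},\{W_\ell\})\) with \(W_\ell=\operatorname{im}(\cdot e_\ell: F_U^{\mathrm{odd}}|_{p_\ell}\to F_U^{\mathrm{ev}}|_{p_\ell})\). The proof therefore has two parts. First, compute \(F_U\) explicitly as a graded \(k[s,t]\)-module together with the right action of \(V\subset\Cl\). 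Second, evaluate the action of \(e_\ell\) at the point \(p_\ell=[-\lambda_\ell:1]\).

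For the first part, I would resolve \(P_U\) over \(P_X\). Since \(U\) is isotropic for both forms, \(q_0,q_\infty\in U^\perp\cdot k[V^*]\), so \(P_U\) is a complete intersection quotient. I would use the standard Koszul--Tate (Shamash/Eisenbud) resolution. This is the construction of \S\ref{ssec:KT_res} that the statement refers to. Write \(q_0=\sum_i x_i^+\,\ell_i\) and \(q_\infty=\sum_i \lambda_i x_i^+\,\ell_i\) with \(\ell_i\in U^\perp\). The resolution is \(P_X\otimes\bigwedge^\bullet(U^\perp)\otimes k[s,t]^\vee\), whose differential is Koszul plus the homotopy terms built from \(x_i^+\) and the coefficients \(a_{i,\ell}\). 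Dualizing and tensoring with \(k\) kills the linear part. This gives
\[
F_U\cong \bigwedge^\bullet(V/U)\otimes k[s,t] \quad(\text{as }(U^\perp)^*=V/U),
\]
which is free over \(k[s,t]\). The right Clifford action of \(v\in V\) is exterior multiplication \(\varepsilon_{\bar v}\), plus a contraction term \(\iota\) weighted by \(s,t\) that comes from the quadratic homotopies. This is the relation \(v^2=sq_0(v)+tq_\infty(v)\) realized on \(\bigwedge^\bullet(V/U)\), and I take this to be the content of Proposition~\ref{prop:Ray_act}. Taking even degrees with \(\deg s=\deg t=2\) and sheafifying on \(\PP^1\) yields the underlying bundle \(\bigoplus_d\bigwedge^{2d}(V/U)\otimes\cO(-d)\).

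For the second part, I would choose coordinates adapted to \(U\). Up to the reflections \(\sigma_i\), which act transitively, take \(U\) spanned by vectors pairing the first \(g+1\) coordinates with the remaining ones. Then \(\bar e_1,\dots,\bar e_{g+1}\) form a basis of \(V/U\), and for \(\ell\ge g+2\) we have \(\bar e_\ell=\sum_i a_{i,\ell}\bar e_i\). The contraction part of \(\cdot e_\ell\) is \(\sum_i(\text{coefficient})\,(s+\lambda_i t)\,\iota_{x_i^+}\). At \(p_\ell\) this coefficient specializes to \(\lambda_\ell-\lambda_i\) up to normalization.

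\begin{itemize}
\item For \(\ell\le g+1\), the contraction term is proportional to \(\iota_{x_\ell^+}\) and has coefficient \(\lambda_\ell-\lambda_\ell=0\), so only \(\varepsilon_{\bar e_\ell}\) survives.
\item For \(\ell\ge g+2\), we obtain the stated operator \(\varepsilon_{\bar e_\ell}+\sum_i a_{i,\ell}(\lambda_\ell-\lambda_i)\iota_{x_i^+}\).
\end{itemize}

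The annihilator formula then follows formally. Under the perfect pairing on \(\bigwedge^\bullet(V/U)\) given by wedge product into the top degree, the adjoint of \(\varepsilon_v\) is \(\pm\iota\) of the dual vector and vice versa. Hence \(W_\ell^\perp\) is the image of the adjoint operator, and the dimensions match because each \(W_\ell\) is half-dimensional, as \(e_\ell^2\) vanishes at \(p_\ell\).

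I expect the main obstacle to be the precise bookkeeping in the first part. One must pin down the homotopy coefficients and signs in the Koszul--Tate resolution, show that the induced Yoneda action is exactly \(\varepsilon+\) (weighted contraction), and check that the weights evaluate to \((\lambda_\ell-\lambda_i)a_{i,\ell}\) at \(p_\ell\) rather than to some other normalization. I would verify this by checking the Clifford relation \(e_\ell^2=s+\lambda_\ell t\) on the proposed operators, which essentially forces the coefficients.
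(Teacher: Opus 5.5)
\textbf{Computation of $W_\ell$.} This part follows the paper's proof. Proposition~\ref{prop:Cliffordmodule} reduces the problem to the image of $\cdot e_\ell\colon F_U^{\mathrm{odd}}|_{p_\ell}\to F_U^{\mathrm{ev}}|_{p_\ell}$. Proposition~\ref{prop:Ray_act} with $\ga=\sum_i x_i^+\otimes x_i^-\otimes(s+\lbd_i t)$ gives $\cdot e_\ell=\varepsilon_{\overline e_\ell}+\sum_i x_i^-(e_\ell)(s+\lbd_i t)\iota_{x_i^+}$. Setting $s=-\lbd_\ell$, $t=1$ then yields the stated operators. Two corrections:
\begin{itemize}
\item It is $x_i^+$ that lies in $U^\perp$, not the other factor; $x_i^-$ does not vanish on $U$.
\item The coefficient is exactly $x_i^-(e_\ell)(\lbd_i-\lbd_\ell)=a_{i,\ell}(\lbd_\ell-\lbd_i)$, since $x_i^-(e_\ell)=-a_{i,\ell}$. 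It is not fixed ``up to normalization'' by the Clifford relations. The relations $v^2=sq_0(v)+tq_\infty(v)$ determine $\ga$ only modulo a skew tensor $\eta\in\bigwedge^2U^\perp\otimes G^*$. Changing $\ga$ conjugates the action by $e^{\iota_\eta}$, which can move the flags. So your proposed check cannot pin down the formula; you need Proposition~\ref{prop:Ray_act} with this specific $\ga$.
\end{itemize}

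\textbf{The gap: the annihilator step.} Write $\Delta_\ell$ for the evaluated operator. $W_\ell^\perp$ is the kernel of the adjoint $\Delta_\ell^*$ on the even part of $\bigwedge U^\perp$. Replacing this kernel by the image requires $\ker\Delta_\ell^*=\operatorname{im}\Delta_\ell^*$. You justify this by saying $W_\ell$ is half-dimensional because $e_\ell^2$ vanishes at $p_\ell$. That only gives $\Delta_\ell^2=0$, hence $\operatorname{im}\subset\ker$ and $\dim W_\ell\le 2^{g-1}$, which is the inequality in the wrong direction. A square-zero operator need not be exact, and equality is exactly what has to be proved.

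The missing idea is the contracting homotopy of Lemma~\ref{lem:exactness}. Here $\Delta_\ell^*=\iota_{\overline e_\ell}+\varepsilon_u$ on $\bigwedge U^\perp$, with $u=\sum_i a_{i,\ell}(\lbd_\ell-\lbd_i)x_i^+$ (and $u=0$ for $\ell\le g+1$).
\begin{itemize}
\item $\overline e_\ell\neq0$, because $q_0(e_\ell)=1$ forces $e_\ell\notin U$.
\item $u(\overline e_\ell)=\sum_i a_{i,\ell}^2(\lbd_\ell-\lbd_i)=0$ by \eqref{eqn:matrixidentity}.
\item Hence for any $v\in U^\perp$ with $v(\overline e_\ell)=1$, one gets $\Delta_\ell^*\varepsilon_v+\varepsilon_v\Delta_\ell^*=1$.
\end{itemize}
This gives exactness, hence the stated $W_\ell^\perp$. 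It also gives $\dim W_\ell=2^{g-1}$ as a consequence rather than an input.

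\textbf{The pairing.} The pairing in the statement is \eqref{eq:perf_pair}, the natural duality between $\bigwedge(V/U)$ and $\bigwedge U^\perp$. It is not the wedge pairing on $\bigwedge(V/U)$ that you name. Your adjoint rule ($\varepsilon_{\overline e_\ell}\leftrightarrow\iota_{\overline e_\ell}$, $\iota_{x_i^+}\leftrightarrow\varepsilon_{x_i^+}$) is correct for \eqref{eq:perf_pair}. Under the wedge pairing, $\varepsilon_v$ and $\iota_\beta$ are each self-adjoint up to sign, and the annihilator would not even lie in $\bigwedge U^\perp$.
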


We will compute the Clifford action in either parity; its odd-dimensional specialization proves the theorem.

\subsection{Koszul--Tate resolution and Clifford action}\label{ssec:KT_res}

Recall that \(P_X\) is the homogeneous coordinate ring of \(X = Q_0 \cap Q_\infty\), and \(P_U = k[V^*]/(U^\perp)\) is that of the linear subspace \(\PP U\). We begin by constructing a \(P_X\)-free resolution of \(P_U\) using \cite[Theorem~4]{Tat57}, with notation compatible with \cite[Proposition~4.4]{ES25}.

Let \(G=\langle s, t\rangle\), and fix a tensor
\[\ga\in U^\perp\otimes_k V^*\otimes_k G^*\]
whose image under the natural map
\[G\xrightarrow{\ga}U^\perp\otimes_kV^*\xrightarrow{mult}\Sym^2(V^*)\]
sends \(s\mapsto q_0\) and \(t\mapsto q_\infty\). Such a tensor will be constructed explicitly in \S\ref{ssec:explicit_flags}.

Using the canonical identification \(V/U\cong (U^\perp)^*\), choose an expression
\[\mathbf 1_{U^\perp}=\sum_i u_i\otimes \overline v_i\in U^\perp\otimes_k (V/U)\cong \rEnd(U^\perp).\]
Let \(\iota\) and \(\varepsilon\) denote contraction and left exterior multiplication, respectively.

\begin{proposition}[{\cite[Theorem~4]{Tat57}, \cite[\S2]{Eis80}}]\label{prop:KT_resolution}
A \(P_X\)-free resolution of \(P_U\) can be written as
\[\left(\bigwedge\nolimits^\bullet U^\perp\right)\otimes_k\bigl(P_X\otimes_k \Sym^\bullet(G)\bigr),\qquad\deg s=\deg t=2,\]
with intrinsic differential \(\partial=(\iota_\bullet\otimes\bullet+\varepsilon_\bullet\otimes\iota_\bullet\ga)(\mathbf 1_{U^\perp})\), that is,
\[\partial=\sum_i\Bigl(\iota_{\overline v_i}\otimes u_i+\varepsilon_{u_i}\otimes\ga(\overline v_i)\Bigr).\]
Here \(u_i\in U^\perp\subset V^*\) acts on \(P_X\) by multiplication, and \(\ga(\overline v_i)\in V^*\otimes_k G^*\) acts on \(P_X\otimes_k\Sym^\bullet(G)\) via multiplication on the \(P_X\)-factor and the natural derivation on the \(\Sym^\bullet(G)\)-factor.
\end{proposition}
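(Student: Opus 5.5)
The plan is to verify three things directly: that the displayed object is a complex of free \(P_X\)-modules, that \(\partial^2=0\), and that the complex is acyclic with \(H_0=P_U\). This is Tate's adjunction-of-variables construction made explicit for a complete intersection inside a linear space. Freeness is clear, since \(\bigwedge^\bullet U^\perp\otimes\Sym^\bullet(G)\) is a graded \(k\)-vector space tensored with \(P_X\).

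\textbf{Step 1: the complex and its homology in degree zero.} Set \(E=P_X\otimes\bigwedge^\bullet U^\perp\). The exterior variables \(u_i\) have homological degree \(1\), with differential \(\iota_{\overline v_i}\otimes u_i\). This is the Koszul complex on the linear forms spanning \(U^\perp\), so \(H_0(E)=P_X/(U^\perp)=P_U\). Since \(q_0,q_\infty\in (U^\perp)\cdot V^*\), the tensor \(\ga\) writes \(q_0=\sum_i u_i\,\ga(\overline v_i)(s)\), and similarly for \(q_\infty\). Consequently the classes of \(q_0,q_\infty\) give \(H_1(E)\), which is spanned by the cycles \(z_s=\sum_i u_i\otimes\ga(\overline v_i)(s)\) and \(z_t\). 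Tate's procedure then kills these cycles by adjoining divided-power variables of homological degree \(2\), dual to \(s,t\). Identifying the divided-power algebra with \(\Sym^\bullet(G)\) in characteristic \(0\), the extra differential term \(\varepsilon_{u_i}\otimes\ga(\overline v_i)\) acts as derivation in \(G\) combined with multiplication in \(P_X\). This is exactly the formula stated.

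\textbf{Step 2: \(\partial^2=0\).} Write \(\partial=\partial_K+\partial_\ga\) and expand. First, \(\partial_K^2=0\) because it is a Koszul differential. Second, \(\partial_\ga^2=0\), because the \(\varepsilon_{u_i}\) anticommute while the derivations on \(\Sym^\bullet G\) commute. Third, the cross terms \(\partial_K\partial_\ga+\partial_\ga\partial_K\) reduce to \(\sum_i u_i\,\ga(\overline v_i)\), using \(\iota\varepsilon+\varepsilon\iota=\langle\ ,\ \rangle\) and the fact that \(\sum u_i\otimes\overline v_i=\mathbf 1\). This sum is the operator "multiply by \(q_0\partial_s+q_\infty\partial_t\)". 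It vanishes on \(P_X\) since \(q_0=q_\infty=0\) there. This is precisely the defining property of \(\ga\).

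\textbf{Step 3: acyclicity, which I expect to be the main obstacle.} I would invoke Tate's theorem, or Eisenbud's \S2, for the complete-intersection map \(S=k[V^*]\to P_X\). The \(S\)-module \(P_U\) has the Koszul resolution \(K_S(U^\perp)\), and \(q_0,q_\infty\) form a regular sequence in \(S\) lying in the ideal \((U^\perp)\). The Shamash/Eisenbud construction then produces a \(P_X\)-free resolution of the form \(K_S(U^\perp)\otimes_S P_X\otimes\Sym^\bullet(G)\). Its higher homotopies are given by the system \(\ga\), which is linear in the Koszul variables, so all homotopies beyond the first vanish. Exactness follows from Eisenbud's theorem, which requires only that the \(q\)'s form a regular sequence annihilating \(P_U\). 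Alternatively, one can filter by \(\Sym\)-degree, where the associated graded is the Koszul complex tensored with \(\Sym\) and its homology is \(\mathrm{Tor}^S(P_U,P_X)\), and then check the resulting spectral sequence. This check is where care is needed, and I would rely on the cited theorem rather than redo it. The grading \(\deg s=\deg t=2\) makes \(\partial\) homogeneous, so the resolution is graded.
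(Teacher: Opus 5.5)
Your proposal is correct and matches the paper, which gives no argument beyond citing Tate's Theorem~4 and Eisenbud. As you do, the paper reads the complex as Tate's construction, with exterior variables killing the linear forms of \(U^\perp\) and divided-power variables in degree~\(2\) killing the cycles \(z_s,z_t\) determined by \(\ga\), and it likewise takes acyclicity from those references. Your explicit checks of \(H_0=P_U\) and of \(\partial^2=0\) (the cross term being \(q_0\partial_s+q_\infty\partial_t\), which vanishes on \(P_X\)) are accurate additions within the same approach.
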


We now compute the induced right \(\Cl\)-action on \(F_U:=\Ext^\bullet_{P_X}(P_U,k)\) using the above \(P_X\)-free resolution. Recall that the fixed basis \(e_1,\dots,e_{2g+1}\) of \(V\) from \S\ref{ssec:isotropicspace} diagonalizes \(q_0\) and \(q_\infty\), and that \(x_1,\dots,x_{2g+1}\) are the dual basis. Denote the image of \(e_j\) under the projection \(V \to V/U\) by \(\overline e_j\). Write the differential as
\begin{equation}\label{eq:diff_coeff}
\partial=\sum_{i,j}x_j\bigl(\iota_{\overline v_i}\otimes u_i(e_j)+\varepsilon_{u_i}\otimes\ga(\overline v_i,e_j)\bigr) =\sum_j x_j\bigl(\iota_{\overline e_j}+\varepsilon_{\ga(e_j)}\bigr).
\end{equation}
Here the last equality follows from \(\sum_i u_i\otimes \overline v_i=\mathbf 1_{U^\perp},\) which implies
\[\sum_i u_i(e_j)\overline v_i=\overline e_j,\qquad\sum_i\ga(\overline v_i,e_j)u_i=\ga(e_j)\in U^\perp\otimes G^*.\]
In particular, the resolution is minimal and \(\rHom_{P_X}(-,k)\) has the zero differential. Then the degree-\(n\) part of \(F_U\) is canonically identified with
\[\rHom_k\!\left( \bigoplus_d \bigwedge^{n-2d}U^\perp\otimes_k \Sym^d(G),\,k \right) \cong \bigoplus_d \bigwedge^{n-2d}(V/U)\otimes_k \Sym^d(G^*)\]
via the perfect pairing
\begin{equation}\label{eq:perf_pair}
\langle-,-\rangle: \left( \bigoplus_d \bigwedge^{n-2d}(V/U)\otimes_k \Sym^d(G^*) \right) \otimes \left( \bigoplus_d \bigwedge^{n-2d}U^\perp\otimes_k \Sym^d(G) \right) \longrightarrow k.
\end{equation}

Take \(\alpha\) in the degree-\(n\) part of \(F_U\), and let \(u\in\bigoplus_d\bigwedge^{n+1-2d}U^\perp\otimes_k\Sym^d(G).\) Then
\[
\langle\alpha\cdot e_j,u\rangle = \langle\alpha,(\iota_{\overline e_j}+\varepsilon_{\ga(e_j)})u\rangle = \left\langle (\varepsilon_{\overline e_j}+\iota_{\ga(e_j)})\alpha,u\right\rangle,
\]
where the first equality follows from \cite[Lemma~3.5]{ES25} and \eqref{eq:diff_coeff}, and the second from the adjointness of contraction and exterior multiplication, together with the adjointness between the natural derivation action of \(G^*\) on \(\Sym^\bullet(G)\) and multiplication on \(\Sym^\bullet(G^*)\). Hence
\begin{equation}\label{eqn:Cliffordaction}
\alpha\cdot e_j=(\varepsilon_{\overline e_j}+\iota_{\ga(e_j)})\alpha,
\end{equation}
and by linearity, this extends to every \(v\in V\). In summary:

\begin{proposition}\label{prop:Ray_act}
The Raynaud module \(F_U\) is isomorphic, as a graded right \(\Cl\)-module, to
\[F_U\cong \bigoplus_{d\ge0}\bigwedge^d(V/U)\otimes_k k[s,t](-d), \qquad \deg s=\deg t=2,\]
with the Clifford action
\[\alpha\cdot v=\bigl(\varepsilon_{\overline v}+\iota_{\ga(v)}\bigr)\alpha \qquad\text{for all }\alpha\in F_U,\;v\in V.\]
\end{proposition}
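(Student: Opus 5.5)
The statement just before this point is Proposition~\ref{prop:Ray_act}, and the computation leading up to it already contains most of the argument. I would organize the proof in three steps.

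\textbf{Step 1: minimality and the underlying graded module.} I start from the Koszul--Tate resolution of Proposition~\ref{prop:KT_resolution}. By \eqref{eq:diff_coeff}, every component of \(\partial\) has coefficients in the irrelevant ideal \((x_1,\dots,x_n)\) of \(P_X\). Hence the resolution is minimal, and applying \(\rHom_{P_X}(-,k)\) produces a complex with zero differential. Therefore \(F_U=\Ext^\bullet_{P_X}(P_U,k)\) is, degree by degree, the \(k\)-dual of the generators \(\bigwedge^\bullet U^\perp\otimes\Sym^\bullet(G)\). Using the perfect pairing \eqref{eq:perf_pair} and \(V/U\cong (U^\perp)^*\), I identify this dual with \(\bigwedge^\bullet(V/U)\otimes\Sym^\bullet(G^*)\). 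Here \(\Sym^\bullet(G^*)\) is regarded as the polynomial ring \(k[s,t]\), with \(s,t\) of degree \(2\) via the dual basis. Tracking internal degrees shows that \(\bigwedge^d(V/U)\) sits in degree \(d\), which gives the asserted shape \(\bigoplus_d\bigwedge^d(V/U)\otimes k[s,t](-d)\).

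\textbf{Step 2: the action of \(V\).} I then recall how the Yoneda action of \(V=\Ext^1_{P_X}(k,k)\subset\Cl\) is read off from a minimal resolution, citing \cite[Lemma~3.5]{ES25}. After reducing modulo the maximal ideal, the action of \(e_j\) on the dual of the generators is the transpose of the coefficient of \(x_j\) in the differential. By \eqref{eq:diff_coeff}, this coefficient is \(\iota_{\overline e_j}+\varepsilon_{\ga(e_j)}\). To take the transpose under \eqref{eq:perf_pair}, I use two adjunctions:
\begin{itemize}
\item contraction by \(\overline e_j\) on \(\bigwedge U^\perp\) is adjoint to exterior multiplication by \(\overline e_j\) on \(\bigwedge(V/U)\);
\item exterior multiplication by \(\ga(e_j)\in U^\perp\otimes G^*\) is adjoint to contraction on the \(\bigwedge(V/U)\) factor combined with multiplication on \(\Sym(G^*)\), since the derivation action of \(G^*\) on \(\Sym(G)\) is dual to multiplication in \(\Sym(G^*)\).
\end{itemize}
Together these give \eqref{eqn:Cliffordaction}, and linearity in \(v\) extends the formula from the basis vectors \(e_j\) to all of \(V\).

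\textbf{Step 3: the \(k[s,t]\)-module structure.} Finally I check that \(s,t\in\Cl\) act by multiplication on the \(k[s,t]\) factor. This can be verified from the Clifford relation: squaring the operator \(\varepsilon_{\overline v}+\iota_{\ga(v)}\) gives \(\iota_{\ga(v)}\varepsilon_{\overline v}+\varepsilon_{\overline v}\iota_{\ga(v)}=\langle \ga(v),\overline v\rangle\). This pairing lies in \(G^*\), and it equals \(sq_0(v)+tq_\infty(v)\) because \(\ga\) maps to \(q_0,q_\infty\) under multiplication.

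The main obstacle is Step 2, specifically matching the sign and side conventions of the Yoneda product with those of \cite[Lemma~3.5]{ES25}. Getting these right is needed to obtain the right action in the stated form rather than its transpose or a version twisted by the parity automorphism.
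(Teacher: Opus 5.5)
Your proposal is correct and follows essentially the same route as the paper. You read off minimality of the Koszul--Tate resolution from \eqref{eq:diff_coeff}, identify the dual of the generators through \eqref{eq:perf_pair}, and compute $\alpha\cdot e_j$ as the transpose of the $x_j$-coefficient via \cite[Lemma~3.5]{ES25} and the two adjunctions. Your Step~3 is a valid extra check that the paper leaves implicit when it names the dual basis of $G^*$ by $s,t$: it uses $(\varepsilon_{\overline v}+\iota_{\ga(v)})^2=sq_0(v)+tq_\infty(v)$, together with the fact that $s,t$ lie in the span of squares of vectors, to show that $s,t\in\Cl$ act by multiplication on the $k[s,t]$ factor.
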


Here we also denote by \(s,t\) the basis of \(G^*\) dual to the chosen basis of \(G\).

\begin{remark}
The action depends on the choice of \(\ga\), but only up to conjugation. Let \(\ga'\) be another tensor satisfying the same condition. Since \(\ga\) and \(\ga'\) have the same image in \(\rHom(G,\Sym^2(V^*))\), the tensor
\[\eta:=\ga'-\ga\in U^\perp\otimes_k V^*\otimes_k G^*\]
is skew-symmetric in the two \(V\)-slots; in particular,
\[\eta(\overline v,u)=-\eta(\overline u,v)=0,\qquad\text{for all }u\in U,\;v\in V\]
and indeed \(\eta\) descends to \(\bigwedge^2U^\perp\otimes_k G^*\).

Let \(\iota_\eta\) be the induced contraction operator on \(\bigwedge^\bullet(V/U)\otimes_k \Sym^\bullet(G^*)\). Then
\[\iota_\eta\,\iota_{\ga(v)}=\iota_{\ga(v)}\,\iota_\eta, \qquad \iota_\eta\,\varepsilon_{\overline v} = \varepsilon_{\overline v}\,\iota_\eta+\iota_{\eta(v)}.\]
Since the second iterated commutator vanishes, the Campbell identity gives
\[e^{\iota_\eta}\bigl(\varepsilon_{\overline v}+\iota_{\ga(v)}\bigr)e^{-\iota_\eta} = \varepsilon_{\overline v}+\iota_{\ga(v)+\eta(v)}.\]
Hence the action induced from \(\ga'=\ga+\eta\) is conjugate to that from \(\ga\). In particular, the resulting \(\Cl\)-module structure is well-defined up to isomorphism.
\end{remark}

%%%%%%%%%%%%%%%%%%%%%%%%%%%%%%%%%%%%%%%%%%%%%%%%%%%%
\subsection{An explicit description}\label{sec:exp_des}

Recall that \(\overline e_j\) is the image of \(e_j\) under the projection \(V \to V/U\). We now use \(\overline e_1,\dots,\overline e_{g+1}\) as coordinates on the quotient \(V/U\). We express the tensor defining the Clifford action and compute the resulting parabolic flags of the Raynaud bundle.

We first justify that these vectors form a basis.

\begin{lemma}\label{lem:adapted_matrix}
The vectors \(\overline e_1,\dots,\overline e_{g+1}\) form a basis of \(V/U\). Hence, for each \(g+2\le \ell\le2g+1\), there are unique scalars \(a_{i,\ell}\in k\) such that
\[\overline e_\ell=\sum_{i=1}^{g+1}a_{i,\ell}\overline e_i .\]
Equivalently, the vectors \(e_\ell-\sum_{i=1}^{g+1}a_{i,\ell}e_i\), for \(g+2\le\ell\le2g+1\), form a basis of \(U\).
\end{lemma}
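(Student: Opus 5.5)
Here $\dim V=2g+1$ and $U$ is a maximal common isotropic subspace of dimension $g$, so $\dim V/U=g+1$. It therefore suffices to show that $\overline e_1,\dots,\overline e_{g+1}$ are linearly independent in $V/U$, or equivalently that
\[U\cap \mathrm{Span}\{e_1,\dots,e_{g+1}\}=0.\]
The remaining assertions are then formal. The scalars $a_{i,\ell}$ are the coordinates of $\overline e_\ell$ in this basis, so they exist and are unique. The $g$ vectors $e_\ell-\sum_i a_{i,\ell}e_i$, for $g+2\le\ell\le 2g+1$, lie in $U$, because their images in $V/U$ vanish. They are linearly independent because their $e_\ell$-components form an identity block. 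Since $\dim U=g$, they form a basis of $U$.

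To prove the intersection is zero, suppose $u=\sum_{i=1}^{g+1}c_ie_i\in U$. Since $U$ is isotropic for the whole pencil, $u$ is isotropic for every polarization. For each $m\ge 0$, apply this to the form $\sum_i\lambda_i^m x_i^2$. These forms are linear combinations of $q_0$ and $q_\infty$ only for $m\le1$, so we use a different route.

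A common isotropic subspace $W$ of $q_0$ and $q_\infty$ satisfies $b_0(W,W)=b_\infty(W,W)=0$. Hence $W$ is isotropic for the bilinear forms $b_0(-,\Lambda^m-)$ for all $m$, where $\Lambda=\mathrm{diag}(\lambda_i)=b_0^{-1}b_\infty$. One checks this by induction using the standard fact (Reid) that a maximal common isotropic $U$ satisfies $\Lambda U\subset U^{\perp_0}$. Rather than rely on this, the cleaner approach I would take is a dimension argument using $U^{\perp}$ with respect to $q_0$.

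The set $\{u\in U\}$ is contained in $U^{\perp_0}\cap U^{\perp_\infty}$, which has dimension at least $2g+1-2g=1$. The key observation is that $U$ is a maximal common isotropic subspace, and by \cite[Theorem~3.8]{Rei73} all such subspaces are obtained from one another by the reflections $\sigma_i$. The reflections preserve coordinate subspaces. So it suffices to verify the claim for one explicit $U$ (the one built in \S\ref{ssec:explicit_flags}), or to argue uniformly as follows.

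If $0\ne u=\sum_{i\le g+1}c_ie_i\in U$, let $S=\{i: c_i\ne0\}$ with $|S|=s\le g+1$. The projection of $U$ to the coordinates outside $S$ remains isotropic for the restricted pencil in dimension $2g+1-s$. I expect this projection step to be the main obstacle: it does not obviously preserve isotropy, and a possibly different argument is needed there.

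Instead I would use the characterization that $U^{\perp_0}\supset U$ and that $\Lambda$ induces a map $U\to U^{\perp_0}/U$. The goal is to show that a vector supported on $g+1$ coordinates cannot satisfy $\sum c_i^2\lambda_i^m=0$ for $m=0,\dots,g$. That is a Vandermonde system in the $g+1$ unknowns $c_i^2$, whose only solution is $c_i=0$ since the $\lambda_i$ are distinct.

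The needed vanishing $q(\Lambda^a u,\Lambda^b u)=0$ for $a+b\le g$ I would derive from the fact that $U,\Lambda U,\dots$ generate an isotropic flag in the pencil, following Reid's description of $U$ via the flag $U\subset U+\Lambda U\subset\cdots$. This is the step I would check most carefully.
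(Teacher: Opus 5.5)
Your reduction to $U\cap\mathrm{Span}\{e_1,\dots,e_{g+1}\}=0$ is correct and matches the paper, and so is your deduction of the remaining assertions (existence and uniqueness of the $a_{i,\ell}$, and the basis of $U$). The gap is in proving that intersection statement: the route you finally commit to rests on a false claim. You want $b_0(u,\Lambda^m u)=\sum_i c_i^2\lambda_i^m=0$ for $0\le m\le g$, derived from an isotropic flag $U\subset U+\Lambda U\subset\cdots$. No such flag exists.

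Since $\Lambda$ has distinct eigenvalues and no $e_i$ is $q_0$-isotropic, $U$ is not $\Lambda$-stable, so $\dim(U+\Lambda U)\ge g+1$. On the other hand, $b_0(U,U)=b_\infty(U,U)=0$ gives $U+\Lambda U\subset U^{\perp_0}$, which has dimension $g+1$. Thus $U+\Lambda U=U^{\perp_0}$ has dimension $g+1>g$ and cannot be totally $b_0$-isotropic. By polarization, the quadratic form $u\mapsto b_0(\Lambda u,\Lambda u)=\sum_i\lambda_i^2u_i^2$ does not vanish identically on $U$. So the $m=2$ equation already fails on $U$ for every $g\ge 2$, which is exactly when your Vandermonde system needs it. Isotropy for the pencil yields only the $m=0,1$ equations, as you observed yourself, and nothing in your sketch uses the support of $u$ to produce the others.

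The missing ingredient is the intersection formula of \cite[Theorem~3.8]{Rei73}, which the paper uses directly: $U\cap\sigma U=0$ whenever $\sigma$ is a product of $g$ distinct coordinate reflections. With $\sigma=\sigma_{g+2}\cdots\sigma_{2g+1}$, every vector of $U\cap\mathrm{Span}\{e_1,\dots,e_{g+1}\}$ is fixed by $\sigma$, hence lies in $U\cap\sigma U=0$.

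Your other idea, reducing to a single $U$ by transitivity of the reflections (which preserve $\mathrm{Span}\{e_1,\dots,e_{g+1}\}$), is also sound. You would then need an explicit $U$, and \S\ref{ssec:explicit_flags} does not supply one, since it presupposes this lemma. One workable choice is the following. Identify $V$ with $k[x]/(f)$, where $f=\prod_i(x-\lambda_i)$, with $b_0(a,b)=\sum_i a(\lambda_i)b(\lambda_i)/f'(\lambda_i)$ and $\Lambda$ acting by multiplication by $x$. Then the rescaled Lagrange idempotents $\sqrt{f'(\lambda_i)}\,\theta_i$ realize the basis $e_i$. Since $\sum_i h(\lambda_i)/f'(\lambda_i)=0$ whenever $\deg h<2g$, the polynomials of degree $<g$ form a $g$-dimensional common isotropic subspace $U_0$. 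An element of $U_0\cap\mathrm{Span}\{e_1,\dots,e_{g+1}\}$ is a polynomial of degree $<g$ vanishing at the $g$ points $\lambda_{g+2},\dots,\lambda_{2g+1}$, hence zero.
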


\begin{proof}
Let \(\sigma_i : V \to V\) be the reflection defined by \(\sigma_i (e_j) = (-1)^{\delta_{ij}}e_j\). By \cite[Theorem~3.8]{Rei73},
\[\dim\bigl(U\cap\sigma_{g+2}\cdots\sigma_{2g+1}U\bigr)=0.\]
Therefore \(U\cap\langle e_1,\dots,e_{g+1}\rangle=0\) as a subspace of \(U\cap\sigma_{g+2}\cdots\sigma_{2g+1}U\), and hence \(\overline e_1,\dots,\overline e_{g+1}\) form a basis of \(V/U\).

Then the coefficients \(a_{i,\ell}\) are uniquely determined. For each \(\ell\), the vector \(e_\ell-\sum_i a_{i,\ell}e_i\) maps to zero in \(V/U\), hence lies in \(U\). These vectors project to \(e_{g+2},\ldots,e_{2g+1}\), so they form a basis of \(U\).
\end{proof}

Set \(x_i^\pm:=x_i\pm\sum_{\ell=g+2}^{2g+1}a_{i,\ell}x_\ell\) for \(1\le i\le g+1\). Then \(x_1^+,\dots,x_{g+1}^+\) form a basis of \(U^\perp\), dual to \(\overline e_1,\dots,\overline e_{g+1}\). We use the block notation
\[
\begin{matrix}
  \begin{matrix}
  \bX_f:=\left(\begin{smallmatrix}
  x_1\\
  \vdots\\
  x_{g+1}
  \end{smallmatrix}\right) \\[6pt]
  \bX_b:=\left(\begin{smallmatrix}
  x_{g+2}\\
  \vdots\\
  x_{2g+1}
  \end{smallmatrix}\right)
  \end{matrix} & \begin{matrix}
  \mathbf\Lambda_f:=\left(\begin{smallmatrix}
  \lbd_1\\
  &\ddots\\
  &&\lbd_{g+1}
  \end{smallmatrix}\right) \\[6pt]
  \mathbf\Lambda_b:=\left(\begin{smallmatrix}
  \lbd_{g+2}\\
  &\ddots\\
  &&\lbd_{2g+1}
  \end{smallmatrix}\right)
  \end{matrix} & \bA:= \begin{pmatrix}
  a_{1,g+2}&\cdots&a_{1,2g+1}\\
  \vdots&\ddots&\vdots\\
  a_{g+1,g+2}&\cdots&a_{g+1,2g+1}
  \end{pmatrix}.
\end{matrix}
\]

\begin{lemma}\label{lem:split_quadrics}
We have
\begin{equation}\label{eqn:matrixidentity}
\bA^T\bA=-\bI_g, \qquad \bA^T\mathbf\Lambda_f\bA=-\mathbf\Lambda_b.
\end{equation}
Consequently,
\begin{equation}\label{q_split}
q_0=\sum_{i=1}^{g+1}x_i^+x_i^-, \qquad q_\infty=\sum_{i=1}^{g+1}\lbd_i x_i^+x_i^-.
\end{equation}
\end{lemma}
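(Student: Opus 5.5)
The plan is to derive both matrix identities from the single fact that \(U\) is isotropic for both \(q_0\) and \(q_\infty\), using the explicit basis of \(U\) from Lemma~\ref{lem:adapted_matrix}. By that lemma, \(U\) is spanned by \(u_\ell := e_\ell-\sum_{i=1}^{g+1}a_{i,\ell}e_i\) for \(g+2\le\ell\le2g+1\). Let \(b_0\) and \(b_\infty\) be the symmetric bilinear forms of \(q_0\) and \(q_\infty\); in the basis \(e_1,\dots,e_{2g+1}\) they are diagonal, with entries \(1\) and \(\lambda_i\) respectively.

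For \(\ell,m\ge g+2\) I would compute
\[
b_0(u_\ell,u_m)=\delta_{\ell m}+\sum_{i=1}^{g+1}a_{i,\ell}a_{i,m},
\]
since the \(e_\ell\) with \(\ell\ge g+2\) are orthogonal to the \(e_i\) with \(i\le g+1\). Isotropy of \(U\) forces this to vanish, which is exactly \(\bA^T\bA=-\bI_g\). The same computation with \(b_\infty\) gives
\[
\lambda_\ell\delta_{\ell m}+\sum_{i=1}^{g+1}\lambda_i a_{i,\ell}a_{i,m}=0,
\]
that is, \(\bA^T\mathbf\Lambda_f\bA=-\mathbf\Lambda_b\).

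For \eqref{q_split}, I would expand
\[
\sum_{i=1}^{g+1} \mu_i x_i^+x_i^-=\sum_{i=1}^{g+1} \mu_i\Bigl(x_i^2-\Bigl(\sum_{\ell}a_{i,\ell}x_\ell\Bigr)^2\Bigr),
\]
where \(\mu_i=1\) or \(\lambda_i\). The quadratic form in \(\bX_b\) that appears is \(-\bX_b^T\bA^T\mathrm{diag}(\mu)\bA\bX_b\). By \eqref{eqn:matrixidentity} this equals \(\bX_b^T\bX_b\) in the first case and \(\bX_b^T\mathbf\Lambda_b\bX_b\) in the second. Adding the \(\bX_f\) part recovers \(q_0\) and \(q_\infty\). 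There are no cross terms, because \(x_i^+x_i^-\) is a difference of squares.

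The argument is purely computational, and I expect no real obstacle. The only point needing care is sign and index bookkeeping: the \(a_{i,\ell}\) enter \(u_\ell\) with a minus sign, but this is harmless since the identities are quadratic in \(\bA\).
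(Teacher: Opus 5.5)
Your proposal is correct and is essentially the paper's proof: the matrix identities are exactly the isotropy of \(U\) for \(q_0\) and \(q_\infty\), which you verify explicitly on the basis \(u_\ell\) whereas the paper simply asserts it, and the splitting then follows by expanding \(x_i^+x_i^-\). Your difference-of-squares remark is the same cancellation of cross terms that the paper obtains by noting that \(\bX_f^T\bA\bX_b\) is a scalar.
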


\begin{proof}
The two matrix identities in \eqref{eqn:matrixidentity} are the conditions that \(U\) is isotropic for \(q_0\) and \(q_\infty\). We compute
\begin{align*}
\sum_{i=1}^{g+1}x_i^+x_i^- &=(\bX_f+\bA\bX_b)^T(\bX_f-\bA\bX_b)\\
&=\bX_f^T\bX_f-\bX_f^T\bA\bX_b +\bX_b^T\bA^T\bX_f-\bX_b^T\bA^T\bA\bX_b\\
&=\bX_f^T\bX_f+\bX_b^T\bX_b=q_0,
\end{align*}
since \(\bX_f^T\bA\bX_b\) is a scalar. Similarly,
\begin{align*}
\sum_{i=1}^{g+1}\lbd_i x_i^+x_i^- &=(\bX_f+\bA\bX_b)^T\mathbf\Lambda_f(\bX_f-\bA\bX_b)\\
&=\bX_f^T\mathbf\Lambda_f\bX_f -\bX_f^T\mathbf\Lambda_f\bA\bX_b +\bX_b^T\bA^T\mathbf\Lambda_f\bX_f -\bX_b^T\bA^T\mathbf\Lambda_f\bA\bX_b\\
&=\bX_f^T\mathbf\Lambda_f\bX_f+\bX_b^T\mathbf\Lambda_b\bX_b =q_\infty.\qedhere
\end{align*}
\end{proof}

For subsets \(I\subset\{1,\dots,g+1\}\) and \(J\subset\{g+2,\dots,2g+1\}\) with \(|I|=|J|\), write
\[a_{I,J}:=\det(a_{i,j})_{i\in I,\ j\in J}.\]

\begin{lemma}\label{lem:A_minors}
Every \(a_{I,J}\) is nonzero.
\end{lemma}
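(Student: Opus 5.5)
We derive this from the same transversality fact already used in the proof of Lemma~\ref{lem:adapted_matrix}, namely \cite[Theorem~3.8]{Rei73}: for any subset \(S\subset\{1,\dots,2g+1\}\) of size \(g\) (indeed any nonempty proper reflection product that does not fix \(U\)), the composite reflection \(\sigma_S=\prod_{i\in S}\sigma_i\) satisfies \(U\cap\sigma_S U=0\). The goal is to translate \(a_{I,J}\neq0\) into a statement that a certain coordinate subspace meets \(U\) trivially.

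Fix \(I\subset\{1,\dots,g+1\}\) and \(J\subset\{g+2,\dots,2g+1\}\) with \(|I|=|J|=m\). By Lemma~\ref{lem:adapted_matrix}, \(U\) has basis \(u_\ell=e_\ell-\sum_{i\le g+1}a_{i,\ell}e_i\) for \(g+2\le\ell\le 2g+1\). Set \(I^c=\{1,\dots,g+1\}\setminus I\) and \(J^c=\{g+2,\dots,2g+1\}\setminus J\), and consider the coordinate subspace
\[
E:=\mathrm{Span}\{e_i: i\in I^c\}\oplus\mathrm{Span}\{e_\ell:\ell\in J\},
\]
which has dimension \((g+1-m)+m=g+1\). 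A vector \(u=\sum_\ell c_\ell u_\ell\in U\) lies in \(E\) exactly when \(c_\ell=0\) for \(\ell\in J^c\) and \(\sum_{\ell\in J}c_\ell a_{i,\ell}=0\) for all \(i\in I\). Thus \(U\cap E\neq0\) if and only if the \(m\times m\) matrix \((a_{i,\ell})_{i\in I,\ell\in J}\) is singular. It therefore suffices to show \(U\cap E=0\).

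To obtain this, let \(S:=I^c\cup J\), which has size \(g+1\). Replacing \(S\) by its complement if needed (the complement has size \(g\), and \(\sigma_{S^c}=-\sigma_S\) up to the sign \(-\mathrm{id}\), which preserves \(U\)), we may use a reflection \(\sigma:=\prod_{i\in S^c}\sigma_i\) whose fixed space is exactly \(E\). Any \(u\in U\cap E\) satisfies \(\sigma u=u\), so \(u\in U\cap\sigma U\). We then invoke \cite[Theorem~3.8]{Rei73} to conclude \(U\cap\sigma U=0\), hence \(U\cap E=0\) and \(a_{I,J}\neq0\). In the case \(S^c=\{g+2,\dots,2g+1\}\) (that is, \(m=0\)) this is precisely the argument of Lemma~\ref{lem:adapted_matrix}.

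The main obstacle is justifying that \(U\cap\sigma_T U=0\) for every \(T\) with \(|T|=g\), rather than only for the specific \(T\) used earlier. The statement in \cite{Rei73} gives \(\dim(U\cap\sigma_T U)\) in terms of \(|T|\): the reflection group \((\ZZ/2)^{2g+1}/\pm1\) acts simply transitively on maximal isotropic subspaces, with intersection dimension \(g-\min(|T|,2g+1-|T|)\). So \(|T|=g\) gives intersection dimension \(0\). I would cite this formula directly. Failing that, an alternative is a direct check via Lemma~\ref{lem:split_quadrics}: from \(\bA^T\bA=-\bI\) and \(\bA^T\mathbf\Lambda_f\bA=-\mathbf\Lambda_b\), the matrix \(\bA\) is a Cauchy-type matrix with entries \(a_{i,\ell}^2\propto \prod(\cdots)/(\lambda_i-\lambda_\ell)\), and its minors are nonvanishing by the Cauchy determinant formula, since the \(\lambda\)'s are distinct.
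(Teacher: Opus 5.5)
Your argument is correct and is essentially the paper's proof: your $\sigma=\prod_{k\in I\cup J^c}\sigma_k$ is exactly the product of $g$ reflections used there, its fixed space $E$ contains the nonzero vector of $U$ produced by a vanishing minor $a_{I,J}$, and this contradicts $U\cap\sigma U=0$ from \cite[Theorem~3.8]{Rei73}. One small correction: the parenthetical in your first paragraph, asserting $U\cap\sigma_T U=0$ for every nonempty proper reflection product not fixing $U$, is false. For $|T|=1$ the intersection has dimension $g-1$, as your own later formula $g-\min(|T|,2g+1-|T|)$ shows. Only the case $|T|=g$ is used, so the proof is unaffected.
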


\begin{proof}
Set
\[\sigma:= \Bigl(\prod_{j\in \{g+2,\dots,2g+1\}\setminus J}\sigma_j\Bigr) \Bigl(\prod_{i\in I}\sigma_i\Bigr),\]
where \(\sigma_i\) is the reflection flipping \(e_i\) as before. The number of reflections in \(\sigma\) is \(g-|J|+|I|=g\), so \cite[Theorem~3.8]{Rei73} gives \(U\cap\sigma U=0\).

Suppose \(a_{I,J}=0\). Then there are scalars \(c_j\), not all zero, such that \(\sum_{j\in J}c_j a_{i,j}=0\) for every \(i\in I\). Hence
\[u:=\sum_{j\in J}c_j\left(e_j-\sum_{i=1}^{g+1}a_{i,j}e_i\right)\]
is a nonzero vector of \(U\) fixed by \(\sigma\), which contradicts \(U\cap\sigma U=0\).
\end{proof}

\subsection{An explicit Clifford action and parabolic flags}\label{ssec:explicit_flags}

The decomposition \eqref{q_split} suggests the choice
\begin{equation*}
\ga:=\sum_{i=1}^{g+1}x_i^+\otimes x_i^-\otimes(s+\lbd_i t)\in U^\perp\otimes V^*\otimes G^*.
\end{equation*}

\begin{proposition}\label{prop:explicit_action}
With \(\ga\) as above, the Clifford action on \(F_U\) is given by
\begin{equation}\label{explicit_action_formula}
\alpha\cdot e_\ell= \begin{cases}
  \bigl(\varepsilon_{\overline e_\ell}+(s+\lbd_\ell t)\iota_{x_\ell^+}\bigr)\alpha, & 1\le\ell\le g+1,\\[4pt]
  \left(\varepsilon_{\overline e_\ell}-\sum_{i=1}^{g+1}a_{i,\ell}(s+\lbd_i t)\iota_{x_i^+}\right)\alpha, & g+2\le\ell\le 2g+1.
\end{cases}\qquad(\alpha\in F_U)
\end{equation}
\end{proposition}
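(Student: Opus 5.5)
The plan is to substitute the explicit tensor \(\ga\) into the general formula of Proposition~\ref{prop:Ray_act}, namely \(\alpha\cdot v=(\varepsilon_{\overline v}+\iota_{\ga(v)})\alpha\), and then evaluate \(\ga(e_\ell)\in U^\perp\otimes G^*\) for each basis vector \(e_\ell\).

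First, I would check that \(\ga\) is admissible. Each \(x_i^+\) lies in \(U^\perp\), because \(x_1^+,\dots,x_{g+1}^+\) form the basis of \(U^\perp\) dual to \(\overline e_1,\dots,\overline e_{g+1}\). Moreover, under \(s\mapsto\) coefficient of \(s\), and likewise for \(t\), the multiplication map sends \(\ga\) to \(\sum_i x_i^+x_i^-=q_0\) and \(\sum_i\lbd_ix_i^+x_i^-=q_\infty\). This is exactly Lemma~\ref{lem:split_quadrics}.

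Next, I would evaluate the contraction. By the convention of Proposition~\ref{prop:KT_resolution}, \(\ga(v)\) means pairing the middle \(V^*\)-slot with \(v\). This gives
\[\ga(e_\ell)=\sum_{i=1}^{g+1}x_i^-(e_\ell)\,x_i^+\otimes(s+\lbd_i t).\]
From \(x_i^-=x_i-\sum_{m=g+2}^{2g+1}a_{i,m}x_m\), two cases arise.

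\textbf{Case \(1\le\ell\le g+1\).} Here \(x_i^-(e_\ell)=\delta_{i\ell}\), so \(\ga(e_\ell)=x_\ell^+\otimes(s+\lbd_\ell t)\).

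\textbf{Case \(g+2\le\ell\le2g+1\).} Here \(x_i^-(e_\ell)=-a_{i,\ell}\), so \(\ga(e_\ell)=-\sum_i a_{i,\ell}\,x_i^+\otimes(s+\lbd_it)\).

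Finally, I would interpret \(\iota_{x\otimes\phi}\) for \(x\in U^\perp\) and \(\phi\in G^*\) as contraction by \(x\) on \(\bigwedge^\bullet(V/U)\), composed with multiplication by \(\phi\) on the \(\Sym^\bullet(G^*)=k[s,t]\) factor. This is the adjoint description already used in deriving \eqref{eqn:Cliffordaction}, and it yields the scalar factors \((s+\lbd_i t)\). Substituting the two cases into Proposition~\ref{prop:Ray_act} gives \eqref{explicit_action_formula}.

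There is no real obstacle here; the only point needing care is the slot convention. I would confirm that \(\ga(\overline v_i,e_j)\) in \eqref{eq:diff_coeff} pairs \(e_j\) with the \(V^*\)-factor and not with the \(U^\perp\)-factor. Otherwise one would get \(x_i^+(e_\ell)\) in place of \(x_i^-(e_\ell)\), which differs by a sign in the second case.

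As a sanity check, I would verify that \((\alpha\cdot e_\ell)\cdot e_\ell=(s+\lbd_\ell t)\alpha\). In the first case this follows from \(\varepsilon_{\overline e_\ell}\iota_{x_\ell^+}+\iota_{x_\ell^+}\varepsilon_{\overline e_\ell}=x_\ell^+(\overline e_\ell)=1\). In the second case the cross term is
\[-\sum_i a_{i,\ell}^2(s+\lbd_it),\]
and the identities \(\bA^T\bA=-\bI_g\) and \(\bA^T\mathbf\Lambda_f\bA=-\mathbf\Lambda_b\) of Lemma~\ref{lem:split_quadrics} turn this into \(s+\lbd_\ell t\). This check fixes the sign convention.
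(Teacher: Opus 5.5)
Your proposal is correct and matches the paper's proof: substitute \(\ga(e_\ell)=\sum_i x_i^-(e_\ell)\,x_i^+\otimes(s+\lbd_i t)\) into the formula of Proposition~\ref{prop:Ray_act}, using \(x_i^-(e_\ell)=\delta_{i\ell}\) for \(\ell\le g+1\) and \(x_i^-(e_\ell)=-a_{i,\ell}\) for \(\ell\ge g+2\). Your admissibility check via Lemma~\ref{lem:split_quadrics} and the consistency check \((\alpha\cdot e_\ell)\cdot e_\ell=(s+\lbd_\ell t)\alpha\) are not in the paper's proof, but both are correct, and the second indeed confirms the sign convention.
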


\begin{proof}
The formula follows by substituting the identities
\[x_i^-(e_\ell)=\begin{cases} \delta_{i,\ell},&\text{if }1\le\ell\le g+1,\\ -a_{i,\ell},&\text{if }g+2\le\ell\le 2g+1, \end{cases}\]
into \(\alpha\cdot e_\ell=(\varepsilon_{\overline e_\ell}+\iota_{\ga(e_\ell)})\alpha=(\varepsilon_{\overline e_\ell}+\sum_{i=1}^{g+1}x_i^-(e_\ell)(s+\lbd_it)\iota_{x_i^+})\alpha\).
\end{proof}

From now on, we use the standard trivialization over the affine chart \(t\ne0\) of \(\PP^1\), so that the fiber at \(p_\ell\) is obtained by evaluating at \(\lbd=-s/t=\lbd_\ell\). We now apply \S\ref{ssec:rootstack} to the Raynaud module \(F_U\), and describe the resulting parabolic structure explicitly.

\begin{proof}[Proof of Theorem~\ref{thm:raynaud_parabolic}]
By Proposition~\ref{prop:Ray_act}, the even part of \(F_U\) is \(\bigoplus_d\bigwedge^{2d}(V/U)\otimes k[s,t](-2d)\), which sheafifies to the above underlying bundle as \(\deg s=\deg t=2\). Using \eqref{explicit_action_formula}, \(W_\ell\) is the image of the operator obtained by evaluating the action of \(e_\ell\) at \(p_\ell\):
\begin{itemize}
  \item If \(1\le\ell\le g+1\), then \((s+\lbd_\ell t)|_{p_\ell}=0\), hence
  \[W_\ell=\im(\varepsilon_{\overline e_\ell}).\]
  \item If \(g+2\le\ell\le 2g+1\), then \((s+\lbd_i t)|_{p_\ell}=\lbd_i-\lbd_\ell\), hence
  \[W_\ell=\im\!\left(\varepsilon_{\overline e_\ell}+\sum_{i=1}^{g+1}a_{i,\ell}(\lbd_\ell-\lbd_i)\iota_{x_i^+}\right).\]
\end{itemize}
Then \(W_\ell^\perp\) is the kernel of the adjoint operator, and Lemma~\ref{lem:exactness} below identifies this kernel with the image of the same operator.
\end{proof}

\begin{lemma}\label{lem:exactness}
Let \(W\) be a finite-dimensional vector space, and let
\[\Delta=\iota_\beta+\varepsilon_u:\bigwedge W\longrightarrow \bigwedge W\]
for some \(u\in W\) and \(\beta\in W^*\) satisfying \(\beta(u)=0\). If \((u,\beta)\ne(0,0)\), then the complex \((\bigwedge W,\Delta)\) is exact.
\end{lemma}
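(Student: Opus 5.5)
The plan is to exhibit a contracting homotopy. First, $\Delta^2=0$. Indeed, $\iota_\beta^2=0$ and $\varepsilon_u^2=0$. Moreover, $\iota_\beta$ is an odd derivation of $\bigwedge W$, so it satisfies the Clifford relation
\[\iota_\beta\varepsilon_u+\varepsilon_u\iota_\beta=\beta(u)\cdot\mathrm{id}.\]
Hence $\Delta^2=\beta(u)\,\mathrm{id}=0$, and $(\bigwedge W,\Delta)$ is a ($\ZZ/2$-graded) complex.

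For exactness, I want an operator $h$ with $\Delta h+h\Delta=\mathrm{id}$; this forces $\ker\Delta=\operatorname{im}\Delta$. I will treat two cases.

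\emph{Case $u\neq0$.} Choose $\gamma\in W^*$ with $\gamma(u)=1$ and set $h=\iota_\gamma$. Using the same anticommutation relations, $\iota_\gamma\iota_\beta+\iota_\beta\iota_\gamma=0$ and $\iota_\gamma\varepsilon_u+\varepsilon_u\iota_\gamma=\gamma(u)=1$. Therefore
\[\Delta h+h\Delta=\gamma(u)\,\mathrm{id}=\mathrm{id}.\]

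\emph{Case $u=0$.} Then $\beta\neq0$. Choose $w\in W$ with $\beta(w)=1$ and set $h=\varepsilon_w$. Now $\varepsilon_w\varepsilon_u+\varepsilon_u\varepsilon_w=0$ and $\iota_\beta\varepsilon_w+\varepsilon_w\iota_\beta=\beta(w)=1$, so again $\Delta h+h\Delta=\mathrm{id}$.

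The only delicate point is the sign bookkeeping in the relation $\iota_\beta\varepsilon_u+\varepsilon_u\iota_\beta=\beta(u)$. This is the standard fact that contraction is an odd derivation: $\iota_\beta(u\wedge\alpha)=\beta(u)\alpha-u\wedge\iota_\beta\alpha$. I would check it on decomposable elements and otherwise cite it. A single formula covering both cases would be $h=\iota_\gamma+\varepsilon_w$ with $\gamma(u)+\beta(w)=1$, but splitting into the two cases avoids any cross terms.

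In the application within the proof of Theorem~\ref{thm:raynaud_parabolic}, the operator at $p_\ell$ has $u=\overline e_\ell\neq0$. For $\ell\ge g+2$ one needs $\beta(u)=0$ with $\beta=\sum_i a_{i,\ell}(\lbd_\ell-\lbd_i)x_i^+$. This gives
\[\beta(\overline e_\ell)=\sum_i a_{i,\ell}^2(\lbd_\ell-\lbd_i),\]
which vanishes by the diagonal entries of \eqref{eqn:matrixidentity}: $\sum_i a_{i,\ell}^2=-1$ and $\sum_i\lbd_i a_{i,\ell}^2=-\lbd_\ell$. I would verify this in passing, so that the lemma applies. The same argument also applies to the adjoint operator $\iota_{\overline e_\ell}+\varepsilon_\beta$ appearing for $W_\ell^\perp$.
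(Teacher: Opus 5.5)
Your proof is correct and is essentially the paper's argument: both use the contracting homotopy \(\varepsilon_w\) (available when \(\beta\ne0\)) or \(\iota_\gamma\) (available when \(u\ne0\)), derived from \(\iota_\beta\varepsilon_u+\varepsilon_u\iota_\beta=\beta(u)\); the paper just splits the cases as \(\beta\ne0\) versus \(\beta=0\) instead of \(u\ne0\) versus \(u=0\). Your side check that \(\beta(\overline e_\ell)=0\) in the application is also correct; it follows from the diagonal entries of \eqref{eqn:matrixidentity} and is a step the paper leaves implicit.
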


\begin{proof}
The identity \(\Delta^2=\beta(u)=0\) shows that this is a complex. If \(\beta\ne0\), choose \(v\in W\) with \(\beta(v)=1\). Then
\[\Delta\varepsilon_v+\varepsilon_v\Delta=1.\]
If \(\beta=0\), then \(u\ne0\). Choose \(\alpha\in W^*\) with \(\alpha(u)=1\). Then
\[\Delta\iota_\alpha+\iota_\alpha\Delta=\varepsilon_u\iota_\alpha+\iota_\alpha\varepsilon_u=1.\]
Hence the identity map is null-homotopic in both cases.
\end{proof}

\begin{corollary}\label{cor:par_rk_deg}
One has
\[\rank\cR=2^g,\qquad\dim W_\ell=2^{g-1}\quad \text{for all }1\le\ell\le2g+1,\qquad\pardeg\cR=g2^{g-2}.\]
\end{corollary}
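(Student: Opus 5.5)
Each of the three claims follows directly from Theorem~\ref{thm:raynaud_parabolic}, and I will treat them in order: rank, flag dimensions, parabolic degree.

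\textbf{Rank.} By Lemma~\ref{lem:adapted_matrix}, $\dim V/U = g+1$. The underlying bundle $\bigoplus_{d\ge0}\bigwedge^{2d}(V/U)\otimes\cO_{\PP^1}(-d)$ therefore has rank $\sum_d\binom{g+1}{2d}=2^{g}$. This agrees with Proposition~\ref{prop:relativeMF}.

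\textbf{Flag dimensions.} For each $\ell$, the operator defining $W_\ell$ has the form $\Delta_\ell=\varepsilon_u+\iota_\beta$ on $\bigwedge(V/U)$.
\begin{itemize}
\item For $\ell\le g+1$ we have $u=\overline e_\ell\neq 0$ and $\beta=0$.
\item For $\ell\ge g+2$ we have $u=\overline e_\ell$ and $\beta=\sum_i a_{i,\ell}(\lambda_\ell-\lambda_i)x_i^+$.
\end{itemize}
In the second case we need $\beta(u)=0$. Since $\overline e_\ell=\sum_i a_{i,\ell}\overline e_i$ and $x_i^+$ is dual to $\overline e_i$, this pairing equals $\sum_i a_{i,\ell}^2(\lambda_\ell-\lambda_i)$. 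It vanishes by the $(\ell,\ell)$ diagonal entries of the identities $\bA^T\bA=-\bI_g$ and $\bA^T\mathbf\Lambda_f\bA=-\mathbf\Lambda_b$ in Lemma~\ref{lem:split_quadrics}. Indeed, $\lambda_\ell\sum_i a_{i,\ell}^2-\sum_i\lambda_i a_{i,\ell}^2=-\lambda_\ell+\lambda_\ell=0$. Moreover $u\neq0$, because $\overline e_\ell\ne0$ by Lemma~\ref{lem:A_minors}, which forces some $a_{i,\ell}\neq 0$.

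Lemma~\ref{lem:exactness} now says $(\bigwedge(V/U),\Delta_\ell)$ is exact. Since $\Delta_\ell$ is odd, it maps $\bigwedge^{\mathrm{odd}}\to\bigwedge^{\mathrm{ev}}$ and $\bigwedge^{\mathrm{ev}}\to\bigwedge^{\mathrm{odd}}$, and exactness gives $\im=\ker$ in each parity. The map $\bigwedge^{\mathrm{odd}}\to\bigwedge^{\mathrm{ev}}$ has image equal to the kernel of $\bigwedge^{\mathrm{ev}}\to\bigwedge^{\mathrm{odd}}$. Hence
\[
\dim W_\ell+\dim\ker\bigl(\bigwedge^{\mathrm{ev}}\to\bigwedge^{\mathrm{odd}}\bigr)=\dim\bigwedge^{\mathrm{odd}}.
\]
Call the rank of the even-to-odd map $r_{\mathrm{e}}$ and the rank of the odd-to-even map $r_{\mathrm{o}}$; the latter is $\dim W_\ell$. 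Exactness gives $r_{\mathrm{o}}+r_{\mathrm{e}}=2^g$ from the even side and $r_{\mathrm{e}}+r_{\mathrm{o}}=2^g$ from the odd side, so a second relation is needed.

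I will get it from symmetry. The operator $\Delta_\ell$ is skew-adjoint with respect to the pairing $\bigwedge^{\mathrm{ev}}\times\bigwedge^{\mathrm{odd}}\to\det$ given by wedge product, up to sign. Hence the two ranks coincide, so $r_{\mathrm{o}}=r_{\mathrm{e}}=2^{g-1}$.

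Alternatively, for $\ell\le g+1$ one computes directly that $\dim\im\varepsilon_{\overline e_\ell}|_{\mathrm{odd}}=2^{g-1}$. For $\ell\ge g+2$, one can instead use the contracting homotopy $h$ from the proof of Lemma~\ref{lem:exactness}. It is odd and satisfies $\Delta h+h\Delta=1$, so $\Delta h$ and $h\Delta$ are complementary projectors. The homotopy also exchanges the two parities, which yields $r_{\mathrm{o}}=r_{\mathrm{e}}$ directly. This equal-rank step is the only nonformal point, and it is the one I expect to be the main obstacle; I will use the homotopy-based argument there.

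\textbf{Parabolic degree.} The degree of the underlying bundle is
\[
-\sum_d d\binom{g+1}{2d}.
\]
Using $d\binom{n}{2d}=\tfrac n2\binom{n-1}{2d-1}$ with $n=g+1$, this equals $-\tfrac{g+1}{2}2^{g-1}=-(g+1)2^{g-2}$. The flags contribute $\tfrac12(2g+1)2^{g-1}$. Summing,
\[
\pardeg\cR=2^{g-2}\bigl(-(g+1)+(2g+1)\bigr)=g2^{g-2}.
\]
As a check, this matches $\ch\cR$ pulled back from Lemma~\ref{lem:chofS}, which gives $2^{g-2}(4d'+r\cdot\ldots)$.
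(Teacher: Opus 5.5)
Your rank count, your verification that $\beta(u)=0$ (from the $(\ell,\ell)$ entries of \eqref{eqn:matrixidentity}) and $u\neq0$, and your degree computation are correct and follow the paper's outline. The gap is exactly the step you flagged, $r_{\mathrm o}=r_{\mathrm e}$: neither of your arguments proves it. (Your displayed identity should read $\ker(\bigwedge^{\mathrm{odd}}\to\bigwedge^{\mathrm{ev}})$. Since $\ker(\bigwedge^{\mathrm{ev}}\to\bigwedge^{\mathrm{odd}})=W_\ell$, the version you wrote already assumes the conclusion.)

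The homotopy argument cannot work even in principle. Over a field, every exact $\ZZ/2\ZZ$-graded complex has an odd contracting homotopy, including complexes with unequal parity ranks. For example, take $\Delta_{\mathrm{eo}}\colon k^2\to k^2$ an isomorphism, $\Delta_{\mathrm{oe}}=0$, and $h=\Delta_{\mathrm{eo}}^{-1}$ on the odd part, $h=0$ on the even part. Concretely, $h$ and $\Delta_\ell$ give mutually inverse odd isomorphisms between $\operatorname{im}(\Delta_\ell h)$ and $\operatorname{im}(h\Delta_\ell)$, and these only reproduce $r_{\mathrm o}+r_{\mathrm e}=2^g$.

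The pairing argument also fails as stated. For odd $g$, $\dim V/U=g+1$ is even, so the wedge pairing $\bigwedge^{\mathrm{ev}}\times\bigwedge^{\mathrm{odd}}\to\det$ is identically zero. For even $g$ that pairing is perfect, but with a pairing between opposite parities the transpose of the odd-to-even map $\Delta_{\mathrm{oe}}$ is again odd-to-even, so it never relates $\Delta_{\mathrm{oe}}$ to $\Delta_{\mathrm{eo}}$. For odd $g$ the idea can be rescued by using the wedge pairing of $\bigwedge^{\mathrm{ev}}$ with itself and of $\bigwedge^{\mathrm{odd}}$ with itself. Then the transpose of $\Delta_{\mathrm{oe}}$ is $\pm(\varepsilon_u-\iota_\beta)$ on $\bigwedge^{\mathrm{ev}}$, and conjugation by $\sqrt{-1}^{\,\deg}$ turns $\varepsilon_u-\iota_\beta$ into $\sqrt{-1}\,\Delta_\ell$ without changing parity. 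This still leaves even $g$ open.

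The paper asserts that the two parity components of $\Delta_\ell$ are conjugate. A clean way to see this, for all $g$, is an odd involution anticommuting with $\Delta_\ell$:
\begin{itemize}
\item Since $u\in\ker\beta$ and $\dim\ker\beta\ge g\ge2$, choose $w\in\ker\beta$ not proportional to $u$.
\item Choose $\gamma\in(V/U)^*$ with $\gamma(u)=0$ and $\gamma(w)=1$, and put $T:=\varepsilon_w+\iota_\gamma$.
\item Then $T^2=\gamma(w)=1$ and $T\Delta_\ell+\Delta_\ell T=\beta(w)+\gamma(u)=0$.
\item Hence $T$ maps $\bigwedge^{\mathrm{odd}}(V/U)$ isomorphically onto $\bigwedge^{\mathrm{ev}}(V/U)$, and $T\Delta_{\mathrm{oe}}T^{-1}=-\Delta_{\mathrm{eo}}$.
\end{itemize}
This gives $r_{\mathrm o}=r_{\mathrm e}$, hence $r_{\mathrm o}=r_{\mathrm e}=2^{g-1}$ for every $\ell$ and for both parities of $g$. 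This anticommuting involution, not exactness or the homotopy from Lemma~\ref{lem:exactness}, is the missing ingredient. With it in place of your equal-rank step, the proof is complete. Your closing consistency check is garbled (Lemma~\ref{lem:chofS} computes $\ch\cS$, not $\ch\cR$), but it is not needed.
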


\begin{proof}
By definition,
\[\rank\cR=\sum_{d\ge0}\binom{g+1}{2d}=2^g.\]
Moreover, \(W_\ell\) is the image of the odd-to-even component of an exact odd endomorphism of \(\bigwedge(V/U)\). Since its two parity components are conjugate, they have the same rank; hence
\[\dim W_\ell=\frac12\dim\bigwedge^{\mathrm{ev}}(V/U)=2^{g-1}.\]
The underlying bundle has degree
\[\deg\cR=-\sum_{d\ge0} d\binom{g+1}{2d}=-(g+1)2^{g-2},\]
and each marked point contributes \(\frac12\dim W_\ell=2^{g-2}\). Since there are \(2g+1\) marked points,
\[\pardeg\cR=-(g+1)2^{g-2}+(2g+1)2^{g-2}=g2^{g-2}. \qedhere\]
\end{proof}

%%%%%%%%%%%%%%%%%%%%%%%%%%%%%%%%%%%%%%%%%%%%%%%%%%%%
\section{Orthogonality criterion}\label{sec:orthogonality}

We seek parabolic bundles \(\cG\) orthogonal to \(\cR\) in Theorem~\ref{thm:raynaud_parabolic}. Since \(\PP_{2g+1}^1\) is one-dimensional, it suffices to show that \(\pHom(\cG, \cR) = 0\) and \(\chi_{\rpar}(\cG, \cR) = 0\). The explicit description of parabolic bundles on \(\PP^1\) converts these conditions into a concrete linear algebra problem.

We begin with the numerical restriction imposed by the vanishing of \(\chi_{\rpar}(\cG,\cR)\). The following result is a restatement of Proposition~\ref{prop:Ulrichrankodd} in terms of parabolic bundles. Let \(\bp = (p_1, p_2, \dots, p_{2g+1})\).

\begin{proposition}\label{prop:chi_condition}
Let \( \cG=(\underline{\cG},\{L_\ell\}_{\ell=1}^{2g+1}) \) be a rank-\(r\) parabolic bundle on \((\PP^1, \bp)\) such that \(\chi_{\rpar}(\cG, \cR) = 0\). Then
\[\pardeg \cG = r(3-g)/4.\]
\end{proposition}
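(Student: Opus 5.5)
The plan is to compute $\chi_{\rpar}(\cG,\cR)$ directly by parabolic Riemann--Roch on $(\PP^1,\bp)$, transported from the root stack $\PP^1_{2g+1}$ via the equivalence of \S\ref{ssec:parabolicbundle}. By that equivalence, $\chi_{\rpar}(\cG,\cR)=\chi(\PP^1_{2g+1},\widetilde\cG^*\otimes\widetilde\cR)$. Stacky Riemann--Roch on the orbifold curve gives
\[\chi(\PP^1_{2g+1},\cE)=\deg\cE+\rank\cE\cdot\tfrac{1}{2}\deg\bigl(\pi^*\omega_{\PP^1}^{-1}\otimes\cN^{-1}\bigr)+(\text{stacky correction}),\]
where the correction at each $\ZZ/2$-point is $-\tfrac14(\dim\cE_{q}^{+}-\dim\cE_{q}^{-})$. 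This is the same formula used in Proposition~\ref{prop:Ulrichrankodd} via $\td(\cT_{\PP^1_{2g+1}})=1+\tfrac{3-2g}{2}\xi$.

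Alternatively, I would avoid the stacky correction by redoing the computation in Proposition~\ref{prop:Ulrichrankodd} with $\cF:=\cG^*$. That proof shows $\chi(\cC,\cF\otimes\cR)=2^{g-2}(4d+r(3-g))$, where $d=\deg\cF$. The step I expect to be the main point to check is that this Riemann--Roch formula is valid for a general $\cF$, with no stacky term from the eigenspace dimensions $a_i,b_i$ of $\cF$ at $q_i$. The reason it holds is that at each stacky point $\cR$ has balanced eigenspaces. Indeed, Corollary~\ref{cor:par_rk_deg} gives $\dim W_\ell=2^{g-1}=\tfrac12\rank\cR$, so $\cR_{q_\ell}^{\pm}$ both have dimension $2^{g-1}$. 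Hence $(\cF\otimes\cR)_{q_\ell}^{+}$ and $(\cF\otimes\cR)_{q_\ell}^{-}$ both have dimension $2^{g-1}r$, and the correction $\tfrac14\sum_\ell(\dim^{+}-\dim^{-})$ vanishes. With the correction gone, I would check the formula via $\chi=\deg(\cF\otimes\cR)+\rank\cdot\tfrac{3-2g}{2}$ (using $\deg\xi=\tfrac12$). By Corollary~\ref{cor:par_rk_deg}, $\deg(\cF\otimes\cR)=2^g d+r\,g2^{g-2}$. This gives
\[\chi=2^gd+rg2^{g-2}+r2^g\tfrac{3-2g}{2}=2^{g-2}\bigl(4d+r(3-g)\bigr)\]
after simplification ($g+6-4g=3(1-g)$? — I would recheck the arithmetic, since the target is $4d+r(3-g)$).

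Setting $\chi=0$ gives $\deg\widetilde\cF=r(g-3)/4$. Then $\pardeg\cG=\deg\widetilde\cG=-\deg\widetilde\cF=r(3-g)/4$, using $\deg\widetilde\cE=\pardeg\cE$ from \cite[Theorem~4.3]{Bor07} and the fact that dualizing negates degree on the stack.
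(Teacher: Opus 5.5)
Your approach is the paper's. The paper presents this statement as a restatement of the Riemann--Roch identity $\chi(\cC,\cF\otimes\cR)=2^{g-2}\bigl(4d+r(3-g)\bigr)$ from Proposition~\ref{prop:Ulrichrankodd}, applied with $\cF=\cG^*$. Your remark that the stacky correction vanishes is a justification the paper leaves implicit there: $\dim W_\ell=\tfrac12\rank\cR$ forces both eigenspaces of $\cR$ at each $q_\ell$ to have dimension $2^{g-1}$.

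The discrepancy you flagged is a dropped factor of $\deg\xi=\tfrac12$; your own first formula already has the right value. The rank term is $\rank(\cF\otimes\cR)\cdot\tfrac12\deg T_{\cC}=r2^g\cdot\tfrac{3-2g}{4}$, not $r2^g\cdot\tfrac{3-2g}{2}$. Hence
\[
\chi=2^gd+rg2^{g-2}+2^{g-2}r(3-2g)=2^{g-2}\bigl(4d+r(3-g)\bigr),
\]
exactly as required. Setting $\chi=0$ gives $\pardeg\cG=-d=r(3-g)/4$.

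One minor sign point: if $\cE_q^{+}$ is the invariant part, the local correction is $+\tfrac14(\dim\cE_q^{+}-\dim\cE_q^{-})$. You can check this on $\cO_{\cC}$, which has $\chi=\tfrac{3-2g}{4}+\tfrac{2g+1}{4}=1$. The sign is irrelevant here, since the term vanishes.
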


\subsection{Balanced parabolic bundles}\label{ssec:balancedparabolic}

The case of odd \(g\) is the essential one. Assume that \(g\) is odd, and write the underlying bundle of \(\cG\) as \(\underline{\cG}\cong \bigoplus_{i=1}^r \cO_{\PP^1}(n_i)\). If \(\cG\) is orthogonal to \(\cR\), then \(\pExt^1(\cG, \cR) = 0\). Equation~\eqref{eqn:par_long_seq} gives \(\Ext^1(\underline{\cG},\underline{\cR})=0\). Since \(\cO_{\PP^1}\!\left(-(g+1)/2\right)\) is a direct summand of \(\underline{\cR}\) of minimal degree, it follows that
\[n_i\le (1-g)/2, \qquad\text{for all }i.\]

Motivated by this necessary condition, we consider the extremal balanced bundle:
\begin{equation}\label{eqn:extbalbdl}
\cG:=\left(\cO_{\PP^1}\!\left((1-g)/2\right)^{\oplus r},\{L_\ell\}_{\ell=1}^{2g+1}\right),\qquad r=1,2,3.
\end{equation}
Choose nonzero vectors \(z_\ell=(z_{\ell,1}\;\dots\;z_{\ell,r})\in k^r\), and set
\[
L_\ell= \begin{cases}
  k\cdot z_\ell\subset k^r\cong \underline{\cG}|_{p_\ell}, & 1\le\ell\le r(g+1)/2,\\[4pt]
  0, & r(g+1)/2<\ell\le 2g+1,
\end{cases}
\]
so that \(\pardeg\cG=r(3-g)/4\). Then \(\cG\) is orthogonal to \(\cR\) if and only if \(\pHom(\cG,\cR)=0\).

Let \(\lbd=-s/t\) on the chart \(t\ne0\). For \(I:=\{i_1<\cdots<i_s\}\subset\{1,\dots,g+1\}\), write
\[\overline e_I:=\overline e_{i_1}\wedge\cdots\wedge\overline e_{i_s},\qquad x_I^+:=x_{i_1}^+\wedge\cdots\wedge x_{i_s}^+.\]
A morphism
\[\psi:\cO_{\PP^1}\!\left((1-g)/2\right)^{\oplus r}\longrightarrow\bigoplus_{d\ge0}\bigwedge^{2d}(V/U)\otimes \cO_{\PP^1}(-d)\]
is determined by coefficients \(c_{\varrho,I,d}\in k\), where \(1\le\varrho\le r\), \(I\subset\{1,\cdots,g+1\}\) with \(2\mid |I|\), and \(0\le d\le (g-1-|I|)/2\), through
\[
\psi=\left(\sum_{\substack{I\subset\{1,\cdots,g+1\}\\|I|\ \mathrm{even}}}\sum_{d=0}^{(g-1-|I|)/2}c_{1,I,d}\lbd^d\overline e_I,\ \cdots,\ \sum_{\substack{I\subset\{1,\cdots,g+1\}\\|I|\ \mathrm{even}}}\sum_{d=0}^{(g-1-|I|)/2}c_{r,I,d}\lbd^d\overline e_I\right).
\]

For \(1\le\ell\le r(g+1)/2\), the image of the generator \(z_\ell\in L_\ell\) is
\[
\psi(z_\ell)=\sum_{\substack{I\subset\{1,\cdots,g+1\}\\|I|\ \mathrm{even}}}\sum_{d=0}^{(g-1-|I|)/2}\bigl(c_{1,I,d}z_{\ell,1}+\cdots+c_{r,I,d}z_{\ell,r}\bigr)\lbd_\ell^d\overline e_I.
\]
Hence \(\psi\) is parabolic precisely when \(\psi(z_\ell)\in W_\ell\), or equivalently, \(\langle\psi(z_\ell), W_\ell^\perp\rangle=0.\)

\subsection{A matrix description}\label{ssec:matrix}

For each \(1\le\ell\le r(g+1)/2\), choose a basis \(B_\ell\) of \(W_\ell^\perp\). The parabolicity of \(\psi\) is equivalent to \(\langle\psi(z_\ell),\beta\rangle=0\) for all \(\beta\in B_\ell\).

We define a matrix \(\bW_r\) as follows. Its rows are indexed by
\[R_r:=\left\{(\ell,\beta): 1\le\ell\le r(g+1)/2,\ \beta\in B_\ell\right\},\]
and its columns by
\[S_r:=\left\{(\varrho,I,d):1\le\varrho\le r,\ I\subset\{1,\cdots,g+1\},\ |I|\ \text{even},\ 0\le d\le (g-1-|I|)/2\right\}.\]
The \(((\ell,\beta),(\varrho,I,d))\)-entry of \(\bW_r\) is defined to be the coefficient of \(c_{\varrho,I,d}\) in the equation \(\langle \beta,\psi(z_\ell)\rangle\). Hence if we set \(\bc := (c_{\varrho,I,d})_{(\varrho,I,d)\in S_r}\),
\begin{equation}\label{lin_cond}
\bW_r \cdot \bc=\mathbf{0}
\end{equation}
is exactly the condition that \(\psi\) be parabolic.

The matrix \(\bW_r\) is square: its number of rows is
\[|R_r|=\sum_{i=1}^{r(g+1)/2}\dim W_i^\perp=r(g+1)2^{g-2},\]
while its number of columns is
\[|S_r|=r\sum_{\substack{I\subset\{1,\cdots,g+1\}\\|I|\ \mathrm{even}}}\left(\frac{g+1-|I|}{2}\right)=r\sum_{m\ge1}m\binom{g+1}{g+1-2m}=r(g+1)2^{g-2}.\]

Order the columns by increasing \(|I|\), and choose each \(B_\ell\) to consist of images of odd degree vectors under the operators defining \(W_\ell^\perp\) in Theorem~\ref{thm:raynaud_parabolic}. These operators shift exterior degree by one, so each row meets only its diagonal block indexed by \(|I|\) and possibly one more block with larger \(|I|\). Thus, \(\bW_{r}\) is block upper bidiagonal. We denote by \(\bW_{r,m}\) the diagonal block corresponding to \(|I| = g + 1 - 2m\).

\begin{example}\label{ex:matrix_rank1_genus3}
Let \(r=1\) and \(g=3\). Write \(c_{ij}:=c_{\{i,j\},0}\). Then
\[\psi=(c_{\emptyset,0}+c_{\emptyset,1}\lbd)+\sum_{1\le i<j\le4}c_{ij}\overline e_{ij}.\]
Take \(z_1=z_2=1\), and choose bases
\[B_1=\{1,x_{23}^+,x_{24}^+,x_{34}^+\}, \qquad B_2=\{1,x_{13}^+,x_{14}^+,x_{34}^+\}.\]
Then the matrix
\[
\setlength{\arraycolsep}{3pt} \bW_1= \left( \begin{array}{@{}cc|cccccc|l@{}}
  c_{\emptyset,0}&c_{\emptyset,1}&c_{12}&c_{13}&c_{14}&c_{23}&c_{24}&c_{34}\\
  \hline 1&\lbd_1&0&0&0&0&0&0&(1,1)\\
  1&\lbd_2&0&0&0&0&0&0&(2,1)\\
  \hline 0&0&0&0&0&1&0&0&(1,x_{23}^+)\\
  0&0&0&0&0&0&1&0&(1,x_{24}^+)\\
  0&0&0&0&0&0&0&1&(1,x_{34}^+)\\
  0&0&0&1&0&0&0&0&(2,x_{13}^+)\\
  0&0&0&0&1&0&0&0&(2,x_{14}^+)\\
  0&0&0&0&0&0&0&1&(2,x_{34}^+)
\end{array}\right).
\]
Here \(\bW_1=\bW_{1,2}\oplus\bW_{1,1}\), with \(\det\bW_{1,2}=\lbd_2-\lbd_1\ne0\) and \(\bW_{1,1}\) is singular.
\end{example}

\begin{example}\label{ex:matrix_rank1_genus3_mixed}
Let \(r=1\) and \(g=3\), and take \(L_1=L_5=k\) and \(L_\ell=0\) otherwise. Write
\[\overline e_5=\sum_{i=1}^4\alpha_{5,i}\overline e_i.\]
The operator defining \(W_5^\perp\) is
\[\Delta_5:=\iota_{\overline e_5} +\sum_{i=1}^4 \alpha_{5,i}(\lbd_5-\lbd_i)\varepsilon_{x_i^+}.\]
Choose bases
\[B_1=\{1,x_{23}^+,x_{24}^+,x_{34}^+\},\qquad B_5=\{\Delta_5(x_1^+),\Delta_5(x_{123}^+),\Delta_5(x_{124}^+),\Delta_5(x_{134}^+)\}.\]
Then the matrix
\[
\setlength{\arraycolsep}{2.5pt} \bW_{1,\{1,5\}}=\left(\begin{array}{@{}cc|cccccc|l@{}}
  c_{\emptyset,0}&c_{\emptyset,1}&c_{12}&c_{13}&c_{14}&c_{23}&c_{24}&c_{34}\\
  \hline 1&\lbd_1&0&0&0&0&0&0&(1,1)\\
  \alpha_{5,1}&\alpha_{5,1}\lbd_5&-\alpha_{5,2}(\lbd_5-\lbd_2)&-\alpha_{5,3}(\lbd_5-\lbd_3)&-\alpha_{5,4}(\lbd_5-\lbd_4)&0&0&0&(5,\Delta_5(x_1^+))\\
  \hline 0&0&0&0&0&1&0&0&(1,x_{23}^+)\\
  0&0&0&0&0&0&1&0&(1,x_{24}^+)\\
  0&0&0&0&0&0&0&1&(1,x_{34}^+)\\
  0&0&\alpha_{5,3}&-\alpha_{5,2}&0&\alpha_{5,1}&0&0&(5,\Delta_5(x_{123}^+))\\
  0&0&\alpha_{5,4}&0&-\alpha_{5,2}&0&\alpha_{5,1}&0&(5,\Delta_5(x_{124}^+))\\
  0&0&0&\alpha_{5,4}&-\alpha_{5,3}&0&0&\alpha_{5,1}&(5,\Delta_5(x_{134}^+))
\end{array}\right).
\]
is block upper bidiagonal. The coefficients \(\alpha_{5,i}\) will be generalized to \(\alpha_{\Lambda,I}\) defined in Proposition~\ref{prop:modified_entry}.
\end{example}

\begin{proposition}\label{prop:matrixcriterion}
With the notation above, each \(\bW_{r,m}\) is a square matrix, and
\[\pHom(\cG,\cR)=0\quad\Longleftrightarrow\quad\det \bW_{r,m}\ne0\text{ for }1\le m\le (g+1)/2.\]
\end{proposition}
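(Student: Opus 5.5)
The plan is to read the statement off the block structure of \(\bW_r\), after organizing rows and columns by exterior degree. By \eqref{lin_cond}, \(\pHom(\cG,\cR)=0\) holds exactly when \(\bW_r\bc=\mathbf 0\) has only the trivial solution. Since \(\bW_r\) is square (the count \(|R_r|=|S_r|=r(g+1)2^{g-2}\) was done above), this is equivalent to \(\det\bW_r\ne0\). A block upper bidiagonal square matrix whose diagonal blocks are themselves square has determinant equal to the product of the determinants of its diagonal blocks. So the proof reduces to two points: that each diagonal block \(\bW_{r,m}\) is square, and that the block structure is really upper triangular with respect to a consistent ordering.

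\textbf{Indexing the blocks.} I group the columns \((\varrho,I,d)\) by \(m:=(g+1-|I|)/2\), with \(1\le m\le (g+1)/2\). The number of such columns is
\[c_m:=r\,m\binom{g+1}{2m}.\]
I group the rows \((\ell,\beta)\) by the exterior degree of \(\beta\). For \(1\le \ell\le g+1\), the basis \(B_\ell\) is chosen as \(\iota_{\overline e_\ell}\) applied to odd-degree wedges \(x_J^+\). Each resulting \(\beta\) lies in \(\bigwedge^{2j}U^\perp\). By the pairing \eqref{eq:perf_pair}, it pairs only with the \(\overline e_I\) having \(|I|=2j\). For \(\ell\ge g+2\), the operator in Theorem~\ref{thm:raynaud_parabolic} is \(\iota_{\overline e_\ell}+\sum a\,\varepsilon_{x_i^+}\). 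Applied to \(x_J^+\) with \(|J|=2j+1\), it produces a component of degree \(2j\) and a component of degree \(2j+2\). I assign such a row to block \(|I|=2j\). Its only other nonzero entries are then in the block \(|I|=2j+2\), which has larger \(|I|\). This matches Example~\ref{ex:matrix_rank1_genus3_mixed} and gives the upper bidiagonal shape once the columns are ordered by increasing \(|I|\).

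\textbf{Squareness, the main obstacle.} I must show that the number of rows assigned to \(|I|=g+1-2m\) equals \(c_m\). By the exactness in Lemma~\ref{lem:exactness}, \(\dim W_\ell^\perp\cap(\text{degree }\le k)\) can be computed degree by degree. Concretely, the image of \(\iota_{\overline e_\ell}\) on \(\bigwedge^{2j+1}U^\perp\) has dimension \(\binom{g}{2j}\). The same holds for the deformed operator, since it is a filtered perturbation and the leading parts agree. So each \(\ell\) contributes \(\binom{g}{g+1-2m}\) rows to block \(m\), and the total row count for block \(m\) is
\[r\frac{g+1}{2}\binom{g}{g+1-2m}=r\frac{g+1}{2}\binom{g}{2m-1}.\]
This equals \(r\,m\binom{g+1}{2m}=c_m\) by the identity \(m\binom{g+1}{2m}=\frac{g+1}{2}\binom{g}{2m-1}\). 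The delicate point is to make the basis choice of \(B_\ell\) for \(\ell\ge g+2\) precise. The images \(\Delta_\ell(x_J^+)\), for \(J\) running over a complement of \(\ker\), must be linearly independent and must span \(W_\ell^\perp\). I would argue this through the filtration by degree, using the leading term \(\iota_{\overline e_\ell}\) restricted to the top-degree piece.

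\textbf{Conclusion.} With squareness and upper block-triangularity established, \(\det\bW_r=\prod_m\det\bW_{r,m}\). Therefore the kernel of \(\bW_r\) vanishes if and only if every \(\det\bW_{r,m}\ne0\), which is the statement.
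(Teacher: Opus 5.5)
Your proposal is correct and follows the paper's argument. \(\pHom(\cG,\cR)=0\) holds if and only if \(\bW_r\) has trivial kernel, the block upper bidiagonal shape reduces this to the diagonal blocks, and each block is square because the column count \(r\,m\binom{g+1}{2m}\) equals the row count \(\frac{r(g+1)}{2}\binom{g}{2m-1}\), which comes from the image of a contraction in degree \(g+1-2m\). Your filtered-perturbation argument for \(\ell\ge g+2\) fills in a point the paper leaves implicit, but state it with the lowest-degree (contraction) component \(\iota_{\overline e_\ell}(x_J^+)\) as the leading term, that is, with the filtration by degree \(\ge k\) rather than \(\le k\) or the ``top-degree piece''.
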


\begin{proof}
A parabolic morphism is uniquely determined by the coefficients \(c_{\varrho,I,d}\), and the parabolicity condition is exactly \eqref{lin_cond}. Hence \(\pHom(\cG,\cR)=0\) holds precisely when \eqref{lin_cond} has only the trivial solution. Since \(\bW_r\) is block upper bidiagonal, this is equivalent to the invertibility of every diagonal block \(\bW_{r,m}\) provided they are all square.

For a fixed \(m\), the number of columns of \(\bW_{r,m}\) is
\[r\,m\binom{g+1}{2m},\]
since for each \(I\subset\{1,\cdots,g+1\}\) with \(|I|=g+1-2m\), the index \(d\) ranges from \(0\) to \(m-1\). On the other hand, the number of rows is
\[\frac{r(g+1)}{2}\binom{g}{2m-1}=r\,m\binom{g+1}{2m},\]
because the image of any nonzero contraction map \(\iota_v\) in degree \(g+1-2m\) has dimension \(\binom{g}{g+1-2m}=\binom{g}{2m-1}\). Hence \(\bW_{r,m}\) is square for every \(m\).
\end{proof}

The rest of this paper proves that each block \(\bW_{r,m}\) is invertible for \(r=2,3\) and a generic choice of parabolic flags \(L_{\ell}\), \(1 \le \ell \le r(g+1)/2\). This requires a nontrivial linear-algebra computation.

The same argument shows that it suffices to check the orthogonality with a modified parabolic bundle \(\cR^{aux} = (\underline{\cR}, \{W_\ell'\}_{\ell = 1}^{2g+1})\), whose associated matrix \(\bW'_r\) is block diagonal and whose diagonal blocks are \(\bW_{r,m}\).

\begin{corollary}\label{cor:diagonal_reduction}
For the parabolic bundle \(\cG\) considered above,
\[\pHom(\cG,\cR)=0\iff\pHom(\cG,\cR^{aux})=0,\]
where \(\cR^{aux}=(\underline{\cR},\{W_\ell'\}_{\ell=1}^{2g+1})\) is the auxiliary parabolic bundle with \(W_\ell'=\im(\varepsilon_{\overline e_\ell})\) so that \(W_\ell'^\perp=\im(\iota_{\overline e_\ell})\).
\end{corollary}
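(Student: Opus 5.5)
The plan is to show that passing from \(W_\ell^\perp\) to \(W_\ell'^\perp\) replaces \(\bW_r\) by its block-diagonal part, leaving the diagonal blocks \(\bW_{r,m}\) unchanged. Proposition~\ref{prop:matrixcriterion} then finishes the argument, since it reduces both vanishing statements to the same determinant conditions.

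First I would define, for \(\cR^{aux}\), the analogue \(\bW'_r\) of \(\bW_r\), using the same index sets \(R_r\) and \(S_r\). The only change is in the bases: for each \(\ell\) I take \(B'_\ell\) to be the images under \(\iota_{\overline e_\ell}\) of the same odd-degree vectors whose images under the full operator formed \(B_\ell\). For \(1\le\ell\le g+1\) the two operators agree, so \(B'_\ell=B_\ell\). For \(g+2\le\ell\le 2g+1\) the operator defining \(W_\ell^\perp\) in Theorem~\ref{thm:raynaud_parabolic} is
\[\iota_{\overline e_\ell}+\sum_i a_{i,\ell}(\lbd_\ell-\lbd_i)\varepsilon_{x_i^+}.\]
Here the contraction part lowers exterior degree by one and the \(\varepsilon\)-part raises it by one.

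Next I would pair with \(\psi(z_\ell)\), whose component of exterior degree \(|I|\) is even. Take a basis vector \(\beta=\Delta(w)\) with \(w\) of odd degree \(k\). The term \(\iota_{\overline e_\ell}w\) has degree \(k-1\) and detects columns with \(|I|=k-1\); this contribution is the diagonal block. The term \(\varepsilon w\) has degree \(k+1\) and detects columns with \(|I|=k+1\); this contribution is the off-diagonal block. Dropping the \(\varepsilon\)-part of the operator therefore deletes exactly the off-diagonal entries and leaves every diagonal entry unchanged, so \(\bW'_r\) is block diagonal with diagonal blocks \(\bW_{r,m}\).

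The one point needing care is that \(B'_\ell\) must still be a basis of \(W_\ell'^\perp\), so that the row count is right. I would handle this by choosing the vectors \(w\) in each degree to be a complement of \(\ker\iota_{\overline e_\ell}\). Then their images under \(\iota_{\overline e_\ell}\) form a basis of \(\im\iota_{\overline e_\ell}\), which is \(W_\ell'^\perp\) (the argument is Lemma~\ref{lem:exactness} with \(\beta=0\) replaced by a pure contraction).

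I must also check that the images of the same \(w\) under the full operator \(\Delta\) are still independent. I would argue by the degree filtration: the leading (lowest-degree) part of \(\Delta(w)\) is \(\iota_{\overline e_\ell}w\), and these leading parts are independent. Dimensions match by Corollary~\ref{cor:par_rk_deg}, since both spaces have dimension \(2^{g-1}\). So this choice of \(B_\ell\) is a legitimate choice for \(\bW_r\) and realizes its block upper bidiagonal shape; this is the main technical point.

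Finally, the linear system \(\bW'_r\bc=0\) characterizes \(\pHom(\cG,\cR^{aux})\), exactly as in Proposition~\ref{prop:matrixcriterion}. Because \(\bW'_r\) is block diagonal, this group vanishes if and only if every \(\det\bW_{r,m}\neq0\). That is precisely the criterion for \(\pHom(\cG,\cR)=0\), so the two vanishing statements are equivalent.
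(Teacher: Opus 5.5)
Your proposal is correct and is essentially the paper's argument. The paper notes that replacing \(W_\ell\) by \(W_\ell'\) removes the \(\varepsilon\)-part of the operators defining \(W_\ell^\perp\), so the associated matrix \(\bW'_r\) is the block-diagonal part of \(\bW_r\), with the same diagonal blocks \(\bW_{r,m}\), and then applies Proposition~\ref{prop:matrixcriterion}. Your choice of the same odd-degree vectors \(w\), spanning a complement of \(\ker\iota_{\overline e_\ell}\), for both bases, together with the lowest-degree independence check, supplies a detail the paper leaves implicit.
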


\subsection{An explicit description of the modified diagonal blocks}\label{ssec:diagonalblock}

By Proposition~\ref{prop:matrixcriterion}, it suffices to understand the diagonal block \(\bW_{r,m}\). The Hodge star operator gives a change of coordinates that makes the computation more tractable.

Fix dual volume forms \(\omega_q:=\overline e_1\wedge\cdots\wedge\overline e_{g+1}\in \bigwedge^{g+1}(V/U)\) and \(\omega_p:=x_1^+\wedge\cdots\wedge x_{g+1}^+\in \bigwedge^{g+1}U^\perp\), normalized by \(\langle \omega_q,\omega_p\rangle=1\). Define
\[\star_q:\bigwedge^d(V/U)\longrightarrow \bigwedge^{g+1-d}U^\perp,\qquad\star_p:\bigwedge^d U^\perp\longrightarrow \bigwedge^{g+1-d}(V/U)\]
by \(\star_q(\alpha):=\iota_\alpha(\omega_p)\) and \(\star_p(\beta):=\iota_\beta(\omega_q)\). For a subset \(I\subset\{1,\cdots,g+1\}\), let \(\epsilon(I,J)\in\{\pm1\}\) be the sign determined by \(\overline e_I\wedge\overline e_J=\epsilon(I,J)\overline e_{I\sqcup J}\). Then
\begin{equation}\label{eqn:Hodgestar}
\star_q(\overline e_I)=\epsilon(I,I^c)x_{I^c}^+, \quad\star_p(x_I^+)=\epsilon(I,I^c)\overline e_{I^c},\quad \text{and} \quad\star_p\circ\iota_{\overline e_i}=(-1)^{d-1}\varepsilon_{\overline e_i}\circ\star_p.
\end{equation}
Moreover, the pairing \eqref{eq:perf_pair} satisfies \(\langle \alpha,\beta\rangle=\langle \star_p\beta,\star_q\alpha\rangle\).

Since \(W_\ell'=\im(\varepsilon_{\overline e_\ell})\) and \(W_\ell'^\perp=\im(\iota_{\overline e_\ell})\), the identity above yields \(\star_p(W_\ell'^\perp)=W_\ell'\). Hence
\[
\left\langle \psi(z_\ell)^{(m)},\,W_\ell'^\perp\cap \bigwedge^{g+1-2m}U^\perp\right\rangle=0 \qquad\Longleftrightarrow\qquad \left\langle W_\ell'\cap \bigwedge^{2m}(V/U),\,\star_q(\psi(z_\ell)^{(m)})\right\rangle=0,
\]
where
\[\psi(z_\ell)^{(m)}:=\sum_{\substack{I\subset\{1,\cdots,g+1\}\\|I|=g+1-2m}}\sum_{d=0}^{m-1}(c_{1,I,d}z_{\ell,1}+\cdots+c_{r,I,d}z_{\ell,r})\lbd_\ell^d\overline e_I.\]

Via the expansion of \(\star_q(\psi(z_\ell)^{(m)})\), we obtain a new matrix \(\star\bW_{r,m}\) from \(\bW_{r,m}\) by replacing each column index \(I\) with \(I^c\) and each coefficient \(c_{\varrho,I,d}\) with \(\epsilon(I,I^c)c_{\varrho,I,d}\). Since this only permutes the columns and changes their signs, the new matrix has the same rank as \(\bW_{r,m}\).

We next describe the new matrix \(\star\bW_{r,m}\). Set
\[
\Om_{\mathrm{head}}:=\left\{1,\dots, (g+1)/2\right\},\;\Om_{\mathrm{mid}}:=\left\{(g+3)/2, \dots,g+1\right\},\;\Om_{\mathrm{tail}}:=\left\{g+2,\dots,3(g+1)/2\right\},
\]
and
\[
\Om_\ell:= \begin{cases}
  \Om_{\mathrm{good}}:=\Om_{\mathrm{head}}\sqcup\Om_{\mathrm{mid}}=\{1,\dots,g+1\}, & 1\le\ell\le g+1,\\[4pt]
  \Om_{\mathrm{bad}}:=\Om_{\mathrm{head}}\sqcup\Om_{\mathrm{tail}}=\left\{1,\dots,(g+1)/2,\,g+2,\dots,3(g+1)/2\right\}, & g+2\le\ell\le 2g+1.
\end{cases}
\]
The rows of \(\star\bW_{r,m}\) are indexed by
\[R_{r,m}^\star:=\{(\ell,\Lambda):1\le\ell\le r(g+1)/2,\ \ell\in\Lambda\subset\Om_\ell,\ |\Lambda|=2m\},\]
and its columns by
\[S_{r,m}^\star:=\{(\varrho,I,d):1\le\varrho\le r,\ I\subset\Om_{\mathrm{good}},\ |I|=2m,\ 0\le d\le m-1\}.\]
For \(g+2\le\ell\le2g+1\) and \((\ell,\Lambda)\in R_{r,m}^\star\), decompose \(\Lambda=\Lambda_0\sqcup\Lambda_1\), where
\[\Lambda_0:=\Lambda\cap\Om_{\mathrm{head}},\quad\Lambda_1:=\Lambda\cap\Om_{\mathrm{tail}}.\]

\begin{proposition}\label{prop:modified_entry}
Let \((\ell,\Lambda)\in R_{r,m}^\star\) and \((\varrho,I,d)\in S_{r,m}^\star\).

\begin{enumerate}
  \item If \(1\le\ell\le g+1\), then
  \[(\star\bW_{r,m})_{(\ell,\Lambda),(\varrho,I,d)}= \begin{cases} z_{\ell,\varrho}\lbd_\ell^d, &\text{if }\Lambda=I,\\ 0, & \text{otherwise}. \end{cases}\]
  \item If \(g+2\le \ell\le 2g+1\), define
  \[
  \alpha_{\Lambda,I}:= \begin{cases}
    \epsilon(\Lambda_0,I\setminus\Lambda_0)\,a_{I\setminus\Lambda_0,\Lambda_1}, & \text{if }\Lambda_0\subset I,\\[4pt]
    0, & \text{otherwise}.
  \end{cases}
  \]
  Then
  \[(\star\bW_{r,m})_{(\ell,\Lambda),(\varrho,I,d)}=\alpha_{\Lambda,I}\,z_{\ell,\varrho}\lbd_\ell^d.\]
\end{enumerate}
In either case, the coefficient is obtained by writing \(\overline e_\Lambda\) in terms of the basis indexed by \(\Om_{\mathrm{good}}\). In particular,
\[\overline e_\Lambda=\sum_{\substack{I\subset\Om_{\mathrm{good}}\\|I|=2m}}\alpha_{\Lambda,I}\,\overline e_I\qquad\text{if }\Lambda\subset\Om_{\mathrm{bad}}.\]
\end{proposition}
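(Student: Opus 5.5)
We compute the entries of \(\star\bW_{r,m}\) directly from the definitions, using the auxiliary bundle \(\cR^{aux}\) of Corollary~\ref{cor:diagonal_reduction}, so \(W_\ell'=\im(\varepsilon_{\overline e_\ell})\). After the Hodge-star translation, the \(\ell\)-th condition in degree \(2m\) reads
\[\left\langle W_\ell'\cap \bigwedge^{2m}(V/U),\ \star_q(\psi(z_\ell)^{(m)})\right\rangle=0.\]
The first step is to pick a basis of \(W_\ell'\cap\bigwedge^{2m}(V/U)\) indexed by \(\Lambda\). Write \(\overline e_\ell\wedge\overline e_{\Lambda'}\) with \(\Lambda'\) a \((2m-1)\)-subset of a fixed basis of \(V/U\) not containing \(\ell\), i.e.\ the vectors \(\overline e_\Lambda\) with \(\ell\in\Lambda\). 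For \(1\le\ell\le g+1\) we use the basis \(\{\overline e_i\}_{i\in\Om_{\mathrm{good}}}\). For \(g+2\le\ell\le2g+1\) we need a basis of \(V/U\) containing \(\overline e_\ell\). Here we take \(\{\overline e_i\}_{i\in\Om_{\mathrm{bad}}}\); Lemma~\ref{lem:A_minors} (nonvanishing of the minor \(a_{\Om_{\mathrm{mid}},\Om_{\mathrm{tail}}}\)) guarantees this is a basis. Counting then matches \(R^\star_{r,m}\): for each \(\ell\) there are \(\binom{g}{2m-1}\) subsets \(\Lambda\ni\ell\) of size \(2m\) in \(\Om_\ell\).

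Second, we evaluate the pairing. By \eqref{eqn:Hodgestar},
\[\star_q(\psi(z_\ell)^{(m)})=\sum_{|I|=g+1-2m}\epsilon(I,I^c)\Bigl(\sum_{d}\bigl(\textstyle\sum_\varrho c_{\varrho,I,d}z_{\ell,\varrho}\bigr)\lbd_\ell^d\Bigr)x^+_{I^c}.\]
After the column relabeling \(I\mapsto I^c\), \(c\mapsto\epsilon(I,I^c)c\) defining \(\star\bW_{r,m}\), this becomes \(\sum_{|I|=2m}\sum_{\varrho,d}c'_{\varrho,I,d}z_{\ell,\varrho}\lbd_\ell^d\,x^+_I\). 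Since \(\{x_i^+\}\) is dual to \(\{\overline e_i\}_{i\le g+1}\), we get \(\langle\overline e_J,x_I^+\rangle=\delta_{IJ}\) for \(J,I\subset\Om_{\mathrm{good}}\). So the \(((\ell,\Lambda),(\varrho,I,d))\)-entry is \(z_{\ell,\varrho}\lbd_\ell^d\) times the coefficient of \(\overline e_I\) in the expansion of \(\overline e_\Lambda\) in the basis \(\{\overline e_I\}_{I\subset\Om_{\mathrm{good}}}\). This gives (1) immediately and reduces (2) to the final displayed formula.

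Third, the expansion for \(\Lambda=\Lambda_0\sqcup\Lambda_1\subset\Om_{\mathrm{bad}}\). For \(j\in\Lambda_1\subset\{g+2,\dots,2g+1\}\), Lemma~\ref{lem:adapted_matrix} gives \(\overline e_j=\sum_{i=1}^{g+1}a_{i,j}\overline e_i\). Then
\[\overline e_\Lambda=\overline e_{\Lambda_0}\wedge\bigwedge_{j\in\Lambda_1}\Bigl(\sum_i a_{i,j}\overline e_i\Bigr)=\overline e_{\Lambda_0}\wedge\sum_{|K|=|\Lambda_1|}a_{K,\Lambda_1}\overline e_K,\]
by the standard Cauchy–Binet expansion of a wedge of linear combinations into minors. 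Terms with \(K\cap\Lambda_0\ne\emptyset\) vanish. Setting \(I=\Lambda_0\sqcup K\), so \(K=I\setminus\Lambda_0\), gives the coefficient \(\epsilon(\Lambda_0,I\setminus\Lambda_0)a_{I\setminus\Lambda_0,\Lambda_1}\), and zero if \(\Lambda_0\not\subset I\). This is \(\alpha_{\Lambda,I}\).

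The main obstacle is sign and ordering bookkeeping. We must check three things: that the ordering of \(\Lambda_1\) (increasing) matches the column order in \(a_{K,\Lambda_1}\); that \(\Lambda_0\) precedes \(\Lambda_1\) in \(\overline e_\Lambda\), which holds since \(\Om_{\mathrm{head}}<\Om_{\mathrm{tail}}\); and that the signs \(\epsilon(I,I^c)\) are absorbed consistently into the column relabeling. Row scalings by nonzero constants are harmless for invertibility in any case, so we fix conventions by checking them against Example~\ref{ex:matrix_rank1_genus3_mixed}.
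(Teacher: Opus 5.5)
Your proposal is correct and follows the paper's proof. Like the paper, you use Lemma~\ref{lem:A_minors} to show that \(\{\overline e_i\}_{i\in\Om_{\mathrm{bad}}}\) is a basis, take the \(\overline e_\Lambda\) with \(\ell\in\Lambda\subset\Om_\ell\) as the row basis of \(W_\ell'\cap\bigwedge^{2m}(V/U)\), and read off the entries by pairing with \(\star_q(\psi(z_\ell)^{(m)})\) using \(\langle\overline e_I,x_J^+\rangle=\delta_{I,J}\) on \(\Om_{\mathrm{good}}\). Your Step 3 (the minor expansion of \(\bigwedge_{j\in\Lambda_1}\overline e_j\), with \(\Lambda_0\) preceding \(\Lambda_1\)) makes explicit the formula for \(\alpha_{\Lambda,I}\) that the paper leaves implicit, and its signs agree with Figure~\ref{fig:W31}.
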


\begin{proof}
First, \(B_{\mathrm{bad}} := \{\overline e_i\}_{i \in \Om_{\mathrm{bad}}}\) is a basis of \(V/U\). Set \(B_{\mathrm{good}} := \{\overline e_1, \overline e_2, \dots, \overline e_{g+1}\}\). Then the change-of-coordinates matrix is
\[P = \begin{pmatrix} I & *\\ 0 & A_{\Om_{\mathrm{mid}}, \Om_{\mathrm{tail}}} \end{pmatrix},\]
and \(\det A_{\Om_{\mathrm{mid}}, \Om_{\mathrm{tail}}} = a_{\Om_{\mathrm{mid}}, \Om_{\mathrm{tail}}} \ne 0\) by Lemma~\ref{lem:A_minors}. Thus, \(\{\overline e_\Lambda\}_{\Lambda \subset \Omega_{\mathrm{bad}}}\) is a basis of the corresponding exterior powers.

With \(c_{\varrho,I,d}\) replaced as above, the statement follows from
\[
\langle\overline e_\Lambda,\star_q(\psi(z_\ell)^{(m)})\rangle=\sum_{\substack{I\subset\Om_{\mathrm{good}}\\|I|=2m}}\sum_{d=0}^{m-1}(c_{1,I,d}z_{\ell,1}+\cdots+c_{r,I,d}z_{\ell,r})\lbd_\ell^d\langle\overline e_\Lambda, x_I^+\rangle
\]
and \(\langle\overline e_I,x_J^+\rangle=\delta_{I,J}\) whenever \(I,J\subset\Om_{\mathrm{good}}\).
\end{proof}

For a fixed \(r\), call an index \(\ell\in\{1,\dots,r(g+1)/2\}\) \textbf{good} if \(\ell\le g+1\), so that \(\Om_\ell = \Om_{\mathrm{good}}\), and \textbf{bad} if \(\ell\ge g+2\), so that \(\Om_\ell = \Om_{\mathrm{bad}}\). Proposition~\ref{prop:modified_entry} shows that the rows of \(\star \bW_{r, m}\) corresponding to good indices are particularly simple.

%%%%%%%%%%%%%%%%%%%%%%%%%%%%%%%%%%%%%%%%%%%%%%%%%%%%
\section{Existence of orthogonal bundles}\label{sec:existence}

In this section, we study the orthogonal parabolic bundles of rank \(1,2,3\), using the block criterion of Proposition~\ref{prop:matrixcriterion} together with the explicit formula in Proposition~\ref{prop:modified_entry}.

In Proposition~\ref{prop:modified_entry} (2), the entry pattern of the block is controlled by how many indices of a subset \(I\) lie in \(\{1,\cdots,(g+1)/2\}\). This motivates the following notation:

\begin{definition}\label{def:type}
Define the \textbf{type} of \(I \subset \{1, 2, \dots, g+1\}\) by
\[\type(I):=\left|I\cap\Om_{\mathrm{head}}\right|.\]
\end{definition}

\subsection{Rank 1}\label{ssec:rank1}

We start with parabolic bundles of the form
\begin{equation}\label{eqn:rank1}
\cG=\left(\cO_{\PP^1}\!\left((1-g)/2\right),\{L_\ell\}_{\ell=1}^{2g+1}\right),\qquad L_\ell= \begin{cases} k, & 1\le \ell\le (g+1)/2,\\[4pt] 0, & (g+1)/2<\ell\le 2g+1, \end{cases}
\end{equation}
which is a special case of \eqref{eqn:extbalbdl}. We may assume \(z_\ell=1\) for all \(\ell\), since \(k\cdot z_\ell=k\) for every nonzero scalar \(z_\ell\).

\begin{example}\label{ex:rk1}
When \(g=5\), the matrix \(\star\bW_{1,2}\) has the following shape:
\[
\left( \begin{array}{cccccc|c}
  1234 & 1235 & \dots & 2356 & 2456 & 3456 & \\
  \hline \begin{matrix} 1 & \lbd_1 \end{matrix} & & & & & & 1\in1234\\
  \begin{matrix} 1 & \lbd_2 \end{matrix} & & & & & & 2\in1234\\
  \begin{matrix} 1 & \lbd_3 \end{matrix} & & & & & & 3\in1234\\
  & \begin{matrix} 1 & \lbd_1 \end{matrix} & & & & & 1\in1235\\
  & \begin{matrix} 1 & \lbd_2 \end{matrix} & & & & & 2\in1235\\
  & \begin{matrix} 1 & \lbd_3 \end{matrix} & & & & & 3\in1235\\
  & & \ddots & & & & \vdots\\
  & & & \begin{matrix} 1 & \lbd_2 \end{matrix} & & & 2\in2356\\
  & & & \begin{matrix} 1 & \lbd_3 \end{matrix} & & & 3\in2356\\
  & & & & \begin{matrix} 1 & \lbd_2 \end{matrix} & & 2\in2456\\
  & & & & & \begin{matrix} 1 & \lbd_3 \end{matrix} & 3\in3456
\end{array} \right).
\]
For brevity, we write a subset \(\{i_1<\dots<i_s\}\) as \(i_1\dots i_s\), suppress the \(\varrho\)-index, group the two columns corresponding to \(d=0,1\) for each fixed \(I\), and denote the row indexed by \((\ell,\Lambda)\) simply by \(\ell\in\Lambda\).
\end{example}

The matrix in the preceding example is visibly singular. Furthermore, it motivates the following block decomposition of the modified diagonal block \(\star\bW_{1,m}\).

\begin{lemma}\label{lem:rank1_block_decomposition}
For each \(m\), the matrix \(\star\bW_{1,m}\) becomes block diagonal:
\[\star\bW_{1,m}=\bigoplus_{\substack{I\subset\Om_{\mathrm{good}}\\|I|=2m}}\bV_{1,I},\]
where \(\bV_{1,I}\) is a \(\type(I)\times m\) matrix, whose rows are indexed by \(\ell\in I\cap\{1,\dots,(g+1)/2\}\), whose columns are indexed by \(d=0,\dots,m-1\), and whose entries are given by
\[(\bV_{1,I})_{\ell,d}=\lbd_\ell^d.\]
\end{lemma}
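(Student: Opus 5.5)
The plan is to read off the block structure directly from Proposition~\ref{prop:modified_entry}(1). In the rank-one setting \eqref{eqn:rank1} the only indices with nonzero flag are $1\le\ell\le(g+1)/2$. All of these satisfy $\ell\le g+1$, so every row index is good, $\Om_\ell=\Om_{\mathrm{good}}$, and no coefficient $\alpha_{\Lambda,I}$ enters. We also have $r=1$, so the $\varrho$-index is suppressed, and $z_\ell=1$. Consequently the rows of $\star\bW_{1,m}$ are the pairs $(\ell,\Lambda)$ with $1\le\ell\le(g+1)/2$, $\Lambda\subset\Om_{\mathrm{good}}$, $|\Lambda|=2m$, and $\ell\in\Lambda$. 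The columns are the pairs $(I,d)$ with $I\subset\Om_{\mathrm{good}}$, $|I|=2m$, and $0\le d\le m-1$.

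By Proposition~\ref{prop:modified_entry}(1), the entry in row $(\ell,\Lambda)$ and column $(I,d)$ equals $\lbd_\ell^d$ when $\Lambda=I$ and vanishes otherwise. I will group rows by their subset $\Lambda$ and columns by their subset $I$. After reordering rows and columns compatibly, $\star\bW_{1,m}$ becomes block diagonal, with one block for each $I\subset\Om_{\mathrm{good}}$ with $|I|=2m$; reordering does not change the rank. The block $\bV_{1,I}$ has rows indexed by those $\ell$ with $\ell\in I$ and $\ell\le(g+1)/2$, that is, by $I\cap\Om_{\mathrm{head}}$, so it has $\type(I)$ rows. Its columns are indexed by $d=0,\dots,m-1$, giving $m$ columns, and its entries are $\lbd_\ell^d$.

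There is no real obstacle, only bookkeeping. The one point to check is that the row set of $\star\bW_{1,m}$ really is as described. This reduces to whether the Hodge-star reindexing of \S\ref{ssec:diagonalblock} turns the basis $B_\ell$ of $W_\ell'^\perp\cap\bigwedge^{g+1-2m}U^\perp$ into the family $\{\overline e_\Lambda:\ell\in\Lambda\subset\Om_{\mathrm{good}},\ |\Lambda|=2m\}$ of $W_\ell'\cap\bigwedge^{2m}(V/U)=\im(\varepsilon_{\overline e_\ell})$. This follows from $\star_p(W_\ell'^\perp)=W_\ell'$ via \eqref{eqn:Hodgestar}, because $\im(\varepsilon_{\overline e_\ell})$ in degree $2m$ is spanned by the $\overline e_\Lambda$ with $\ell\in\Lambda$. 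Note that the blocks $\bV_{1,I}$ need not be square, since $\type(I)$ and $m$ can differ; the lemma asserts only the decomposition, so this is not an issue (and it is exactly what makes rank one fail, as in Example~\ref{ex:rk1}).
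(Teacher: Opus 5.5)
Your proposal is correct and follows the paper's proof: since every flagged index satisfies \(\ell\le(g+1)/2\le g+1\), all rows fall under Proposition~\ref{prop:modified_entry}(1). That gives entries \(\lbd_\ell^d\) exactly when \(\Lambda=I\), and grouping rows and columns by their common subset yields the rectangular blocks \(\bV_{1,I}\). Your added check of the row set via \(\star_p(W_\ell'^\perp)=W_\ell'\) is a harmless consistency remark that the paper builds into the definition of \(R^\star_{r,m}\).
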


\begin{proof}
Since \(1\le \ell\le (g+1)/2\), every row of \(\star\bW_{1,m}\) falls into item (1) of Proposition~\ref{prop:modified_entry}. Hence
\[(\star\bW_{1,m})_{(\ell,\Lambda),(1,I,d)}= \begin{cases} \lbd_\ell^d, & \Lambda=I,\\ 0, & \text{otherwise}. \end{cases}\]
In particular, a row indexed by \((\ell,\Lambda)\) interacts only with the columns indexed by the same subset \(\Lambda\). This gives the block diagonal decomposition and the stated formula for the entries of each block.
\end{proof}

\begin{proposition}\label{prop:rank1_nullity}
For \(1\le m\le (g+1)/2\),
\[\nullity(\star\bW_{1,m})=\sum_{p=0}^{m}(m-p)\binom{(g+1)/2}{p}\binom{(g+1)/2}{2m-p}=\frac{m(g+1-2m)}{2(g+1)}\binom{(g+1)/2}{m}^{2}.\]
In particular, the rank \(1\) bundle in \eqref{eqn:rank1} is never orthogonal to \(\cR\) for odd \(g\ge 3\).
\end{proposition}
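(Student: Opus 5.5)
The plan is to read the nullity off the block decomposition of Lemma~\ref{lem:rank1_block_decomposition}, then evaluate the resulting sum by a telescoping identity. Throughout, set \(h:=(g+1)/2\).

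\textbf{Step 1: the nullity of each block.} By Lemma~\ref{lem:rank1_block_decomposition}, \(\star\bW_{1,m}=\bigoplus_I\bV_{1,I}\), with \(I\subset\Om_{\mathrm{good}}\) and \(|I|=2m\). The blocks are not square, but the kernel of a block diagonal matrix is the direct sum of the column kernels of its blocks. So \(\nullity(\star\bW_{1,m})=\sum_I\bigl(m-\rank\bV_{1,I}\bigr)\).

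Let \(p:=\type(I)\). The block \(\bV_{1,I}\) is a \(p\times m\) Vandermonde-type matrix with rows \((1,\lbd_\ell,\dots,\lbd_\ell^{m-1})\), indexed by \(\ell\in I\cap\Om_{\mathrm{head}}\). Since the \(\lbd_\ell\) are pairwise distinct, any \(\min(p,m)\) rows restricted to \(\min(p,m)\) columns form an invertible Vandermonde matrix. Hence \(\rank\bV_{1,I}=\min(p,m)\), and the block contributes \(\max(m-p,0)\) to the nullity.

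\textbf{Step 2: counting subsets of each type.} The number of \(I\subset\Om_{\mathrm{good}}\) with \(|I|=2m\) and \(\type(I)=p\) is \(\binom{h}{p}\binom{h}{2m-p}\). Terms with \(p\ge m\) contribute nothing. This gives the first expression
\[\sum_{p=0}^m(m-p)\binom hp\binom h{2m-p}.\]

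\textbf{Step 3: the closed form (the only real work).} Write \(m-p=\tfrac12\bigl((2m-p)-p\bigr)\) and use \(k\binom hk=h\binom{h-1}{k-1}\). The \(p\)-th term becomes
\[\tfrac h2\Bigl[\binom hp\binom{h-1}{2m-p-1}-\binom{h-1}{p-1}\binom h{2m-p}\Bigr].\]
Expand \(\binom hp\) and \(\binom h{2m-p}\) by Pascal's rule. This rewrites the bracket as \(f(p)-f(p-1)\), where \(f(p):=\binom{h-1}{p}\binom{h-1}{2m-1-p}\). Summing over \(p=0,\dots,m\) telescopes, since \(f(-1)=0\), leaving
\[\tfrac h2\binom{h-1}{m}\binom{h-1}{m-1}.\]
Now substitute \(\binom{h-1}{m}=\frac{h-m}{h}\binom hm\) and \(\binom{h-1}{m-1}=\frac mh\binom hm\). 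This gives \(\frac{m(h-m)}{2h}\binom hm^2\), which equals \(\frac{m(g+1-2m)}{2(g+1)}\binom{(g+1)/2}{m}^2\).

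\textbf{Step 4: consequence for orthogonality.} For \(g\ge3\) we have \(h\ge2\). Taking \(m=1\) gives nullity \(\frac{h-1}{2}h\cdot\frac{h}{h}\cdot\frac{1}{1}>0\), since the formula at \(m=1\) is \(\frac{h-1}{2h}h^2=\frac{h(h-1)}{2}\). So \(\star\bW_{1,1}\), and hence \(\bW_{1,1}\), is singular, because the two differ only by a signed column permutation. Proposition~\ref{prop:matrixcriterion} then gives \(\pHom(\cG,\cR)\ne0\), so the bundle in \eqref{eqn:rank1} is not orthogonal to \(\cR\).
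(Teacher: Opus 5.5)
Your proposal is correct and follows the paper's proof: it uses the same block-diagonal Vandermonde nullity count, the same count of subsets by type, and the same telescoping sum, since with \(h=(g+1)/2\) your \(\tfrac h2\binom{h-1}{p}\binom{h-1}{2m-1-p}\) equals the paper's \(S(p)=\frac{(2m-p)(g+1-2p)}{2(g+1)}\binom{h}{p}\binom{h}{2m-p}\) by the absorption identity. The only differences are that you derive the telescoping step via Pascal's rule instead of asserting it, and you make the ``in particular'' explicit by evaluating at \(m=1\), which gives nullity \(h(h-1)/2>0\).
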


\begin{proof}
By Lemma~\ref{lem:rank1_block_decomposition},
\[\nullity(\star\bW_{1,m})=\sum_{\substack{I\subset\{1,\cdots,g+1\}\\|I|=2m}}\nullity(\bV_{1,I}).\]
Let \(p=\type(I)\). Then \(\bV_{1,I}\) is a \(p\times m\) Vandermonde matrix
\[
\bV_{1,I}= \begin{pmatrix}
  1 & \lbd_{\ell_1} & \dots & \lbd_{\ell_1}^{m-1}\\
  \vdots & \vdots & & \vdots\\
  1 & \lbd_{\ell_p} & \dots & \lbd_{\ell_p}^{m-1}
\end{pmatrix},
\]
where \(\ell_1,\dots,\ell_p\) are the elements of \(I\cap\{1,\dots,(g+1)/2\}\). Since the scalars \(\lbd_{\ell_i}\) are pairwise distinct, \(\bV_{1,I}\) has rank \(\min(p,m)\), and hence nullity \(\max(m-p,0)\). Therefore
\[\nullity(\star\bW_{1,m})=\sum_{\substack{I\subset\{1,\cdots,g+1\}\\|I|=2m}}\max(m-\type(I),0).\]

Now a subset of type \(p\) is obtained by choosing \(p\) elements from \(\{1,\cdots,(g+1)/2\}\) and \(2m-p\) elements from \(\{(g+3)/2,\cdots,g+1\}\). Hence the number of such subsets is
\[\binom{(g+1)/2}{p}\binom{(g+1)/2}{2m-p}.\]
Since only the range \(p\le m\) contributes to the nullity, this proves the first equality. The second equality comes from the telescoping identity for \(S(p):=\frac{(2m-p)(g+1-2p)}{2(g+1)}\genfrac(){0pt}{}{(g+1)/2}{p}\genfrac(){0pt}{}{(g+1)/2}{2m-p}\):
\[
\begin{split}
  \sum_{p=0}^{m}(m-p)\binom{(g+1)/2}{p}\binom{(g+1)/2}{2m-p}&=\sum_{p=0}^{m}(S(p)-S(p-1))=S(m)\\
  &=\frac{m(g+1-2m)}{2(g+1)}\binom{(g+1)/2}{m}^{2}.\qedhere
\end{split}
\]
\end{proof}

We now extend the previous proposition to arbitrary parabolic line bundles. The next result shows that \(r = 1\) is impossible in Theorem~\ref{thm:mainthmodd}.

\begin{theorem}\label{thm:line_bundle_case}
For \(g\ge 2\), there is no parabolic line bundle orthogonal to \(\cR\). Consequently, \(X^{2g-2}\) admits no Ulrich bundle of rank \(2^{g-3}\).
\end{theorem}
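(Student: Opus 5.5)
We argue by contradiction: suppose \(\cG=(\underline{\cG},\{L_\ell\})\) is a parabolic line bundle on \((\PP^1,\bp)\) orthogonal to \(\cR\). Since \(\cG\) has rank one, \(\underline{\cG}\cong\cO_{\PP^1}(n)\) for some integer \(n\), and each \(L_\ell\) is either \(0\) or the whole fiber. Let \(M\subset\{1,\dots,2g+1\}\) be the set of indices with \(L_\ell\neq0\), so that \(\pardeg\cG=n+|M|/2\). Orthogonality gives \(\chi_{\rpar}(\cG,\cR)=0\), and Proposition~\ref{prop:chi_condition} with \(r=1\) then forces
\[4n+2|M|=3-g.\]

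The first step handles even \(g\). The left-hand side of this identity is even, while \(3-g\) is odd. So no parabolic line bundle can even satisfy the numerical condition, and the case is finished.

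The second step handles odd \(g\ge3\), where the numerical condition can be met. Here I would not attack \(\pHom(\cG,\cR)\) directly for every possible \(n\) and \(M\). Instead I pass to Ulrich bundles and use the known bound on Ulrich complexity. Dualizing preserves rank, so \(\cG^*\) is a line bundle on \(\PP_{2g+1}^1\) whose dual \(\cG\) is cohomologically orthogonal to \(\cR\). Theorem~\ref{thm:existenceUlrich} then says that \(\cE:=\Phi_{\cS}(\cG^*)(1)\) is an Ulrich bundle on \(X=X^{2g-2}\). By Proposition~\ref{prop:Ulrichrankodd}, its rank is \(2^{g-3}\). Now choose a smooth hyperplane section \(Y=X\cap H\). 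This \(Y\) is a smooth intersection of two even-dimensional quadrics with parameter \(g-1\ge2\), and \(\cE|_Y\) is Ulrich on \(Y\) of rank \(2^{g-3}\). But Eisenbud--Schreyer \cite{ES25} proved \(uc(Y)=2^{(g-1)-1}=2^{g-2}>2^{g-3}\), which is a contradiction. One could instead use Propositions~\ref{prop:FMofUlrichisvb} and~\ref{prop:Ulrichrankeven} on \(Y\): they give a vector bundle \(\cF\) on the genus-\((g-1)\) curve with \(\rank\cF=1/2\), which is impossible. This version avoids citing \cite{ES25} and stays inside the machinery of the paper.

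The final claim of the theorem follows from the same correspondence. If \(X^{2g-2}\) had an Ulrich bundle \(\cE\) of rank \(2^{g-3}\), then Proposition~\ref{prop:FMofUlrichisvb} would produce a vector bundle \(\cF=\Phi_{\cS}^!(\cE(-1))\) on \(\PP_{2g+1}^1\) with \(\cF^*\) orthogonal to \(\cR\). Proposition~\ref{prop:Ulrichrankodd} forces \(\rank\cF=1\), so \(\cF^*\) would be a parabolic line bundle orthogonal to \(\cR\), which we have just excluded.

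The main point to check is that every input to the odd-\(g\) step actually holds. Specifically, \(\cG^*\) must be a genuine vector bundle on the stack, so that Theorem~\ref{thm:existenceUlrich} applies. The restriction of an Ulrich bundle to a smooth hyperplane section must again be Ulrich, which is standard. And \(g-1\ge2\) is needed so that Proposition~\ref{prop:Ulrichrankeven} is available on \(Y\). If one wants a self-contained linear-algebra proof instead, the fallback is to compute nullities as in Proposition~\ref{prop:rank1_nullity}. That would require first showing \(n\le(1-g)/2\) via \(\Ext^1(\underline{\cG},\underline{\cR})=0\), and then handling every \(n\) and \(M\) with \(M\) containing bad indices. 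This is where the real difficulty lies, which is why I prefer the restriction argument.
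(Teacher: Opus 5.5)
Your main argument is correct, but it takes a different route from the paper's. For even \(g\) the two agree: both use the parity obstruction \(4n+2|M|=3-g\) from Proposition~\ref{prop:chi_condition}.

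For odd \(g\) the paper never passes to Ulrich bundles. It normalizes \(\underline{\cL}=\cO_{\PP^1}((1-g)/2-n)\) with full flags at \((g+1)/2+2n\) points and writes down an explicit nonzero parabolic morphism \(\xi=\eta\cdot\prod_\ell e_\ell\in(F_U)_{g-1+2n}\), with \(\eta\in\bigwedge^{(g-3)/2}(V/U)\). Because each \(e_i\) can be moved to the end of the product up to sign, \(\xi|_{p_i}\in\im(\cdot e_i)=W_i\) by Proposition~\ref{prop:Cliffordmodule}. This is the stacky analogue of Raynaud's argument that \(\cR\) has no theta divisor. It is short and self-contained, so your remark that a direct attack needs a nullity case analysis over all \(n\) and \(M\) is too pessimistic.

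Your route reverses the logic. You first prove the Ulrich statement, via Theorem~\ref{thm:existenceUlrich}, Proposition~\ref{prop:Ulrichrankodd}, restriction to a hyperplane section, and the bound \(uc(X^{2g'-1})=2^{g'-1}\) of \cite{ES25} with \(g'=g-1\). Then you deduce the parabolic statement from it. This is quicker, but it outsources the essential content to Eisenbud--Schreyer: in effect, to the no-theta-divisor statement for the Raynaud bundle on the genus-\((g-1)\) curve. The paper's argument, by contrast, makes the condition \(r\ge2\) in Theorem~\ref{thm:mainthmodd} independent of that external bound.

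Your proposed in-house alternative is wrong and should be dropped. On \(Y\), which has parameter \(g'=g-1\), Proposition~\ref{prop:Ulrichrankeven} gives \(\rank \cE|_Y=r\,2^{g'-2}=r\,2^{g-3}\). So \(r=1\), not \(1/2\); you used \(2^{g-2}\) in place of \(2^{g'-2}\). The degree constraint \(d=r(g'-2)/2=(g-3)/2\) is also an integer for odd \(g\). Nothing in the paper's numerical machinery is therefore violated. You are left with a line bundle on the genus-\((g-1)\) hyperelliptic curve whose dual is cohomologically orthogonal to \(\cR\). Excluding it is exactly what \cite{ES25}, or Raynaud's theorem, supplies, so the citation cannot be avoided this way.
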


\begin{proof}
Suppose that the parabolic line bundle \(\cL=(\underline{\cL},\{U_\ell\}_{\ell=1}^{2g+1})\) is orthogonal to \(\cR\). By the discussion in \S\ref{ssec:balancedparabolic}, \(g\) must be odd and we may write
\[
\underline{\cL}=\cO_{\PP^1}\left(\frac{1-g}{2}-n\right),\quad U_\ell= \begin{cases} \underline{\cL}|_{p_\ell}, & 1\le\ell\le (g+1)/2+2n,\\[4pt] 0, & (g+1)/2+2n<\ell\le 2g+1, \end{cases}
\]
for \(0\le n<3(g+1)/4\), after relabeling the marked points if necessary.

Choose nonzero \(\eta\in\bigwedge^{(g-3)/2}(V/U)\), and set
\[\xi:=\eta\cdot\prod_{\ell=1}^{(g+1)/2+2n}e_\ell\in(F_U)_{g-1+2n}.\]
The element \(\xi\) is nonzero since each \(e_\ell\) acts injectively on \(F_U\), and it defines a morphism \(\underline{\cL}\to\underline{\cR}\). For \(1\le i\le(g+1)/2+2n\), the Clifford relations give
\[\xi=\pm\left(\eta\cdot\prod_{\substack{1\le\ell\le(g+1)/2+2n\\\ell\ne i}}e_\ell\right)e_i,\]
so \(\xi|_{p_i}\in W_i=\im(\cdot e_i)\). Hence \(\xi\) defines a nonzero parabolic morphism \(\cL\to\cR\), contradicting \(\pHom(\cL,\cR)=0\).
\end{proof}

\subsection{Rank 2}\label{ssec:rank2}

The rank-two case is analogous to the rank-one case.

Consider the rank-two extremal balanced bundle in \eqref{eqn:extbalbdl}. The following lemma can be shown in the same way as Lemma~\ref{lem:rank1_block_decomposition}. Note that all nontrivial parabolic flags are indexed by item (1) of Proposition~\ref{prop:modified_entry}.

\begin{lemma}\label{lem:rank2_block_decomposition}
For each \(m\), the matrix \(\star\bW_{2,m}\) becomes block diagonal:
\[\star\bW_{2,m}=\bigoplus_{\substack{I\subset\Om_{\mathrm{good}}\\|I|=2m}}\bV_{2,I},\]
where \(\bV_{2,I}\) is a \(2m\times 2m\) matrix whose rows are indexed by \(\ell\in I\), whose columns are indexed by \((\varrho,d)\) with \(\varrho\in\{1,2\}\) and \(0\le d\le m-1\), and whose entries are given by
\[(\bV_{2,I})_{\ell,(\varrho,d)}=z_{\ell,\varrho}\lbd_\ell^d.\]
\end{lemma}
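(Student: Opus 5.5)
The plan is to mimic the proof of Lemma~\ref{lem:rank1_block_decomposition}, with one check first. In the rank-two extremal balanced bundle \eqref{eqn:extbalbdl}, the nontrivial flags $L_\ell$ occur exactly for $1\le \ell\le r(g+1)/2 = g+1$. So every row index $\ell$ of $\star\bW_{2,m}$ is good, $\Om_\ell=\Om_{\mathrm{good}}$, and no bad indices appear. Consequently every entry of $\star\bW_{2,m}$ is computed by item (1) of Proposition~\ref{prop:modified_entry}:
\[(\star\bW_{2,m})_{(\ell,\Lambda),(\varrho,I,d)}=\begin{cases} z_{\ell,\varrho}\lbd_\ell^d, & \Lambda=I,\\ 0,& \text{otherwise}.\end{cases}\]

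The key steps, in order, are as follows.

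First, I group the rows $(\ell,\Lambda)\in R^\star_{2,m}$ according to the subset $\Lambda\subset\Om_{\mathrm{good}}$ with $|\Lambda|=2m$. I group the columns $(\varrho,I,d)\in S^\star_{2,m}$ according to $I$.

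Second, the displayed formula shows that a row with subset $\Lambda$ has nonzero entries only in columns with $I=\Lambda$. After simultaneously permuting rows and columns so that equal subsets are adjacent, the matrix is therefore block diagonal, with one block $\bV_{2,I}$ for each $I\subset\Om_{\mathrm{good}}$ with $|I|=2m$. Such permutations do not affect rank or invertibility.

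Third, I identify the size of each block.
\begin{itemize}
\item Rows: the rows of $\bV_{2,I}$ are the pairs $(\ell,I)$ with $\ell\in I$, since the row condition $\ell\in\Lambda\subset\Om_\ell$ becomes $\ell\in I$. Every $\ell\in I\subset\{1,\dots,g+1\}$ is an admissible row index because $g+1=r(g+1)/2$. This gives $2m$ rows.
\item Columns: these are the pairs $(\varrho,d)$ with $\varrho\in\{1,2\}$ and $0\le d\le m-1$, giving $2m$ columns.
\end{itemize}
So each block is square of size $2m$, and its entries are $z_{\ell,\varrho}\lbd_\ell^d$ as claimed.

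The only point needing care is the bookkeeping in the third step: we must check that the row set for a fixed $I$ is all of $I$, rather than $I\cap\Om_{\mathrm{head}}$ as in rank one. This uses exactly that the flagged indices fill $\{1,\dots,g+1\}=\Om_{\mathrm{good}}$. Beyond that, the argument is a direct transcription of the rank-one proof.
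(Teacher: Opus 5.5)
Your proposal is correct and follows the same route as the paper: because the rank-two flags are nontrivial exactly for $1\le\ell\le g+1$, every row of $\star\bW_{2,m}$ falls under item (1) of Proposition~\ref{prop:modified_entry}, and the argument of Lemma~\ref{lem:rank1_block_decomposition} then applies verbatim. You also correctly identify the one bookkeeping difference: the rows of $\bV_{2,I}$ run over all of $I$ rather than only $I\cap\Om_{\mathrm{head}}$, which is what makes each block $2m\times 2m$.
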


\begin{proposition}\label{prop:rank2_generic_full_rank}
For a general choice of nonzero vectors \(z_1,\cdots,z_{g+1}\in k^2\), one has
\[\det \star\bW_{2,m}\neq 0\quad\text{for all }1\le m\le (g+1)/2.\]
Hence, a general rank-two extremal balanced bundle in \eqref{eqn:extbalbdl} is orthogonal to \(\cR\) for odd \(g\).
\end{proposition}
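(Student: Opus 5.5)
By Lemma~\ref{lem:rank2_block_decomposition}, $\det\star\bW_{2,m}=\pm\prod_{I}\det\bV_{2,I}$, where $I$ runs over the $2m$-subsets of $\Om_{\mathrm{good}}=\{1,\dots,g+1\}$. So the plan is to show that each $\det\bV_{2,I}$ is a nonzero polynomial in the coordinates of the vectors $z_\ell$. The set of $(z_1,\dots,z_{g+1})\in(k^2)^{g+1}$ where all these finitely many polynomials are nonzero is then a nonempty Zariski open set. Intersecting it with the open set where every $z_\ell\neq0$ gives the general choice required. Proposition~\ref{prop:matrixcriterion} then yields $\pHom(\cG,\cR)=0$. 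Since $\pardeg\cG=2(3-g)/4$ already gives $\chi_{\rpar}(\cG,\cR)=0$, the orthogonality follows as explained in \S\ref{ssec:balancedparabolic}.

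To show $\det\bV_{2,I}\not\equiv0$, it suffices to exhibit one specialization of the $z_\ell$ at which it is nonzero. Write $I=\{\ell_1<\dots<\ell_{2m}\}$. The row of $\bV_{2,I}$ indexed by $\ell$ is
\[\bigl(z_{\ell,1},\,z_{\ell,1}\lbd_\ell,\,\dots,\,z_{\ell,1}\lbd_\ell^{m-1},\;z_{\ell,2},\,z_{\ell,2}\lbd_\ell,\,\dots,\,z_{\ell,2}\lbd_\ell^{m-1}\bigr).\]
The first choice to try is $z_\ell=(1,\lbd_\ell^{m})$. With it, the row becomes $(1,\lbd_\ell,\dots,\lbd_\ell^{2m-1})$. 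Then $\bV_{2,I}$ is exactly the $2m\times2m$ Vandermonde matrix on $\lbd_{\ell_1},\dots,\lbd_{\ell_{2m}}$. Its determinant is nonzero because the $\lbd_i$ are pairwise distinct, and the specialized vectors are nonzero as required.

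This specialization depends on $m$, but that does no harm. For each pair $(m,I)$ we only need $\det\bV_{2,I}$ to be a nonzero polynomial, and one witness point suffices. A finite intersection of nonempty opens in the irreducible space $(k^2)^{g+1}$ is still nonempty, so a single general choice works for all $m$ and all $I$ simultaneously.

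The main point to verify carefully is the block decomposition itself. It requires that every nontrivial flag index satisfies $\ell\le g+1$, and this holds because $r(g+1)/2=g+1$ when $r=2$. Given that, the rows of $\star\bW_{2,m}$ indexed by $(\ell,\Lambda)$ are controlled by item (1) of Proposition~\ref{prop:modified_entry}, and they meet only the columns with $I=\Lambda$. The block $\bV_{2,I}$ is square: it has $2m$ rows, one for each $\ell\in I$, and $2m$ columns indexed by $(\varrho,d)$. The Vandermonde specialization then carries the rest of the argument.
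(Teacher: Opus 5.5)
Your proposal is correct and follows the paper's proof. Like the paper, you use Lemma~\ref{lem:rank2_block_decomposition} to reduce to each block $\bV_{2,I}$, then specialize $z_\ell=(1,\lbd_\ell^{m})$ so that each block becomes the $2m\times 2m$ Vandermonde matrix on the distinct $\lbd_\ell$. Your remaining steps only make explicit what the paper leaves implicit: intersecting the finitely many nonempty open sets over all pairs $(m,I)$, and deducing $\chi_{\rpar}(\cG,\cR)=0$ from the parabolic degree.
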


\begin{proof}
It suffices to show that \(\bV_{2,I}\) is generically invertible. Set
\[z_{\ell,1}=1,\qquad z_{\ell,2}=\lbd_\ell^m\qquad\text{for all }\ell\in I.\]
Then \(\bV_{2,I}\) becomes, up to a permutation of columns, the ordinary Vandermonde matrix
\[\bigl(\lbd_\ell^{d'}\bigr)_{\ell\in I,\ 0\le d'\le 2m-1}.\]
Since \(\lbd_\ell\) are pairwise distinct, this matrix is invertible.
\end{proof}

\begin{theorem}\label{thm:rk2_bundle_case}
For \(g\ge2\), there exists a rank \(2\) parabolic bundle orthogonal to \(\cR\). Consequently, \(X\) admits an Ulrich bundle of rank \(2^{g-2}\).
\end{theorem}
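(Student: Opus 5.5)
The plan is to split according to the parity of \(g\), and in each case to produce an explicit rank-two parabolic bundle \(\cG\) on \((\PP^1,\bp)\) with \(\pHom(\cG,\cR)=0\) and \(\chi_{\rpar}(\cG,\cR)=0\). Because \(\PP_{2g+1}^1\) has cohomological dimension one, these two conditions give cohomological orthogonality. Then apply Theorem~\ref{thm:existenceUlrich} to \(\cF:=\cG^*\), identified with a bundle on \(\cC=\PP_{2g+1}^1\) (so that \(\cF^*=\cG\) is orthogonal to \(\cR\)). This shows that \(\Phi_{\cS}(\cF)(1)\) is Ulrich, and Proposition~\ref{prop:Ulrichrankodd} gives its rank \(2\cdot 2^{g-3}=2^{g-2}\).

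\textbf{Odd \(g\).} This case is essentially done. Take the extremal balanced bundle \eqref{eqn:extbalbdl} with \(r=2\), whose \(g+1\) nonzero flags sit at the points \(1,\dots,g+1\). All these indices are good, so \(\pardeg\cG=(3-g)/2\) and \(\chi_{\rpar}(\cG,\cR)=0\) by Proposition~\ref{prop:chi_condition} and Riemann--Roch. Proposition~\ref{prop:rank2_generic_full_rank} shows that every block \(\star\bW_{2,m}\) is invertible for general \(z_\ell\). Proposition~\ref{prop:matrixcriterion}, together with the column permutation and sign changes relating \(\star\bW_{2,m}\) to \(\bW_{2,m}\), then gives \(\pHom(\cG,\cR)=0\).

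\textbf{Even \(g\).} Here the balanced bundle is unavailable: \((1-g)/2\) is not an integer, and the target slope \(\pardeg=(3-g)/2\) forces an unbalanced underlying bundle. I would take \(\cG_2^{\mathrm{ev}}\) as in Theorem~\ref{thm:initialconst}(3): underlying bundle \(\cO((2-g)/2)\oplus\cO(-g/2)\), with general one-dimensional flags \(k\cdot z_\ell\) at \(p_1,\dots,p_{g+1}\). Its parabolic degree is \(1-g+(g+1)/2=(3-g)/2\), so \(\chi_{\rpar}=0\) again. I would redo \S\ref{ssec:balancedparabolic}--\S\ref{ssec:matrix} in this setting.
\begin{enumerate}
\item A morphism to \(\underline{\cR}=\bigoplus_d\bigwedge^{2d}(V/U)\otimes\cO(-d)\) is given by polynomials in \(\lbd\). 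For fixed \(I\), the first summand contributes one more power of \(\lbd\) than the second, so the allowed \(d\)-range differs by one between \(\varrho=1\) and \(\varrho=2\).
\item The exterior-degree ordering still makes the matrix block upper bidiagonal. Corollary~\ref{cor:diagonal_reduction} and the Hodge star change of coordinates still apply, since only good points \(\ell\le g+1\) carry flags.
\item Item (1) of Proposition~\ref{prop:modified_entry} makes each diagonal block split as a direct sum over \(I\). Each summand is a matrix with rows \(\ell\in I\) and entries \(z_{\ell,\varrho}\lbd_\ell^d\), where the \(d\)-range depends on \(\varrho\).
\end{enumerate}

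The \textbf{main obstacle} is the even-\(g\) bookkeeping. I must check that each block is square, i.e.\ that the number of rows \(|I|\) equals the combined \(d\)-ranges of the two summands. This requires recomputing the column count from the degree shift of \(\cO(-g/2)\) versus \(\cO((2-g)/2)\), together with the possibly odd size \(|I|\) coming from \(\dim V/U=g+1\) being odd. Once squareness holds, invertibility follows by the specialization used in Proposition~\ref{prop:rank2_generic_full_rank}: set \(z_{\ell,1}=1\) and \(z_{\ell,2}=\lbd_\ell^{a}\), where \(a\) is the length of the \(d\)-range of the first summand. The block then becomes an ordinary Vandermonde matrix in the distinct \(\lbd_\ell\), hence invertible. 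Invertibility is an open condition, so a general choice of flags works simultaneously for all blocks.

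If the squareness count fails for some block, the fallback is to move one flag from \(\{1,\dots,g+1\}\) to a bad index. The bad rows are then handled using Lemma~\ref{lem:A_minors} and the coefficients \(\alpha_{\Lambda,I}\) from Proposition~\ref{prop:modified_entry}, which are nonzero minors and keep a generic determinant nonvanishing.
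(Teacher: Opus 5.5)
Your proposal is correct and follows the paper's proof essentially verbatim. Odd $g$ is handled by Proposition~\ref{prop:rank2_generic_full_rank}; for even $g$ the paper uses the same bundle $\cO((2-g)/2)\oplus\cO(-g/2)$ with general flags at $p_1,\dots,p_{g+1}$, splits $\star\bW^{\mathrm{ev}}_{2,m}$ over $I\subset\Om_{\mathrm{good}}$ with $|I|=2m+1$, and concludes by the Vandermonde specialization. The squareness you leave open does hold, so no fallback is needed: $\cO((2-g)/2)$ contributes $d=0,\dots,m-1$ and $\cO(-g/2)$ contributes $d=0,\dots,m$, giving $2m+1$ columns against the $2m+1$ rows $\ell\in I$ (so the lower-degree summand gets the extra power of $\lbd$, the reverse of what you wrote, though your choice $z_{\ell,2}=\lbd_\ell^{a}$ works either way).
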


\begin{proof}
The case of odd \(g\) is covered by Proposition~\ref{prop:rank2_generic_full_rank}. Suppose that \(g\) is even, and consider the almost balanced bundle
\[
\cG=\left(\cO\left((2-g)/2\right)\oplus\cO\left(-g/2\right),\{L_\ell\}_{\ell=1}^{2g+1}\right),\quad L_\ell= \begin{cases} k\cdot z_\ell\subset k^2\cong \underline{\cG}|_{p_\ell}, & 1\le\ell\le g+1,\\[4pt] 0, & g+2\le\ell\le 2g+1. \end{cases}
\]

Let \(\star\bW^{\mathrm{ev}}_{2,m}\) be the matrix defined using the same formalism as in \S\ref{ssec:explicit_flags}, where now \(m\) is determined by the condition \(|I|=2m+1\), since \(g\) is even. Then \(\star\bW^{\mathrm{ev}}_{2,m}\) also decomposes as
\[
\star\bW^{\mathrm{ev}}_{2,m}=\bigoplus_{\substack{I\subset\Om_{\mathrm{good}}\\|I|=2m+1}}\bV^{\mathrm{ev}}_I,\qquad(\bV^{\mathrm{ev}}_I)_{\ell,(\varrho,d)}=z_{\ell,\varrho}\lbd_\ell^d,
\]
where the rows are indexed by \(\ell\in I\), and the columns are indexed by pairs \((\varrho,d)\) with \(\varrho\in\{1,2\}\) and \(0\le d\le m+\varrho-2\). In particular, as in the proof of Proposition~\ref{prop:rank2_generic_full_rank}, \(\det \bV^{\mathrm{ev}}_I\) is nonzero.
\end{proof}

\subsection{Rank 3}\label{ssec:rank3}

Unlike the cases \(r=1\) and \(r=2\), the matrix \(\star\bW_{3,m}\) is no longer block diagonal, since rows with bad indices in item (2) of Proposition~\ref{prop:modified_entry} occur. We begin with the smallest nontrivial example. Additional computational examples are given in \cite{JLM26}.

\begin{example}\label{ex:rk3_first}
When \(g=3\), the \(18 \times 18\) matrix \(\star\bW_{3,1}\) may be written in block form as in Figure~\ref{fig:W31}.
\begin{figure}[!ht]
\[
\left( \begin{array}{cccccc|c}
  12 & 13 & 14 & 23 & 24 & 34 & \\
  \hline z_1 &&&&&& 1\in 12\\
  &z_1&&&&& 1\in 13\\
  &&z_1&&&& 1\in 14\\
  z_2&&&&&& 2\in 12\\
  &&&z_2&&& 2\in 23\\
  &&&&z_2&& 2\in 24\\
  &z_3&&&&& 3\in 13\\
  &&&z_3&&& 3\in 23\\
  &&&&&z_3& 3\in 34\\
  &&z_4&&&& 4\in 14\\
  &&&&z_4&& 4\in 24\\
  &&&&&z_4& 4\in 34\\
  a_{2,5}z_5 & a_{3,5}z_5 & a_{4,5}z_5 &&&& 5\in 15\\
  -a_{1,5}z_5 &&& a_{3,5}z_5 & a_{4,5}z_5 && 5\in 25\\
  a_{12,56}z_5 & a_{13,56}z_5 & a_{14,56}z_5 & a_{23,56}z_5 & a_{24,56}z_5 & a_{34,56}z_5 & 5\in 56\\
  a_{2,6}z_6 & a_{3,6}z_6 & a_{4,6}z_6 &&&& 6\in 16\\
  -a_{1,6}z_6 &&& a_{3,6}z_6 & a_{4,6}z_6 && 6\in 26\\
  a_{12,56}z_6 & a_{13,56}z_6 & a_{14,56}z_6 & a_{23,56}z_6 & a_{24,56}z_6 & a_{34,56}z_6 & 6\in 56
\end{array} \right).
\]
\caption{\(\star\bW_{3,1}\)}\label{fig:W31}
\end{figure}

We use the same convention as in Example~\ref{ex:rk1}, and, by abuse of notation, \( z_\ell=\bigl(z_{\ell,1}\;\;z_{\ell,2}\;\;z_{\ell,3}\bigr) \) denotes the corresponding row block.
\end{example}

To prove \(\det\star\bW_{3,m} \ne 0\), regard \(\{z_{\ell, i}\}\) as formal variables, since the parabolic flags may be chosen generically. Then \(\det \star\bW_{3,m}\) is a polynomial in these variables, so it suffices to choose a monomial and show that its coefficient is nonzero. The obvious choice of the leading term with respect to a lexicographic order does not work because its coefficient is zero. The monomial must therefore be chosen carefully.

The computation of \(\det \star \bW_{3,m}\) has two stages. Algorithm~\ref{alg:detcom} describes the first, and \S\ref{sec:prf_main_thm} the second. Three examples explain why the reduction is needed and illustrate the underlying idea. The first shows that a naive greedy choice may leave a rank \(1\) residual matrix, the second shows how a corrected choice avoids this defect, and the third explains the same mechanism for larger \(m\).

\begin{example}\label{ex:rk3_forcing}
Consider the matrix in Figure~\ref{fig:W31} and the monomial
\[z := z_{1,1}^3 z_{2,1}^2 z_{2,2} z_{3,1} z_{3,2}^2 z_{4,2}^3,\]
which is the leading term with respect to the lexicographic order.

Because of the factor \(z_{1,1}^3\), the three entries \(z_{1,1}\) in the rows \(1\in12\), \(1\in13\), and \(1\in14\) must be chosen. Remove the corresponding rows and columns. Then the factor \(z_{2,1}^2z_{2,2}\) forces the three entries in the rows \(2\in12\), \(2\in23\), and \(2\in24\). Proceeding in the same way, each remaining good row has a unique entry giving the factors \(z_{3,1}z_{3,2}^2\) and \(z_{4,2}^3\).

After deleting the corresponding rows and columns, the remaining minor is
\[
\left( \begin{array}{cccccc|c}
  12 & 13 & 14 & 23 & 24 & 34 & \\
  \hline a_{2,5}z_{5,3} & a_{3,5}z_{5,3} & a_{4,5}z_{5,3} &&&& 5\in 15\\
  -a_{1,5}z_{5,3} &&& a_{3,5}z_{5,3} & a_{4,5}z_{5,3} && 5\in 25\\
  a_{12,56}z_{5,3} & a_{13,56}z_{5,3} & a_{14,56}z_{5,3} & a_{23,56}z_{5,3} & a_{24,56}z_{5,3} & a_{34,56}z_{5,3} & 5\in 56\\
  a_{2,6}z_{6,3} & a_{3,6}z_{6,3} & a_{4,6}z_{6,3} &&&& 6\in 16\\
  -a_{1,6}z_{6,3} &&& a_{3,6}z_{6,3} & a_{4,6}z_{6,3} && 6\in 26\\
  a_{12,56}z_{6,3} & a_{13,56}z_{6,3} & a_{14,56}z_{6,3} & a_{23,56}z_{6,3} & a_{24,56}z_{6,3} & a_{34,56}z_{6,3} & 6\in 56
\end{array} \right).
\]
However, this is not invertible because the third and last rows are linearly dependent. This is exactly the matrix corresponding to the rank \(1\) parabolic bundle
\[\left(\cO_{\PP^1}(-1),\{L_\ell\}_{\ell=1}^7\right),\qquad L_\ell= \begin{cases} kz_{\ell,3}, & \ell=5,6,\\ 0, & \text{otherwise,} \end{cases}\]
whose matrix is singular by \S\ref{ssec:rank1}. Hence the leading monomial does not produce a nonzero coefficient.
\end{example}

\begin{example}\label{ex:rk3_forcing2}
The next largest possible term is obtained by replacing \(z_{2,2}\) in the monomial \(z\) from Example~\ref{ex:rk3_forcing} by \(z_{2,3}\). Consider the monomial
\[z' := z_{1,1}^3 z_{2,1}^2 z_{2,3} z_{3,1} z_{3,2}^2 z_{4,2}^3.\]
After the same elimination of rows and columns, the remaining minor is
\[
\left( \begin{array}{cccccc|c}
  12' & 13 & 14 & 23 & 24 & 34 & \\
  \hline a_{2,5}z_{5,2} & a_{3,5}z_{5,3} & a_{4,5}z_{5,3} &&&& 5\in 15\\
  -a_{1,5}z_{5,2} &&& a_{3,5}z_{5,3} & a_{4,5}z_{5,3} && 5\in 25\\
  a_{12,56}z_{5,2} & a_{13,56}z_{5,3} & a_{14,56}z_{5,3} & a_{23,56}z_{5,3} & a_{24,56}z_{5,3} & a_{34,56}z_{5,3} & 5\in 56\\
  a_{2,6}z_{6,2} & a_{3,6}z_{6,3} & a_{4,6}z_{6,3} &&&& 6\in 16\\
  -a_{1,6}z_{6,2} &&& a_{3,6}z_{6,3} & a_{4,6}z_{6,3} && 6\in 26\\
  a_{12,56}z_{6,2} & a_{13,56}z_{6,3} & a_{14,56}z_{6,3} & a_{23,56}z_{6,3} & a_{24,56}z_{6,3} & a_{34,56}z_{6,3} & 6\in 56
\end{array} \right).
\]
After factoring out \(z_{5,3}\) from the first three rows and \(z_{6,3}\) from the last three rows, and setting \(y_5:=z_{5,2}/z_{5,3}\) and \(y_6:=z_{6,2}/z_{6,3}\), we obtain
\[
\left( \begin{array}{cccccc|c}
  12' & 13 & 14 & 23 & 24 & 34 & \\
  \hline a_{2,5}y_5 & a_{3,5} & a_{4,5} &&&& 5\in 15\\
  -a_{1,5}y_5 &&& a_{3,5} & a_{4,5} && 5\in 25\\
  a_{12,56}y_5 & a_{13,56} & a_{14,56} & a_{23,56} & a_{24,56} & a_{34,56} & 5\in 56\\
  a_{2,6}y_6 & a_{3,6} & a_{4,6} &&&& 6\in 16\\
  -a_{1,6}y_6 &&& a_{3,6} & a_{4,6} && 6\in 26\\
  a_{12,56}y_6 & a_{13,56} & a_{14,56} & a_{23,56} & a_{24,56} & a_{34,56} & 6\in 56
\end{array} \right).
\]
We claim that this matrix is generically invertible. Adjoin the \(12\)-column and extend the matrix to
\[
\left( \begin{array}{ccccccc|c}
  12' & 12& 13 & 14 & 23 & 24 & 34 & \\
  \hline a_{2,5}y_5 &a_{2,5} & a_{3,5} & a_{4,5} &&&& 5\in 15\\
  -a_{1,5}y_5 &-a_{1,5}&&& a_{3,5} & a_{4,5} && 5\in 25\\
  a_{12,56}y_5 &a_{12,56}& a_{13,56} & a_{14,56} & a_{23,56} & a_{24,56} & a_{34,56} & 5\in 56\\
  a_{2,6}y_6 &a_{2,6}& a_{3,6} & a_{4,6} &&&& 6\in 16\\
  -a_{1,6}y_6 &-a_{1,6} &&& a_{3,6} & a_{4,6} && 6\in 26\\
  a_{12,56}y_6 &a_{12,56}& a_{13,56} & a_{14,56} & a_{23,56} & a_{24,56} & a_{34,56} & 6\in 56\\
  &1&&&&&&12
\end{array} \right).
\]
Note that up to sign, the determinant is the same. If we use \(\{1,2,5,6\}\) as the coordinate indices for \(V/U\) instead of \(\{1,2,3,4\}\), we have the following new matrix representation:
\[
\left( \begin{array}{ccccccc|c}
  12' & 12& 15 & 16 & 25 & 26 & 56 & \\
  \hline a_{2,5}y_5 &&1&&&&& 5\in 15\\
  -a_{1,5}y_5 &&&&1&&& 5\in 25\\
  a_{12,56}y_5 &&&&&&1& 5\in 56\\
  a_{2,6}y_6 &&&1&&&& 6\in 16\\
  -a_{1,6}y_6 &&&&&1&& 6\in 26\\
  a_{12,56}y_6 &&&&&&1& 6\in 56\\
  &1&&&&&&12
\end{array} \right),
\]
whose determinant then reduces to
\[\begin{pmatrix} a_{12,56}y_5 & 1\\ a_{12,56}y_6 & 1 \end{pmatrix}.\]
Hence the coefficient is \((y_5-y_6)z_{5,3}^3z_{6,3}^3\) up to scalar, which is generically nonzero by Lemma~\ref{lem:A_minors}.
\end{example}

\begin{example}\label{ex:rk3_vandermonde}
We consider \(m > 1\). Fix \(1\le \varrho\le 3\) and \(I = \{i_1 < i_2 < \dots < i_{2m}\} \subset\Om_{\mathrm{good}}\) with \(|I|=2m\). Let \(C_d\) be the column \((\varrho, I, d)\) with \(0 \le d \le m-1\). For a row \(r := (\ell, \Lambda)\), set
\[b_{r, I} := \begin{cases} \delta_{\Lambda, I}, & \ell \le g+1,\\ \alpha_{\Lambda, I}, & \ell > g+1. \end{cases}\]
Then Proposition~\ref{prop:modified_entry} gives the following column block indexed by \((\varrho, I, -)\):
\[
\left( \begin{array}{cccc|c}
  z_{i_1,\varrho} & z_{i_1,\varrho}\lbd_{i_1} & \cdots & z_{i_1,\varrho}\lbd_{i_1}^{m-1} & i_1\in I\\
  z_{i_2,\varrho} & z_{i_2,\varrho}\lbd_{i_2} & \cdots & z_{i_2,\varrho}\lbd_{i_2}^{m-1} & i_2\in I\\
  \vdots & \vdots && \vdots & \vdots\\
  b_{r, I}\,z_{\ell,\varrho} & b_{r, I}\,z_{\ell,\varrho}\lbd_\ell & \cdots & b_{r, I}\,z_{\ell,\varrho}\lbd_\ell^{m-1} & \ell\in\Lambda\\
  \vdots & \vdots & \ddots & \vdots & \vdots
\end{array} \right),
\]
where \(\Lambda\) denotes the remaining row indices appearing in this block. Thus, we have a generalized Vandermonde-type matrix.

Perform the following column operations from right to left:
\[C_d\longmapsto C_d-\lbd_{i_1}C_{d-1},\qquad d=m-1,\dots,1.\]
Then the block becomes
\[
\left( \begin{array}{cccc|c}
  z_{i_1,\varrho} & 0 & \cdots & 0 & i_1\in I\\
  b_{r, I}\,z_{\ell,\varrho} & b_{r, I}\,z_{\ell,\varrho}(\lbd_\ell-\lbd_{i_1}) & \cdots & b_{r, I}\,z_{\ell,\varrho}\lbd_\ell^{m-2}(\lbd_\ell-\lbd_{i_1}) & \ell\in\Lambda\\
  \vdots & \vdots & \ddots & \vdots & \vdots
\end{array} \right),
\]
so we may choose \(z_{i_1,\varrho}\) and remove the first row and column. Repeat the procedure; after \(s\) steps, the remaining block has the form {\footnotesize
\[
\left( \begin{array}{cccc|c}
  z_{i_{s+1},\varrho}\displaystyle \prod_{\mu=1}^{s}(\lbd_{i_{s+1}}-\lbd_{i_\mu}) & z_{i_{s+1},\varrho}\lbd_{i_{s+1}}\displaystyle \prod_{\mu=1}^{s}(\lbd_{i_{s+1}}-\lbd_{i_\mu}) & \dots & z_{i_{s+1},\varrho}\lbd_{i_{s+1}}^{m-s-1}\displaystyle \prod_{\mu=1}^{s}(\lbd_{i_{s+1}}-\lbd_{i_\mu})& i_{s+1}\in I\\
  b_{r, I}\,z_{\ell,\varrho}\displaystyle \prod_{\mu=1}^{s}(\lbd_\ell-\lbd_{i_\mu}) & b_{r, I}\,z_{\ell,\varrho}\lbd_\ell\displaystyle \prod_{\mu=1}^{s}(\lbd_\ell-\lbd_{i_\mu}) & \dots & b_{r, I}\,z_{\ell,\varrho}\lbd_\ell^{m-s-1}\displaystyle \prod_{\mu=1}^{s}(\lbd_\ell-\lbd_{i_\mu})& \ell\in\Lambda\\
  \vdots & \vdots & \ddots & \vdots & \vdots
\end{array} \right),
\]
} and we may continue the same right-to-left elimination process to pull out the nonzero scalar multiple \(P_s(\lbd_{i_{s+1}})z_{i_{s+1},\varrho}\).
\end{example}

The preceding examples suggest the first reduction strategy.

\begin{algorithm}\label{alg:detcom}
Apply the following procedure to \(\star\bW_{3,m}\):
\begin{enumerate}[label = (\roman*)]
  \item For each column index
  \[I=\{i_1<\dots<i_{2m}\}\subset\Om_{\mathrm{good}},\]
  select \(z_{i_j, 1}\) for the first half \(i_1,\dots,i_m\). The submatrix containing \(\{z_{i_j, 1}\}_{1 \le j \le m}\) forms a Vandermonde matrix as in Example~\ref{ex:rk3_vandermonde}. Remove all rows indexed by \((i_s,I)\) for \(1\le s\le m\) and all columns indexed by \((1,I,d)\).
  \item Replace by zero all entries having \(z_{\ell, 3}, \ell \in \Om_{\mathrm{head}}\) or \(z_{\ell, 2}, \ell \in \Om_{\mathrm{mid}}\).
  \item Factor out \(z_{\ell, 3}\) from each row and write
  \[y_\ell:=\frac{z_{\ell,2}}{z_{\ell,3}}\]
  if \(z_{\ell, 3} \ne 0\).
\end{enumerate}
\end{algorithm}

\begin{definition}\label{def:tildeW}
Let \(\widetilde{\bW}_{3,m}\) denote the matrix obtained from \(\star\bW_{3,m}\) after Steps~(i)--(iii).
\end{definition}

\begin{proposition}\label{prop:rk3_reduction}
If \(\det \widetilde{\bW}_{3,m}\ne 0\), then \(\det \star\bW_{3,m}\ne 0\).
\end{proposition}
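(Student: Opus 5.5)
We regard \(\det\star\bW_{3,m}\) as a polynomial in the formal variables \(z_{\ell,\varrho}\), with coefficients in \(k(\lbd,a)\). The goal is to exhibit one monomial whose coefficient is a nonzero multiple of \(\det\widetilde{\bW}_{3,m}\). Since the flags may be chosen generically, this suffices.

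\textbf{Step (i).} Step~(i) of Algorithm~\ref{alg:detcom} is a partial Laplace expansion. For each column index \(I\subset\Om_{\mathrm{good}}\), consider the variables \(z_{i_1,1},\dots,z_{i_m,1}\).
\begin{itemize}
\item By Proposition~\ref{prop:modified_entry}, each variable \(z_{\ell,1}\) with \(\ell\le g+1\) appears in the rows \((\ell,\Lambda)\) only through the columns \((1,\Lambda,d)\).
\item We fix the exponent vector of these variables to be exactly the one coming from the \(m\) selected rows \((i_s,I)\), \(1\le s\le m\), for every \(I\).
\item Then in any term of the Leibniz expansion realizing this exponent vector, the \(m\) columns \((1,I,d)\) must be matched with those \(m\) rows.
\end{itemize}
This needs a counting check. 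Every column \((1,I,d)\) carries a factor \(z_{\ell,1}\) for some row \(\ell\). In total the rows \((\ell,\Lambda)\) with \(\ell\) good and \(\ell\in\) first half of \(\Lambda\) supply exactly as many \(z_{\cdot,1}\) factors as there are \(\varrho=1\) columns. We must also check that bad rows contribute no \(z_{\cdot,1}\) factor to the chosen monomial: we simply require the bad-row variables in the monomial to involve only \(z_{\ell,2},z_{\ell,3}\).

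With this, the coefficient factors as
\[
\prod_I \Bigl(\text{Vandermonde}\Bigr)\times\det(\text{complementary minor}).
\]
Example~\ref{ex:rk3_vandermonde} shows each Vandermonde factor is \(\prod z_{i_s,1}\cdot\prod_{\mu<\nu}(\lbd_{i_\nu}-\lbd_{i_\mu})\neq0\).

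\textbf{Steps (ii) and (iii).} In the complementary minor, we next specialize to monomials containing no \(z_{\ell,3}\) for \(\ell\in\Om_{\mathrm{head}}\) and no \(z_{\ell,2}\) for \(\ell\in\Om_{\mathrm{mid}}\). Taking the coefficient of a monomial that avoids these variables is the same as setting them to zero, which is Step~(ii).

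Then each bad row \((\ell,\Lambda)\) is homogeneous in \((z_{\ell,2},z_{\ell,3})\) of degree one. Factoring \(z_{\ell,3}\) out of it and dehomogenizing by \(y_\ell=z_{\ell,2}/z_{\ell,3}\) is an invertible operation on the nonvanishing of the polynomial. This is Step~(iii).

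\textbf{Assembling the argument.} After Steps~(i)--(iii), the coefficient of the resulting multidegree in \(\det\star\bW_{3,m}\) is a nonzero scalar times a product of powers of the remaining \(z\)'s times \(\det\widetilde{\bW}_{3,m}\). In the last factor, the \(y_\ell\) and the surviving good-row variables are still free. Hence \(\det\widetilde{\bW}_{3,m}\neq0\) forces \(\det\star\bW_{3,m}\) to be a nonzero polynomial, and so to be nonzero for general \(z\).

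\textbf{Main obstacle.} The main difficulty is the bookkeeping in Step~(i). We must justify that no other matching can produce the same monomial, in particular no matching that uses a bad row or a second-half row to cover a \((1,I,d)\) column. The tool is a degree argument in the \(z_{\cdot,1}\) variables: the total \(z_{\cdot,1}\)-degree of the chosen monomial equals the number of \(\varrho=1\) columns, and each \(z_{\ell,1}\) has prescribed multiplicity. If instead a first-half row covered a column of a different \(I\), the multiplicity of some \(z_{\ell,1}\) would be off, because good rows meet only their own \(\Lambda\)-columns.
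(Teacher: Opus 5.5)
Your route is the paper's. You extract a partial coefficient in the $z_{\cdot,1}$ variables, split off the Vandermonde factors, set $z_{\ell,3}=0$ for $\ell\in\Om_{\mathrm{head}}$ and $z_{\ell,2}=0$ for $\ell\in\Om_{\mathrm{mid}}$, and dehomogenize. Treating Steps (ii)--(iii) as a specialization (a polynomial that is nonzero after setting some variables to zero was nonzero before) is correct, and cleaner than the paper's ``highest-degree term in $y_\ell$ / constant term'' phrasing. Your degree count is also fine: every nonzero Leibniz term has $z_{\cdot,1}$-degree equal to the number of $\varrho=1$ columns, so no bad row covers a $(1,I,d)$ column.

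The gap is the uniqueness of the matching in Step (i). You flag it as the main obstacle, but the justification you give does not work. It addresses a scenario that cannot occur: a good row $(\ell,\Lambda)$ vanishes outside the columns indexed by $\Lambda$, so it never covers a column of another $I$. The real alternative is different. For several $I$ at once, some other $m$-subset $S_I\subset I$ of the rows $(\ell,I)$ could cover the columns $(1,I,\cdot)$, with the changes in the multiplicities of the $z_{\ell,1}$ cancelling across different $I$.

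Multiplicity bookkeeping alone does not exclude this. For $g=3$, $m=1$, take $S_{12},S_{13},S_{23}=\{1\},\{3\},\{2\}$ versus $\{2\},\{1\},\{3\}$, with all other $I$ unchanged. Both give the same exponent vector. So the argument must use the particular exponent vector you chose.

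The missing idea is the one the paper's proof invokes: the first-half vector is the lexicographically maximal possible one, and this forces uniqueness. Write $F_I:=\{i_1,\dots,i_m\}$ for the first $m$ elements of $I$, and induct on $k=1,\dots,g+1$. Suppose $S_I\cap\{1,\dots,k-1\}=F_I\cap\{1,\dots,k-1\}$ for every $I$. If $I\ni k$ but $k\notin F_I$, then $F_I\subset\{1,\dots,k-1\}$, so $S_I\supseteq F_I$ already has $m$ elements and $k\notin S_I$. Hence the exponent of $z_{k,1}$ is at most $\#\{I: k\in F_I\}$. Equality, which is what you prescribe, forces $k\in S_I$ for every such $I$, so $S_I\cap\{1,\dots,k\}=F_I\cap\{1,\dots,k\}$. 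With this induction inserted, the coefficient factors as $\pm\prod_I(\text{Vandermonde})\cdot\det(\text{complementary minor})$, and the rest of your argument goes through.
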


\begin{proof}
The determinant \(\det \star \bW_{3,m}\) is a polynomial in \(\{z_{\ell, 1}, z_{\ell, 2}, z_{\ell, 3}\}\). In Step~(i), we extract the coefficient of the lexicographically maximal possible monomial in \(\{z_{\ell, 1}\}_{\ell \in \Om_{\mathrm{good}}}\).

We formally set \(y_\ell := z_{\ell, 2}/z_{\ell, 3}\). After Step (i), the determinant of the remaining matrix can be written as
\[\left(\prod_{\ell}z_{\ell, 3}^{r_\ell}\right)P\left(\frac{z_{\ell,2}}{z_{\ell,3}}\right) = \left(\prod_{\ell}z_{\ell, 3}^{r_\ell}\right)P\left(y_{\ell}\right)\]
for some polynomial \(P\), where \(r_\ell\) is the number of rows carrying the index \(\ell\). In \(P\), choose the highest-degree term in each \(y_{\ell}\) for \(\ell \in \Om_{\mathrm{head}}\) and the constant term in each \(y_{\ell}\) for \(\ell \in \Om_{\mathrm{mid}}\). Algorithm \ref{alg:detcom} gives the coefficient of this term, proving the claim.
\end{proof}

It remains to show that \(\det \widetilde{\bW}_{3,m}\) is nonzero.

\begin{theorem}\label{thm:rk3_main}
For every odd \(g\ge3\) and every \(1\le m\le(g+1)/2\), the polynomial \(\det \widetilde{\bW}_{3,m}\) is not identically zero. Consequently, \(X^{2g-2}\) admits an Ulrich bundle of rank \(3\cdot2^{g-3}\).
\end{theorem}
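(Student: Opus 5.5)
The plan is to generalize the mechanism of Example~\ref{ex:rk3_forcing2} to arbitrary $g$ and $m$. First I would pin down the shape of $\widetilde{\bW}_{3,m}$. After Step~(i) of Algorithm~\ref{alg:detcom}, the columns $(1,I,d)$ are gone. The surviving rows are of three kinds: good rows $(i_s,I)$ with $s>m$, which now carry only the variables $z_{\ell,2}$ for $\ell\in\Om_{\mathrm{head}}$ and $z_{\ell,3}$ for $\ell\in\Om_{\mathrm{mid}}$ after Step~(ii); and bad rows $(\ell,\Lambda)$ with $\ell\in\{g+2,\dots,3(g+1)/2\}=\Om_{\mathrm{tail}}$, whose entries are $\alpha_{\Lambda,I}\lbd_\ell^d$ times $y_\ell$ (in the $\varrho=2$ columns) or $1$ (in the $\varrho=3$ columns). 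The first step is to eliminate the good rows by the same right-to-left Vandermonde reduction of Example~\ref{ex:rk3_vandermonde}, column block by column block. For $\ell\in\Om_{\mathrm{head}}$, Step~(iii) turns $z_{\ell,2}$ into $y_\ell$, and we take its top coefficient. Each elimination contributes a nonzero product of differences $\lbd_\ell-\lbd_{i_\mu}$.

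What remains should be a square matrix whose rows are indexed by bad rows, and whose columns are indexed by the surviving pairs $(\varrho,I,d)$ with $\varrho\in\{2,3\}$. Its entries are $\alpha_{\Lambda,I}$ times polynomials in $\lbd_\ell$, up to already-extracted Vandermonde factors. I would check this square count directly from the counts in Proposition~\ref{prop:matrixcriterion}, using that a set $I$ of type $p$ loses $m$ good rows per value of $\varrho$.

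The second step is a change of coordinates, as in Example~\ref{ex:rk3_forcing2}. I would adjoin identity rows for the columns that were eliminated, which changes the determinant only by a sign. Then I would re-express all column indices in the basis $\{\overline e_I\}_{I\subset\Om_{\mathrm{bad}}}$ instead of $\Om_{\mathrm{good}}$. This basis change is legitimate by Lemma~\ref{lem:A_minors}, which gives $a_{\Om_{\mathrm{mid}},\Om_{\mathrm{tail}}}\ne0$. The key identity from Proposition~\ref{prop:modified_entry} is $\overline e_\Lambda=\sum_I\alpha_{\Lambda,I}\overline e_I$. Because of it, the bad rows become \emph{coordinate} rows in the new basis: row $(\ell,\Lambda)$ picks out exactly the $\Lambda$-coordinate. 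The only exception is the columns coming from the $\varrho=2$ block, which carry $y_\ell$.

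After this change the matrix should split, up to permutation, into a block diagonal sum over $\Lambda\subset\Om_{\mathrm{bad}}$. Each block has rows indexed by $\ell\in\Lambda\cap\Om_{\mathrm{tail}}$, and its entries are of the form $\lbd_\ell^d$ and $y_\ell\lbd_\ell^d$ times minors $a_{J,K}$. That is, each block is a rank-two analogue of the matrices in Lemma~\ref{lem:rank2_block_decomposition}. Specializing $y_\ell=\lbd_\ell^{k}$ for a suitable $k$ should turn each block into an ordinary Vandermonde matrix, exactly as in Proposition~\ref{prop:rank2_generic_full_rank}. The entries $a_{J,K}$ enter only as nonzero row or column scalings, again by Lemma~\ref{lem:A_minors}.

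The main obstacle is this last bookkeeping step. I need to verify that after the basis change the surviving $\varrho=2$ and $\varrho=3$ columns really distribute over the blocks so that each block is square with the right degree range in $d$. This is where the naive lexicographic choice failed in Example~\ref{ex:rk3_forcing}, producing a rank-one-type block. I would attack it by first handling $m=1$ for general $g$ to confirm the block structure. I would then induct on $m$, tracking the type $\type(\Lambda\cap\Om_{\mathrm{head}})$ and comparing with the nullity count of Proposition~\ref{prop:rank1_nullity}. The aim is to show that the $\varrho=2$ columns supply exactly the missing rank in each block. If a clean splitting fails, the fallback is to find one further monomial in the $y_\ell$ whose coefficient factors as a product of Vandermonde determinants and minors $a_{J,K}$.

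Finally, nonvanishing of $\det\widetilde{\bW}_{3,m}$ for every $m$ gives $\det\star\bW_{3,m}\ne0$ by Proposition~\ref{prop:rk3_reduction}. This in turn gives orthogonality by Proposition~\ref{prop:matrixcriterion}, and the Ulrich bundle of rank $3\cdot2^{g-3}$ then follows from Theorem~\ref{thm:existenceUlrich} and Proposition~\ref{prop:Ulrichrankodd}.
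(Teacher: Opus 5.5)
\textbf{There is a genuine gap, in your second step.} You claim that after passing to $\Om_{\mathrm{bad}}$-coordinates the matrix splits into a direct sum over $\Lambda\subset\Om_{\mathrm{bad}}$, with rows $\ell\in\Lambda\cap\Om_{\mathrm{tail}}$. That claim is false, and it is exactly where the content of the theorem lies.

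In the $\Om_{\mathrm{bad}}$-basis only the tail rows become coordinate rows. For fixed $\Lambda$ there are $|\Lambda\cap\Om_{\mathrm{tail}}|=2m-\type(\Lambda)$ of them. They face the $2m$ columns $(\varrho,\Lambda,d)$ with $\varrho\in\{2,3\}$ and $0\le d\le m-1$. So your blocks are not square unless $\type(\Lambda)=0$, and in that case no minors $a_{J,K}$ appear in them at all. If you instead keep only the columns surviving your first step, those are not indexed by subsets of $\Om_{\mathrm{bad}}$, since their number depends on the type of $I\subset\Om_{\mathrm{good}}$.

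The missing $\type(\Lambda)$ rows per block must come from the surviving good rows. The identity rows you adjoin just restore these. They are indexed by subsets $I\subset\Om_{\mathrm{good}}$. Since $\overline e_I=\sum_J\widetilde{\alpha}_{I,J}\overline e_J$, every mid row, and every surviving head row with $I\not\subset\Om_{\mathrm{head}}$, expands over several $J$ and couples the blocks.

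Better coordinates cannot fix this. There are $3(g+1)/2>g+1$ flagged points, so no basis of $V/U$ makes all rows coordinate rows at once. Even Example~\ref{ex:rk3_forcing2} does not end block diagonal. There the basis change is applied to only one of the two $\varrho$-parts, and the leftover column $12'$ meets every bad row; it is removed by a cofactor expansion. Your induction on $m$ and your fallback monomial are not substitutes for an argument.

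\textbf{What is missing is the mechanism the paper uses.}
\begin{itemize}
\item Change basis only on the $y$-columns. The transition $\Om_{\mathrm{good}}\to\Om_{\mathrm{bad}}$ is block triangular with respect to type, because $\overline e_I$ only expands over $J$ with $J\cap\Om_{\mathrm{head}}\supseteq I\cap\Om_{\mathrm{head}}$. This yields a block upper triangular matrix with diagonal blocks $\bT_p$ for $m<p\le 2m$, $\bU$, and $\bV^1_{p'}$ for $p'<m$.
\item The blocks $\bT_p$ are inverted by the Vandermonde elimination together with invertibility of a compound matrix of $\widetilde{\bA}$.
\item The mixed block $\bU$ needs a third coordinate system, indexed by $\Om_{\mathrm{mid}}\sqcup\Om_{\mathrm{tail}}$, in which mid and tail rows are simultaneously coordinate rows. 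There $\bU$ is identified with the truncation $|I\cap\Om_{\mathrm{mid}}|\le m$ of the block diagonal matrix of Lemma~\ref{lem:rank2_block_decomposition}. That matrix belongs to the rank-two bundle $\cH$ with flags $(0,1)$ at mid points and $(y_\ell,1)$ at tail points.
\item The truncation is essential. Blocks with more than $m$ mid rows are singular, since those rows lie in only $m$ columns.
\end{itemize}
So a rank-two Vandermonde picture does appear, but on the point set $\Om_{\mathrm{mid}}\sqcup\Om_{\mathrm{tail}}$ and only after the triangular reduction, not on $\Om_{\mathrm{bad}}$-blocks as you propose.

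Your final chain of deductions through Proposition~\ref{prop:rk3_reduction}, Proposition~\ref{prop:matrixcriterion}, Theorem~\ref{thm:existenceUlrich} and Proposition~\ref{prop:Ulrichrankodd} is fine.
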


%%%%%%%%%%%%%%%%%%%%%%%%%%%%%%%%%%%%%%%%%%%%%%%%%%%%
\section{Proof of Theorem~\ref{thm:rk3_main}}\label{sec:prf_main_thm}

We first describe \(\widetilde{\bW}_{3,m}\) as a block matrix whose blocks are parametrized by index sets \(I\). A change of basis in the columns with \(y\)-variables then reduces the problem to the diagonal blocks described below.

\begin{figure}[!ht]
\[
\left( \begin{array}{cccc|cccc|c}
  (2,I,0) & (2,I,1) & \cdots & (2,I,m-1) & (3,I,0) & (3,I,1) & \cdots & (3,I,m-1) & \\
  \hline y_{i_{m+1}} & y_{i_{m+1}}\lbd_{i_{m+1}} & \cdots & y_{i_{m+1}}\lbd_{i_{m+1}}^{m-1}&&&&& i_{m+1}\in I\\
  \vdots & \vdots & \ddots & \vdots&&&&& \vdots\\
  y_{i_p} & y_{i_p}\lbd_{i_p} & \cdots & y_{i_p}\lbd_{i_p}^{m-1}&&&&& i_p\in I\\
  \hline &&&&1 & \lbd_{i_q} & \cdots & \lbd_{i_q}^{m-1} & i_q\in I\\
  &&&&\vdots & \vdots & \ddots & \vdots & \vdots\\
  &&&&1 & \lbd_{i_{2m}} & \cdots & \lbd_{i_{2m}}^{m-1} & i_{2m}\in I\\
  \hline \alpha_{\Lambda,I}\,y_\ell & \alpha_{\Lambda,I}\,y_\ell\lbd_\ell & \cdots & \alpha_{\Lambda,I}\,y_\ell\lbd_\ell^{m-1} & \alpha_{\Lambda,I} & \alpha_{\Lambda,I}\lbd_\ell & \cdots & \alpha_{\Lambda,I}\lbd_\ell^{m-1} & \ell\in\Lambda\\
  \vdots & \vdots & \ddots & \vdots & \vdots & \vdots & \ddots & \vdots & \vdots
\end{array} \right).
\]
\caption{The \(I\)-th block of \(\widetilde{\bW}_{3,m}\).} \label{fig:Iblock}
\end{figure}

Let \(I=\{i_1<\cdots<i_{2m}\}\subset\Om_{\mathrm{good}}\) be of type \(p\), and set
\[q:=\max\{p,m\}+1.\]
The \(I\)-th column block of \(\widetilde{\bW}_{3,m}\) is displayed in Figure~\ref{fig:Iblock}. The block has the form
\[
\begin{pmatrix}
  \bV_{I}^y & 0\\
  0 & \bV_{I}^1\\
  \bB_{I}^y & \bB_{I}^1
\end{pmatrix}.
\]

The rows in the lower blocks \(\bB_{I}^y\) and \(\bB_{I}^1\) are indexed by bad rows \((\ell,\Lambda)\), hence
\[g+2\le \ell\le3(g+1)/2,\qquad\Lambda\subset\Om_{\mathrm{bad}},\]
and \(\alpha_{\Lambda,I}\) is defined as in Proposition~\ref{prop:modified_entry}.

We now collect the \(I\)-column blocks of the same type \(p\). After sorting the column indices \(I\subset\Om_{\mathrm{good}}\) by type, the matrix \(\widetilde{\bW}_{3,m}\) can be written as in Figure~\ref{fig:wholeW3m}, where
\[
\bV_p^y := \bigoplus_{\mathrm{type}\;I = p}\bV_{I}^y, \;\bV_p^1 := \bigoplus_{\mathrm{type}\;I = p} \bV_{I}^1,\; \bB_p^y := \bigsqcup_{\mathrm{type}\;I = p} \bB_{I}^y,\; \mbox{ and } \bB_p^1 := \bigsqcup_{\mathrm{type}\;I = p} \bB_{I}^1.
\]
Here \(\bA \oplus \bB\) means a block diagonal matrix with two blocks \(\bA\) and \(\bB\), while \(\bA \sqcup \bB\) is the augmentation \([\bA| \bB]\). From Definition \ref{def:type}, \(\max\{0, 2m - (g+1)/2\} \le p \le \min\{(g+1)/2, 2m\}\). Therefore, some of the blocks in Figure~\ref{fig:wholeW3m} may be empty; we omit these blocks from \(\widetilde{\bW}_{3,m}\).

\begin{figure}[!ht]
\[
\begin{pmatrix}
  &&&\bV_{m+1}^{y}\\
  &&&&\ddots\\
  &&&&&\bV_{2m}^{y}\\
  &&&&&&\bV_{0}^{1}\\
  &&&&&&&\ddots\\
  &&&&&&&&\bV_{2m}^{1}\\
  \bB_{0}^{y}&\cdots&\bB_{m}^{y}&\bB_{m+1}^{y}&\cdots&\bB_{2m}^{y}&\bB_{0}^{1}&\cdots&\bB_{2m}^{1}
\end{pmatrix},
\]
\caption{The matrix \(\widetilde{\bW}_{3,m}\)} \label{fig:wholeW3m}
\end{figure}

Apply the basis change from the \(\Om_{\mathrm{good}}\)-basis to the \(\Om_{\mathrm{bad}}\)-basis to the \(y\)-part columns, and write
\[\overline e_I=\sum_{\substack{J\subset\Om_{\mathrm{bad}}\\|J|=2m}}\widetilde{\alpha}_{I,J}\,\overline e_J,\qquad I\subset\Om_{\mathrm{good}}.\]
Then the blocks \(\bV_{m+1}^y,\dots,\bV_{2m}^y\) are converted into an upper triangular matrix whose diagonal blocks are \(\widetilde{\bB}_{m+1}^y,\dots,\widetilde{\bB}_{2m}^y\). Meanwhile, the blocks \(\bB_0^y,\dots,\bB_m^y\) are transformed into Vandermonde matrices \(\widetilde{\bV}_0^y, \dots, \widetilde{\bV}_m^y\). The constant part is unchanged. After reordering rows and columns, \(\widetilde{\bW}_{3,m}\) becomes the matrix in Figure~\ref{fig:afterchangeofbasis}, where \(\bB_{i,j}^1\) denotes the block of \(\bB_i^1\) consisting of rows of type \(j\). A further rearrangement gives the block upper bidiagonal matrix in Figure~\ref{fig:rearrangement}. Its diagonal blocks are
\[
\bT_p := \begin{pmatrix} \widetilde{\bB}_{p}^y\\ \widetilde{\bV}_{p}^y \end{pmatrix}, \quad \bU := \begin{pmatrix}
  &&&\bV^1_m\\
  &&&&\ddots\\
  &&&&&\bV^1_{2m}\\
  \widetilde{\bV}^y_0&&&\bB^1_{m,0}&\cdots&\bB^1_{2m,0}\\
  &\ddots&&\vdots&\ddots&\vdots\\
  &&\widetilde{\bV}^y_m&\bB^1_{m,m}&\cdots&\bB^1_{2m,m}
\end{pmatrix}, \quad\bV^1_{p'},
\]
where \(m+1 \le p \le 2m\) and \(0 \le p' \le m-1\). Each \(\bV^1_{p'}\) is invertible, being a block diagonal matrix whose diagonal blocks are \(m\times m\) Vandermonde matrices. Lemmas \ref{lem:BpVp-invertible} and \ref{lem:big-invertible} show the invertibility of the other blocks and hence complete the proof of Theorem~\ref{thm:rk3_main}.

\begin{figure}[!ht]
{\small
\[
\left(\begin{smallmatrix}
  &&&\widetilde{\bB}_{m+1}^y&*&*\\
  &&&&\ddots&*\\
  &&&&&\widetilde{\bB}_{2m}^y\\
  &&&&&&\bV_0^1\\
  &&&&&&&\ddots\\
  &&&&&&&&\bV_{2m}^1\\
  \widetilde{\bV}_{0}^y&&&&&&\bB^1_{0,0}&\cdots&\bB^1_{2m,0}\\
  &\ddots&&&&&\vdots&\ddots&\vdots\\
  &&\widetilde{\bV}_m^y&&&&\bB^1_{0,m}&\cdots&\bB^1_{2m,m}\\
  &&&\widetilde{\bV}_{m+1}^y&&&\bB^1_{0,m+1}&\cdots&\bB^1_{2m,m+1}\\
  &&&&\ddots&&\vdots&\ddots&\vdots\\
  &&&&&\widetilde{\bV}_{2m}^y&\bB^1_{0,2m}&\cdots&\bB^1_{2m,2m}
\end{smallmatrix}\right)
\]
} \caption{\(\widetilde{\bW}_{3,m}\) after a change of basis.} \label{fig:afterchangeofbasis}
\end{figure}

\begin{figure}[!ht]
{\scriptsize
\[
\left(\begin{array}{cccc|cccccc|ccc}
  \widetilde{\bB}_{m+1}^y&*&*&* &&&&&&&\\
  \widetilde{\bV}_{m+1}^y&&&&&&&\bB_{m,m+1}^1&\cdots&\bB_{2m,m+1}^1&\bB_{0,m+1}^1&\cdots&\bB_{m-1,m+1}^1\\
  &\widetilde{\bB}_{m+2}^y&*&*&&&&&&\\
  &\widetilde{\bV}_{m+2}^y&&&&&&\bB_{m,m+2}^1&\cdots&\bB_{2m,m+2}^1&\bB_{0,m+2}^1&\cdots&\bB_{m-1,m+2}^1\\
  &&\ddots&*&&&&&&\\
  &&\ddots&&&&&\vdots&\ddots&\vdots&\vdots&\ddots&\vdots\\
  &&&\widetilde{\bB}_{2m}^y&&&&&&&\\
  &&&\widetilde{\bV}_{2m}^y&&&&\bB_{m,2m}^1&\cdots&\bB_{2m,2m}^1&\bB_{0,2m}^1&\cdots&\bB_{m-1,2m}^1\\
  \hline &&&&&&&\bV_{m}^1&&&\\
  &&&&&&&&\ddots&&&\\
  &&&&&&&&&\bV_{2m}^1\\
  &&&&\widetilde{\bV}_{0}^y&&&\bB_{m,0}^1&\cdots&\bB_{2m,0}^1&\bB_{0,0}^1&\cdots&\bB_{m-1,0}^1\\
  &&&&&\ddots&&\vdots&\ddots&\vdots&\vdots&\ddots&\vdots\\
  &&&&&&\widetilde{\bV}_{m}^y&\bB_{m,m}^1&\cdots&\bB_{2m,m}^1&\bB_{0,m}^1&\cdots&\bB_{m-1,m}^1\\
  \hline &&&&&&&&&&\bV_0^1\\
  &&&&&&&&&&&\ddots\\
  &&&&&&&&&&&&\bV_{m-1}^1
\end{array}\right),
\]
} \caption{\(\widetilde{\bW}_{3,m}\) after the rearrangement of rows and columns.} \label{fig:rearrangement}
\end{figure}

\begin{lemma}\label{lem:BpVp-invertible}
For each \(m+1\le p\le 2m\), any nonempty \(\bT_p\) is square and generically invertible.
\end{lemma}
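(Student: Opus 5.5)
The plan is to show that $\bT_p$ splits, after a permutation of rows and columns, into a direct sum of smaller blocks indexed by the head part $H$. On each such block, an invertible row operation should reduce everything to ordinary Vandermonde matrices.

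\emph{Step 1: describe rows and columns.} After the change to the $\Om_{\mathrm{bad}}$-basis, the columns of $\bT_p$ are indexed by $(J,d)$ with $J\subset\Om_{\mathrm{bad}}$, $|J|=2m$, $\type(J)=p$, and $0\le d\le m-1$. The rows come from two sources.
\begin{itemize}
\item The good rows in $\widetilde{\bB}^y_p$ are the rows $(i,I)$ with $\type(I)=p$ and $i\in\{i_{m+1},\dots,i_p\}$; these are the last $p-m$ head elements of $I$, which is nonempty since $p>m$. Their entries are $y_i\lbd_i^d\,\widetilde\alpha_{I,J}$.
\item The bad rows in $\widetilde{\bV}^y_p$ are the rows $(\ell,\Lambda)$ with $\type(\Lambda)=p$ and $\ell\in\Lambda\cap\Om_{\mathrm{tail}}$. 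Their entries are $y_\ell\lbd_\ell^d\,\delta_{\Lambda,J}$.
\end{itemize}
Write $H=I\cap\Om_{\mathrm{head}}$. The sets $I$ and $J$ of type $p$ are both obtained by choosing $p$ head indices and $2m-p$ indices from a set of size $(g+1)/2$ (namely $\Om_{\mathrm{mid}}$, resp. $\Om_{\mathrm{tail}}$), so there are equally many of each. Each $J$ contributes $m$ columns, $2m-p$ bad rows, and, through the corresponding $I$ with the same $H$, $p-m$ good rows. Hence $\bT_p$ is square.

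\emph{Step 2: compute $\widetilde\alpha_{I,J}$.} By Lemma~\ref{lem:adapted_matrix}, each $\overline e_j$ with $j\in\Om_{\mathrm{mid}}$ is a combination of $\overline e_h$ ($h\in\Om_{\mathrm{head}}$) and $\overline e_t$ ($t\in\Om_{\mathrm{tail}}$). The relevant coefficient matrix is $C:=\bA_{\Om_{\mathrm{mid}},\Om_{\mathrm{tail}}}^{-1}$, up to the head part; it exists because $a_{\Om_{\mathrm{mid}},\Om_{\mathrm{tail}}}\ne0$ by Lemma~\ref{lem:A_minors}. Expanding $\overline e_I=\overline e_H\wedge\overline e_M$ with $M=I\cap\Om_{\mathrm{mid}}$ in the $\Om_{\mathrm{bad}}$-basis, I expect the following.
\begin{itemize}
\item Every term with a type-$p$ index $J$ must satisfy $J\cap\Om_{\mathrm{head}}=H$.
\item For $J=H\sqcup K$, the coefficient is $\pm$ the $(M,K)$-minor of $C$.
\item All remaining terms have type $>p$ and fall into the off-diagonal $*$ blocks.
\end{itemize}
Consequently, within the type-$p$ columns, $\bT_p$ is block diagonal over $H\subset\Om_{\mathrm{head}}$ with $|H|=p$. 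For fixed $H$, the good part of the block is $\bigwedge^{2m-p}C$ (up to signs) tensored with the $(p-m)\times m$ matrix $(y_i\lbd_i^d)$, where $i$ runs over the top $p-m$ elements of $H$. The bad part is block diagonal over $K$, with rows $(y_\ell\lbd_\ell^d)_{\ell\in K}$.

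\emph{Step 3: row-reduce to Vandermonde blocks.} The compound matrix $\bigwedge^{2m-p}C$ is invertible. For each fixed $i$, I left-multiply the good rows $\{(i,H\sqcup M)\}_M$ by its inverse. After this, the good rows are also diagonal in $K$. Then the $H$-block becomes $\bigoplus_K$ of $m\times m$ matrices with rows
\[
(y_\ell\lbd_\ell^{d})_{0\le d\le m-1},\qquad \ell\in \{i_{m+1},\dots,i_p\}\sqcup K .
\]
These index sets have size $(p-m)+(2m-p)=m$ and consist of distinct indices. Each such determinant is therefore $\prod y_\ell$ times a Vandermonde determinant in distinct $\lbd_\ell$, which is nonzero for generic $y$. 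This gives generic invertibility of $\bT_p$.

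\emph{Main obstacle.} The delicate point is Step 2: verifying that the type-$p$ part of $\overline e_I$ in the $\Om_{\mathrm{bad}}$-basis is supported exactly on $J\supset H$ with head part $H$. Equivalently, one needs that $\overline e_h$ for $h\in\Om_{\mathrm{head}}$ is fixed by the change of basis and that substituting mid vectors never lowers the head count. One must also keep track of the signs $\epsilon(\cdot,\cdot)$ and the sign convention in the $\Om_{\mathrm{mid}}\to\Om_{\mathrm{tail}}$ coefficients, so that the resulting coefficient matrix is really a compound matrix of an invertible matrix. If the signs or the placement of $\Om_{\mathrm{head}}$ contributions cause trouble, I would instead verify directly that the type-$p$ coefficient matrix is the $(2m-p)$-th compound of the block $\bigl(\widetilde\alpha_{\{m\},\{t\}}\bigr)_{m\in\Om_{\mathrm{mid}},\,t\in\Om_{\mathrm{tail}}}$, using the multilinearity of $\wedge$.
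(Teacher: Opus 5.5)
Your proposal is correct and follows essentially the paper's argument. It uses the same count for squareness, the same block diagonalization by the common head part $H$ with the $(2m-p)$-th compound of the mid-to-tail transition matrix (invertible by Lemma~\ref{lem:A_minors}), and the same final reduction to Vandermonde determinants; the only difference is the order of reduction. You invert the compound by row operations and split each $H$-block into $m\times m$ scaled Vandermonde blocks, whereas the paper first column-eliminates against the bad rows $\widetilde{\bV}^y_p$ and factors the residual as $\bigl(\text{compound}\otimes\bI_{p-m}\bigr)\operatorname{diag}(\mathbf\Lambda_{I_0,I_1})$.
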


\begin{proof}
Fix \(p\). Each \(p\)-type subset on \(\Om_{\mathrm{bad}}\) contributes \(m\) columns and \(2m-p\) rows on \(\widetilde{\bV}_{p}^{y}\), while each \(p\)-type subset on \(\Om_{\mathrm{good}}\) contributes \(p-m\) rows on \(\widetilde{\bB}_{p}^{y}\). Hence \(\bT_p\) is square.

Apply the same column elimination as in Example~\ref{ex:rk3_vandermonde}, using \(\widetilde{\bV}_{p}^{y}\) as the pivot block. Then the part coming from \(\widetilde{\bV}_{p}^{y}\) is eliminated completely, and the remaining block in \(\widetilde{\bB}_{p}^{y}\) is
\[
\begin{pmatrix} \widetilde{\alpha}_{J,I}\,\mathbf\Lambda_{I_0,I_1} \end{pmatrix}_{\substack{I\subset\Om_{\mathrm{bad}}\text{ of type }p\\J\subset\Om_{\mathrm{good}}\text{ of type }p}},\qquad\mathbf\Lambda_{I_0,I_1}:=\left(y_{j_s}\lbd_{j_s}^{\,t}\prod_{\mu=p+1}^{2m}(\lbd_{j_s}-\lbd_{i_\mu})\right)_{m+1\le s\le p,\;0\le t\le p-m-1}.
\]
Here we write
\[I=\{i_1<\cdots<i_{2m}\},\qquad J=\{j_1<\cdots<j_{2m}\},\]
so that
\[I_0:=I\cap\Om_{\mathrm{head}}=\{i_1,\dots,i_p\},\qquad I_1:=I\cap\Om_{\mathrm{tail}}=\{i_{p+1},\dots,i_{2m}\}.\]
In particular, \(\mathbf\Lambda_{I_0,I_1}\) is a Vandermonde matrix up to multiplying each row by the nonzero scalar
\[y_{j_s}\prod_{\mu=p+1}^{2m}(\lambda_{j_s}-\lambda_{i_\mu}),\]
and is therefore generically invertible.

Let \(\widetilde{\bA}\) be the matrix expressing the \(\Om_{\mathrm{mid}}\)-basis in terms of the \(\Om_{\mathrm{bad}}\)-basis, and let \(\widetilde a_{\bullet,\bullet}\) denote its minors, defined analogously to the original ones. By construction, \(\widetilde{\alpha}_{I,J}=0\) unless
\[I\cap\Om_{\mathrm{head}}=J\cap\Om_{\mathrm{head}}.\]
Hence, after sorting rows and columns according to the common head part \(I_0\), the whole matrix becomes block diagonal. For fixed \(I_0\), write
\[I=I_0\sqcup I_1,\qquad I_1\subset\Om_{\mathrm{tail}},\qquad J=I_0\sqcup J_1,\qquad J_1\subset\Om_{\mathrm{mid}},\]
with \(|I_1|=|J_1|=2m-p\). The corresponding diagonal block is
\[
\bigl(\widetilde{a}_{J_1,I_1}\mathbf\Lambda_{I_0,I_1}\bigr)_{J_1,I_1}=\bigl((\widetilde{a}_{J_1,I_1})_{J_1,I_1}\otimes \bI_{p-m}\bigr)\operatorname{diag}\bigl(\mathbf\Lambda_{I_0,I_1}\bigr)_{I_1}.
\]
The first factor is the \((2m-p)\)-th compound matrix of \(\widetilde{\bA}\), hence it is invertible by Lemma~\ref{lem:A_minors} and \cite[\S0.8.1]{HJ13}. The second factor is generically invertible by the previous paragraph. Therefore, every diagonal block is generically invertible, and the proof is complete.
\end{proof}

\begin{lemma}\label{lem:big-invertible}
The matrix \(\bU\) is generically invertible.
\end{lemma}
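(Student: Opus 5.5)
Write \(\bU\) as
\[
\bU=\begin{pmatrix} 0 & \bV^1_{\ge m}\\ \widetilde{\bV}^y_{\le m} & \bB^1_{\ge m,\le m}\end{pmatrix},
\]
where \(\bV^1_{\ge m}=\bigoplus_{p=m}^{2m}\bV^1_p\), \(\widetilde{\bV}^y_{\le m}=\bigoplus_{p=0}^{m}\widetilde{\bV}^y_p\), and \(\bB^1_{\ge m,\le m}\) collects the blocks \(\bB^1_{i,j}\) with \(i\ge m\), \(j\le m\). The plan is to show that \(\bU\) is square, then pivot on the \(\bV^1\)-blocks and reduce to the invertibility of a Schur complement.

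\textbf{Step 1: squareness.} I would count rows and columns type by type, exactly as in the proof of Lemma~\ref{lem:BpVp-invertible}. A column index \(I\subset\Om_{\mathrm{good}}\) of type \(p\ge m\) contributes \(m\) columns and \(2m-p\) rows \(i_q,\dots,i_{2m}\) to \(\bV^1_p\). This is the only place where \(q=\max\{p,m\}+1\) enters. A bad subset \(\Lambda\subset\Om_{\mathrm{bad}}\) of type \(j\le m\) contributes \(m\) columns to \(\widetilde{\bV}^y_j\), together with its bad rows \((\ell,\Lambda)\) with \(\ell\in\Lambda_1\), of which there are \(2m-j\). Since \(\bU\) is a diagonal block of a square matrix whose other diagonal blocks \(\bT_p\) and \(\bV^1_{p'}\) are square, squareness of \(\bU\) then follows by subtraction.

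\textbf{Step 2: reduction to a residual block.} Each \(\bV^1_I\) is a \((2m-p)\times m\) Vandermonde matrix of rank \(2m-p\) with \(2m-p\le m\). I would run the right-to-left column elimination of Example~\ref{ex:rk3_vandermonde} on each such \(I\). This eliminates the rows of \(\bV^1_I\) and leaves \(p-m\) residual columns per \(I\), whose entries in the bad rows are \(\alpha_{\Lambda,I}\lbd_\ell^{t}\prod_{\mu}(\lbd_\ell-\lbd_{i_\mu})\) for \(0\le t<p-m\), the product running over \(i_\mu\in I\cap\Om_{\mathrm{mid}}\) (the rows of \(\bV^1_I\)). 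Invertibility of \(\bU\) is then equivalent to invertibility of
\[
\bU'=\bigl(\widetilde{\bV}^y_{\le m}\ \big|\ \bB'\bigr),
\]
a square matrix with bad rows only.

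\textbf{Step 3: using the \(y\)-variables.} The variables \(y_\ell\) occur only in \(\widetilde{\bV}^y\), and there \(\widetilde{\bV}^y_\Lambda\) is a Vandermonde-type block whose rows \(\ell\in\Lambda_1\) are scaled by \(y_\ell\). I would use the \(\Om_{\mathrm{bad}}\)-coordinates (as in Example~\ref{ex:rk3_forcing2}) to make \(\bB'\) as sparse as possible: \(\alpha_{\Lambda,I}\) becomes a minor of the inverse change-of-basis matrix, and \(\alpha_{\Lambda,I}=0\) unless the head parts are compatible. I would then extract the coefficient of a suitable monomial in the \(y\)'s, taking for each bad \(\Lambda\) of type \(j\le m\) as many rows from \(\widetilde{\bV}^y_\Lambda\) as possible (\(\min\{m,2m-j\}=m\)). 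The remaining \(m-j\) rows per \(\Lambda\) must then be matched with the residual columns of \(\bB'\). The contributing pieces of \(\bB'\) should factor, as in the proof of Lemma~\ref{lem:BpVp-invertible}, into a Kronecker product of a compound matrix of \(\widetilde{\bA}\) (invertible by Lemma~\ref{lem:A_minors} and \cite[\S0.8.1]{HJ13}) with a diagonal of Vandermonde blocks in the \(\lbd_\ell\).

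\textbf{Main obstacle.} The hard part is the bookkeeping in Step 3. Residual columns coming from \(I\) of type \(p\in[m,2m]\) must be paired bijectively with leftover bad rows of types \(j\le m\), and the pairing must be triangular (for instance, ordered by \(p+j\) or by head parts) so that the leading coefficient is a product of nonzero determinants. The danger is the one seen in Example~\ref{ex:rk3_forcing}: a careless choice of monomial can leave a residual block equal to the singular rank-one matrix of \S\ref{ssec:rank1}. I would first work out \(g=3,5\) by hand to find the correct matching and then generalize it.
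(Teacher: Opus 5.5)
Your Steps 1 and 2 are sound: squareness follows by subtraction, and the Schur-complement elimination of the Vandermonde blocks \(\bV^1_I\) is valid. But these steps only reformulate the problem. The content of the lemma is Step 3, and there you give no argument. You never specify the ``suitable monomial'' or the triangular pairing between residual columns of type \(p\) and leftover rows of type \(2m-p\).

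Your extraction also does not single out one term. The variables \(y_\ell\) are indexed only by \(\ell\in\Om_{\mathrm{tail}}\) and are shared by all the blocks \(\widetilde{\bV}^y_\Lambda\). So prescribing ``\(m\) rows from each \(\widetilde{\bV}^y_\Lambda\)'' does not fix a monomial. Many different row selections contribute to the same \(y\)-monomial and can cancel, which is exactly the failure mode of Example~\ref{ex:rk3_forcing}. Eliminating the mid rows in Step 2 makes this worse: it replaces the simple entries \(\lbd_\ell^d\) by products \(\prod_\mu(\lbd_\ell-\lbd_{i_\mu})\) and hides the structure that makes \(\bU\) invertible.

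The missing idea is structural, not combinatorial. After Algorithm~\ref{alg:detcom}, the mid rows of \(\bU\) vanish on the \(y\)-columns and carry \(\lbd_\ell^d\) on the constant columns. The tail rows carry \(y_\ell\) and \(1\) respectively. So \(\bU\) is a truncation of the \(\pHom\)-matrix of the rank-two bundle \(\cH\) with lines \(k\cdot(0\;\;1)\) on \(\Om_{\mathrm{mid}}\), \(k\cdot(y_\ell\;\;1)\) on \(\Om_{\mathrm{tail}}\), and no other flags.

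Compute that matrix in the basis \(\{\overline e_i\}_{i\in\Om_{\mathrm{mid}}\sqcup\Om_{\mathrm{tail}}}\) of \(V/U\); this is a basis because \(a_{\Om_{\mathrm{head}},\Om_{\mathrm{tail}}}\ne0\) by Lemma~\ref{lem:A_minors}. In these coordinates every flagged index behaves like a good index. By the argument of Lemma~\ref{lem:rank2_block_decomposition}, the matrix is then block diagonal with \(2m\times2m\) blocks \(\bV_{2,I}\), \(I\subset\Om_{\mathrm{mid}}\sqcup\Om_{\mathrm{tail}}\).

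Since the mid rows vanish on the \(m\) columns carrying the \(y\)'s, Laplace expansion along those columns shows that \(\bV_{2,I}\) is generically invertible exactly when \(|I\cap\Om_{\mathrm{mid}}|\le m\). Within a single block the \(y_\ell\) sit on distinct rows, so distinct row choices give distinct monomials and no cancellation can occur.

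Finally, the type conditions defining \(\bU\) (type \(\le m\) on the \(\Om_{\mathrm{bad}}\)-indexed part, type \(\ge m\) on the \(\Om_{\mathrm{good}}\)-indexed part) correspond to \(|I\cap\Om_{\mathrm{mid}}|\le m\iff|I\cap\Om_{\mathrm{tail}}|\ge m\). This correspondence holds under changes of basis that are block triangular with invertible diagonal blocks. Hence \(\bU\) is identified with this invertible truncation, and your matching problem never arises.
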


\begin{proof}
Consider the rank-two extremal balanced parabolic bundle
\[
\cH:= \left(\cO\left((1-g)/2\right)^{\oplus 2},\{L'_\ell\}_{\ell=1}^{2g+1}\right),\quad L'_\ell= \begin{cases}
  k\cdot\begin{pmatrix} 0&1 \end{pmatrix}, & (g+1)/2 <\ell\le g+1,\\[6pt]
  k\cdot\begin{pmatrix} y_\ell&1 \end{pmatrix}, & g+1<\ell\le 3(g+1)/2,\\[6pt]
  0, & \text{otherwise}.
\end{cases}
\]
We compare the following two truncations of the matrix describing \(\pHom(\cH, \cR) \subset \rHom(\underline{\cH}, \underline{\cR})\).
\begin{enumerate}
  \item The matrix \(\bU\) is obtained by indexing the lower rows and left columns by subsets of \(\Om_{\mathrm{bad}}\) of type \(\le m\), and the upper rows and right columns by subsets of \(\Om_{\mathrm{good}}\) of type \(\ge m\).
  \item Retain only the blocks indexed by subsets \(I\subset\Om_{\mathrm{mid}}\sqcup\Om_{\mathrm{tail}}\) satisfying \(|I\cap\Om_{\mathrm{mid}}|\le m\). By Lemma~\ref{lem:rank2_block_decomposition}, this truncation is block diagonal, with blocks \(\bV_{2,I}\) indexed by
  \[I=\{i_1<\ldots<i_{2m}\}\subset\Om_{\mathrm{mid}}\sqcup\Om_{\mathrm{tail}},\qquad|I\cap\Om_{\mathrm{mid}}|=p\le m,\]
  evaluated at
  \[z_{i_1,1}=\dots=z_{i_p,1}=0,\qquad z_{i_{p+1},1}=y_{i_{p+1}},\quad\dots,\quad z_{i_{2m},1}=y_{i_{2m}},\qquad z_{i_j,2}=1\quad(1\le j\le2m).\]
  Each such block is generically invertible, hence so is the whole matrix.
\end{enumerate}

It remains to identify the first truncation with the second. The change of basis from \(\Om_{\mathrm{mid}}\sqcup\Om_{\mathrm{tail}}\) to \(\Om_{\mathrm{bad}}\) is block triangular if the source basis is sorted by \(|I\cap\Om_{\mathrm{mid}}|\) and the target basis by type. The change of basis from \(\Om_{\mathrm{mid}} \sqcup \Om_{\mathrm{tail}}\) to \(\Om_{\mathrm{good}}\) is likewise block triangular if the source basis is sorted by \(|I\cap\Om_{\mathrm{tail}}|\) and the target basis by type. In both cases, the diagonal blocks are invertible, so the induced maps on the corresponding truncated subspaces are also invertible.

Since \(|I\cap\Om_{\mathrm{mid}}|\le m\) is equivalent to \(|I\cap\Om_{\mathrm{tail}}|\ge m\), the two truncated matrices differ only by invertible changes of basis and reindexing. Therefore \(\bU\) is generically invertible.
\end{proof}

%%%%%%%%%%%%%%%%%%%%%%%%%%%%%%%%%%%%%%%%
\footnotesize \printbibliography
\end{document}